\newcommand{\subu}[2]{{#1}_{\raise-2pt\hbox{$ \scriptstyle #2 $}}}
\newcommand{\subd}[3]{{#1}_{\raise-2pt\hbox{$ \scriptstyle #2 #3 $}}}
\newtheorem{mtheorem}{Theorem}
\newtheorem{lema}{Lemma}[section]
\newtheorem{theorem}[lema]{Theorem}
\newtheorem{cor}[lema]{Corollary}
\newtheorem{prop}[lema]{Proposition}
\theoremstyle{definition}
\newtheorem{definition}[lema]{Definition}
\newtheorem{exa}[lema]{Example}
\newtheorem{rmk}[lema]{Remark}
\theoremstyle{remark}
\newtheorem{obs}[lema]{Remark}
\newcommand{\q}{\mathcal{Q}}
\newcommand\id{\operatorname{id}}
\newcommand\co{\operatorname{co}}
\newcommand\rk{\operatorname{rk}}
\newcommand\ord{\operatorname{ord}}
\newcommand\cop{\operatorname{cop}}
\newcommand\Hom{\operatorname{Hom}}
\newcommand\Ker{\operatorname{Ker}}
\newcommand\Img{\operatorname{Im}}
\newcommand\op{\operatorname{op}}
\newcommand{\eps}{\varepsilon}
\newcommand{\ot}{\otimes}
\newcommand{\com}{\Delta}
\newcommand\Supp{\operatorname{Supp}}
\newcommand\Res{\operatorname{Res}}
\newcommand\res{\operatorname{res}}
\newcommand\diag{\operatorname{diag}}
\newcommand\Fr{\operatorname{Fr}}
\newcommand{\g}{\mathfrak g}
\newcommand{\W}{{\mathcal W}}
\newcommand{\C}{{\mathcal C}}
\newcommand{\D}{{\mathcal D}}
\newcommand{\Oc}{{\mathcal O}}
\newcommand\Lie{\operatorname{Lie}}
\def\NN{\mathbb{N}}
\def\ZZ {\mathbb{Z}}
\def\QQ{\mathbb{Q}}
\def\CC{\mathbb{C}}
\def\AA{\mathbb{A}}
\def\JJ{\mathcal{J}}
\def\II{\mathcal{I}}
\def\OO{\mathcal{O}}
\def\SS{\mathcal{S}}
\def\Tt{\mathbb{T}}
\def\D{\mathcal{D}}
\def\E{\mathcal{E}}
\def\lieg{\mathfrak{g}}
\def\lieh{\mathfrak{h}}
\def\liep{\mathfrak{p}}
\def\QEM{U_q(\lieg, M)}
\def\QEMphi{U_q^{\varphi}(\lieg, M)}
\def\QEPbmasphi{\check{U}_q^{\varphi}(\bgot_{+})}
\def\QEPbmenphi{\check{U}_q^{\varphi}(\bgot_{-})}
\def\QEbmasphi{U_q^{\varphi}(\bgot_{+})}
\def\QEbmenphi{U_q^{\varphi}(\bgot_{-})}
\def\QEPphi{U_q^{\varphi}(\lieg, P)}
\def\QEphicheck{\check{U}_q^{\varphi}(\lieg)}
\def\QEcheck{\check{U}_q(\lieg)}
\def\QEQphi{U_q^{\varphi}(\lieg, Q)}
\def\Ga{\Gamma}
\def\OeP{\OO_{\epsilon}(P)}
\def\Oe{\OO_{\epsilon}(G)}
\def\Oq{\OO_{q}(G)}
\def\qe{{\bf u}_{\epsilon}(\lieg)}
\def\bgot{\mathfrak{b}}
\def\qep{{\bf u}_{\epsilon}(\liep)}
\def\pf{\begin{proof}}
\def\epf{\end{proof}}
\theoremstyle{plain}
\def\qephi{\mathbf{u}_\epsilon^\varphi(\mathfrak{g})}
\def\qephip{\mathbf{u}^\varphi_\epsilon(\mathfrak{p})}
\def\Gphige{\Gamma_\epsilon^\varphi(\mathfrak{g})}
\def\Gephip{\Gamma_\epsilon^\varphi(\mathfrak{p})}
\def\Gphig{\Gamma^\varphi(\mathfrak{g})}
\def\Gphibp{\Gamma^\varphi(\mathfrak{b}_{+})}
\def\Gphibm{\Gamma^\varphi(\mathfrak{b}_{-})}
\def\h{\mathfrak{h}}
\def\p{\mathfrak{p}}
\def\QEphi{U_q^\varphi(\mathfrak{g})}
\def\QUphie{U_\epsilon^\varphi}
\def\Rqphi{R_q^\varphi}
\def\OphieeQ{\OO_\epsilon^\varphi(G)_{\QQ_{(\epsilon)}}}
\def\Oephi{\OO_\epsilon^\varphi(G)}
\def\OephiP{\OO_\epsilon^\varphi(P)}
\def\Ophieq{\OO_q^\varphi(G)}
\def\QUhatphi{\check{U}^\varphi_q}
\def\QUhatphie{\check{U}^\varphi_\epsilon}
\def\QUhatphieR{\widehat{U}^\varphi_\epsilon}
\begin{document}





\title[Twisted quantum subgroups]
{Quantum subgroups of simple twisted quantum groups at roots of one}

\author{Gast\'on A. Garc\'ia \and Javier A. Guti\'errez}

\thanks{2010 Mathematics Subject Classification: 81R50, 17B37, 20G42, 16W30, 16W35. \\
\textit{Keywords: twisted quantum function algebras, quantum subgroups, Hopf algebra quotients} \\
This work was partially supported by
ANPCyT-Foncyt, CONICET, Secyt (UNC)}

\address{\noindent G. A. G. : Departamento de Matem\'atica, Facultad de Ciencias Exactas,
Universidad Nacional de La Plata. CONICET. Casilla de Correo 172, 1900
La Plata, Argentina.}

\email{ggarcia@mate.unlp.edu.ar}

\address{\noindent J. A. G. : FaMAF-CIEM (CONICET), Universidad Nacional de C\'ordoba. 
Medina Allende s/n, Ciudad Universitaria, 5000 C\'ordoba. Argentina.}

\email{puiguti@gmail.com }

\begin{abstract}
Let $G$ be a connected, simply connected simple complex algebraic group and let $\epsilon$ be a primitive
$\ell$th root of unity with $\ell$
odd and coprime with $3$ if $G$ is of type $G_{2}$.
We determine all  Hopf algebra quotients 
of the twisted multiparameter quantum function
algebra $\OO_{\epsilon}^{\varphi}(G)$ introduced by Costantini and Varagnolo. This extends  
the results of Andruskiewitsch and the first author, where the untwisted case is treated.
\end{abstract}


\maketitle

\section{Introduction}
Let $G$ be a connected, simply connected complex algebraic group.
In these notes we determine all Hopf algebra quotients of the twisted multiparameter 
quantum function algebra $\Oephi$
introduced by Costantini and Varagnolo in \cite{cv1}, where $\epsilon$ is a 
primitive $\ell$th root of unity with $\ell$
odd and if $G$ is of type $G_{2}$ further $\epsilon$ is coprime with $3$. 
The dual notion of this 
 was introduced 
by Reshetikhin \cite{R} to produce multiparameter quantum enveloping 
algebras of $\lieg=\Lie(G)$, see also \cite{Su}. 
It is constructed as a 
twist deformation of the topological Hopf algebra $U_{\hbar}(\lieg)$ over $\CC[[\hbar]]$, where the
twist only involves elements of a fixed Cartan subalgebra $\lieh$ of $\lieg$. In the dual function algebra, 
this deformation corresponds to a skew endomorphism $\varphi$ on the weight lattice of $\g$. 
When $\varphi=0$, one 
recovers the standard quantum function algebra on $G$ and the resuIts on this 
paper reproduce the classification 
obtained in \cite{AG}.

It turns out that $\Oephi$ is a $2$-cocycle deformation of 
$\Oe$, see Lemma \ref{lem:def2cocicloOe}. For this reason, we call $\Oephi$ the \textit{twisted quantum 
function algebra} over $G$; heuristically it should correspond to the function algebra over the 
\textit{twisted} quantum group $G^{\varphi}_{\eps}$.
This is not an isolated example. 
The relation between multiparameter quantum function algebras and $2$-cocycle deformations has 
been explained for particular instances
of quantum groups; see for example \cite{Ma}, \cite{Tk2}, \cite{AST}, \cite{hlt}.
In general, multiparameter quantum groups were intensively studied. 
They appeared first in the work of Manin \cite{Ma} and
were subsequently treated by different authors, among them \cite{AE,BW, CM,DPW,H,hlt,HPR,ls,OY,R,Tk}.

An important problem in the theory of quantum groups is the determination of the\linebreak general properties that 
a quantum group should have, since up to date there is no\linebreak axiomatic definition of an \textit{algebraic} quantum 
group.
In this sense, the 
description of all possible Hopf algebra 
quotients of the quantum function algebra, seen as algebras of functions over 
\textit{quantum subgroups}, 
of the known examples would give some insight on the structure of the quantum group. This can be viewed as 
the quantum version of the classical problem of studying subgroups of a simple algebraic group. This 
is an actual area of research since the description by P. Podle\'s \cite{podles} of the compact  quantum subgroups 
of Woronowicz's quantum groups $SU_{q}(2)$ and $SO_{q}(3)$ for $q\in [-1,1]\setminus 0$. 
Besides the result of Podle\'s, 
the main contributions are
\begin{itemize}
 \item[$\triangleright$] the description of the finite quantum subgroups of $GL_{q}(n)$ and $SL_{q}(n)$ for 
 $q$ an odd root of unity
 by M\"uller \cite{Mu};
 \item[$\triangleright$] the classification in \cite{AG} of the quantum subgroups of $G_{q}$,  
 for $G$ a connected, simply connected,
 complex simple algebraic group $G$, with $q$ a primitive
$\ell$th root of unity with $\ell$
odd and coprime with $3$ if $G$ is of type $G_{2}$.
  \item[$\triangleright$] the description  in \cite{G} of the  quantum subgroups of the two-parameter 
  deformation $GL_{\alpha,\beta}(n)$ for $\alpha^{-1}\beta $ a primitive  root of unity of odd order;
\item[$\triangleright$]  the compact quantum subgroups of $SO_{-1}(3) $ were determined by Banica and Bichon \cite{BB};
   \item[$\triangleright$] the study of the quantum subgroups of $SU_{q}(2)$ for $q=-1$ in \cite{BN} and for $q\neq -1$ in
   \cite{FST}.
\item[$\triangleright$] the description by  Bichon and Dubois-Violette \cite{BD} of the 
 compact quantum subgroups of  the half-liberated orthogonal quantum groups $O_{n}^{*}$
from \cite{BS}. 
  \item[$\triangleright$] the classification of the quantum subgroups of $SU_{-1}(3)$ by Bichon and Yuncken \cite{BY}.
 \end{itemize}

 As the reader might have noticed, the problem splits into the algebraic case and the compact case.	The latter 
 is reduced mainly to the case when $q=-1$. In this paper, we study the algebraic case, hence we will assume
 that $q$ is a primitive root of unity of odd order. The main result reads 
 
\begin{mtheorem}\label{teo:biyeccion}
There is a bijection between
\begin{enumerate}
\item[$(a)$] Hopf algebra quotients $q: \Oephi \to A$.
\item[$(b)$] Twisted subgroup data up to equivalence.
\end{enumerate}
\end{mtheorem}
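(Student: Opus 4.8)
The plan is to transfer the classification from the untwisted case \cite{AG} via the $2$-cocycle deformation established in Lemma \ref{lem:def2cocicloOe}. The starting point is that a $2$-cocycle twist $\Oephi = (\Oe)_{\sigma}$ does not change the underlying coalgebra, so the lattice of right coideal subalgebras, and in particular the lattice of Hopf ideals (equivalently Hopf algebra quotients) is controlled by the same combinatorial and geometric data, suitably ``twisted''. Concretely, I would first recall from \cite{AG} that every Hopf algebra quotient $q\colon \Oe \to A$ is described by \emph{subgroup data}: a datum encoding a subgroup of the ``classical part'' together with a sub-root-datum governing the ``quantum part'' (the small quantum group piece), plus some compatibility. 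The definition of \emph{twisted subgroup data} in $(b)$ should be the same list of data, with the cocycle $\varphi$ (a skew endomorphism of the weight lattice) imposing compatibility conditions — essentially that the chosen subgroup and sub-root-datum be stable under, or compatible with, $\varphi$.

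The key steps, in order, would be: (1) Use Lemma \ref{lem:def2cocicloOe} to write $\Oephi = \Oe_{\sigma}$ for an explicit $2$-cocycle $\sigma$ coming from $\varphi$, and record that $\sigma$ is supported on the ``torus part'' — i.e. it factors through a character on the group algebra of the weight lattice. (2) Establish the general Hopf-theoretic fact (or cite it, cf.\ the discussions around \cite{Ma,Tk2,AST}) that for a cocycle $\sigma$ of this restricted type, the pushout / quotient Hopf algebras of $H_{\sigma}$ are in bijection with those quotients $H \to A$ for which the image of $\sigma$ still makes sense, i.e.\ $\sigma$ descends to a cocycle on $A$; this descent is automatic when the quotient is compatible with the torus. (3) Run through the three-layer structure of $\Oe$ as in \cite{AG} — the central Hopf subalgebra $\OO(G)$ (functions on the classical group obtained via the Frobenius–Lusztig map), the finite-dimensional quotient (the dual of the small quantum group $\qe$), and the extension gluing them — and check at each layer how $\sigma$ interacts: on $\OO(G)$ the cocycle is cohomologically trivial (a classical torus twist), on the finite quotient it produces exactly the twisting by $\varphi$ already analyzed by Costantini–Varagnolo \cite{cv1}, and the extension datum is transported verbatim. (4) Assemble these into the definition of twisted subgroup data and verify the bijection is well-defined in both directions and inverse to each other, by naturality of the cocycle-twist construction with respect to Hopf algebra maps.

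I expect the main obstacle to be Step (3): making precise that the cocycle $\sigma$, while trivial ``up to a coboundary'' on the big commutative part, genuinely alters the multiplication on the small-quantum-group layer and on the extension, and that every quotient of $\Oephi$ arises by twisting a quotient of $\Oe$ rather than something more exotic. In other words, one must rule out ``new'' quotients that exist only after twisting. The way around this is that $2$-cocycle deformation is an equivalence of categories of comodules (a monoidal equivalence of the categories of Hopf algebra quotients that preserves the coalgebra), so the poset of quotients of $\Oephi$ is literally isomorphic, as a poset, to that of $\Oe$ — one then only has to track how the \emph{labels} (subgroup data) change, which is bookkeeping with $\varphi$. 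A secondary technical point is to verify that the hypotheses on $\ell$ (odd, coprime to $3$ in type $G_2$) are exactly what is needed so that Lemma \ref{lem:def2cocicloOe} applies and so that the untwisted classification of \cite{AG} is available; these are inherited, not re-proved. The argument therefore reduces the theorem to: \emph{(i)} the cocycle-twist identification, \emph{(ii)} the untwisted classification, and \emph{(iii)} a careful but routine translation of data through $\varphi$.
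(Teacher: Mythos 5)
Your proposal takes a genuinely different route from the paper, and it contains a gap that cannot be waved away as ``bookkeeping.'' The paper does \emph{not} deduce the classification for $\Oephi$ by transporting the classification for $\Oe$ across the $2$-cocycle deformation. Instead it re-runs the scheme of \cite{AG} directly on the twisted objects: Lemma \ref{lema:subalgparadeuphi} parametrizes Hopf subalgebras of $\qephi$ by triples $(I_+,I_-,\Sigma^\varphi)$ subject to the $\varphi$-dependent constraint $K_{(1\mp\varphi)(\alpha_i)}\in\Sigma^\varphi$; Propositions \ref{prop:paraL}, \ref{prop:pushout} and \ref{prop:ultimopaso} build the quotient $A_{\D^\varphi}$ from $\OephiP$ by pushout along $^t\gamma$ and by factoring out $J_\delta$; Theorems \ref{thm:reciteo}--\ref{thm:classification} prove the converse via a diagram chase on the central exact sequence $\OO(G)\hookrightarrow\Oephi\twoheadrightarrow\qephi^*$, using the embedding $\mu^\varphi_\epsilon$ into $\AA^\varphi_\epsilon$ and the pairing with $\Gphige$. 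The cocycle-deformation fact (Lemma \ref{lem:def2cocicloOe}) appears as an auxiliary tool — mainly to import dimension counts (Proposition \ref{prop:paraL}$(iv)$, Proposition \ref{prop:pushoutcocycle}) — not as the classification mechanism.

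The gap in your argument is the assertion that ``the poset of quotients of $\Oephi$ is literally isomorphic, as a poset, to that of $\Oe$.'' The monoidal equivalence of comodule categories does not immediately give this, and the paper's own Example \ref{exa:2} and item $(v)$ in the final Proposition show that the naive transport fails concretely: with the same generating data, $\Sigma^\varphi=\Tt^\varphi$ while $\Sigma\neq\Tt$, so $N^\varphi$ is trivial while $N$ is not, hence $A_{\D^\varphi}$ and $A_{\D}$ have \emph{different dimensions} and $A_{\D^\varphi}$ is not a $2$-cocycle deformation of $A_{\D}$. In other words, the $\varphi$-dependent constraint in Lemma \ref{lema:subalgparadeuphi} genuinely changes which triples are admissible, and the subgroups $\widehat{\Tt^\varphi_{I^c}}$ — hence the allowable $N^\varphi$ — vary with $\varphi$. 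Your step (2) is also internally in tension with the poset claim: if quotients of $\Oephi$ were in bijection only with those quotients of $\Oe$ on which $\sigma$ descends, the two posets would not in general be isomorphic; and the harder direction — that every quotient of $\Oephi$ arises by descent of $\sigma$ along some quotient of $\Oe$ — is exactly what would require proof and is not supplied. To close the gap you would need to run the structure theory for the twisted objects in any case (the analogues of Lemma \ref{lema:subalgparadeuphi}, Proposition \ref{prop:propdih} and the exact-sequence/diagram arguments), which is precisely what the paper does.
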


For the definition of the twisted subgroup data see Definition \ref{def:twistedsdata}. We prove Theorem 
\ref{teo:biyeccion} in Section \ref{sec:qsubgroups} through Theorems \ref{thm:mainteo1}, \ref{thm:oephiisodualnfl}
and \ref{thm:classification}. We use the strategy developed in \cite{AG} for the untwisted case, where the
Hopf algebra quotients are constructed using commutative diagrams whose rows are central 
extension of Hopf algebras. Since $\Oephi$ is a $2$-cocycle deformation of $\Oe$, several steps
of the construction can be carried out without much effort. On the other hand, special attention 
has to be paid for certain constructions. To describe them we use the study of $\Oephi$
carried out by Costantini and Varagnolo in \cite{cv1} and \cite{cv2}, which is in turn a generalization
of \cite{dl}.

As a consequence of Theorem 
\ref{teo:biyeccion} one would expect 
 the construction of new examples of finite-dimensional Hopf algebras
with different properties which might not be necessarily $2$-cocycle deformation of 
Hopf algebra quotients of $\Oe$. These examples 
are given by central exact sequences of Hopf algebras. 
We hope that, using similar methods to those of \c Stefan \cite{St}, who characterized Hopf algebras
with certain properties as quotients of the quantum function algebra $\Oc_{q}(SL(2))$ 
and used this to understand the structure of
low-dimensional Hopf algebras, see \cite{N} as well, these examples might help to understand better
the classification problem for finite-dimensional Hopf algebras over the complex numbers.

The paper is organized as follows. In Section \ref{sec:twistedqg} we recall the
definition and general properties of the twisted quantum enveloping algebra 
$\QEphi$, its divided power algebra, the twisted quantum function algebra 
$\OO_{q}^{\varphi}(G)$ and their specializations at roots of unity, and we show that 
$\Oephi$ is a $2$-cocycle deformation
of $\Oe$. 
In Section \ref{sec:mflk} we describe the twisted Frobenius-Lusztig kernels
$\qephi$ and all the Hopf algebra quotients of $\qephi^{*}$. 
We also prove that $\qephi$ is a twist deformation of
$\qe$. Finally, in Section \ref{sec:qsubgroups} we prove the main theorem.

\subsection*{Conventions and Preliminaries}
Our references for the theory of Hopf algebras are \cite{Mo}, \cite{Ra}. 
We use standard notation for Hopf algebras; 
the comultiplication, counit and antipode are denoted by $\com$, $\eps$ and $\SS$,
respectively. Let $\Bbbk$ be a field.
 The set of group-like elements of a
coalgebra $C$ is denoted by $G(C)$. We also denote by $C^{+} = \Ker
\eps$ the augmentation ideal of $C$. 
Let $H$ be a Hopf algebra. $H^{\op}$  denotes 
the Hopf algebra with the same coalgebra structure but opposite multiplication
and $H^{\cop}$  denotes 
the Hopf algebra with the same algebra structure but opposite comultiplication. 
Let $g,h\in G(H)$, the set of 
$(g,h)$-primitive elements is given by 
$P_{g,h}(H)=\{x\in H:\ \com(x)= x\ot g + h\ot x\}$.
We call $P_{1,1}(H)=P(H)$ the set of primitive elements.

Recall that a convolution invertible linear map
$\sigma $ in $\Hom_{\Bbbk}(H\ot H, \Bbbk)$
is a
\textit{normalized multiplicative 2-cocycle} if
\begin{equation}\label{eq:concocycle}
\sigma(b_{(1)},c_{(1)})\sigma(a,b_{(2)}c_{(2)}) =
\sigma(a_{(1)},b_{(1)})\sigma(a_{(2)}b_{(2)},c)  
\end{equation}
and $\sigma (a,1) = \eps(a) = \sigma(1,a)$ for all $a,b,c \in H$,
see \cite[Sec. 7.1]{Mo}.
In particular, the inverse of $\sigma $ is given by
$\sigma^{-1}(a,b) = \sigma(\SS(a),b)$ for all $a,b\in H$.
Using a $2$-cocycle $\sigma$ it is possible to define
a new algebra structure on $H$ by deforming the multiplication,
which we denote by $ H_{\sigma} $. Moreover, $H_{\sigma}$ is indeed
a Hopf algebra with
$H = H_{\sigma}$ as coalgebras,
deformed multiplication
$m_{\sigma} = \sigma * m * \sigma^{-1} : H \ot H \to H$
given by $$m_{\sigma}(a,b) = a\cdot_{\sigma}b = \sigma(a_{(1)},b_{(1)})a_{(2)}b_{(2)}
\sigma^{-1}(a_{(3)},b_{(3)})\qquad\text{ for all }a,b\in H,$$
if $Q^\sigma=\sigma(Id\ot\SS)$ and $Q^{\sigma^{-1}}=\sigma^{-1}(\SS\ot Id )$ the antipode
$\SS_{\sigma} = Q^\sigma\ast\SS\ast Q^{\sigma^{-1}} : H \to H$ given by (see \cite{doi} for details)
$$\SS_{\sigma}(a)=\sigma(a_{(1)},\SS(a_{(2)}))\SS(a_{(3)})
\sigma^{-1}(\SS(a_{(4)}),a_{(5)})\qquad\text{ for all }a\in H.$$

\begin{obs}\label{obs:cocycleproy}
Let $H$ be a Hopf algebra, $I$ a Hopf ideal, 
$A=H/I$ and $\pi: H \to A$ the canonical map. Clearly, any $2$-cocycle on $A$ can be lifted through $\pi$
to a $2$-cocycle on $H$. Let 
$\sigma: H\otimes H\to \Bbbk$ a normalized multiplicative $2$-cocycle on $H$ such that
$\sigma|_{I\ot H + H\ot I} = 0$. Then the map $\hat{\sigma}: A\ot A \to \Bbbk$
given by $\hat{\sigma}(\pi(h),\pi(k)) = \sigma(h,k)$ for all $h,k \in H$ defines a  
 normalized multiplicative $2$-cocycle on $A$ and the induced map 
 $\pi_{\sigma}: H_{\sigma}\to A_{\hat{\sigma}}$ is a Hopf algebra map. In particular, if $B$ is 
 a central Hopf subalgebra of $H$ such that 
$\sigma|_{I\ot H + H\ot I} = 0$ with $I=B^{+}H$, then  
the formula above defines a $2$-cocycle on $A = H/ B^{+}H$. 
\end{obs}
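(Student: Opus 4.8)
The plan is to check, in order, that $\hat\sigma$ is well defined, that it is a normalized multiplicative $2$-cocycle (with an explicit convolution inverse), and that $\pi_\sigma$ is a bialgebra map; the antipode compatibility and the final assertion are then formal. The lifting remark stated at the beginning is clear: if $\sigma_A$ is a $2$-cocycle on $A$, then $\sigma_A\circ(\pi\ot\pi)$ is a $2$-cocycle on $H$, since $\pi$ is a bialgebra map.

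For well-definedness I would use that, over the field $\Bbbk$, the map $\pi\ot\pi\colon H\ot H\to A\ot A$ is surjective with kernel exactly $I\ot H+H\ot I$. Since $\sigma$ vanishes on this kernel by hypothesis, it factors uniquely through $\pi\ot\pi$, giving $\hat\sigma\colon A\ot A\to\Bbbk$ with $\hat\sigma(\pi(h),\pi(k))=\sigma(h,k)$. Because $I$ is a Hopf ideal, $\SS(I)\subseteq I$, so the identity $\sigma^{-1}(a,b)=\sigma(\SS(a),b)$ shows that $\sigma^{-1}$ also vanishes on $I\ot H+H\ot I$ and therefore descends to a map $\tau\colon A\ot A\to\Bbbk$. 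Writing $\bar a=\pi(a)$, $\bar b=\pi(b)$ and using that $\pi$ is a coalgebra map, one checks $\hat\sigma(\bar a_{(1)},\bar b_{(1)})\,\tau(\bar a_{(2)},\bar b_{(2)})=\sigma(a_{(1)},b_{(1)})\sigma^{-1}(a_{(2)},b_{(2)})=\eps(a)\eps(b)=\eps_A(\bar a)\eps_A(\bar b)$, and the opposite composite analogously, so $\tau$ is the convolution inverse of $\hat\sigma$.

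The cocycle identity \eqref{eq:concocycle} for $\hat\sigma$ and the normalization $\hat\sigma(\bar a,1)=\eps_A(\bar a)=\hat\sigma(1,\bar a)$ then follow by writing each argument as $\pi$ of an element of $H$ and using $\com_A\,\pi=(\pi\ot\pi)\com_H$, $\pi(bc)=\pi(b)\pi(c)$ and $\eps_A\,\pi=\eps$ to transport everything into $H$, where the corresponding identities for $\sigma$ hold. Hence $\hat\sigma$ is a normalized multiplicative $2$-cocycle and the Hopf algebra $A_{\hat\sigma}$ is defined.

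Finally, $\pi_\sigma=\pi$ is a coalgebra map since $H_\sigma=H$ and $A_{\hat\sigma}=A$ as coalgebras, and it respects units. Applying $\pi$ to $a\cdot_\sigma b=\sigma(a_{(1)},b_{(1)})a_{(2)}b_{(2)}\sigma^{-1}(a_{(3)},b_{(3)})$ and using multiplicativity and coalgebra compatibility of $\pi$ gives $\pi(a\cdot_\sigma b)=\sigma(a_{(1)},b_{(1)})\pi(a_{(2)})\pi(b_{(2)})\sigma^{-1}(a_{(3)},b_{(3)})=\hat\sigma(\bar a_{(1)},\bar b_{(1)})\bar a_{(2)}\bar b_{(2)}\tau(\bar a_{(3)},\bar b_{(3)})=\bar a\cdot_{\hat\sigma}\bar b$; since a bialgebra map between Hopf algebras automatically commutes with the antipodes, $\pi_\sigma$ is a Hopf algebra map. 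For the last assertion, when $B$ is a central Hopf subalgebra of $H$ the ideal $I=B^{+}H$ is a Hopf ideal, so the preceding applies verbatim and produces the $2$-cocycle $\hat\sigma$ on $A=H/B^{+}H$ whenever $\sigma|_{I\ot H+H\ot I}=0$. I do not expect a genuine obstacle in any of this; the only point demanding care is the Sweedler-notation bookkeeping involved in pushing \eqref{eq:concocycle} and the convolution-inverse identity through $\pi$.
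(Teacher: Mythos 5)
Your argument is correct and complete. The paper states this result as a remark without giving a proof, and your verification follows the standard route one would expect: factoring $\sigma$ and $\sigma^{-1}$ through $\pi\ot\pi$ (using $\Ker(\pi\ot\pi)=I\ot H+H\ot I$ over a field and $\SS(I)\subseteq I$), transporting the normalization, the cocycle identity, and the convolution-inverse property along the coalgebra map $\pi$, and then checking that $\pi$ intertwines $m_\sigma$ with $m_{\hat\sigma}$, with the antipode compatibility and the $B^+H$ case following formally.
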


Let $H$ be a Hopf algebra and $J\in H\ot H$ an invertible element. We say that
$J$ is a \textit{normalized twist} if
$$
(\com\ot \id)(J)(J\ot 1)  =  (\id\ot\com)(J)(1\ot J)\quad\text{and}\quad (\eps\ot \id)(J) =  1 = (\id\ot\eps)(J).
$$
Given a twist $J$ for $H$, one can define a new Hopf algebra $H^{J}$ with the same
algebra structure and counit as $H$, but different comultiplication and antipode 
$$
\com^{J}(h)  =  J^{-1}\com(h) J,\qquad \SS^{J}(h) = Q_{J}^{-1}\SS(h)Q_{J},
$$
for all $h \in H$, where we denote $J=J^{(1)}\ot J^{(2)}$ and $Q_{J} = \SS(J^{(1)})J^{(2)}$.
We say that $H^{J}$ is a twist deformation of $H$.

The notion of 2-cocycle and twist are dual of each other. If $H$ is finite-dimensional,
then $J$ is a twist for $H$ if and only if $J^{*}$ is a $2$-cocycle on $H^{*}$.

\begin{definition}\label{def:hopf-pairing}
A {\it Hopf pairing} between two Hopf algebras $U$ and $H$ over a
ring $R$ is a bilinear form $b: H \times U \to
R$ such that, for all $u,\ v \in U$ and $f,\ h \in H$,
\begin{align*}
& (i)\qquad  b(h,uv) = b(h_{(1)},u)b(h_{(2)},v);\qquad & (iii)&\qquad
b(1,u) =
\eps(u);\\
&(ii)\qquad b(fh,u) = b(f,u_{(1)})b(h,u_{(2)}); \qquad & (iv)& \qquad
b(h,1) = \eps(h).
\end{align*}
\end{definition}

\noindent It follows that $b(h,\SS(u)) = b(\SS(h),u)$ for all $u \in
U$, $h \in H$. Given a Hopf pairing, one has Hopf algebra maps $U
\to H^{\circ}$ and $H \to U^{\circ}$, where $H^{\circ}$ and
$U^{\circ}$ are the Sweedler duals. The pairing is called {\it
perfect} if these maps are injections.

\smallbreak
Let $G$ be a connected, simply connected simple complex algebraic group and
$\g= \Lie (G)$ the Lie algebra of $G$. We fix $\h \subseteq \g$ a  
Cartan subalgebra and $\Phi$ the root system associated to $\h$
with simple roots $\Pi=\{\alpha_1,\ldots,\alpha_n\}$, where $n= \rk(\g):=\dim(\h)$.
Let 
 $(-,-)$ be the symmetric bilinear form over $\h^*$ induced by the Killing form. 
Then, the Cartan matrix associated $A=(a_{ij})_{1\leq i,j\leq n} \in \ZZ^{n\times n}$ 
to $\lieg$ is given by $a_{ij} = \frac{2(\alpha_j,\alpha_i)}{(\alpha_i,\alpha_i)}$. 
If we write $d_i=\frac{(\alpha_i,\alpha_i)}{2}$ and $D=\diag(d_1,\ldots,d_n)$, then 
$(\alpha_i,\alpha_j)= d_i a_{ij}$ and $DA$ is
symmetric.
The fundamental weights $\omega_1, \ldots, \omega_n$ are 
given by the property $(\omega_i,\alpha_j)=d_i\delta_{ij}$ for all $1\leq i \leq n$. Then, 
$\alpha_i = \sum_{j=1}^{n} a_{ji}\omega_j $ for all $1\leq i\leq n$. 
We denote by
$P=\sum_{i=1}^{n}{\mathbb{Z}\omega_i}$ the weight lattice, $P_+$ the positive weights, 
$Q=\sum_{i=1}^{n}{\mathbb{Z}\alpha_i}$ the root lattice, $Q_+$ the positive roots and $\W$ the Weyl group associated to $\Phi$.
The bilinear form $(-,-)$ defines a $\mathbb{Z}$-pairing over $P\times Q$.

\smallbreak
Let $q$ be an indeterminate, $R=\QQ[q,q^{-1}]$ and $\QQ(q)$ its field of fractions.
Let $\epsilon$ be an $\ell$th root of unity with $\ord \epsilon = \ell$ odd and $3\nmid \ell$ if $G$ is of type $G_{2}$. 
If $\chi_{\ell}(q)$ denotes the $\ell$th cyclotomic polynomial, 
then $R/[\chi_{\ell}(q)R] = \QQ(\epsilon)$. We denote $q_{i} = q^{d_{i}}$ for all $1\leq i \leq n$.

\smallbreak
For $n > 0$ define
\begin{align*}
(n)_{q} &= \frac{q^{n}-1}{q-1}=q^{n-1} + \cdots +q + 1,\quad
(n)_{q}!  =  (n)_{q}(n-1)_{q}\cdots (2)_{q}(1)_{q}\text{ and }(0)_{q}=1,\\
[n]_{q} & =\frac{q^n-q^{-n}}{q-q^{-1}},\quad 
[n]_{q}!  =[n]_{q}[n-1]_{q}\cdots[1]_{q}\quad\text{ and }\quad[0]_{q}=1,\\
\binom{n}{k}_{q}&= \frac{(n)_{q}}{(k)_{q}(n-k)_{q}},\qquad {\bigg[{n \atop k}\bigg]}_q= \frac{{[n]}_q!}{{[k]}_q! {[n-k]}_q!}.
\end{align*}

\section*{Acknowledgements}
The authors want to thank M. Costantini and F. Gavarini for fruitful conversations, and the referee for 
his/her suggestions to improve the paper.

\section{Twisted quantum groups} \label{sec:twistedqg}
In this section we recall the definition of the twisted (multiparameter simply connected) 
quantum enveloping algebra $\QEphicheck$, its divided power algebra and the twisted quantum function algebra 
$\OO_{q}^{\varphi}(G)$. The former is isomorphic to the multiparameter 
quantum enveloping algebra defined in \cite{R} and \cite{ckp}, see \cite{cv1}, and the latter is 
introduced by Costantini and Varagnolo in \cite{cv2}. We follow mainly \cite{cv2} for the description.

These algebras depend on a $\QQ$-linear map on the weight lattice that induces a deformation
on the coproduct of $\QEcheck$, and on the product of $\OO_{q}(G)$. This deformation is given by a 
multiplicative 2-cocycle on $\OO_{q}(G)$ and {\it resembles} a twist deformation on $\QEcheck$.
For this reason, we call them {\it twisted} quantum function algebras and {\it twisted}
quantum enveloping algebras, respectively.
We describe also the corresponding objects at roots of unity and some basic
properties such as PBW basis, a Hopf algebra pairing and the quantum Frobenius map. In particular,
twisted quantum function algebras at roots of unity fit into an exact sequence of Hopf algebras.

Throughout we omit the supraindex $\varphi$ when $\varphi=0$ on the quantum function
algebras and on the corresponding maps if
no possible confusion arise.

\subsection{The twisting map $\varphi$}
Consider the $\QQ$-linear space $\QQ P=\sum_{i=1}^{n}{\mathbb{Q}\omega_i}$ 
spanned by the weigths and define a $\QQ$-linear map satisfying:
\begin{equation}\label{eq:phi}
\begin{cases}
(\varphi x,y)=-(x, \varphi y), & \forall x,y \in \mathbb{Q}P, \\
\varphi \alpha_i = \delta_i = 2 \tau_i, & \tau_i \in P, i=1,...,n,   \\
\frac{1}{2} (\varphi \lambda, \mu) \in \mathbb{Z}, & \forall \lambda, \mu \in P,
\end{cases}
\end{equation}
where
$(-,-)$ is consider as a linear extension of the $\mathbb{Z}$-pairing over $P\times Q$ to a symmetric bilinear form
$\mathbb{Q}P \times \mathbb{Q}P \rightarrow \mathbb{Q}$. In particular, $\varphi$
is antisymmetric with respect to this form.

According to the first two conditions we have that $(2\tau_i , \alpha_j)=-(2\tau_j, \alpha_i)$. Writing
$\tau_i=\sum_{j=1}^{n}{x_{ji}\omega_j}=\sum_{j=1}^{n}{y_{ji}\alpha_j}$ with $x_{ji}, y_{ij} \in \ZZ$ for all
$1\leq i,j\leq n$, it follows that
$$
d_j x_{ji}= \left(\sum_{k=1}^{n}{x_{ki}\omega_k,\alpha_j}\right) = -\left(\sum_{l=1}^{n}{x_{lj}\omega_l,\alpha_i}\right)
= - \sum_{l=1}^{n}{x_{lj}(\omega_l,\alpha_i)}=
 -d_ix_{ij}.
$$
 If we denote $X=(x_{ij})_{1\leq i,j\leq n}$, $Y=(y_{ij})_{1\leq i,j\leq n} \in \ZZ^{n\times n}$,
then $AY=X$ and $DX=(d_ix_{ij})_{1\leq i,j\leq n}$ is antisymmetric. In particular,
$x_{ii}=0$ for all $1\leq i \leq n$ and 
$\varphi$ depends on at most $\frac{n(n-1)}{2}$ integer parameters. 
Moreover, by \cite[Lemma 2.1]{cv1} the matrix $A + 2X$ is invertible and
the maps $1\pm \varphi : \QQ P \to \QQ P$ are $\QQ$ isomorphisms that 
satisfy that 
$$ ((1+\varphi)^{\pm 1}\lambda, \mu)= (\lambda, (1-\varphi)^{\pm 1}\mu) \qquad \text{ for all }\lambda,\ \mu \in P.
$$
Write $r=(1+\varphi)^{-1}$, $\bar{r} = (1-\varphi)^{-1}$. Note that if 
$\lambda \in r(P)$ and $\mu \in P$, then $(\lambda,\mu)\in \frac{1}{\det(A+2X)} \ZZ$.
Let $u$ be an element contained in the algebraic clausure of $\QQ(q)$ such that $q=u^{\det (A+2X)}$.
If $z \in \frac{1}{\det(A+2X)}\ZZ$, then we write $q^{z}$ for $u^{z\det(A + 2X)}$.

\subsection{Twisted quantum enveloping algebras}\label{subsec:tmqea}
Let $Q\subseteq M \subseteq P$ be a lattice.
For convenience,  we recall the definition of the one-parameter quantum 
enveloping algebras $\QEM $, see \cite[I.6.5]{K-K}. 
\begin{definition}\label{algcuan}
$\QEM$ is the  $\QQ(q)$-algebra generated by the elements 
$\{E_i,F_i\}_{i=1}^{n}$,\  linebreak $\{K_\lambda:\lambda \in M\}$ 
satisfying the relations
\begin{eqnarray}
K_0=1,\qquad   K_\lambda K_\mu & =  &K_{\lambda+\mu} =K_\mu K_\lambda\quad 
\text{ for all }\quad \lambda , \mu \in M, \nonumber \\
K_\lambda E_j K_{-\lambda} & = &q^{(\lambda,\alpha_j)}E_j, \nonumber\\ 
K_\lambda F_j K_{-\lambda} &= & q^{-(\lambda,\alpha_j)}F_j, \nonumber\\ 
\nonumber
E_iF_j-F_jE_i &= &\delta_{ij}\dfrac{K_i-K_i^{-1}}{q_i-q_i^{-1}},\\
\nonumber
\sum\limits_{l=0}^{1-a_{ij}}(-1)^l\begin{bmatrix}
1-a_{ij}\\
l
\end{bmatrix}_{q_i} E_i^{1-a_{ij}-l} E_jE_i^l &= & 0 \text{\quad} (i\neq j),\\
\nonumber
\sum\limits_{l=0}^{1-a_{ij}}(-1)^l\begin{bmatrix}
1-a_{ij}\\
l
\end{bmatrix}_{q_i} F_i^{1-a_{ij}-l} F_jF_i^l &= & 0 \text{\quad} (i\neq j).
\end{eqnarray}
\end{definition}

It is well-known that $\QEM$ is a Hopf algebra with its 
comultiplication defined by setting
$E_i$ to be $(1,K_{\alpha_{i}})$-primitive and $F_i$ to be
$(K_{-\alpha_{i}},1)$-primitive, for all 
$1\leq i \leq n$.
Using the map $\varphi$, one may define a different coproduct, 
counit and antipode on $\QEM$ as follows
(see \cite[\S 1.3]{cv2})
$$\begin{cases}
\Delta_{\varphi}(E_i) = E_i \otimes K_{\tau_i}+K_{\alpha_i-\tau_i} \otimes E_i,\\
\Delta_{\varphi}(F_i) = F_i \otimes K_{-\alpha_i-\tau_i}+K_{\tau_i} \otimes F_i,\\
\Delta_{\varphi}( K_\lambda)=K_\lambda \otimes K_\lambda,
\end{cases} 
\begin{cases}
\eps_\varphi (E_i) = 0,\\
\eps_\varphi (F_i) = 0,\\
\eps_\varphi (K_\alpha) = 1,
\end{cases}
\begin{cases}
\SS_\varphi (E_i) = -K_{-\alpha_i}E_i,\\
\SS_\varphi (F_i) = -F_i K_{\alpha_i},\\
\SS_\varphi (K_\lambda) = K_{-\lambda}.
\end{cases}
$$
Note that the coproduct is well-defined by \eqref{eq:phi}. With this new structure, $\QEM$ is again a Hopf algebra which is   
denoted by $\QEMphi$. Clearly, $\QEMphi= \QEM$ if $\varphi=0$.
We write $\QEPphi =  \QEphicheck$ and $ \QEQphi= \QEphi$.

\begin{obs}\label{rmk:releiki}
From the defining relations we have that $K_{\tau_i}E_i=E_iK_{\tau_i}$ and
$K_{\tau_i}F_i=F_iK_{\tau_i}$ for all 
$1\leq i \leq n$. Indeed, 
$$K_{\tau_i}E_i=\prod_{j=1}^{n}{K_{\omega_j}^{x_{ji}}}E_i=
\left(\prod_{j=1}^{n}{q_j^{x_{ji}(\omega_j,\alpha_i)}}\right)E_i
\left(\prod_{j=1}^{n}{K_{\omega_j}^{x_{ji}}}\right)= 
\left(\prod_{j=1}^{n}{q_j^{d_j x_{ji}\delta_{ij}}}\right)E_iK_{\tau_i}=E_iK_{\tau_i}.$$ 
The second assertion follows analogously. 
\end{obs}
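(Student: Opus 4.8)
The claim is Remark~\ref{rmk:releiki}: that $K_{\tau_i}$ commutes with $E_i$ and $F_i$. The plan is to compute the conjugation action $K_{\tau_i} E_i K_{\tau_i}^{-1}$ directly from the defining relations of $\QEM$ and verify it returns $E_i$ unchanged, i.e.\ that the relevant exponent of $q$ vanishes.

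First I would expand $K_{\tau_i}$ in terms of the generators $K_{\omega_j}$ using the decomposition $\tau_i = \sum_{j=1}^n x_{ji}\omega_j$, so that $K_{\tau_i} = \prod_{j=1}^n K_{\omega_j}^{x_{ji}}$ (this makes sense since the $K_\lambda$ pairwise commute and $K_\lambda K_\mu = K_{\lambda+\mu}$). Then I would apply, one factor at a time, the defining relation $K_\lambda E_j K_{-\lambda} = q^{(\lambda,\alpha_j)} E_j$ with $\lambda = \omega_j$ and the relevant simple root index $i$: each factor $K_{\omega_j}^{x_{ji}}$ contributes a scalar $q^{x_{ji}(\omega_j,\alpha_i)}$ when moved past $E_i$. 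Collecting all of these yields
\[
K_{\tau_i} E_i = \left(\prod_{j=1}^n q^{x_{ji}(\omega_j,\alpha_i)}\right) E_i K_{\tau_i}.
\]
Now I would invoke the normalization $(\omega_j,\alpha_i) = d_i \delta_{ij}$ of the fundamental weights, so the product collapses to $q^{d_i x_{ii}} = q_i^{x_{ii}}$. The key input is that $x_{ii} = 0$ for all $i$, which was already established just above the remark (it follows from the antisymmetry of $DX$, a consequence of the antisymmetry condition $(\varphi x,y) = -(x,\varphi y)$ in \eqref{eq:phi}). Hence the scalar is $1$ and $K_{\tau_i} E_i = E_i K_{\tau_i}$, as desired.

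The argument for $F_i$ is entirely parallel: using $K_\lambda F_j K_{-\lambda} = q^{-(\lambda,\alpha_j)} F_j$ one gets $K_{\tau_i} F_i = \big(\prod_j q^{-x_{ji}(\omega_j,\alpha_i)}\big) F_i K_{\tau_i} = q_i^{-x_{ii}} F_i K_{\tau_i} = F_i K_{\tau_i}$, again because $x_{ii}=0$. There is no real obstacle here — the statement is a short computation — but the one point deserving care is the bookkeeping when one uses $\tau_i = \sum_j y_{ji}\alpha_j$ instead, in which case one should pair against $\alpha_i$ using $(\alpha_j,\alpha_i) = d_j a_{ji}$ and the relation $AY = X$ to recover the same vanishing; using the $\omega$-expansion and $(\omega_j,\alpha_i) = d_i\delta_{ij}$ is the cleaner route and is the one I would present.
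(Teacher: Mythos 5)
Your argument is correct and matches the paper's own computation step by step: you expand $K_{\tau_i}$ via $\tau_i=\sum_j x_{ji}\omega_j$, push $E_i$ (resp.\ $F_i$) past each factor $K_{\omega_j}^{x_{ji}}$ using the defining relation $K_\lambda E_i K_{-\lambda}=q^{(\lambda,\alpha_i)}E_i$, invoke $(\omega_j,\alpha_i)=d_j\delta_{ij}$ to collapse the product to a single factor $q_i^{x_{ii}}$, and conclude from $x_{ii}=0$. The only difference is presentational — you make explicit the final step (that $x_{ii}=0$ kills the scalar), which the paper leaves implicit, and you correctly write the base as $q$ rather than the paper's typo $q_j$ — but this is the same proof.
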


\begin{definition}\label{def:borel}\cite[\S 1.4]{cv2}
Let $\QEPbmasphi$ and $\QEbmasphi$  be the Hopf subalgebras of 
$\QEphicheck$ generated by the elements $K_{\lambda}, E_{i}$
with  $\lambda \in P$ and $\lambda \in Q$, respectively. 
Similarly, let
$\QEPbmenphi$ and $\QEbmenphi$ be the Hopf subalgebras of 
generated by the elements
$K_{\lambda}, F_{i}$, with $\lambda \in P$, and $\lambda \in Q$, respectively.
The algebra $\QEphi$ is the Hopf subalgebra generated by 
$K_{\lambda}, E_{i}, F_{i}$ with $\lambda \in Q$ and $1\leq i\leq n$.
\end{definition}

\subsection{Pairings, Borel subalgebras and integer forms}
By 
\cite[$\S$ 2]{cv1}, see also \cite[\S 1.4]{cv2}, \cite[\S 3]{dl}, there exist perfect Hopf pairings
$\pi_{\varphi}:  \QUhatphi(\mathfrak{b}_{-})^{\cop}\times \QUhatphi(\mathfrak{b}_+)\to
\QQ(u)$ and  $\bar{\pi}_{\varphi}:  \QUhatphi(\mathfrak{b}_+)^{\cop}\times
\QUhatphi(\mathfrak{b}_-)\to \QQ(u)$. These are given by 
$$
\begin{cases}
\pi_\varphi(K_\lambda,K_\mu)=q^{(r(\lambda),\mu)},\\
\pi_\varphi(K_\lambda,E_i)=\pi_\varphi(F_i,K_\lambda)=0,\\
\pi_\varphi(F_i,E_j)=\dfrac{\delta_{ij}}{q_i-q_i^{-1}}q^{(r(\tau_i),\tau_i)},
\end{cases}\qquad 
\begin{cases}
\overline{\pi}_\varphi(K_\lambda,K_\mu)=q^{-(\overline{r}(\lambda),\mu))},\\
\overline{\pi}_\varphi(E_i,K_\lambda)=\overline{\pi}_\varphi(K_\lambda,F_i)=0,\\
\overline{\pi}_\varphi(E_i,F_j)=\dfrac{\delta_{ij}}{q_i^{-1}-q_i}
q^{-(\overline{r}(\tau_i),\tau_i)},
\end{cases}
$$
for $\lambda, \mu \in P$ and $1\leq i,j \leq n$. The pairing $\overline{\pi}_\varphi$ 
can be obtained from $\pi_{-\varphi}$ by the conjugation of the Hopf algebra
$\QQ$-anti-isomorphism $\zeta_\varphi: \QUhatphi(\g) \to \check{U}^{-\varphi}_q (\g)$ 
given by $E_i\mapsto F_i$, $F_i \mapsto E_i$, $K_\lambda \mapsto K_{-\lambda}$ and $q 
\mapsto q^{-1}$. Clearly, $\zeta_{\varphi}$ maps 
$\QUhatphi(\mathfrak{b}_+)$ into $\check{U}^{-\varphi}_q (\mathfrak{b}_-)$ and
$\overline{\pi}_\varphi = \zeta_{\varphi} \circ \pi_{-\varphi} \circ 
(\zeta_{\varphi}\ot \zeta_{\varphi})$. If $\varphi=0$, denote by $\pi_{0}$ the corresponding bilinear form.
Using these pairings we will define four $R$-Hopf algebras that will be needed later. 

\smallbreak
Fix a reduced
expresion of the longest element $w_{0}= s_{i_{1}}\cdots s_{i_{N}}$ in the Weyl group $\W$ and
consider the total ordering on $\Phi_{+}$ given by 
$$
\beta_{1} = \alpha_{i_{1}},\qquad \beta_{2} = \alpha_{i_{1}}\alpha_{i_{2}},\qquad \ldots \qquad
\beta_{N} = \alpha_{i_{1}}\alpha_{i_{2}}\cdots \alpha_{i_{N}}.
$$

The braid group $B_{\W}$ associated to $\W$ acts on $\QEphicheck$ via the Lusztig automorphisms $T_{i_{j}}$ for $1\leq j \leq N$, 
and one may define
the root vectors  
$$ E_{\beta_{k}} = T_{i_{1}}T_{i_{2}}\cdots T_{i_{k-1}} (E_{k})\qquad\text{ and }\qquad
F_{\beta_{k}} = T_{i_{1}}T_{i_{2}}\cdots T_{i_{k-1}} (F_{k}).
$$
For $s\in \NN$, $1\leq i \leq n$, $1\leq k \leq N$ and $G_{i} = E_{i}$ or $F_{i}$ define
$$ G_{i}^{(s)}= \frac{G_{i}^{s}}{[s]_{q_{i}}!},\qquad\text{ and }\qquad
G_{\beta_{k}}^{(s)} = T_{i_{1}}T_{i_{2}}\cdots T_{i_{k-1}} (G_{k}^{(s)}).
$$
For $\alpha\in \Phi_{+}$, let
\begin{align*}
q_{\alpha} &= q^{\frac{(\alpha,\alpha)}{2}},&\tau_{\alpha} &= \frac{1}{2}\varphi(\alpha),
& e_{\alpha}^{\varphi} = (q_{\alpha}^{-1} - q_{\alpha})E_{\alpha}K_{-\tau_{\alpha}},\\
e_{i}^{\varphi} &= e^{\varphi}_{\alpha_{i}},&f_{i}^{\varphi} &= f^{\varphi}_{\alpha_{i}},
& f_{\alpha}^{\varphi} = (q_{\alpha} - q_{\alpha}^{-1})F_{\alpha}K_{-\tau_{\alpha}}.
\end{align*}

\begin{definition}\label{def:Rborel}
Denote by $\Rqphi[B_-]^\prime$ and $\Rqphi[B_-]^{\prime\prime}$ the $R$-subalgebras 
of $\QUhatphi(\mathfrak{b}_+)^{\op}$ and $\QUhatphi(\mathfrak{b}_+)^{\cop}$, respectively, 
generated by the elements $e^\varphi_\alpha$ and $K_{(1-\varphi)\omega_i}$ for 
$1\leq i \leq n$ and $\alpha \in \Phi_{+}$.
Similarly, let $\Rqphi[B_+]^\prime$ and $\Rqphi[B_+]^{\prime\prime}$ be the $R$-subalgebras 
of $\QUhatphi(\mathfrak{b}_-)^{\op}$ and $\QUhatphi(\mathfrak{b}_-)^{\cop}$, 
generated by the elements $f^\varphi_\alpha$ and $K_{(1+\varphi)\omega_i}$ for 
$1\leq i \leq n$ and $\alpha \in \Phi_{+}$.  
\end{definition}

By restriction, we get the following pairings
\begin{align*}
 \pi_\varphi^\prime & : \QUhatphi(\mathfrak{b}_-) \otimes_{R}  \Rqphi[B_-]^\prime	\to \QQ(q), & 
  \pi_\varphi^{\prime\prime} & :  \Rqphi[B_+]^{\prime\prime} \otimes_{R} \QUhatphi(\mathfrak{b}_+)\to \QQ(q),\\
  \bar{\pi}_\varphi^\prime & : \QUhatphi(\mathfrak{b}_+) \otimes_{R}  \Rqphi[B_+]^\prime	\to \QQ(q),&
 \bar{\pi}_\varphi^{\prime\prime} & :  \Rqphi[B_-]^{\prime\prime} \otimes_{R} \QUhatphi(\mathfrak{b}_-)\to \QQ(q). 
\end{align*}

They are given by 
\begin{align}\label{eq:pairphipi}
\pi_\varphi^\prime (K_\lambda,K_{(1-\varphi)\mu})&=q^{(\lambda,\mu)},&
\pi_\varphi^\prime (F_j,e^\varphi_i)&=-\delta_{ij}, \notag \\
\pi_\varphi^{\prime\prime} (K_{(1+\varphi)\mu},K_\lambda)
& =q^{(\mu,\lambda)}, & \pi_\varphi^{\prime\prime} (f^\varphi_i,E_j)& =\delta_{ij}, \notag \\
\overline{\pi}_\varphi^{\prime} (K_\lambda,K_{(1+\varphi)\mu})
& =q^{-(\lambda,\mu)}, &\overline{\pi}_\varphi^\prime (E_j,f^\varphi_i)&=-\delta_{ij},\\
\overline{\pi}_\varphi^{\prime\prime}
(K_{(1-\varphi)\mu},K_\lambda) & =q^{-(\mu,\lambda)}, & \overline{\pi}_\varphi^{\prime\prime} (e^\varphi_i,F_j)& =\delta_{ij}.\notag
\end{align}

By \cite{L2}, one may take as bases of $U_q^\varphi(\mathfrak{b}_+)$ and 
$U_q^\varphi(\mathfrak{b}_-)$, respectively, the elements
\begin{align*}
\xi_{m,t}&=\prod\limits_{j=N}^{1}E_{\beta_j}^{(m_j)}\prod\limits_{i=1}^{n} 
\binom{K_{\alpha_{i}};0}{t_i}K_{\alpha_{i}}^{-\left[\dfrac{t_i}{2}\right]},
&
\eta_{m,t}&=\prod\limits_{j=N}^{1}F_{\beta_j}^{(m_j)}
\prod\limits_{i=1}^{n} \binom{K_{\alpha_{i}};0}{t_i}K_{\alpha_{i}}^{-\left[\dfrac{t_i}{2}\right]},
\end{align*}

where $[ \, ]$ represents the integer part function and $\binom{K_{\alpha_{i}};0}{t_i}=\prod\limits_{s=1}^{t_i}\frac{K_{\alpha_i}^{-s+1}-1}{q^s-1}$.

\subsubsection*{Divided power algebras}\label{subsubsec:dpa} 
We describe now an integer form of $\QEphi$, which is used to define 
the algebra $\QUphie(\g)$ at the root of unity $\epsilon$. 
\begin{definition}\label{def:gamma}
Let $\Gamma^\varphi(\mathfrak{b}_+)$ and $\Gamma^\varphi(\mathfrak{b}_-)$ be
the $R$-submodules of $\QEphi$ given by
\begin{align*}
\Gamma^\varphi(\mathfrak{b}_+) & = 
\{u \in \QEbmasphi\ \vert \ \pi^{\prime\prime}_{\varphi}({\Rqphi[B_+]^{\prime\prime}}^{\cop}\ot u)\subset R\},\\ 
\Gamma^\varphi(\mathfrak{b}_-) & = 
\{u \in \QEbmenphi\ \vert \ \pi^{\prime}_{\varphi}(u\ot {\Rqphi[B_-]^{\prime}}^{\op})\subset R\}.
\end{align*}
\end{definition}
It is known that the sets $\{\xi_{m,t}\}$ and $\{\eta_{m,t}\}$ are $R$-bases of 
$\Gamma^\varphi(\mathfrak{b}_+) $ and $\Gamma^\varphi(\mathfrak{b}_-)$, respectively. This implies that
both are algebras isomorphic to $\Gamma(\mathfrak{b}_+) $ and $\Gamma(\mathfrak{b}_-)$.
Moreover, they are also subcoalgebras with the coproduct given by
\begin{equation}\label{eq:coproGphi}
\begin{cases}
\Delta_\varphi E_i^{(p)}=\sum\limits_{r+s=p}q_i^{-rs}E_i^{(r)}K_{s(\alpha_i-\tau_{i})}\otimes E_i^{(s)}K_{r\tau_i},\\
\Delta_\varphi F_i^{(p)}=\sum\limits_{r+s=p}q_i^{-rs}F_i^{(r)}K_{s\tau_i}\otimes F_i^{(s)}K_{-r(\alpha_i+\tau_i)},\\
\Delta_\varphi\dbinom{K_i;0}{t}=
\sum\limits_{r+s=t}^{}K_i^s q_i^{-rs}\dbinom{K_i;0}{r}\otimes \dbinom{K_i;0}{s}.
\end{cases}
\end{equation}

Again by restriction, we get the Hopf pairings 
\begin{align*}
 \pi_\varphi^\prime & : \Gamma^\varphi(\mathfrak{b}_-)  \otimes_{R}  \Rqphi[B_-]^\prime	\to R, & 
  \pi_\varphi^{\prime\prime} & :  \Rqphi[B_+]^{\prime\prime} \otimes_{R} \Gamma^\varphi(\mathfrak{b}_+) \to R,\\
  \bar{\pi}_\varphi^\prime & : \Gamma^\varphi(\mathfrak{b}_+)  \otimes_{R}  \Rqphi[B_+]^\prime	\to R,&
 \bar{\pi}_\varphi^{\prime\prime} & :  \Rqphi[B_-]^{\prime\prime} \otimes_{R} \Gamma^\varphi(\mathfrak{b}_-) \to R.  
\end{align*}
By \cite[Lemma 1.12]{cv2}, the algebras $ \Rqphi[B_\pm]^\prime$, $ \Rqphi[B_\pm]^{\prime\prime}$ admit a 
Hopf algebra structure such that the pairings above become perfect Hopf algebra pairings.
Moreover, we have that $\Rqphi [B_\pm]^\prime\simeq\Rqphi [B_\pm]^{\prime\prime}$ as Hopf algebras.

\begin{definition}\cite[\S 3.4]{dl}
The algebra $\Gphig$ is the $R$-subalgebra of $\QEphi$ generated by $\Gphibp$ and $\Gphibm$.
In particular, it is generated
by the elements 
\begin{align*}\label{defGphig}
K_{\alpha_i}^{-1} & & (1\leq i \leq n), & & E_i^{(t)}& & (t\geq 1, 1\leq i\leq n),\\
\dbinom{K_{\alpha_i};0}{t}& & (t\geq 1, 1\leq i\leq n), & & F_i^{(t)}& & (t\geq 1, 1\leq i\leq n).
\end{align*}
\end{definition}



\subsection{Twisted quantum function algebras}\label{subsec:tmwfa}
In this subsection, we introduce the dual algebras $\Ophieq$ of $\QEphi$ and
$\Rqphi[G]$ of $\Gphig$.
They are obtained as the submodules generated by the matrix 
coefficients of representations of type one.

Let $\C_{\varphi}$ be the full faithfull subcategory in $\QEphi$-mod consisting of finite-dimensional 
modules on which the elements $K_{\alpha_{i}}$ act diagonally by powers of $q$. Then $\C_{\varphi}$ is a 
tensor category which is strict. Denote by $\Ophieq$ the $\QQ(q)$-submodule of $\Hom_{\QQ(q)}(\QEphi,\QQ(q))$ spanned by all 
the matrix coefficients of objects in $\C_{\varphi}$. Then  $\Ophieq$ is a $\QQ(q)$-Hopf algebra
with the usual structure. Given $V \in \C_{\varphi}$, $v\in V$ and $f\in V^{*}$, then 
the matrix coefficient $c_{f,v}: \QEphi\to \QQ(q)$ is defined by $c_{f,v}(x) =f(x\cdot v)$
for all $x\in \QEphi$. Then 
we have
$$ \com(c_{f,v}) (x\ot y) = c_{f,v}(xy)\qquad\text{ and }\qquad
m_{\varphi}(c_{f,v}\ot c_{g,w})= c_{f\ot g,v\ot w},$$
for $V,W\in \C_{\varphi}$, $v\in V$, $f\in V^{*}$, $w\in W$, $g,\in W^{*}$ and $x,y \in \QEphi$. 

\smallbreak
For $\Lambda \in P_{+}$, let $L(\Lambda)$ be a simple highest weight module of $\QEphi$. 
Then, $ L(\Lambda) = \bigoplus  L(\Lambda)_{\lambda}$ is a graded module and by the Peter-Weyl Theorem 
we have that 
$\Oq =\bigoplus_{\Lambda \in P_{+}} L(\Lambda) \otimes L(\Lambda)^{*}$,
where $ L(\Lambda)^{*} \simeq  L(-\omega_{0}\Lambda)$.
If $v\in L(\Lambda)_{\mu}$, $f\in L(\Lambda)_{-\lambda}$, then
write $\com(c_{f,v}) = \sum_{i} c_{f,v}^{-\lambda,\nu}\ot c_{f,v}^{-\nu,\mu} \in \bigoplus_{\nu} 
\Oq_{-\lambda,\nu}\ot \Oq_{-\nu,\mu}$. Since $\Ophieq$ equals $\Oq$ as coalgebra, we keep this notation for the coproduct on 
$\Ophieq$.

\begin{lema}\label{lem:deforprp}\cite{ls} For $i=1,2$ and 
$\Lambda_{i}\in P_{+}$, $v_{i}\in L(\Lambda_{i})_{\mu_{i}}$, $f_{i}\in L(\Lambda_{i})_{-\lambda_{i}}$ we
have
$$ m_{\varphi}(c_{f_{1},v_{1}}\ot c_{f_{2},v_{2}}) = q^{\frac{1}{2}((\varphi(\mu_{1}),\mu_{2})-
(\varphi(\lambda_{1}),\lambda_{2}))}m(c_{f_{1},v_{1}}\ot c_{f_{2},v_{2}}).
$$\qed
\end{lema}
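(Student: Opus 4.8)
The plan is to realise the twisted multiplication $m_{\varphi}$ on $\Ophieq$ as a conjugate of the ordinary multiplication $m$ on $\Oq$ by an explicit \emph{Cartan twist}, and then to read off the scalar on matrix coefficients. Recall that $\QEphi$ and $\QEq$ coincide as algebras, and that $\C_{\varphi}$ and $\C$ are literally the same category of $\QEq$-modules (the defining condition does not involve $\varphi$), only the monoidal structure being different. Hence for objects $V,W\in\C$ the weight decompositions $V=\bigoplus_{\mu}V_{\mu}$, $W=\bigoplus_{\nu}W_{\nu}$ make sense in both settings, and I would introduce, for each such pair, the invertible operator $\mathcal{J}_{V,W}\in\End_{\QQ(q)}(V\ot W)$ acting on $V_{\mu}\ot W_{\nu}$ by the scalar $q^{\pm\frac{1}{2}(\varphi(\mu),\nu)}$, the overall sign being fixed by the compatibility stated next. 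The one point needing care here is that $\mathcal{J}_{V,W}$ is defined over $\QQ(q)$: by the third condition in \eqref{eq:phi} one has $\frac12(\varphi(\mu),\nu)\in\ZZ$ for all $\mu,\nu\in P$, so each eigenvalue is an honest power of $q$ and we need not pass to the ring extension by $u$. Its inverse acts on $V_{\mu}\ot W_{\nu}$ by $q^{\mp\frac12(\varphi(\mu),\nu)}$.

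The heart of the matter is the compatibility
$$ \Delta_{\varphi}(x)|_{V\ot W}=\mathcal{J}_{V,W}^{-1}\,\Delta(x)|_{V\ot W}\,\mathcal{J}_{V,W}\qquad\text{for all }x\in\QEq\text{ and all }V,W\in\C, $$
where $\Delta(x)|_{V\ot W}$, $\Delta_{\varphi}(x)|_{V\ot W}$ denote the operators by which $x$ acts on $V\ot W$ through $\Delta$, resp.\ $\Delta_{\varphi}$. Both sides are algebra maps $\QEq\to\End_{\QQ(q)}(V\ot W)$ in the variable $x$ --- the right-hand one because conjugation by the fixed invertible $\mathcal{J}_{V,W}$ is an automorphism of the endomorphism algebra --- so it suffices to check the identity on the generators $K_{\lambda},E_{i},F_{i}$. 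For $K_{\lambda}$ it is immediate, since $K_{\lambda}$ acts diagonally on weight spaces and so commutes with $\mathcal{J}_{V,W}$. For $E_{i}$ and $F_{i}$ I would evaluate both sides on a weight vector in $V_{\mu}\ot W_{\nu}$, use the explicit formulas for $\Delta_{\varphi}(E_{i})$, $\Delta_{\varphi}(F_{i})$ together with $\tau_{i}=\frac12\varphi(\alpha_{i})$ and the $(-,-)$-antisymmetry of $\varphi$, and compare coefficients; this both pins down the sign above and finishes the check. (That $\Delta_{\varphi}$ is a Cartan twist of $\Delta$ is already implicit in \cite{cv1,cv2} and goes back to \cite{ls}.)

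It then remains to specialise and do the bookkeeping. Take $V_{i}=L(\Lambda_{i})$, $v_{i}\in L(\Lambda_{i})_{\mu_{i}}$, and $f_{i}\in L(\Lambda_{i})^{*}$ of weight $-\lambda_{i}$, i.e.\ $f_{i}$ vanishes on $L(\Lambda_{i})_{\mu}$ for $\mu\neq\lambda_{i}$. Writing $\mathcal{J}=\mathcal{J}_{L(\Lambda_{1}),L(\Lambda_{2})}$ and using $m_{\varphi}(c_{f_{1},v_{1}}\ot c_{f_{2},v_{2}})=c_{f_{1}\ot f_{2},\,v_{1}\ot v_{2}}$ for the $\Delta_{\varphi}$-module $L(\Lambda_{1})\ot L(\Lambda_{2})$, one obtains, for all $x\in\QEq$,
$$ m_{\varphi}(c_{f_{1},v_{1}}\ot c_{f_{2},v_{2}})(x)=(f_{1}\ot f_{2})\big(\mathcal{J}^{-1}\,\Delta(x)\,\mathcal{J}\,(v_{1}\ot v_{2})\big). $$
Since $v_{1}\ot v_{2}$ lies in $L(\Lambda_{1})_{\mu_{1}}\ot L(\Lambda_{2})_{\mu_{2}}$, the eigenvalue of $\mathcal{J}$ there comes out of the first slot; and since $f_{1}\ot f_{2}$ kills every weight component of the resulting vector outside $L(\Lambda_{1})_{\lambda_{1}}\ot L(\Lambda_{2})_{\lambda_{2}}$, one may replace $\mathcal{J}^{-1}$ by its eigenvalue there. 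Collecting these two scalars, one finds that $m_{\varphi}(c_{f_{1},v_{1}}\ot c_{f_{2},v_{2}})(x)$ equals $q^{\frac{1}{2}((\varphi(\mu_{1}),\mu_{2})-(\varphi(\lambda_{1}),\lambda_{2}))}\,(f_{1}\ot f_{2})(\Delta(x)(v_{1}\ot v_{2}))$ for all $x$, and the factor $(f_{1}\ot f_{2})(\Delta(x)(v_{1}\ot v_{2}))$ is precisely $m(c_{f_{1},v_{1}}\ot c_{f_{2},v_{2}})(x)$; this is the assertion. The only genuine obstacle is keeping the eigenvalues of $\mathcal{J}$ on the two weight spaces straight and consistent with the conjugation convention fixed above; everything else is formal, and the lemma then serves as the computational input for the fact that $\Ophieq$ is a $2$-cocycle deformation of $\Oq$ (hence $\Oephi$ of $\Oe$).
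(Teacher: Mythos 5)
The paper does not prove this lemma: it is quoted from \cite{ls}, and the equivalent $2$-cocycle picture is then set out in Remark \ref{rmk:bigrad} and Corollary \ref{cor:cocycle}. Your approach via a Cartan twist $\mathcal{J}$ acting diagonally on weight spaces is the natural dual formulation of that picture and is sound in outline --- including the remark that the eigenvalues are integer powers of $q$ by the third condition of \eqref{eq:phi}, so no extension of scalars is needed.

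The gap is precisely at the point you flag as ``the one point needing care'': you defer the sign of the eigenvalue of $\mathcal{J}$ and then assert the displayed scalar without resolving it, and the resolution does not produce what you claim. With your convention $\Delta_\varphi=\mathcal{J}^{-1}\Delta\mathcal{J}$, comparing both sides on $E_i$ acting on $V_\mu\ot W_\nu$ and using $\tau_i=\tfrac12\varphi(\alpha_i)$ together with the skew-symmetry of $\varphi$ forces $\mathcal{J}$ to act on $V_\mu\ot W_\nu$ by $q^{-\frac12(\varphi(\mu),\nu)}$, i.e.\ the minus sign. Running your own final bookkeeping with that eigenvalue --- $\mathcal{J}$ contributes its eigenvalue at $(\mu_1,\mu_2)$ and $\mathcal{J}^{-1}$ contributes its inverse at $(\lambda_1,\lambda_2)$ --- produces $q^{\frac12((\varphi(\lambda_1),\lambda_2)-(\varphi(\mu_1),\mu_2))}$, the reciprocal of the displayed scalar; a direct evaluation of $(c_{f_1,v_1}\cdot_\varphi c_{f_2,v_2})(E_i)$ against $(c_{f_1,v_1}\cdot c_{f_2,v_2})(E_i)$, specializing to $\lambda_1=\mu_1+\alpha_i$, $\lambda_2=\mu_2$, gives the same conclusion. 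The same sign appears in Remark \ref{rmk:bigrad} and Corollary \ref{cor:cocycle}, so this is either a consistent sign slip in that passage or a convention I am not tracking; either way, the compatibility check you postpone is exactly where the proof stands or falls and cannot be left implicit. Nothing downstream is affected: Lemma \ref{lem:def2cocicloOe} and its sequel only use that $m_\varphi$ is a bicharacter deformation of $m$, for which the overall sign of the exponent is immaterial.
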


\begin{obs}\label{rmk:bigrad}
Following \cite[\S 2]{hlt}, the quantum function algebra $\Oq$ is a $P$-bigraded Hopf algebra. 
In particular, if $f\in L(\Lambda)_{\lambda}^\ast$, $v\in  L(\Lambda)_{\mu}$ and $f(v)\neq 0$ 
we have that $f(v)=f(1\cdot v)=f(K^{-1}_{\alpha_i}K_{\alpha_i}\cdot v)
=f(\SS(K_{\alpha_i})K_{\alpha_i}\cdot v)=(K_{\alpha_i}\cdot f)
(K_{\alpha_i}\cdot v)=q_i^{(\lambda,\alpha_i)+(\mu,\alpha_i)}f(v)$
for all $1\leq i \leq n$, 
which implies that $\lambda = -\mu$ if $f(v)\neq 0$.

If we define the anti-symmetric bicharacter
$p:P\times P\to \QQ(q)$ by $p(\lambda_{1},\lambda_{2}) = 
q^{-\frac{1}{2}(\varphi(\lambda_{1}),\lambda_{2})}$, then it induces a group $2$-cocycle $\tilde{p}$ 
on $P\times P$ given
by  
$$\tilde{p}((\lambda_{1},\mu_{1}),(\lambda_{2},\mu_{2})) = p(\lambda_{1},\lambda_{2})p(\mu_{1},\mu_{2})^{-1}=
q^{\frac{1}{2}((\varphi(\mu_{1}),\mu_{2})-(\varphi(\lambda_{1}),\lambda_{2}))},$$
and  by \cite[Theorem 2.1]{hlt}, $\Ophieq$ is isomorphic to the deformed $P$-bigraded Hopf algebra $\Oq_{p}$ where the 
product is given 
$$ m_{\varphi}(c_{f_{1},v_{1}}\ot c_{f_{2},v_{2}}) = p(\lambda_{1},\lambda_{2})
p(\mu_{1},\mu_{2})^{-1}m(c_{f_{1},v_{1}}\ot c_{f_{2},v_{2}}),
$$
for $\Lambda_{i}\in P_{+}$, $v_{i}\in L(\Lambda_{i})_{\mu_{i}}$, $f_{i}\in L(\Lambda_{i})_{-\lambda_{i}}$.
\end{obs}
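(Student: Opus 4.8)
The plan is to realize $\Ophieq$ as a cocycle twist of the $P$-bigraded Hopf algebra $\Oq$ and then to invoke \cite[Theorem 2.1]{hlt}, identifying the twisting $2$-cocycle with the deformation scalar furnished by Lemma~\ref{lem:deforprp}. The key observation making this possible is that $\QEphi$ and $\QEq$ coincide as algebras, so the categories $\C_\varphi$ and $\C$ have the same objects and morphisms; hence $\Oq$ and $\Ophieq$ share the same underlying coalgebra and the same spanning family of matrix coefficients $c_{f,v}$, and the whole difference between them lies in the multiplication, which is exactly the situation handled by cocycle twisting.

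First I would make the bigrading explicit, following \cite[\S 2]{hlt}: one writes $\Oq=\bigoplus_{\lambda,\mu\in P}\Oq_{\lambda,\mu}$, declaring $c_{f,v}\in\Oq_{\lambda,\mu}$ whenever $v\in L(\Lambda)_{\mu}$ and $f$ has weight $\lambda$ in $L(\Lambda)^{*}$ (equivalently $f\in L(\Lambda)_{-\lambda}$), and checks that the undeformed product $m$ and the coproduct $\com$ respect this grading; this is routine from $\com(c_{f,v})(x\ot y)=c_{f,v}(xy)$ and $m(c_{f,v}\ot c_{g,w})=c_{f\ot g,v\ot w}$. In parallel I would verify the displayed chain of equalities in the statement — obtained by inserting $1=\SS(K_{\alpha_i})K_{\alpha_i}$ and using the weight relations of Definition~\ref{algcuan} together with the dual action $(x\cdot f)(w)=f(\SS(x)w)$ — which gives $f(v)=q_i^{(\lambda+\mu,\alpha_i)}f(v)$ for every $i$, hence $\eps(c_{f,v})=f(v)\neq0$ only if $\lambda=-\mu$. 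This last fact is needed to see that the counit of $\Oq$ is compatible with the twist below.

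Next I would check that the map $p: P\times P\to\QQ(q)^{\times}$, $p(\lambda_1,\lambda_2)=q^{-\frac{1}{2}(\varphi\lambda_1,\lambda_2)}$, is a well-defined anti-symmetric bicharacter: well-definedness is the third clause of \eqref{eq:phi}, which ensures $\frac{1}{2}(\varphi\lambda,\mu)\in\ZZ$ so that no fractional power of $q$ occurs; bimultiplicativity in each argument is immediate from linearity of $\varphi$ and bilinearity of $(-,-)$; and anti-symmetry $p(\lambda_1,\lambda_2)=p(\lambda_2,\lambda_1)^{-1}$ is the first clause of \eqref{eq:phi}. Hence $\tilde{p}((\lambda_1,\mu_1),(\lambda_2,\mu_2))=p(\lambda_1,\lambda_2)p(\mu_1,\mu_2)^{-1}$, being a product of bicharacters of the abelian group $P\times P$, is a normalized group $2$-cocycle with values in $\QQ(q)^{\times}$; moreover, using bilinearity once more, $\tilde{p}((\lambda,-\lambda),(\mu,-\mu))=1$, so $\tilde{p}$ is trivial on exactly the bidegrees where $\eps$ is supported.

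Finally I would apply \cite[Theorem 2.1]{hlt}: twisting $\Oq$ by $\tilde{p}$ produces a $P$-bigraded Hopf algebra $\Oq_{p}$ with unchanged coalgebra, counit and bigrading, and with product $m_{\tilde{p}}(c_{f_1,v_1}\ot c_{f_2,v_2})=\tilde{p}((\lambda_1,\mu_1),(\lambda_2,\mu_2))\,m(c_{f_1,v_1}\ot c_{f_2,v_2})$ on bihomogeneous elements. Expanding, $\tilde{p}((\lambda_1,\mu_1),(\lambda_2,\mu_2))=q^{\frac{1}{2}((\varphi\mu_1,\mu_2)-(\varphi\lambda_1,\lambda_2))}$, which is exactly the scalar relating $m_\varphi$ and $m$ on matrix coefficients in Lemma~\ref{lem:deforprp}; therefore $m_\varphi$ and $m_{\tilde{p}}$ agree on the spanning family $c_{f,v}$ and coincide. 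Since $\Ophieq$ and $\Oq_{p}$ also agree as coalgebras (both being $\Oq$) and as bigraded vector spaces, the identity map is the asserted isomorphism of $P$-bigraded Hopf algebras. I do not expect a serious mathematical obstacle here; the only point requiring care is correctly invoking \cite[Theorem 2.1]{hlt}, i.e. checking that $\Oq$ really is a $P$-bigraded Hopf algebra and that $\tilde{p}$ is an admissible twisting cocycle — both of which reduce to the verifications above — while the rest is bookkeeping.
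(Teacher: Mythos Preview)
Your proposal is correct and follows essentially the same approach as the paper: the remark itself is self-contained, simply recording the $P$-bigrading, the counit computation, the definition of $p$ and $\tilde p$, and then invoking Lemma~\ref{lem:deforprp} together with \cite[Theorem 2.1]{hlt}, and you have faithfully expanded each of these steps. There is nothing to add; the only care needed is exactly where you put it, namely checking via \eqref{eq:phi} that $p$ lands in $\QQ(q)^{\times}$ and is anti-symmetric so that \cite[Theorem 2.1]{hlt} applies.
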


\begin{cor}\label{cor:cocycle}
$\Ophieq$ is a $2$-cocycle deformation of $\Oq$. The $2$-cocycle $\sigma: \Oq\ot \Oq \to \QQ(q)$ is given by
the formula 
$$\sigma(c_{f_{1},v_{1}}, c_{f_{2},v_{2}}) = \eps(c_{f_{1},v_{1}})\eps(c_{f_{2},v_{2}})
q^{-\frac{1}{2}(\varphi(\lambda_{1}),\lambda_{2})}$$
for 
$\Lambda_{i}\in P_{+},\ v_{i}\in L(\Lambda_{i})_{\mu_{i}}$, $f_{i}\in L(\Lambda_{i})_{-\lambda_{i}}$,
and $i=1,2$.
\end{cor}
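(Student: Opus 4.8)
The plan is to turn the identification $\Ophieq\simeq\Oq_p$ of Remark~\ref{rmk:bigrad} into a genuine Hopf $2$-cocycle deformation by exhibiting $\sigma$ explicitly and checking the axioms directly on matrix coefficients. First I would define $\sigma$ by the stated formula and record that its candidate convolution inverse is $\sigma^{-1}(c_{f_1,v_1},c_{f_2,v_2})=\eps(c_{f_1,v_1})\eps(c_{f_2,v_2})\,q^{\frac12(\varphi(\lambda_1),\lambda_2)}$. The structural point is that $\sigma$ equals $\eps\ot\eps$ times the bicharacter value $p(\lambda_1,\lambda_2)=q^{-\frac12(\varphi(\lambda_1),\lambda_2)}$, which depends only on the first $P$-bidegree components of its arguments; since $\Oq$ is $P$-bigraded as a Hopf algebra, $\sigma$ is well defined on all of $\Oq\ot\Oq$, and since $\eps(c_{f,v})=f(v)$ vanishes unless the bidegree of $c_{f,v}$ is diagonal, this is consistent with the group $2$-cocycle $\tilde p$ appearing in Remark~\ref{rmk:bigrad}.

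Next I would verify that $\sigma$ is a normalized multiplicative $2$-cocycle. Normalization $\sigma(a,1)=\eps(a)=\sigma(1,a)$ is immediate because $1$ has trivial bidegree. For \eqref{eq:concocycle} one evaluates both sides on matrix coefficients $a,b,c$ with first bidegrees $-\alpha,-\beta,-\gamma$: on each side the $\eps$-factors collapse the Sweedler sums to $\eps(a)\eps(b)\eps(c)$ and force the relevant intermediate weights, and the surviving scalar becomes $q^{-\frac12((\varphi\alpha,\beta)+(\varphi\alpha,\gamma)+(\varphi\beta,\gamma))}$ on both sides, using only that $p$ is a bicharacter. A parallel computation gives $\sigma*\sigma^{-1}=\eps\ot\eps=\sigma^{-1}*\sigma$. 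It then remains to identify the deformed product: expanding $m_\sigma(c_{f_1,v_1}\ot c_{f_2,v_2})$ via the matrix-coefficient coproduct $\com(c_{f,v})=\sum_i c_{f,v}^{-\lambda,\nu_i}\ot c_{f,v}^{-\nu_i,\mu}$ and its three-fold version, the $\eps$-factor in $\sigma$ contracts the leftmost legs onto the weight-$\lambda_i$ pieces and the $\eps$-factor in $\sigma^{-1}$ contracts the rightmost legs onto the weight-$\mu_i$ pieces; the middle legs reassemble to $c_{f_1,v_1}c_{f_2,v_2}$, $\sigma$ contributes $q^{-\frac12(\varphi\lambda_1,\lambda_2)}$ and $\sigma^{-1}$ contributes $q^{\frac12(\varphi\mu_1,\mu_2)}$. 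Hence $m_\sigma(c_{f_1,v_1}\ot c_{f_2,v_2})=q^{\frac12((\varphi\mu_1,\mu_2)-(\varphi\lambda_1,\lambda_2))}\,m(c_{f_1,v_1}\ot c_{f_2,v_2})$, which is exactly $m_\varphi$ by Lemma~\ref{lem:deforprp}; since $\Oq$ and $\Ophieq$ share the same coalgebra, this identifies $\Ophieq$ with $(\Oq)_\sigma$ as Hopf algebras.

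The main obstacle is entirely the bookkeeping with the bigrading: one must check that the $\eps$-factors in $\sigma$ and $\sigma^{-1}$ really do pin down the correct $P$-graded components in the Sweedler expansions, so that the scalars occurring in \eqref{eq:concocycle} and in $m_\sigma$ depend only on the first bidegree components (and thus match $p$), and that the contracted middle leg is literally $c_{f_1,v_1}c_{f_2,v_2}$ with no leftover scalar. Once the conventions of Remark~\ref{rmk:bigrad} are fixed, everything reduces to the bicharacter property of $p$ together with Lemma~\ref{lem:deforprp}; alternatively, this is an instance of the standard fact that deforming a $\Gamma$-graded Hopf algebra by a $2$-cocycle of an abelian group $\Gamma$ yields a Hopf $2$-cocycle deformation.
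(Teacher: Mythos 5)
Your proposal is correct and follows essentially the same line as the paper's own proof: you define $\sigma$ via the bicharacter $p$, check normalization and the cocycle identity \eqref{eq:concocycle} by direct computation on matrix coefficients using the $P$-bigrading (which forces $\lambda=-\mu$ whenever $\eps(c_{f,v})\neq 0$), and then identify $m_\sigma$ with $m_\varphi$ via Lemma~\ref{lem:deforprp}. The closing observation that this is a special case of deforming a $\Gamma$-graded Hopf algebra by a group $2$-cocycle is a nice conceptual gloss, but the actual argument carried out is the same bookkeeping computation the paper performs.
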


\pf Denote $\chi(\lambda_{1},\lambda_{2}) = 
q^{-\frac{1}{2}(\varphi(\lambda_{1}),\lambda_{2})}$. Clearly, $\sigma(x,1)=\sigma(1,x)=\eps(x)$ for all $x\in \Oq$.
We first prove condition (\ref{eq:concocycle}). For $1\leq i\leq 3$, let $c_{f_i,v_i} \in \Oq$
with $f_i\in L(\Lambda_i)_{\lambda_i}$ and $v_i\in L(\Lambda_i)_{\mu_i}$. On one hand, we have
\begin{align*}
&  \sigma((c_{f_2,v_2})_{(1)},(c_{f_3,v_3})_{(1)})\sigma(c_{f_1,v_1},(c_{f_2,v_2})_{(2)}(c_{f_3,v_3})_{(2)}) \\
 & = \sum_{\nu_{1},\nu_{2}} \sigma(c_{f_2,v_2}^{-\lambda_2,\nu_{1}},c_{f_3,v_3}^{-\lambda_3,\nu_{2}})\sigma(c_{f_1,v_1},c_{f_2,v_2}^{-\nu_{1},\mu_2}c_{f_3,v_3}^{-\nu_2,\mu_3})\\
&= \sum_{\nu_{1},\nu_{2}} \eps(c_{f_2,v_2}^{-\lambda_2,\nu_{1}})\eps(c_{f_3,v_3}^{-\lambda_3,\nu_{2}})\chi(\lambda_{2},\lambda_{3})\eps(c_{f_1,v_1})
\eps(c_{f_2,v_2}^{-\nu_{1},\mu_2})\eps(c_{f_3,v_3}^{-\nu_2,\mu_3})\chi(\lambda_{1},\nu_{2}+\nu_{3})\\
&=\eps(c_{f_1,v_1})\eps(c_{f_2,v_2})\eps(c_{f_3,v_3})\chi(\lambda_{2},\lambda_{3})\chi(\lambda_{1},\lambda_{2}+\lambda_{3}).
\end{align*}
On the other hand, 
\begin{align*}
&  \sigma((c_{f_1,v_1})_{(1)}, (c_{f_2,v_2})_{(1)}) \sigma((c_{f_1,v_1})_{(2)})(c_{f_2,v_2})_{(2)}, c_{f_3,v_3}) \\
 & = \sum_{\nu_{1},\nu_{2}} \sigma(c_{f_1,v_1}^{-\lambda_1,\nu_{1}},c_{f_2,v_2}^{-\lambda_2,\nu_{2}})\sigma(c_{f_1,v_1}^{-\nu_1,\mu_1}c_{f_2,v_2}^{-\nu_{2},\mu_2},c_{f_3,v_3})\\
&= \sum_{\nu_{1},\nu_{2}} \eps(c_{f_1,v_1}^{-\lambda_1,\nu_{1}})\eps(c_{f_1,v_1}^{-\lambda_2,\nu_{2}})\chi(\lambda_{1},\lambda_{2})\eps(c_{f_1,v_1}^{-\nu_1,\mu_1})
\eps(c_{f_2,v_2}^{-\nu_{2},\mu_2})\eps(c_{f_3,v_3})\chi(\nu_{1}+\nu_{2},\lambda_{3})\\
&=\eps(c_{f_1,v_1})\eps(c_{f_2,v_2})\eps(c_{f_3,v_3})\chi(\lambda_{1},\lambda_{2})\chi(\lambda_{1} +\lambda_{2},\lambda_{3}).
\end{align*}
Thus, $\sigma$ is a $2$-cocycle on $\Oq$. We prove now that it satisfies the equation given in Lemma \ref{lem:deforprp}. It actually follows by
a direct computation 
using that 
$\chi(0,0)=1$, $\chi(\lambda,0) = \chi(0,\lambda) = 1$, $\sigma^{-1}(c_{f_{1},v_{1}}, c_{f_{2},v_{2}})=
\sigma(\SS(c_{f_{1},v_{1}}), c_{f_{2},v_{2}}) =
\eps(c_{f_{1},v_{1}})\eps(c_{f_{2},v_{2}})
\chi(\mu_{1},\lambda_{2})$ 
for 
 $\Lambda_{i}\in P_{+}$, $v_{i}\in L(\Lambda_{i})_{\mu_{i}}$, $f_{i}\in L(\Lambda_{i})_{-\lambda_{i}}$
and $i=1,2$,  and  $\eps(\Oq_{\lambda,\mu}) = 0 $ if
$-\lambda \neq \mu$:
\begin{align*}
 & m_{\sigma}(c_{f_{1},v_{1}}, c_{f_{2},v_{2}}) =
 \sum_{\nu_{1},\nu_{2},\eta_{1},\eta_{2}} 
 \sigma(c_{f_{1},v_{1}}^{-\lambda_{1},\nu_{1}}, c_{f_{2},v_{2}}^{-\lambda_{2},\nu_{2}})  
 m(c_{f_{1},v_{1}}^{-\nu_{1},\eta_{1}}\ot c_{f_{2},v_{2}}^{\nu_{2},\eta_{2}})
 \sigma^{-1}(c_{f_{1},v_{1}}^{-\eta_{1},\mu_{1}} c_{f_{2},v_{2}}^{-\eta_{2},\mu_{2}})\\
 &\qquad=\sum_{\nu_{1},\nu_{2},\eta_{1},\eta_{2}} \eps(c_{f_{1},v_{1}}^{-\lambda_{1},\nu_{1}})
 \eps(c_{f_{2},v_{2}}^{-\lambda_{2},\nu_{2}})
\chi(\lambda_{1},\lambda_{2}) m(c_{f_{1},v_{1}}^{-\nu_{1},\eta_{1}}\ot c_{f_{2},v_{2}}^{\nu_{2},\eta_{2}})
\eps(c_{f_{1},v_{1}}^{-\eta_{1},\mu_{1}})\eps(c_{f_{2},v_{2}}^{-\eta_{2},\mu_{2}})
\chi(\mu_{1},\eta_{2})\\
 &\qquad= \chi(\lambda_{1},\lambda_{2}) m(c_{f_{1},v_{1}}\ot c_{f_{2},v_{2}})
\chi(\mu_{1},-\mu_{2})\\
 &\qquad= \chi(\lambda_{1},\lambda_{2})\chi(\mu_{1},\mu_{2})^{-1} 
 m(c_{f_{1},v_{1}}\ot c_{f_{2},v_{2}}) = q^{\frac{1}{2}((\varphi(\mu_{1}),\mu_{2})-(\varphi(\lambda_{1}),\lambda_{2}))}m(c_{f_{1},v_{1}}\ot c_{f_{2},v_{2}}) .
\end{align*}
\epf

For more details on twisting, deformation and $r$-matrices, see \cite{hlt}, \cite[\S 2.2]{cv2}.

\begin{definition}\label{def:RphiG}
Let  $\E_\varphi$ be the full faithfull subcategory in 
$\Gphig$-mod whose objects are the free $R$-modules of finite rank such that the elements 
$K_i$ and $\binom{K_i;0}{t}$ 
act by diagonal matrices with eigenvalues  $q_i^m$ and $\binom{m}{t}_{q_i}$ respectively.
Define $\Rqphi [G]$ as the $R$-submodule of $\Hom_{R}(\Gphig, R)$ generated by the matrix coefficients of elements 
in $\E_{\varphi}$. Analogously, we define  $\Rqphi [B_{\pm}]$ 
as the $R$-module generated by the matrix coefficients of elements of the full 
subcategories of $\Gphibp$-mod and $\Gphibm$-mod, respectively.
\end{definition}

Since the categories are strict and tensorial, $\Rqphi [G]$ and $\Rqphi [B_{\pm}]$
are $R$-Hopf algebras. Moreover, by \cite[\S 2.3]{cv2}, we have the isomorphims  
$$
\Rqphi [B_\pm]^\prime\simeq\Rqphi [B_\pm]\simeq\Rqphi [B_\pm]^{\prime\prime}
$$

Consider the linear map 
$\Gphibp\ot_{R} \Gphibm \to \Gphig$ given by the multiplication. The 
dual map composed with the isomorphism above give the injection
\begin{equation}\label{eq:defmu}
\mu_{\varphi}^{\prime\prime} : \Rqphi[G] \to \Rqphi[B_{+}]^{\prime\prime}\ot_{R} \Rqphi[B_{-}]^{\prime\prime}. 
\end{equation}

\begin{lema}{\cite[Lemma 2.5]{cv2}}\label{lem:mu}
The image of 
$\mu_\varphi^{\prime\prime}$ is contained in the $R$-subalgebra 
$\AA^{\prime\prime}_\varphi$ generated by elements the
$1\otimes e_\alpha^\varphi$, $f_\alpha^\varphi\otimes 1$ and $ K_{-(1+\varphi)\lambda}\otimes K_{(1-\varphi)\lambda}$
for $\lambda \in P$, $\alpha \in \Phi_+$.\qed
\end{lema}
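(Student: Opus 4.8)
The plan is to reduce the assertion to its untwisted instance, due to De Concini--Lyubashenko \cite{dl}, by noticing that at the level of the divided power algebras and of the associated function algebras the twisted picture coincides with the untwisted one after a harmless relabelling of generators.

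First I would unwind $\mu_\varphi^{\prime\prime}$. Let $\rho_\pm\colon \Rqphi[G]\to\Rqphi[B_\pm]^{\prime\prime}$ send a matrix coefficient of an object of $\E_\varphi$ to its restriction to $\Gphibp$, resp.\ $\Gphibm$, postcomposed with the isomorphism $\Rqphi[B_\pm]^{\prime\prime}\simeq\Rqphi[B_\pm]$; this is a Hopf algebra map, dual to the inclusion of the Borel, since the categories involved are strict and tensorial. As $\mu_\varphi^{\prime\prime}$ is by \eqref{eq:defmu} the transpose of the multiplication $\Gphibp\ot_R\Gphibm\to\Gphig$ composed with those isomorphisms, one gets $\mu_\varphi^{\prime\prime}=(\rho_+\ot\rho_-)\circ\com$; evaluated on a matrix coefficient $c_{f,v}$, with a weight basis $\{v_j\}$ of $V\in\E_\varphi$ and dual basis $\{v^j\}$, this is $\mu_\varphi^{\prime\prime}(c_{f,v})=\sum_j \rho_+(c_{f,v_j})\ot\rho_-(c_{v^j,v})$. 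So it suffices to show each summand lies in $\AA_\varphi^{\prime\prime}$, the $c_{f,v}$ generating $\Rqphi[G]$.

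Next I would set up the relabelling. The integer forms $\Gphibp$, $\Gphibm$ coincide, as $R$-subalgebras of the Borel quantum enveloping algebras (which as algebras do not depend on $\varphi$), with $\Gbmas$, $\Gbmen$: they carry the same PBW bases $\{\xi_{m,t}\}$, $\{\eta_{m,t}\}$, whose definition does not involve $\varphi$. Hence $\Gphig=\Gli$ as $R$-algebras and the multiplication map above is literally the untwisted one; likewise $\Rqphi[G]=\Rq$ as $R$-coalgebras and $\Rqphi[B_\pm]=R_q[B_\pm]$ as $R$-modules, the parameter $\varphi$ entering only through the isomorphisms $\Rqphi[B_\pm]\simeq\Rqphi[B_\pm]^{\prime\prime}$ of \cite[Lemma 1.12]{cv2}. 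Comparing \eqref{eq:pairphipi} with its $\varphi=0$ specialization, the assignments $f_\alpha^\varphi\mapsto f_\alpha$, $K_{(1+\varphi)\omega_i}\mapsto K_{\omega_i}$, resp.\ $e_\alpha^\varphi\mapsto e_\alpha$, $K_{(1-\varphi)\omega_i}\mapsto K_{\omega_i}$, carry $\pi_\varphi^{\prime\prime},\bar\pi_\varphi^{\prime\prime}$, resp.\ $\pi_\varphi^{\prime},\bar\pi_\varphi^{\prime}$, to the untwisted pairings. As these are perfect Hopf pairings and the listed elements generate, I get Hopf algebra isomorphisms $\lambda_\pm\colon\Rqphi[B_\pm]^{\prime\prime}\xrightarrow{\ \sim\ }R_q[B_\pm]^{\prime\prime}$ with $(\lambda_+\ot\lambda_-)\circ\mu_\varphi^{\prime\prime}=\mu_0^{\prime\prime}$, and $\lambda_+\ot\lambda_-$ sends the generators $f_\alpha^\varphi\ot 1$, $1\ot e_\alpha^\varphi$, $K_{-(1+\varphi)\lambda}\ot K_{(1-\varphi)\lambda}$ of $\AA_\varphi^{\prime\prime}$ exactly to $f_\alpha\ot 1$, $1\ot e_\alpha$, $K_{-\lambda}\ot K_\lambda$; thus $(\lambda_+\ot\lambda_-)(\AA_\varphi^{\prime\prime})=\AA_0^{\prime\prime}$.

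Finally I would invoke \cite{dl}: the inclusion $\Img\mu_0^{\prime\prime}\subseteq\AA_0^{\prime\prime}$ is the De Concini--Lyubashenko description of $\Rq$ inside the quantum Borels, and transporting it back through $\lambda_+\ot\lambda_-$ gives $\Img\mu_\varphi^{\prime\prime}\subseteq\AA_\varphi^{\prime\prime}$. Alternatively one argues directly, without the reduction: decompose $\mu_\varphi^{\prime\prime}(c_{f,v})=\sum_\nu\rho_+(c_{f,v}^{-\lambda,\nu})\ot\rho_-(c_{f,v}^{-\nu,\mu})$ along the $P$-bigrading of Remark \ref{rmk:bigrad}; the bidegree of a PBW monomial of $\Rqphi[B_+]^{\prime\prime}$, resp.\ $\Rqphi[B_-]^{\prime\prime}$, forces its torus factor to be $K_{-(1+\varphi)\nu}$, resp.\ $K_{(1-\varphi)\nu}$ --- tied together by the common intermediate weight $\nu$ --- while the remaining nilpotent parts contribute only factors $f_\alpha^\varphi\ot 1$ and $1\ot e_\alpha^\varphi$, so every summand lands in $\AA_\varphi^{\prime\prime}$. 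I expect the main obstacle to lie not in the twisting (once the relabelling is in place, that part is bookkeeping: the delicate points are the compatibility of $\lambda_\pm$ with all four pairings, with the multiplication, and with the isomorphisms of \cite[Lemma 1.12]{cv2}), but in the underlying untwisted statement, which must be quoted from \cite{dl} in precisely the needed form, or reproved --- this is where the genuine content sits.
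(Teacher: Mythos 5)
The paper treats this statement purely as a citation: it is quoted from Costantini--Varagnolo \cite[Lemma 2.5]{cv2} with no proof given. Your blind attempt supplies an argument, so I assess it on its own terms, and the main route you take does not work.

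The flaw is in the construction of the claimed Hopf algebra isomorphisms $\lambda_\pm\colon\Rqphi[B_\pm]^{\prime\prime}\xrightarrow{\sim}R_q[B_\pm]^{\prime\prime}$. These cannot be \emph{algebra} maps sending generators to generators as you propose. Inside $\QUhatphi(\mathfrak{b}_-)^{\cop}$ (which as an algebra agrees with the untwisted Borel) one has, using $(\varphi x,y)=-(x,\varphi y)$ and $\varphi\alpha_i=2\tau_i$,
$$
K_{(1+\varphi)\omega_j}\,f_i^\varphi\,K_{-(1+\varphi)\omega_j}=q^{-((1+\varphi)\omega_j,\alpha_i)}f_i^\varphi
=q^{-(\omega_j,\alpha_i)+2(\omega_j,\tau_i)}f_i^\varphi,
$$
while in the untwisted target $K_{\omega_j}f_iK_{-\omega_j}=q^{-(\omega_j,\alpha_i)}f_i$. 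Since generically $(\omega_j,\tau_i)\ne0$, the proposed relabelling does not respect the defining $q$-commutation relations; the same phenomenon appears on the positive side. This is not a bookkeeping issue: the whole point of the twist is that $\Rqphi[B_\pm]^{\prime\prime}$ and $R_q[B_\pm]^{\prime\prime}$ have the \emph{same} coalgebra but a genuinely deformed algebra structure (dual to the twisted coproduct \eqref{eq:coproGphi} on $\Gphibp$), so no algebra isomorphism can match generators termwise. Your argument "the pairings agree on generators, hence transport the Hopf structure" is also misleading for the same reason: agreement on generators does not propagate to products, because the Hopf-pairing axioms feed the twisted $\Delta_\varphi$ on $\Gphibp$ into the computation; e.g. $\pi_\varphi^{\prime\prime}(K_{(1+\varphi)\mu}f_i^\varphi,E_j)=q^{(\mu,\alpha_j-\tau_j)}\delta_{ij}$, which differs from $\pi_0^{\prime\prime}(K_\mu f_i,E_j)=q^{(\mu,\alpha_j)}\delta_{ij}$ by $q^{-(\mu,\tau_j)}$. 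Consequently the equality $(\lambda_+\ot\lambda_-)\circ\mu_\varphi^{\prime\prime}=\mu_0^{\prime\prime}$ and the transport of $\AA_\varphi^{\prime\prime}$ to $\AA_0^{\prime\prime}$ do not hold, and the reduction to De Concini--Lyubashenko collapses. The ``alternative direct argument'' you sketch at the end is essentially the shape of the Costantini--Varagnolo/De Concini--Lyubashenko proof, but as stated it is circular: that the torus factor is \emph{exactly} $K_{-(1+\varphi)\nu}\ot K_{(1-\varphi)\nu}$ in each bigraded summand is precisely the nontrivial content one has to establish (say by first computing $\mu_\varphi^{\prime\prime}$ on the generators $\psi_{\pm\lambda}$, $\psi_{-\omega_i}^{\pm\alpha_i}$ as in Remark \ref{rmk:reci}, and then propagating by multiplicativity); it cannot be deduced from the bidegree alone.
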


Let $\lambda \in P_{+}$ and $v_{\pm \lambda}$ be a highest (resp. lowest) 
weight vector of $L(\lambda)$ (resp. $L(-\lambda)$). 
Let $\phi_{\pm\lambda}$ be the unique element in $L(\pm\lambda)^\ast$, 
such that $\phi_{\pm\lambda}(v_{\pm\lambda})=1$ and vanish over the complement 
$\Gamma(\lieh)$-invariant of $\QQ(q)v_{\pm\lambda}\subset L(\pm \lambda)$. 
Denote by $\psi_{\pm \lambda} = c_{\phi_{\pm \lambda},v_{\pm \lambda}}$ the corresponding
matrix coefficient.

As in \cite{dl}, we define for all $\alpha\in\Phi_+$,  the matrix coefficient  $\psi_{\pm\lambda}^{\pm \alpha}$
by
\begin{align*}
\psi_{\lambda}^{\alpha}(x)&=\phi_{\lambda}((E_\alpha x)\cdot v_{\lambda}),& 
\psi_{-\lambda}^{\alpha}(x)&=\phi_{-\lambda}(( x E_\alpha)\cdot v_{-\lambda}),\\
\psi_{\lambda}^{- \alpha}(x)&=\phi_{\lambda}((x F_\alpha)\cdot  v_{\lambda}),&
\psi_{-\lambda}^{- \alpha}(x)&=\phi_{-\lambda}((F_\alpha x )\cdot  v_{-\lambda}).
 \end{align*}

\begin{rmk}\label{rmk:reci}
$(a)$ Let $\lambda\in P_+$, then  
$\mu_\varphi^{\prime\prime}(\psi_{-\lambda})=K_{-(1+\varphi)\lambda}\otimes K_{(1-\varphi)\lambda}$.

Indeed,
evaluating both expressions in $EM \otimes FN$ where $ EM=\xi_{m_1,0}\eta_{0,t_2}$ and  $FN=\eta_{m_2,0}\xi_{0,t_1}$ for suitable $m_1, t_2, m_2, t_1$
of the basis of $\Gamma^\varphi(\mathfrak{b}_+)$ and $\Gamma^\varphi(\mathfrak{b}_-)$ (c.f. Definition \ref{def:gamma})
respectively, and using \cite[Lemma 4.4 (iv)]{dl} we have that
\begin{equation*}
\langle\mu_\varphi^{\prime\prime}(\psi_{-\lambda}),EM\otimes NF\rangle=\psi_{-\lambda}(EMNF)=\delta_{1,E}\delta_{1,F}MN(-\lambda),
\end{equation*} 
where 
$M(\lambda)=\pi_0(K_{\lambda},M)$ and $N(\lambda)=\bar{\pi}_0(K_{-\lambda},N)$. 
Then $MN(-\lambda)=\pi_0(K_{-\lambda},MN)=\pi_0(K_{-\lambda},M)\pi_0(K_{-\lambda},N)=\pi_0(K_{-\lambda},M)\bar{\pi}_0(K_{\lambda},N)=M(-\lambda)N(-\lambda)$.
Moreover, using \eqref{eq:pairphipi} we have
\begin{equation*}
\langle\mu_\varphi^{\prime\prime}(\psi_\lambda),EM\otimes NF\rangle=\delta_{1,E}\delta_{1,F}M(-\lambda)N(-\lambda)
=\delta_{1,E}\delta_{1,F}\pi^{\prime\prime}_\varphi(K_{-(1+\varphi)\lambda},M)\bar{\pi}^{\prime\prime}_\varphi(K_{(1-\varphi)\lambda},N).
\end{equation*} 
On the other hand, using the pairings $\pi^{\prime\prime}_{\varphi}$ and $\bar{\pi}^{\prime\prime}_{\varphi}$ we have that
\begin{equation*}
\langle K_{-(1+\varphi)\lambda}\otimes K_{(1-\varphi)\lambda},EM\otimes NF\rangle
=\delta_{1,E}\delta_{1,F}\pi^{\prime\prime}_\varphi(K_{-(1+\varphi)\lambda},M)\bar{\pi}^{\prime\prime}_\varphi(K_{(1-\varphi)(\lambda)},N)
\end{equation*}
and the claim follows.

$(b)$ By \cite[Propositions 1.9 \& 2.7]{cv2}, for all $1\leq i\leq n$ we have that 
 \begin{align}\label{eq:reci3teo}
\mu_\varphi^{\prime\prime}(\psi_{-\omega_{i}}^{-\alpha_{i}})  
& =q^{-(\tau_i,\omega_{i})}f^\varphi_{\alpha_{i}} K_{-(1+\varphi)\omega_{i}}\otimes K_{(1-\varphi)\omega_{i}},\\ 
\nonumber \mu_\varphi^{\prime\prime}(\psi_{-\omega_{i}}^{\alpha_{i}})  
& =q^{-(\tau_i,\omega_{i})}K_{-(1+\varphi)\omega_{i}}\otimes K_{(1-\varphi)\omega_{i}}e^\varphi_{\alpha_{i}} .
\end{align}

We check the first formula, the second follows similarly. 
Since $\mu_\varphi^{\prime\prime}(\psi_{-\omega_{i}}^{-\alpha_{i}})=\mu_0^{\prime\prime}(\psi_{-\omega_{i}}^{-\alpha_{i}})$, and by
\cite[Lemma 4.5 (vi)]{dl}, it holds that $\mu_0^{\prime\prime}(\psi_{-\omega_{i}}^{-\alpha_{i}})  
=f_{\alpha_{i}} K_{-\omega_{i}}\otimes K_{\omega_{i}}$, we have  
\begin{align*}
\langle\mu_\varphi^{\prime\prime}(\psi^{-\alpha_i}_{-\omega_i}),EM\otimes NF\rangle & = 
\langle f_{\alpha_i} K_{-\omega_i}\otimes  K_{\omega_i},EM\otimes NF\rangle =
\pi_0^{\prime\prime}(f_{\alpha_i} K_{-\omega_i},EM)\bar{\pi}^{\prime\prime}_0(K_{\omega_i},NF)\\
&= 
\pi_0^{\prime\prime}(f_{\alpha_i} K_{-\omega_i},EM)\bar{\pi}^{\prime\prime}_0(K_{\omega_i},N)\bar{\pi}^{\prime\prime}_0(K_{\omega_i},F)\\
&=\pi_0^{\prime\prime}(f_{\alpha_i} K_{-\omega_i},EM)N(-\omega_i)\delta_{1,F}.
\end{align*}
On the other hand, since 
$\pi^{\prime\prime}_\varphi(f_{\alpha_i}^\varphi K_{-(1+\varphi)\omega_i},EM)=q^{(\tau_i,\omega_i)}\pi^{\prime\prime}_0(f_{\alpha_i}^0 K_{-\omega_i},EM)$
by \cite[Proposition 1.9]{cv2} and \cite[(3.3)]{dl}, using the definitions in \eqref{eq:pairphipi}, 
we obtain 
\begin{align*}
 \langle f_{\alpha_{i}}^{\varphi}K_{-(1+\varphi)\omega_{i}}\ot K_{(1-\varphi)\omega_{i}},EM\otimes NF\rangle & =
 \pi^{\prime\prime}_{\varphi}(f_{\alpha_{i}}^{\varphi}K_{-(1+\varphi)\omega_{i}},EM) \pi^{\prime\prime}_{\varphi}(K_{(1-\varphi)\omega_{i}},NF) \\
 & = \pi^{\prime\prime}_{\varphi}(f_{\alpha_{i}}^{\varphi}K_{-(1+\varphi)\omega_{i}},EM)N(-\omega_i)\delta_{1,F}\\
 & = q^{(\tau_i,\omega_i)}\pi^{\prime\prime}_0(f_{\alpha_i}^0 K_{-\omega_i},EM)N(-\omega_i)\delta_{1,F},
 \end{align*}
and the assertion is proved.
 \end{rmk}

The following lemma is a twisted version of \cite[Lemma 4.1]{dl}.

\begin{lema}\label{lem:pairRG}
$\Rqphi[G]$ coincides with the $R$-Hopf subalgebra of $\QEphi^\circ$ given by the set 
of all linear functions 
$f:\Gphig\rightarrow R$ such that there exists a cofinite ideal  $I\subset \Gphig$ and $N\in \NN$
which satisfy that $f(I)=0$ and $\prod_{p=-N}^{N}(K_i-q_i^p)\in I$ for all $1\leq i \leq n$.
Further, the induced Hopf pairing $\rho$ between $R_q^\varphi[G]$ and $\Gphig$ is non-degenerate.
\end{lema}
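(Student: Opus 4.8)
The plan is to reduce the statement to its untwisted counterpart \cite[Lemma 4.1]{dl}, exploiting that the twist by $\varphi$ only deforms coalgebra-level data. First I would record that $\QEphi$ and $\QEq$ have the \emph{same} underlying $\QQ(q)$-algebra: the relations in Definition \ref{algcuan} do not involve $\varphi$, and neither do the Lusztig operators $T_{i_j}$. Consequently the root vectors $E_{\beta_k},F_{\beta_k}$ and the divided powers $E_i^{(t)},F_i^{(t)},\binom{K_{\alpha_i};0}{t}$ are literally the same elements, and the $R$-subalgebra $\Gphig$ they generate coincides with $\Gli$ inside this common algebra. Similarly, the subcategory $\E_\varphi$ of Definition \ref{def:RphiG} is singled out by a condition on the $\Gphig$-action alone, hence equals the subcategory $\E_0$ attached to $\varphi=0$; and a matrix coefficient $c_{f,v}(x)=f(x\cdot v)$ only involves that action. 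Therefore $\Rqphi[G]$ and $\Rq$ coincide as $R$-submodules of $\Hom_R(\Gphig,R)=\Hom_R(\Gli,R)$.

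Next I would observe that the set $\F$ of linear maps $f\colon\Gphig\to R$ admitting a cofinite ideal $I\subset\Gphig$ and $N\in\NN$ with $f(I)=0$ and $\prod_{p=-N}^{N}(K_i-q_i^p)\in I$ for all $i$ is, again, described purely through the $R$-\emph{algebra} $\Gphig=\Gli$; so $\F$ equals the corresponding set for $\varphi=0$, which by \cite[Lemma 4.1]{dl} is exactly $\Rq$. Combining with the previous step, $\F=\Rq=\Rqphi[G]$ as $R$-submodules of $\Hom_R(\Gli,R)$. Finally, $\Rqphi[G]$ already carries the $R$-Hopf algebra structure inherited from the twisted Sweedler dual $\QEphi^\circ$ (see Definition \ref{def:RphiG} and \cite[\S 2.3]{cv2}), so the displayed equality is an equality of $R$-Hopf subalgebras of $\QEphi^\circ$. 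Along the way one checks that $\rho\colon\Rqphi[G]\times\Gphig\to R$, $\rho(f,x)=f(x)$, is a Hopf pairing: compatibility with the multiplication of $\Gphig$ is dual to the (undeformed) comultiplication of $\QEphi^\circ$, while compatibility with the deformed comultiplication $\Delta_\varphi$ of $\Gphig$ (cf. \eqref{eq:coproGphi}) is dual to the $\varphi$-deformed multiplication of $\QEphi^\circ$; both are built into the construction of $\QEphi^\circ$.

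For the non-degeneracy of $\rho$: in the first variable it is automatic, since $\Rqphi[G]\subseteq\Hom_R(\Gphig,R)$. In the second variable, if $x\in\Gphig$ satisfies $f(x)=0$ for every $f\in\Rqphi[G]$, then, using $\Rqphi[G]=\Rq$ as $R$-submodules of $\Hom_R(\Gli,R)$ together with the non-degeneracy of the pairing between $\Rq$ and $\Gli$ from \cite[Lemma 4.1]{dl}, we conclude $x=0$.

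The main obstacle will be making rigorous the claim that the integral form $\Gphig$, which a priori is defined through the $\varphi$-twisted pairings $\pi_\varphi,\bar\pi_\varphi$ (Definition \ref{def:gamma}), coincides as an $R$-\emph{algebra} with $\Gli$, so that the notion of a cofinite ideal $I$ with $\prod_{p=-N}^{N}(K_i-q_i^p)\in I$ is genuinely unchanged and \cite[Lemma 4.1]{dl} may be quoted verbatim. Should this identification resist a clean formulation, the fallback is to re-run the proof of \cite[Lemma 4.1]{dl} in the twisted setting: a matrix coefficient of an $\E_\varphi$-module annihilates a cofinite ideal of the required type by finiteness of rank and the eigenvalue condition on the $K_i$; conversely, given such an $f$, one factors it through $\Gphig/I$ and realises it as a matrix coefficient using the PBW-type bases $\{\xi_{m,t}\},\{\eta_{m,t}\}$ and the perfect pairings $\pi_\varphi^{\prime\prime},\bar\pi_\varphi^{\prime\prime}$, mirroring \cite{dl} step by step.
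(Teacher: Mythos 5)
Your proposal matches the paper's proof: the paper likewise observes that $\Gphig = \Gamma(\g)$ \emph{as algebras} (since both are generated, inside the common underlying algebra of $\QEphi$ and $\QEq$, by the same elements $K_{\alpha_i}^{-1}$, $\binom{K_{\alpha_i};0}{t}$, $E_i^{(t)}$, $F_i^{(t)}$, and share the PBW bases $\{\xi_{m,t}\}$, $\{\eta_{m,t}\}$) and then quotes \cite[Lemma 4.1]{dl} verbatim for the characterization of $\Rqphi[G]$. The ``main obstacle'' you flag at the end is in fact already dispatched by the text following Definition \ref{def:gamma}, where these $R$-bases are identified; the only small divergence is in the non-degeneracy step, where the paper argues directly that $\Gphig$ has a PBW basis whose dual basis lies in $\Rqphi[G]$, rather than citing non-degeneracy from \cite{dl} as you do --- either route is fine, though the paper's version is self-contained and avoids depending on whether \cite[Lemma 4.1]{dl} explicitly states non-degeneracy.
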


\begin{proof} Since $\Gphig = \Ga(\g)$ as algebras, $\Rqphi[G]$ coincides with the set above
by \cite[Lemma 4.1]{dl}. The Hopf algebra structure is the one induced from $\Gphig^\circ$.
The last claim follows from the fact that $\Gphig$ has a PBW-basis and its dual basis lie 
in $\Rqphi[G]$.
\end{proof}

\subsection{Specializations at roots of one} In this subsection we recall the definition 
at roots of unity of the twisted quantum algebras, and state some results that will be needed later.
For all $Q\leq M \leq P$, we define  
$$\QUphie(\g;M)
= U_q^\varphi(\g;M)\ot_{R} \QQ(\epsilon),\ \Gphige:=\Gphig\otimes_R \QQ(\epsilon),\
\OphieeQ:=\Rqphi[G]\otimes_R\QQ(\epsilon).
$$
Note that $\Gphige\simeq \Gphig/[\chi_l(q)\Gphig]$  
and $\OphieeQ\simeq \Rqphi[G]/[\chi_\ell(q)\Rqphi[G]]$, where\linebreak  
$R/[\chi_\ell(q)R]\simeq \QQ(\epsilon)$.  
We denote 
$\QUphie(\g;P):=\QUhatphie(\g)$ and $\QUphie(\g;Q):=\QUphie(\g)$.
For $r\in R$, denote by $\bar{r}$ the image of the canonical projection $R\twoheadrightarrow \QQ(\epsilon)$.

\begin{lema}\label{lem:def2cocicloOe}
$\OphieeQ$ is a 2-cocycle deformation of $\Oc_\epsilon(G)_{\QQ(\epsilon)}$.
\end{lema}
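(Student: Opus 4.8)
The plan is to reduce this statement, at the root of unity $\epsilon$, to the $2$-cocycle deformation already established at the generic parameter level in Corollary~\ref{cor:cocycle}, and then invoke Remark~\ref{obs:cocycleproy} to pass the cocycle through the specialization map. Recall that $\OphieeQ = \Rqphi[G]\otimes_R\QQ(\epsilon) \simeq \Rqphi[G]/[\chi_\ell(q)\Rqphi[G]]$ and similarly $\Oc_\epsilon(G)_{\QQ(\epsilon)} = \Rq[G]/[\chi_\ell(q)\Rq[G]]$. So the first step is to produce an $R$-valued (equivalently, after base change, $\QQ(\epsilon)$-valued) normalized multiplicative $2$-cocycle on $\Rq[G]$ that recovers the twisted product, and then show it descends to the quotient by the ideal generated by $\chi_\ell(q)$.

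First I would observe that the $2$-cocycle $\sigma$ of Corollary~\ref{cor:cocycle} on $\Oq$, namely
\[
\sigma(c_{f_1,v_1},c_{f_2,v_2}) = \eps(c_{f_1,v_1})\eps(c_{f_2,v_2})\, q^{-\frac{1}{2}(\varphi(\lambda_1),\lambda_2)},
\]
takes values that, by the integrality hypothesis $\frac{1}{2}(\varphi\lambda,\mu)\in\ZZ$ in \eqref{eq:phi}, actually lie in $R = \QQ[q,q^{-1}]$ when restricted to matrix coefficients coming from the $R$-form $\Rq[G]$. Indeed the only nontrivial factor is an integral power of $q$, so $\sigma$ restricts to a map $\sigma_R\colon \Rq[G]\ot_R \Rq[G]\to R$, which is again a normalized multiplicative $2$-cocycle (the cocycle identity \eqref{eq:concocycle} and normalization are preserved by restriction to a Hopf $R$-subalgebra), and $(\Rq[G])_{\sigma_R}\simeq \Rqphi[G]$ by construction, using that $\Rqphi[G]$ is generated by the twisted matrix coefficients with the deformed product $m_\varphi$ described before Lemma~\ref{lem:deforprp}. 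Here I would lean on Lemma~\ref{lem:mu}, Remark~\ref{rmk:reci}, and Lemma~\ref{lem:pairRG} to check on the generators $\psi_{\pm\lambda}$, $\psi_{\pm\lambda}^{\pm\alpha}$ that the deformed multiplication on $\Rqphi[G]$ is exactly $m_{\sigma_R}$.

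Next I would apply Remark~\ref{obs:cocycleproy} with $H = \Rq[G]$, the Hopf ideal $I = \chi_\ell(q)\,\Rq[G]$, and $A = H/I \simeq \Oc_\epsilon(G)_{\QQ(\epsilon)}$. The key point to verify is the hypothesis $\sigma_R|_{I\ot H + H\ot I} = 0$; but this is immediate because $\sigma_R$ is $R$-bilinear and $\chi_\ell(q)$ maps to $0$ in $\QQ(\epsilon)$ — more precisely, $\sigma_R$ extended $R$-linearly to $H\ot_R H\to R\to \QQ(\epsilon)$ kills $\chi_\ell(q)(H\ot H)$, so the induced form $\hat\sigma\colon \Oc_\epsilon(G)_{\QQ(\epsilon)}\ot\Oc_\epsilon(G)_{\QQ(\epsilon)}\to\QQ(\epsilon)$ is a well-defined normalized multiplicative $2$-cocycle and the canonical projection induces a Hopf algebra isomorphism $(\Rq[G])_{\sigma_R}\otimes_R\QQ(\epsilon) \xrightarrow{\ \sim\ } (\Oc_\epsilon(G)_{\QQ(\epsilon)})_{\hat\sigma}$. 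Since the left-hand side is $\Rqphi[G]\otimes_R\QQ(\epsilon) = \OphieeQ$, this gives $\OphieeQ\simeq (\Oc_\epsilon(G)_{\QQ(\epsilon)})_{\hat\sigma}$, which is the assertion.

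The main obstacle I anticipate is the bookkeeping in the second step: confirming that $(\Rq[G])_{\sigma_R}$ really is $\Rqphi[G]$ as an $R$-Hopf algebra and not merely after inverting something or after base change to $\QQ(q)$. This requires that the identification $\Ophieq\simeq\Oq_\sigma$ of Corollary~\ref{cor:cocycle} be compatible with the integral structures, i.e. that $\sigma_R$ is honestly $R$-valued on $\Rq[G]\ot\Rq[G]$ (not just $\QQ(q)$-valued) and that the matrix-coefficient generators of $\Rqphi[G]$ in Definition~\ref{def:RphiG} correspond under the cocycle twist to those of $\Rq[G]$; this is where the integrality condition $\frac12(\varphi\lambda,\mu)\in\ZZ$ and the explicit formulas of Remark~\ref{rmk:reci} do the work. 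Once that compatibility is in place, everything else is a formal consequence of Remark~\ref{obs:cocycleproy}.
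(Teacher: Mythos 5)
Your proposal takes essentially the same route as the paper: start from the generic-level $2$-cocycle $\sigma$ of Corollary~\ref{cor:cocycle} and specialize $q\mapsto\epsilon$. The paper's own proof is a single terse sentence — it simply defines $\bar\sigma(\bar x,\bar y)=\overline{\sigma(x,y)}$ and asserts it is well-defined — leaving implicit exactly the two points you spell out: that the integrality hypothesis $\tfrac12(\varphi\lambda,\mu)\in\ZZ$ from \eqref{eq:phi} forces $\sigma$ to be $R$-valued on the integral form $\Rq[G]$, and that $R$-bilinearity then makes $\sigma_R$ descend through $\chi_\ell(q)$ via Remark~\ref{obs:cocycleproy}. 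Your extra worry about whether $(\Rq[G])_{\sigma_R}=\Rqphi[G]$ as $R$-Hopf algebras (rather than only after base change to $\QQ(q)$) is legitimate but resolves quickly: since $\Gphig=\Gamma(\lieg)$ as algebras, the categories $\E_\varphi$ and $\E_0$ agree, so $\Rqphi[G]$ and $\Rq[G]$ coincide as $R$-coalgebras, and the $R$-valuedness of $\sigma_R$ together with Corollary~\ref{cor:cocycle} identifies $m_{\sigma_R}$ with $m_\varphi$ on the $R$-form. Your write-up is correct and, if anything, tightens a step that the paper leaves to the reader.
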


\pf
Let 
$\sigma: \Oq \ot \Oq \to \QQ(q)$ denote the $2$-cocycle defined in 
Corollary \ref{cor:cocycle}. Then, the map $\bar{\sigma}: \OphieeQ\otimes \OphieeQ \to \QQ(\epsilon)$
given by 
$$ \bar{\sigma}(\bar{x},\bar{y})=\overline{{\sigma(x,y)}}\qquad\text{ for all }x,y\in \Oq,$$
is a well-defined $2$-cocycle for $\OphieeQ$, 
where $\bar{x}$ denotes the image of $x\in \Oq$ under the canonical projection $\Ophieq \twoheadrightarrow \OphieeQ$. 
\epf

\begin{obs}\label{rmk:relmodepsgam}
The relations $E_i^\ell=0$, $F_i^\ell=0$, $K_{\alpha_{i}}^\ell=1$ hold in $\Gphige$ for all $1\leq i\leq n$.
Indeed, we have that 
$\prod\limits_{s=1}^{\ell}\left(K_{\alpha_{i}}q^{(-s+1)}-1\right)
=\prod\limits_{s=1}^{\ell}(q^s-1)\dbinom{K_{\alpha_{i}};0}{\ell}$ in $\Gphig$.
If we specialize $q$ at $\epsilon$, then we have $\prod\limits_{s=1}^{\ell}\left(K_{\alpha_{i}}\epsilon^{(-s+1)}-1\right)=0$. 
Since $K_{\alpha_{i}}^\ell-1=\prod\limits_{s=0}^{\ell-1}(K_{\alpha_{i}}-\epsilon^s) = 
\epsilon^{\frac{(\ell-1)\ell}{2}}\prod\limits_{s=0}^{\ell-1}(K_{\alpha_{i}}\epsilon^{-s+1}-1) $, 
we have that $K_{\alpha_{i}}^{\ell}=1 $ as desired. 
The other two relations follow from the fact $(\ell)_\epsilon=0$.
\end{obs}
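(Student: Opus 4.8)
The plan is to derive both families of relations from identities already available in the $R$-form $\Gphig$ and then to specialize, i.e.\ to apply $-\otimes_R\QQ(\epsilon)$ with $\QQ(\epsilon)=R/[\chi_\ell(q)R]$. The key point is that $\Gphig$ contains, by its very definition, the divided powers $E_i^{(t)}$, $F_i^{(t)}$ and the elements $\binom{K_{\alpha_i};0}{t}$, so all the identities I need live over $R$ before any reduction; alternatively one could simply observe that $\Gphig\simeq\Gamma(\g)$ as $R$-algebras, so that $\Gphige\simeq\Gamma(\g)\otimes_R\QQ(\epsilon)$ is the classical Lusztig integral form at $\epsilon$, in which the three relations are standard, but I prefer to spell out the direct argument.

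First I would treat $K_{\alpha_i}^\ell=1$. Expanding the defining product for $\binom{K_{\alpha_i};0}{\ell}$ gives, in $\Gphig$, the identity $\prod_{s=1}^{\ell}(K_{\alpha_i}q^{-s+1}-1)=\big(\prod_{s=1}^{\ell}(q^s-1)\big)\binom{K_{\alpha_i};0}{\ell}$. Reducing modulo $\chi_\ell(q)$ annihilates the right-hand side, because the scalar $\prod_{s=1}^{\ell}(q^s-1)$ has the factor $q^\ell-1$, whose image in $\QQ(\epsilon)$ is $\epsilon^\ell-1=0$. Hence $\prod_{s=0}^{\ell-1}(K_{\alpha_i}\epsilon^{-s}-1)=0$ in $\Gphige$; pulling the unit $\epsilon^{-s}$ out of the $s$-th factor rewrites the left-hand side as $\epsilon^{-\ell(\ell-1)/2}\prod_{s=0}^{\ell-1}(K_{\alpha_i}-\epsilon^s)=\epsilon^{-\ell(\ell-1)/2}(K_{\alpha_i}^\ell-1)$, using that $\{\epsilon^s\}_{s=0}^{\ell-1}$ runs over all $\ell$-th roots of $1$ (and $\epsilon^{\ell(\ell-1)/2}=1$ since $\ell$ is odd). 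Therefore $K_{\alpha_i}^\ell=1$.

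Next, for $E_i^\ell=0=F_i^\ell$: in $\Gphig$ the divided-power relations $E_i^{(a)}E_i^{(b)}=\binom{a+b}{a}_{q_i}E_i^{(a+b)}$ telescope to $E_i^\ell=[\ell]_{q_i}!\,E_i^{(\ell)}$, and symmetrically $F_i^\ell=[\ell]_{q_i}!\,F_i^{(\ell)}$, with coefficient $[\ell]_{q_i}!\in R$. Since $[\ell]_{q_i}=q_i^{-(\ell-1)}(\ell)_{q_i^2}$ and the hypotheses that $\ell$ is odd and coprime to $3$ in type $G_2$ force $\gcd(2d_i,\ell)=1$, the element $\epsilon^{2d_i}$ is again a primitive $\ell$-th root of unity, so $(\ell)_{\epsilon^{2d_i}}=0$ and $[\ell]_{q_i}!$ maps to $0$ in $\QQ(\epsilon)$. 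As $E_i^{(\ell)},F_i^{(\ell)}\in\Gphig$ are honest elements, specializing the two identities gives $E_i^\ell=0=F_i^\ell$ in $\Gphige$.

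The only delicate point, and the sole place the arithmetic hypotheses on $\ell$ enter, is the invertibility at $\epsilon$ of the denominator $q_i-q_i^{-1}$ hidden in $[\ell]_{q_i}$ — equivalently the primitivity of $\epsilon^{2d_i}$ — which is exactly what "$\ell$ odd, coprime to $3$ in type $G_2$" secures; the rest is bookkeeping.
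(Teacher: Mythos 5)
Your proof is correct and follows essentially the same route as the paper: the same identity $\prod_{s=1}^{\ell}(K_{\alpha_i}q^{-s+1}-1)=\bigl(\prod_{s=1}^{\ell}(q^s-1)\bigr)\binom{K_{\alpha_i};0}{\ell}$, specialization at $\epsilon$, and the factorization of $K_{\alpha_i}^\ell-1$; for $E_i^\ell=F_i^\ell=0$ you spell out $E_i^\ell=[\ell]_{q_i}!\,E_i^{(\ell)}$ and why $[\ell]_{q_i}!\mapsto 0$, which is the detail the paper compresses into ``$(\ell)_\epsilon=0$.'' The extra precision about $\epsilon^{2d_i}$ being a primitive $\ell$-th root (and the role of the hypotheses on $\ell$) is a welcome clarification but does not change the argument.
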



The following lemma is analogue to \cite[Lemma 6.1]{dl}. 

\begin{lema}\label{lem:pairind} There
exists a perfect Hopf pairing $\bar{\rho}:\OphieeQ \otimes_{\QQ(\epsilon)}\Gphige\to \QQ(\epsilon)$. 
\end{lema}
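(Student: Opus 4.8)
The plan is to descend the non-degenerate Hopf pairing $\rho\colon\Rqphi[G]\otimes_R\Gphig\to R$ of Lemma \ref{lem:pairRG} to the quotients modulo $\chi_\ell(q)$. Since $\OphieeQ\simeq \Rqphi[G]/[\chi_\ell(q)\Rqphi[G]]$ and $\Gphige\simeq\Gphig/[\chi_\ell(q)\Gphig]$, with $R/[\chi_\ell(q)R]\simeq\QQ(\epsilon)$, the first step is to check that $\rho$ sends the $R$-submodule $\chi_\ell(q)\Rqphi[G]\otimes_R\Gphig + \Rqphi[G]\otimes_R\chi_\ell(q)\Gphig$ into $\chi_\ell(q)R$; this is immediate from $R$-bilinearity. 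Hence $\bar\rho(\bar f\otimes\bar u):=\overline{\rho(f,u)}$ is a well-defined $\QQ(\epsilon)$-bilinear map, and it is a Hopf pairing because all four axioms of Definition \ref{def:hopf-pairing} are equalities in $R$ that pass to the quotient (using that the Hopf structures on $\OphieeQ$ and $\Gphige$ are obtained by base change $\otimes_R\QQ(\epsilon)$). This reproduces exactly the situation of \cite[Lemma 6.1]{dl} in the twisted setting, and the only point requiring care is non-degeneracy.

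For non-degeneracy I would argue via compatible bases, exactly as in the untwisted case. By Lemma \ref{lem:pairRG} the pairing $\rho$ is non-degenerate over $R$, and $\Gphig$ admits a PBW-type $R$-basis (the monomials in the $E_i^{(t)}$, $F_i^{(t)}$, $\binom{K_{\alpha_i};0}{t}$, $K_{\alpha_i}^{-1}$) whose dual basis, with respect to $\rho$, lies in $\Rqphi[G]$. Reducing both an $R$-basis of $\Gphig$ and its $\rho$-dual basis modulo $\chi_\ell(q)$ yields a $\QQ(\epsilon)$-basis of $\Gphige$ and a family in $\OphieeQ$ that is dual to it with respect to $\bar\rho$; this forces $\bar\rho$ to be non-degenerate on the right. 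For non-degeneracy on the left one uses that $\OphieeQ$ is spanned by matrix coefficients of the category $\E_\varphi$ (Definition \ref{def:RphiG}), each of which is determined by its values on $\Gphige$ since the representations defining it factor through $\Gphige$; equivalently, a $\QQ(\epsilon)$-basis of $\OphieeQ$ dual to a chosen basis of $\Gphige$ can be extracted in the same way. Thus $\bar\rho$ is perfect.

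The main obstacle is the non-degeneracy statement: over the principal ideal domain $R$ non-degeneracy of $\rho$ need not be preserved by an arbitrary specialization, so one genuinely needs the existence of $\rho$-dual PBW bases of $\Gphig$ inside $\Rqphi[G]$ (the content of the last sentence of Lemma \ref{lem:pairRG}) to guarantee that the reduction stays perfect at $q=\epsilon$. Concretely, one should make sure the specialized PBW monomials remain $\QQ(\epsilon)$-linearly independent in $\Gphige$ — which follows from $\Gphig$ being $R$-free on that basis and $\QQ(\epsilon)$ being flat over $R$ — and that reducing the dual basis does not collapse it, which again follows from freeness together with the duality relation $\rho(f_i,u_j)=\delta_{ij}$ surviving modulo $\chi_\ell(q)$. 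Since $\Gphig\simeq\Gamma(\lieg)$ and $\Rqphi[B_\pm]^\prime\simeq\Rqphi[B_\pm]\simeq\Rqphi[B_\pm]^{\prime\prime}$ as established above, the structural input needed for this argument is already available in the twisted setting, so no new ideas beyond the \cite{dl} template are required.
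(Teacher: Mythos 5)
Your plan is essentially the same route as the paper's proof: both descend the non-degenerate Hopf pairing $\rho$ of Lemma~\ref{lem:pairRG} modulo $\chi_\ell(q)$ via $\bar\rho(\bar x,\bar u)=\overline{\rho(x,u)}$, with the paper merely stating that ``a direct computation'' gives well-definedness and non-degeneracy. You fill in those details correctly, and in particular you put your finger on the only genuinely delicate point — that non-degeneracy of an $R$-bilinear form is not automatically preserved under specialization, and that one must use the $\rho$-dual PBW bases from the proof of Lemma~\ref{lem:pairRG} (which, being $R$-free and exactly dual, reduce to dual $\QQ(\epsilon)$-bases) to guarantee perfection at $q=\epsilon$.
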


\begin{proof}
Let $\rho: R_q^\varphi[G]\otimes _{R}\Gphig\to R$ denote the 
pairing defined in Lemma \ref{lem:pairRG}. Then, we may define the pairing 
$\bar{\rho}: \OphieeQ \otimes_{\QQ(\epsilon)}\Gphige\to \QQ(\epsilon)$ via
$\bar{\rho}(\bar{x},\bar{u}) =\overline{(\rho(x,u))}$ for all $x\in R_{q}^{\varphi}[G]$ and 
$u \in \Gphig$, where $\bar{x}$ and $\bar{u}$ denote the images of $x$ and $u$
under the canonical projections
$\Rqphi[G]\to\OphieeQ$ and $\Gphig\to\Gphige$, respectively. 
A direct computation shows that $\bar{\rho}$ is a well-defined map and it is a non-degenerate Hopf pairing. 
\end{proof}

Now we introduce the twisted quantum Frobenius map. For details, see \cite[\S 3]{cv2}.
For $1\leq i\leq n$, let $e_i$, $f_i$ and $h_i$ denote the Chevalley generators of $\g$
and write $e_i^{(m)}:=e_i/m!$, $f_i^{(m)}:=f_i/m!$, 
$\dbinom{h_i}{m}:=\frac{h_i(h_i-1)\cdots (h_i-m+1)}{m!}$  for all $m\geq 0$. 

\begin{lema}\cite[\S 3.2 (i)]{cv2}\label{prop:homofropro}
There is a unique Hopf algebra epimorphism $\Fr:\Gphige\longrightarrow U(\g)_{\QQ(\epsilon)}$ given 
for all $1\leq i\leq n$ and $m>0$, by
\begin{align*}
\Fr (E_i^{(m)})&=e_i^{(m/\ell)}, & \Fr (F_i^{(m)})&=f_i^{(m/\ell)}, & 
\Fr\dbinom{K_{\alpha_{i}};0}{m}&=\dbinom{h_i}{m/\ell}, & \Fr (K_{\alpha_{i}})&=1,
\end{align*}
if $\ell\mid m$ or $0$ otherwise.
Its kernel is the ideal generated by the elements $K_{\alpha_{i}}-1$, $E_i$ and $F_i$.
In particular, there is a Hopf algebra 
monomorphism $^{t}\Fr:\Oc(G)_{\QQ(\epsilon)}\rightarrow\Gphige^\circ$. 
\qed
\end{lema}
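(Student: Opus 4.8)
The plan is to bootstrap from the classical (untwisted) quantum Frobenius map. Recall from \cite[\S 6]{dl} (see also \cite[\S 3]{cv2}) that there is a Hopf algebra epimorphism $\Fr_{0}\colon\Glie\to U(\g)_{\QQ(\epsilon)}$ determined by $E_{i}^{(m)}\mapsto e_{i}^{(m/\ell)}$, $F_{i}^{(m)}\mapsto f_{i}^{(m/\ell)}$, $\dbinom{K_{\alpha_{i}};0}{m}\mapsto\dbinom{h_{i}}{m/\ell}$, $K_{\alpha_{i}}\mapsto 1$ (the divided powers with $\ell\nmid m$ going to $0$). Since $\Gphig=\Ga(\g)$ as $R$-algebras, one has $\Gphige=\Glie$ as $\QQ(\epsilon)$-algebras, so the very same assignment defines an algebra homomorphism $\Fr\colon\Gphige\to U(\g)_{\QQ(\epsilon)}$. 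It is surjective because its image contains the Kostant-type generators $e_{i}^{(m)}$, $f_{i}^{(m)}$, $\dbinom{h_{i}}{m}$ of $U(\g)_{\QQ(\epsilon)}$, and it is unique because it is prescribed on a generating set of the algebra $\Gphige$. The two points that require genuine work are: (i) that $\Fr$ is compatible with the \emph{twisted} comultiplication $\Delta_{\varphi}$ of \eqref{eq:coproGphi}; and (ii) the computation of $\Ker\Fr$.

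For (i), the key observation is that $\Fr$ sends every group-like $K_{\mu}\in\Gphige$ to $1$, since the group-likes of $\Gphige$ are generated by the $K_{\alpha_{j}}^{\pm1}$; hence $\Fr\ot\Fr$ is blind to the $K$-factors that distinguish $\Delta_{\varphi}$ from the undeformed coproduct. Concretely, applying $\Fr\ot\Fr$ to $\Delta_{\varphi}E_{i}^{(p)}=\sum_{r+s=p}q_{i}^{-rs}\,E_{i}^{(r)}K_{s(\alpha_{i}-\tau_{i})}\ot E_{i}^{(s)}K_{r\tau_{i}}$, all terms with $\ell\nmid r$ or $\ell\nmid s$ vanish, while for $r=\ell r'$, $s=\ell s'$ one has $\Fr(K_{s(\alpha_{i}-\tau_{i})})=\Fr(K_{r\tau_{i}})=1$ and the scalar $q_{i}^{-rs}$ specializes to $\epsilon^{-d_{i}\ell^{2}r's'}=1$ because $\epsilon^{\ell}=1$; therefore $(\Fr\ot\Fr)\Delta_{\varphi}E_{i}^{(p)}=\sum_{r'+s'=p'}e_{i}^{(r')}\ot e_{i}^{(s')}=\Delta\Fr(E_{i}^{(p)})$ when $p=\ell p'$, and $0=\Delta\Fr(E_{i}^{(p)})$ otherwise. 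The same computation works for $F_{i}^{(p)}$; for $\dbinom{K_{\alpha_{i}};0}{t}$ the coproduct in \eqref{eq:coproGphi} is already $\varphi$-free, so the identical argument applies; and $\Delta_{\varphi}K_{\alpha_{i}}=K_{\alpha_{i}}\ot K_{\alpha_{i}}\mapsto 1\ot1$. Compatibility with the counit is immediate, so $\Fr$ is a bialgebra map between Hopf algebras, hence a Hopf algebra epimorphism.

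For (ii), let $I$ be the ideal generated by $K_{\alpha_{i}}-1$, $E_{i}$, $F_{i}$. That $I\subseteq\Ker\Fr$ is clear from the formulas (recall $\ell>1$, so $\Fr(E_{i})=\Fr(F_{i})=0$). For the reverse inclusion one repeats the argument of \cite[\S 6]{dl} verbatim, the point being that $\Gphig$ and $\Ga(\g)$ share the same underlying algebra, so the ideal $I$ and all divided-power manipulations are exactly those of the untwisted case. Namely: the triangular PBW decomposition gives a basis of $\Gphige$ by monomials $\prod_{j}F_{\beta_{j}}^{(m_{j})}\cdot\prod_{i}\dbinom{K_{\alpha_{i}};0}{t_{i}}K_{\alpha_{i}}^{-[t_{i}/2]}\cdot\prod_{j}E_{\beta_{j}}^{(n_{j})}$; using the divided-power product relations together with the quantum-Lucas congruence for the Gauss binomials at $\epsilon$, and the fact that $G_{\beta}^{(m_{0})}$ and $\dbinom{K_{\alpha_{i}};0}{m_{0}}$ lie in $I$ for $1\le m_{0}<\ell$ (indeed each root vector $E_{\beta}$, being a braid-group image of some $E_{i_{k}}$, lies in the subalgebra generated by $E_{1},\dots,E_{n}$, and likewise for $F_{\beta}$), one checks that modulo $I$ every PBW monomial is congruent to a scalar multiple of an $\ell$-divisible one; under $\Fr$ these map bijectively onto the PBW basis of $U(\g)_{\QQ(\epsilon)}$. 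Hence $\Fr$ descends to an isomorphism $\Gphige/I\xrightarrow{\ \sim\ }U(\g)_{\QQ(\epsilon)}$ and $\Ker\Fr=I$. This basis bookkeeping is the main technical obstacle; everything else is formal.

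Finally, the ``in particular'' clause is pure duality: the surjective Hopf algebra map $\Fr\colon\Gphige\twoheadrightarrow U(\g)_{\QQ(\epsilon)}$ dualizes to an injective Hopf algebra map $U(\g)_{\QQ(\epsilon)}^{\circ}\hookrightarrow\Gphige^{\circ}$, and precomposing with the standard embedding $\Oc(G)_{\QQ(\epsilon)}\hookrightarrow U(\g)_{\QQ(\epsilon)}^{\circ}$ of the coordinate ring into the finite dual of the enveloping algebra (valid since $G$ is connected and simply connected) yields the desired monomorphism $^{t}\Fr\colon\Oc(G)_{\QQ(\epsilon)}\hookrightarrow\Gphige^{\circ}$.
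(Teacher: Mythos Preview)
The paper does not actually supply a proof of this lemma: it is stated with a \qed and attributed wholesale to \cite[\S 3.2 (i)]{cv2}. Your argument is a correct reconstruction of the standard proof---the key point, which you identify cleanly, is that $\Gphige=\Glie$ as algebras, so the untwisted Frobenius map of \cite{dl} is already an algebra epimorphism with the claimed kernel, and the only new thing to check is compatibility with $\Delta_{\varphi}$; since $\Fr$ kills every $K_{\mu}$ (in particular the extra $K_{\tau_{i}}$-factors distinguishing $\Delta_{\varphi}$ from $\Delta$), this follows at once from the untwisted computation.
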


Let $\Bbbk$ be a field extension of $\QQ(\epsilon)$. 
We call $\OphieeQ\otimes_{\QQ(\epsilon)}\Bbbk$ the $\Bbbk$-form of $\OphieeQ$.
When $\Bbbk=\CC$ we simply write $\Oephi$.

\begin{lema}\label{lem:subalgcen} \cite[\S 3.3]{cv2}
$\OphieeQ$ contains a central Hopf subalgebra $F_0$ 
isomorphic to $\Oc(G)_{\QQ(\epsilon)}$. 
Moreover, an element of $\OphieeQ$ belongs to   $F_0$ 
if only if it vanishes on $I$ and 
$$F_0=\QQ(\epsilon)\langle \overline{c}_{f,v} \in 
\Ophieq_{\QQ(\epsilon)} \vert f \in \overline{L}(\ell\Lambda)^*_{-\ell v}, v\in \overline{L}(\ell\Lambda)_{\ell\mu}; 
v,\mu \in P_+\rangle,$$  
where $\overline{L}(e\Lambda)$ is the 
$\Gphig$-module $\Gphig v_{e\Lambda}$ with $v_{e\Lambda}$ the highest weight vector of $L(e\Lambda)$.
\qed
\end{lema}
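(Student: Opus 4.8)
The plan is to realize $F_0$ as the image of the transpose of the twisted quantum Frobenius map and to read off its centrality from the fact that $\OphieeQ$ is a $2$-cocycle deformation of $\Oeq$. By Lemma~\ref{prop:homofropro} the Hopf epimorphism $\Fr\colon\Gphige\to U(\g)_{\QQ(\epsilon)}$ has kernel the ideal $I$ generated by $K_{\alpha_i}-1$, $E_i$, $F_i$, and its transpose ${}^t\Fr\colon\Oc(G)_{\QQ(\epsilon)}\hookrightarrow\Gphige^{\circ}$ is a Hopf monomorphism whose image is the set of functionals vanishing on $I$. Using the non-degenerate pairing $\bar\rho$ of Lemma~\ref{lem:pairind} we regard $\OphieeQ$ as a Hopf subalgebra of $\Gphige^{\circ}$ and put $F_0:={}^t\Fr\bigl(\Oc(G)_{\QQ(\epsilon)}\bigr)$. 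For a matrix coefficient $c_{f,v}$ of a $G$-module $V$, the pullback $c_{f,v}\circ\Fr$ is the matrix coefficient of the $\Gphige$-module $V^{[\ell]}$ obtained by restriction along $\Fr$; the formulas for $\Fr$ force $K_{\alpha_i}$ to act as the identity and $\binom{K_{\alpha_i};0}{t}$ to act by $\binom{M}{t/\ell}$, vanishing unless $\ell\mid t$ (the root-of-unity reduction of $\binom{\ell M}{t}_{q_i}$), so $V^{[\ell]}\in\E_{\varphi}$ and hence $F_0\subseteq\OphieeQ$. As in the untwisted case \cite{dl}, \cite[\S 3.3]{cv2}, the modules arising this way are exactly the $\overline L(\ell\Lambda)$, so $F_0$ is the span of the $\overline c_{f,v}$ with $f\in\overline L(\ell\Lambda)^{*}_{-\ell\lambda}$, $v\in\overline L(\ell\Lambda)_{\ell\mu}$, $\lambda,\mu\in P_{+}$; since the weights of such modules lie in $\ell P$, these coefficients kill $E_i$, $F_i$ and $K_{\alpha_i}-1$, and conversely the non-degeneracy of $\bar\rho$ shows that an element of $\OphieeQ$ vanishing on $I$ must lie in $F_0$. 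Thus ${}^t\Fr$ identifies $\Oc(G)_{\QQ(\epsilon)}$ with $F_0$ as coalgebras and the ``if and only if'' description holds.

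It remains to prove that $F_0$ is central in $\OphieeQ$ and that ${}^t\Fr$ is an algebra map onto it. For this I would use the $2$-cocycle $\sigma$ of Corollary~\ref{cor:cocycle}, whose reduction $\bar\sigma$ exhibits $\OphieeQ=(\Oeq)_{\bar\sigma}$ by Lemma~\ref{lem:def2cocicloOe}. The key point is that $\bar\sigma$, and likewise $\bar\sigma^{-1}$, are trivial on $F_0\otimes\OphieeQ$ and on $\OphieeQ\otimes F_0$: whenever one of the two matrix coefficients lies in $F_0$ the relevant weight is of the form $\ell\lambda'$ with $\lambda'\in P$, so the exponent of $\epsilon$ occurring in $\sigma$ is $-\tfrac12(\varphi(\ell\lambda'),\mu)=-\ell\cdot\tfrac12(\varphi(\lambda'),\mu)$ with $\tfrac12(\varphi(\lambda'),\mu)\in\ZZ$ by \eqref{eq:phi}, whence $\epsilon$ raised to it equals $(\epsilon^{\ell})^{-\tfrac12(\varphi(\lambda'),\mu)}=1$ (and the $v$-weight contributes in the same way to $\sigma^{-1}$). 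Since $F_0$ is a subcoalgebra, collapsing the outer tensor legs by $\eps$ in $m_{\bar\sigma}$ then gives $x\cdot_{\bar\sigma}y=xy$ and $y\cdot_{\bar\sigma}x=yx$ for $x\in F_0$, $y\in\OphieeQ$, the products being those of $\Oeq$. Because $F_0$ is the image of the untwisted quantum Frobenius it is central in $\Oeq$ (\cite{dl}), so $xy=yx$ there and hence $x\cdot_{\bar\sigma}y=y\cdot_{\bar\sigma}x$ in $\OphieeQ$. Taking $y\in F_0$ in the same computation shows $F_0$ is closed under $\cdot_{\bar\sigma}$ and that $\SS_{\bar\sigma}|_{F_0}=\SS|_{F_0}$, so $F_0$ is a central Hopf subalgebra of $\OphieeQ$ and ${}^t\Fr\colon\Oc(G)_{\QQ(\epsilon)}\to F_0$ is a Hopf algebra isomorphism.

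The step I expect to be the main obstacle is the precise matching of the two descriptions of $F_0$: verifying that pulling matrix coefficients of $G$-modules back along $\Fr$ yields exactly the span of the $\overline c_{f,v}$ with highest weights in $\ell P_{+}$, and that this span is precisely $\OphieeQ\cap\{f:f(I)=0\}$. This rests on the decomposition of the Frobenius-twisted modules $V^{[\ell]}$ inside $\E_{\varphi}$ together with the non-degeneracy of $\bar\rho$ from Lemma~\ref{lem:pairind}, and is where one genuinely uses the structure theory of $\Gphige$ from \cite{dl} and \cite{cv2}; by contrast the centrality reduces to the short cocycle computation above, which only uses that the relevant exponents of $\epsilon$ are multiples of $\ell$.
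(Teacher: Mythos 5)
The paper does not prove Lemma~\ref{lem:subalgcen}; it is stated with a \texttt{\textbackslash qed} and a citation to \cite[\S~3.3]{cv2}, so there is no in-text argument to compare against directly. Your reconstruction is sound and, in particular, the centrality argument is clean and worth highlighting: you observe that $\bar\sigma$ and $\bar\sigma^{-1}$ reduce to $\eps\ot\eps$ whenever one tensor factor lies in $F_0$, because the relevant $\lambda$-weight then lies in $\ell P$, so the exponent $-\tfrac12(\varphi(\lambda),\mu)$ is $\ell$ times an integer by the third condition in \eqref{eq:phi} and $\epsilon^\ell=1$; combined with $F_0$ being a subcoalgebra, this collapses $m_{\bar\sigma}$ and $\SS_{\bar\sigma}$ to the untwisted operations on $F_0\cdot\OphieeQ$ and $F_0$, and centrality is inherited from $\Oc_\epsilon(G)_{\QQ(\epsilon)}$ via \cite{dl}. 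This is exactly the kind of ``cocycle is trivial on the commutative central part'' principle the paper later exploits implicitly (e.g.\ in the proof of Proposition~\ref{prop:paraL}~(iv)), and it gives a self-contained reason for centrality that does not require re-deriving the multiparameter analysis of \cite{cv2}.

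Two small points you should make explicit if you were to write this up. First, you invoke that $F_0$ is a subcoalgebra and $\SS$-stable before you have established it is a Hopf subalgebra; this is fine because the coalgebra structures of $\OphieeQ$ and $\Oc_\epsilon(G)_{\QQ(\epsilon)}$ coincide (a $2$-cocycle deformation does not change the coalgebra), so $F_0=\phantom{}^{t}\Fr(\Oc(G)_{\QQ(\epsilon)})$ is a subcoalgebra closed under $\SS$ by the untwisted theory, and the cocycle computation then shows it is also closed under $\cdot_{\bar\sigma}$. Second, the identification of $F_0$ as the span of the $\overline c_{f,v}$ over the $\overline L(\ell\Lambda)$, and the ``vanishes on $I$'' characterisation, is the part you flag as the potential obstacle; you are right that this is where one genuinely leans on the structure theory of $\Gphige$ (decomposition of the Frobenius pullbacks inside $\E_\varphi$ and non-degeneracy of $\bar\rho$). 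Since $\Gphige=\Gamma_\epsilon(\lieg)$ as an algebra and $\Fr$ is the same underlying algebra map as in the untwisted case, this step does reduce to \cite{dl} and \cite[\S~3.3]{cv2} as you say, so the appeal is legitimate, but it is the one place where the proof is not self-contained.
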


\begin{prop}\label{prop:femodpro}
$\OphieeQ$ is a free $\OO(G)_{\QQ(\epsilon)}$-module of rank $\ell^{\dim \lieg}$.
\end{prop}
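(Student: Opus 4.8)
The plan is to reduce the statement to the known untwisted case via the $2$-cocycle deformation. By Lemma~\ref{lem:def2cocicloOe}, $\OphieeQ = (\Oc_\epsilon(G)_{\QQ(\epsilon)})_{\bar\sigma}$ for the $2$-cocycle $\bar\sigma$ induced from Corollary~\ref{cor:cocycle}. A $2$-cocycle deformation leaves the coalgebra structure unchanged and, more importantly for us, it does not change the underlying category of comodules up to a monoidal equivalence; in particular it preserves freeness over a central Hopf subalgebra provided the $2$-cocycle restricts trivially to the relevant ideal. So first I would check, using Lemma~\ref{lem:subalgcen} together with the explicit formula for $\sigma$ in Corollary~\ref{cor:cocycle}, that the central Hopf subalgebra $F_0 \cong \Oc(G)_{\QQ(\epsilon)}$ of $\OphieeQ$ is literally the same subspace as the corresponding central Hopf subalgebra $F_0^{0} \subset \Oc_\epsilon(G)_{\QQ(\epsilon)}$, and that $\bar\sigma$ vanishes on $I_0\ot \Oe + \Oe\ot I_0$ where $I_0 = (F_0^0)^+\,\Oe$. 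This is where Remark~\ref{obs:cocycleproy} applies: $\sigma$ is supported on matrix coefficients via the counit factors $\eps(c_{f_1,v_1})\eps(c_{f_2,v_2})$, and $F_0$ is spanned by matrix coefficients of the modules $\overline L(\ell\Lambda)$, so the weights appearing are in $\ell P$ and one computes that $\bar\sigma$ restricted to $F_0$ is trivial (it equals $\eps\ot\eps$ on that part), hence in particular $\bar\sigma$ kills the ideal generated by $F_0^+$ in each tensor factor.

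Granting that, the multiplication of $\OphieeQ$ and of $\Oe$ agree \emph{as left and right $F_0$-module structures} up to the scalar twist, which is trivial on $F_0$; more precisely $a\cdot_{\bar\sigma} b = a\, b$ whenever $a\in F_0$ or $b\in F_0$, because $\bar\sigma(F_0\ot\Oe) = \eps\ot\eps$ and similarly on the other side (using $\SS(F_0)\subseteq F_0$ for the inverse cocycle). Therefore $\OphieeQ$ and $\Oe$ coincide as $F_0$-bimodules, and in particular as left $F_0$-modules. Since $F_0 \cong \Oc(G)_{\QQ(\epsilon)}$ under both identifications (compatibly), the freeness assertion for $\OphieeQ$ over $\Oc(G)_{\QQ(\epsilon)}$ is \emph{equivalent} to the corresponding statement for $\Oe$ over its central subalgebra, which is the classical result of De Concini--Lyu (cited as \cite{dl}): $\Oc_\epsilon(G)_{\QQ(\epsilon)}$ is a free module of rank $\ell^{\dim\lieg}$ over the central subalgebra $\Oc(G)_{\QQ(\epsilon)}$. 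I would invoke that result, noting the rank $\ell^{\dim\lieg}$ comes from $\dim G = \dim\lieg$ and the exact-sequence/dimension count already present in \cite{dl}, \cite{cv2}.

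Alternatively, and perhaps more in the spirit of \cite{cv2}, one can argue directly: by Lemma~\ref{lem:pairind} there is a perfect Hopf pairing $\bar\rho: \OphieeQ \ot \Gphige \to \QQ(\epsilon)$, and by Lemma~\ref{prop:homofropro} the Frobenius map realizes $\Gphige$ as a Hopf algebra extension of $U(\lieg)_{\QQ(\epsilon)}$ by the finite-dimensional twisted Frobenius--Lusztig kernel $\qephi$, which has dimension $\ell^{\dim\lieg}$ (this is where the PBW basis of $\Gphig$ and the relations $E_i^\ell = F_i^\ell = 0$, $K_{\alpha_i}^\ell = 1$ of Remark~\ref{rmk:relmodepsgam} enter). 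Dualizing this extension via the nondegenerate pairing yields that $\OphieeQ$ is a faithfully flat $F_0$-module, and then freeness follows because $F_0 \cong \Oc(G)_{\QQ(\epsilon)}$ is the coordinate ring of a smooth affine variety, over which finitely generated faithfully flat (equivalently projective, by the extension structure) modules of constant rank are free --- or one simply transports the explicit free basis from the untwisted case as above.

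The main obstacle I anticipate is the bookkeeping in the first paragraph: one must verify carefully that the \emph{same} subspace $F_0$ serves as the central Hopf subalgebra before and after the cocycle twist (not merely an abstractly isomorphic one), and that $\bar\sigma$ really is trivial on it. Both reduce to the weight-support computation with the bicharacter $p(\lambda,\mu) = q^{-\frac12(\varphi(\lambda),\mu)}$ restricted to weights in $\ell P$, combined with the integrality condition $\tfrac12(\varphi\lambda,\mu)\in\ZZ$ from \eqref{eq:phi}; once that is in hand the reduction to \cite{dl} is immediate.
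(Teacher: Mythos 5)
Your proposal is correct and takes a genuinely different route from the paper. The paper's proof is simply a citation to \cite[Proposition 3.5]{cv2}, \cite{dl}, and \cite{BGS} (the last for the untwisted freeness of $\Oe$ over $\OO(G)$). You instead reduce the twisted case to the untwisted one by observing that the $2$-cocycle $\bar\sigma$ of Lemma~\ref{lem:def2cocicloOe} acts trivially whenever one argument lies in the central subalgebra $F_0$: by Lemma~\ref{lem:subalgcen} the elements of $F_0$ are matrix coefficients with weights in $\ell P$, the integrality condition $\tfrac12(\varphi\lambda,\mu)\in\ZZ$ from \eqref{eq:phi} then forces $\tfrac12(\varphi(\ell\lambda'),\mu)\in\ell\ZZ$, and $\epsilon^\ell=1$ kills the scalar. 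Since $F_0$ is a subcoalgebra, this gives $a\cdot_{\bar\sigma}b = ab$ for $a\in F_0$ or $b\in F_0$, so $\OphieeQ$ and $\Oe$ are literally the same as left $F_0$-modules and freeness transfers directly from \cite{BGS}. This is cleaner and more self-contained than the paper's bare citation, and it makes explicit the role of the integrality hypothesis on $\varphi$. One caveat on your alternative second paragraph: ``finitely generated faithfully flat $\Rightarrow$ free over the coordinate ring of a smooth affine variety'' is not true in general (projectives over smooth affine rings need not be free), so that route as stated has a gap; but you immediately fall back to transporting the explicit basis from the untwisted case, which is the same as your first argument, so the overall proof stands.
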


\begin{proof} Follows from \cite[Proposition 3.5]{cv2}, \cite{dl} and  \cite{BGS}.
\end{proof}

Let $\overline{\Oephi}$ be the quotient $\Oephi / [\OO(G)^+\Oephi]$ and 
$\pi:\Oephi \longrightarrow \overline{\Oephi)}$ the canonical projection. By 
Proposition \ref{prop:femodpro}, $\overline{\Oephi}$ is a Hopf algebra of 
dimension $\ell^{\dim \lieg}$. Moreover, since 
$\Oephi$ is a free $\OO(G)$-module, it is faithfully flat. Then, by 
\cite[Proposition 3.4.3]{Mo} $\Oephi$ fits into the short exact sequence 
of Hopf algebras. 
$$
1 \longrightarrow \OO(G) \longrightarrow \Oephi \longrightarrow \overline{\Oephi} \longrightarrow 1.
$$

\section{Twisted Frobenius-Lusztig kernels}\label{sec:mflk}
In this section we define and study the twisted Frobenius-Lusztig kernels and the quotients of their duals. They are 
finite-dimensional pointed Hopf algebras which are twist deformations of the usual kernels.

Let $Z^\varphi_0$ be the smaller $B_{\W}$-invariant subalgebra of $\QUphie(\g)$ that contains the elements 
$K_{\ell\alpha}=K_\alpha^\ell$, $E_i^\ell$, $F_i^\ell$ for all $\alpha \in Q$ and $1\leq i \leq n$. 

\begin{theorem}\label{thm:Z0}
\begin{enumerate}
\item[$(i)$] $Z^\varphi_0$ is a central Hopf subalgebra of $\QUphie(\g)$.
\item[$(ii)$] $Z^\varphi_0$ is a polynomial ring  in $\dim \g$ generators, with $n$ generators inverted.
\item[$(iii)$] $\QUphie(\g)$ is a free $Z^\varphi_0$-module of rank $\ell^{\dim \g}$.
\end{enumerate}
\end{theorem}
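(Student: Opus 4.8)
The plan is to reduce the whole statement to the untwisted case, exploiting that the twisting $\varphi$ deforms only the coalgebra structure. First I would observe that $\QEMphi$ and $\QEM$ have the same multiplication, so the De~Concini--Kac $R$-form $U_q^\varphi(\g;M)$ coincides with $U_q(\g;M)$ as an $R$-algebra and $\QUphie(\g)=U_\epsilon(\g)$ as a $\QQ(\epsilon)$-algebra; moreover the Lusztig automorphisms $T_i$ are defined by the usual formulas in $E_j,F_j,K_\mu$, which do not involve $\varphi$, so the $B_{\W}$-action on $\QUphie(\g)$ is literally the one on $U_\epsilon(\g)$. Consequently $Z^\varphi_0$ equals, as a subalgebra of $\QUphie(\g)$, the classical De~Concini--Kac subalgebra $Z_0\subseteq U_\epsilon(\g)$, and it is therefore generated as an algebra by the $K_\gamma^{\ell}$ ($\gamma\in Q$), $E_\beta^{\ell}$ and $F_\beta^{\ell}$ ($\beta\in\Phi_+$). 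With this identification the centrality in $(i)$, the polynomiality $(ii)$ and the freeness $(iii)$ --- all of which only concern the algebra structure of $\QUphie(\g)$ and its module structure over $Z^\varphi_0$, both of which are unchanged --- follow directly from the results of De~Concini--Kac in \cite{ckp} (cf.\ also \cite{cv2}, \cite{dl}).

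It then remains to check that $Z^\varphi_0$ is a sub-bialgebra for the twisted structure $(\Delta_\varphi,\eps_\varphi,\SS_\varphi)$; stability under $\eps_\varphi$ is immediate, and stability under $\Delta_\varphi$ and $\SS_\varphi$ I would verify on the algebra generators. The element $K_\gamma^{\ell}$ is group-like, so nothing to do there. For a simple root, the divided-power coproduct \eqref{eq:coproGphi} yields
$$
\Delta_\varphi(E_i^{\ell})=\sum_{r+s=\ell}q_i^{-rs}\begin{bmatrix}\ell\\ r\end{bmatrix}_{q_i}E_i^{r}K_{s(\alpha_i-\tau_i)}\otimes E_i^{s}K_{r\tau_i},
$$
and since $\ell$ is odd and prime to $3$ for $G_2$, $\epsilon^{d_i}$ is a primitive $\ell$th root of unity, so the inner coefficients $\begin{bmatrix}\ell\\ r\end{bmatrix}_{\epsilon^{d_i}}$ vanish for $0<r<\ell$; only $r\in\{0,\ell\}$ survive, giving $\Delta_\varphi(E_i^{\ell})=E_i^{\ell}\otimes K_{\ell\tau_i}+K_{\ell(\alpha_i-\tau_i)}\otimes E_i^{\ell}$. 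As $\tau_i\in Q$ (Remark \ref{rmk:releiki}), both $K_{\ell\tau_i}$ and $K_{\ell(\alpha_i-\tau_i)}$ are products of the $K_{\alpha_j}^{\pm\ell}$ and hence lie in $Z^\varphi_0$, so $\Delta_\varphi(E_i^{\ell})\in Z^\varphi_0\otimes Z^\varphi_0$; the computation for $F_i^{\ell}$ is identical, and a similar reordering shows $\SS_\varphi(E_i^{\ell})=-E_i^{\ell}K_{-\ell\alpha_i}\in Z^\varphi_0$ (using $\epsilon^{d_i\ell}=1$), likewise for $F_i^\ell$ and $\SS_\varphi(K_\gamma^{\ell})=K_{-\gamma}^{\ell}$.

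For the non-simple root vectors $E_\beta^{\ell},F_\beta^{\ell}$ the braid group does not help directly, since $T_i$ is not a coalgebra map; instead I would combine the De~Concini--Kac identities expressing $\Delta(E_\beta^{\ell})$ as an element of $Z_0\otimes Z_0$ (a sum of products of the $E_{\beta'}^{(\ell)}$'s with suitable $K$'s, and symmetrically for $F_\beta^{\ell}$) with the observation that passing from $\Delta$ to $\Delta_\varphi$ only multiplies each such term by a scalar of the form $\epsilon^{\ell z}$ with $z\in\ZZ$ --- all weights occurring lie in $\ell Q$, so the phases produced by the $\tfrac12\varphi$-pairing in \eqref{eq:phi} are integral --- and shifts it by Cartan factors $K_\mu$ with $\mu\in\ell Q$, i.e.\ by elements of $Z^\varphi_0$. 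Both effects are harmless, hence $\Delta_\varphi(E_\beta^{\ell})\in Z^\varphi_0\otimes Z^\varphi_0$ and $\SS_\varphi(E_\beta^{\ell})\in Z^\varphi_0$, and similarly for $F_\beta^{\ell}$; this finishes $(i)$. I expect this last step --- reconciling the twisted coproduct with the De~Concini--Kac description of $\Delta$ on the higher root vectors, for which I would lean on \cite{cv2} and \cite[\S 3]{dl} --- to be the only genuine obstacle, everything else being either formal or a transcription of the untwisted theory.
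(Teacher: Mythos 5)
Your proposal is essentially the paper's argument, but organized more explicitly. The paper's entire proof is a single sentence invoking \cite[Theorem III.6.2]{K-K} together with the one new computation needed: that $W$, the subalgebra generated by $K_\alpha^\ell$, $E_i^\ell$, $F_i^\ell$, is stable under $\Delta_\varphi$, which it checks by precisely your $q$-binomial calculation for $\Delta_\varphi(E_i^\ell)$ (using that $(E_i\ot K_{\tau_i})$ and $(K_{\alpha_i-\tau_i}\ot E_i)$ $\epsilon^{-2d_i}$-commute). Your explicit observation that the algebra structure --- and hence the Lusztig automorphisms, centrality, polynomiality, and freeness --- is literally unchanged by $\varphi$, so that $(ii)$, $(iii)$, and the centrality in $(i)$ transcribe verbatim from the untwisted De~Concini--Kac theory, is a cleaner way of phrasing what the paper buries in ``follows the same lines.'' Your treatment of the non-simple root vectors is the one part the paper does not spell out (it is subsumed in the reference to Brown--Goodearl), and you are right to flag it as the delicate step; note also that for the simple-root case the fact you need is $\tau_i\in Q$, which comes from the paper's stipulation $\tau_i=\sum_j y_{ji}\alpha_j$ with $y_{ij}\in\ZZ$ in the setup following \eqref{eq:phi}, not from Remark \ref{rmk:releiki}. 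Overall: correct, same route, somewhat more detail.
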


\begin{proof} The proof follows the same lines as 
\cite[Theorem III.6.2]{K-K}, using that the algebra $W$ spanned by the
elements $K_{\ell\alpha}=K_\alpha^\ell$, $E_i^\ell$, $F_i^\ell$ for all $\alpha \in Q$ and $1\leq i \leq n$ is a Hopf 
subalgebra, and this follows from a simple computation using the $q$-binomial formula. For example, 
$\Delta_{\varphi}(E_i^\ell) = (E_i \otimes K_{\tau_i}+K_{\alpha_i-\tau_i} \otimes E_i)^\ell = 
E_i^\ell \otimes K_{\ell\tau_i}+K_{\ell(\alpha_i-\tau_i)} \otimes E_i^\ell$, 
since $(E_i \otimes K_{\tau_i})(K_{\alpha_i-\tau_i} \otimes E_i)=
\epsilon^{-2d_i}(K_{\alpha_i-\tau_i} \otimes E_i)(E_i \otimes K_{\tau_i})$.
\end{proof}

\begin{definition}\label{def:flkernels}
The \textit{twisted Frobenius-Luzstig kernel} is defined as the quotient 
$$\qephi =\QUphie(\g)/[(Z_0^\varphi)^+\QUphie(\g)].$$ 
\end{definition}

By the theorem above, $\qephi$ is a finite-dimensional pointed Hopf algebra of dimension $\ell^{\dim \g}$ and
$G(\qephi)= \langle K_{\alpha_{i}}|\ 1\leq i\leq n\rangle \simeq (\ZZ/\ell\ZZ)^{n}$.
We denote  $G(\qephi)= \Tt^{\varphi}$.

\begin{lema}\label{lem:reslusyflkiso}
Let $\QUhatphieR(\g)$ be the Hopf subalgebra of $\Gphige$ generated by the 
elements $E_i,F_i$, $K_{\alpha_{i}}$ with $1 \leq i \leq  n$. Then
$\QUhatphieR(\g)$ and $\qephi$ are isomorphic as Hopf algebras.
\end{lema}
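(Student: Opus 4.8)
The plan is to exhibit a Hopf algebra surjection from $\QUhatphieR(\g)$ onto $\qephi$ and then show it is injective by a dimension or PBW-basis argument. First I would recall that $\QUphie(\g)$ is generated by the elements $E_i, F_i, K_{\alpha_i}^{\pm 1}$ together with the divided powers $E_i^{(m)}, F_i^{(m)}$ and the binomial elements $\binom{K_{\alpha_i};0}{t}$, and that in the quotient $\qephi = \QUphie(\g)/[(Z_0^\varphi)^+\QUphie(\g)]$ the relations $E_i^\ell = F_i^\ell = 0$ and $K_{\alpha_i}^\ell = 1$ hold, since $K_{\ell\alpha_i}, E_i^\ell, F_i^\ell$ generate (a $B_{\W}$-orbit spanning) $Z_0^\varphi$. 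In particular, in $\qephi$ the divided powers $E_i^{(m)}, F_i^{(m)}$ and the $\binom{K_{\alpha_i};0}{t}$ with $m, t \geq \ell$ become expressible in terms of the subalgebra generated by $E_i, F_i, K_{\alpha_i}$: indeed $E_i^{(m)} = E_i^{(r)} E_i^{(\ell)\, [m/\ell]} \cdots$ up to scalars and $E_i^{(\ell)} \in Z_0^\varphi$ maps to a scalar (in fact to $0$'s complement structure controlled by $Z_0^\varphi$), and similarly $\binom{K_{\alpha_i};0}{t}$ for $t \geq \ell$ reduces via $\binom{K_{\alpha_i};0}{\ell} \in Z_0^\varphi$. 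Hence the composite $\QUhatphieR(\g) \hookrightarrow \Gphige \to \Gphige/[(Z_0^\varphi)^+\Gphige] \cong \qephi$ (using Theorem \ref{thm:Z0} to identify $\QUphie(\g)$ inside $\Gphige$ and the quotient) is surjective; call this map $\Psi$. One should check $\Psi$ is a Hopf algebra map, which is immediate since it is a restriction/corestriction of the quotient map.

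The second step is injectivity. Here I would use PBW bases: $\QUhatphieR(\g)$ has an $R$-basis (hence a $\QQ(\epsilon)$-basis after specialization) given by ordered monomials $\prod F_{\beta_j}^{a_j} \cdot \prod K_{\alpha_i}^{c_i} \cdot \prod E_{\beta_j}^{b_j}$ with $0 \le a_j, b_j < \ell$ and $0 \le c_i < \ell$ — this follows from the PBW theorem for the small quantum group / restricted integral form, and one must verify that the relations $E_i^\ell = F_i^\ell = K_{\alpha_i}^\ell - 1 = 0$ together with the quantum Serre relations and the commutation relations suffice to reduce every element to such a normal form, giving $\dim_{\QQ(\epsilon)} \QUhatphieR(\g) \leq \ell^{\dim\g}$. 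On the other side, $\dim \qephi = \ell^{\dim\g}$ by Theorem \ref{thm:Z0}(iii) and Definition \ref{def:flkernels}. Since $\Psi$ is surjective and the source has dimension at most that of the target, $\Psi$ is an isomorphism. Alternatively, and perhaps more cleanly, one can argue that $\Psi$ sends the PBW basis of $\QUhatphieR(\g)$ to the image of the PBW basis of $\QUphie(\g)$, which is known to descend to a basis of $\qephi$, so $\Psi$ carries a spanning set bijectively to a basis.

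The main obstacle I expect is the careful bookkeeping showing that $\QUhatphieR(\g)$ — defined as an abstract Hopf subalgebra of $\Gphige$ generated by $E_i, F_i, K_{\alpha_i}$ — actually surjects onto $\qephi$, i.e. that the map factors correctly through $(Z_0^\varphi)^+$. One must be slightly careful because $Z_0^\varphi \subseteq \QUphie(\g)$ while the Chevalley-type generators $E_i, F_i, K_{\alpha_i}$ generate a subalgebra of $\Gphige$ that strictly sits inside $\Gphige$; the point is that $Z_0^\varphi \cap \QUhatphieR(\g)$ is generated by $E_i^\ell, F_i^\ell, K_{\alpha_i}^\ell$ and its augmentation ideal generates the same ideal in $\QUhatphieR(\g)$ as $(Z_0^\varphi)^+$ does upon restriction, because $\Gphige$ (equivalently $\QUphie(\g)$) is free over $Z_0^\varphi$ with the extra generators being divided powers that already live in the $Z_0^\varphi$-submodule generated by $\QUhatphieR(\g)$. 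A clean way to phrase this: $\QUphie(\g) = Z_0^\varphi \cdot \QUhatphieR(\g)$ as a consequence of the PBW decomposition in Theorem \ref{thm:Z0}, whence $\QUphie(\g)/[(Z_0^\varphi)^+\QUphie(\g)] = \QUhatphieR(\g)/[(Z_0^\varphi \cap \QUhatphieR(\g))^+\QUhatphieR(\g)]$, and the latter ideal is generated by $E_i^\ell, F_i^\ell, K_{\alpha_i}^\ell - 1$, which are already $0$ in $\QUhatphieR(\g)$ when we impose the defining relations — or rather, $\QUhatphieR(\g)$ \emph{is} that quotient because its defining relations as a subalgebra of $\Gphige$ already include these. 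Making this last identification precise, using the results of \cite{dl} and \cite{cv2} on the structure of $\Gphige$, is the real content; the rest is formal.
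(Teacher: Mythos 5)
Your overall strategy (exhibit a surjection onto $\qephi$ and then compare dimensions using Remark~\ref{rmk:relmodepsgam} and Theorem~\ref{thm:Z0}) is the same as the paper's, but the middle step, where you actually build the surjection, contains a genuine error. You claim the map
$$\QUhatphieR(\g) \hookrightarrow \Gphige \longrightarrow \Gphige/[(Z_0^\varphi)^+\Gphige] \cong \qephi$$
is surjective, but the isomorphism at the end is false. By Remark~\ref{rmk:relmodepsgam} the relations $E_i^\ell=0$, $F_i^\ell=0$, $K_{\alpha_i}^\ell=1$ already hold in $\Gphige$, so the images of the generators $E_\beta^\ell$, $F_\beta^\ell$, $K_\alpha^\ell-1$ of $(Z_0^\varphi)^+$ are all zero there and $(Z_0^\varphi)^+\Gphige=0$. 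Hence $\Gphige/[(Z_0^\varphi)^+\Gphige]=\Gphige$, which is infinite-dimensional (it surjects onto $U(\g)_{\QQ(\epsilon)}$ via $\Fr$), not the finite-dimensional $\qephi$. The ideal $(Z_0^\varphi)^+\QUphie(\g)$ defining $\qephi$ lives in $\QUphie(\g)$, not in $\Gphige$; the two algebras are different specializations (De Concini--Kac vs.\ Lusztig) and are not nested.

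The attempted repair in your last paragraph has the same type-confusion: $E_i^{(\ell)}$ is \emph{not} an element of $Z_0^\varphi$ (it lies in $\Gphig$, not in the De Concini--Kac form, and $E_i^\ell=[\ell]_{q_i}!\,E_i^{(\ell)}$ with $[\ell]_{q_i}!\to 0$), and expressions such as $\QUphie(\g)=Z_0^\varphi\cdot\QUhatphieR(\g)$ and $Z_0^\varphi\cap\QUhatphieR(\g)$ compare elements living in two different algebras. The paper avoids all of this: one observes directly that $\qephi$ and $\QUhatphieR(\g)$ are both generated by $E_i,F_i,K_{\alpha_i}$, and that the defining relations imposed on $\qephi$ (the De Concini--Kac relations together with the $(Z_0^\varphi)^+$-relations) hold in $\QUhatphieR(\g)$ by Remark~\ref{rmk:relmodepsgam}; this gives a Hopf epimorphism between the two sending generators to generators. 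The PBW basis of $\Gphige$, again together with Remark~\ref{rmk:relmodepsgam}, shows $\dim\QUhatphieR(\g)\leq\ell^{\dim\g}=\dim\qephi$ (Theorem~\ref{thm:Z0}(iii)), so the epimorphism is an isomorphism. No intermediate quotient of $\Gphige$ is involved.
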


\begin{proof} 
By definition, there exists a Hopf epimorphism $\QUhatphieR(\g)\twoheadrightarrow \qephi$
given by $E_i\mapsto E_i$, $F_i \mapsto F_i$ and $K_{\alpha_{i}}\mapsto K_{\alpha_{i}}$
for all $1\leq i\leq n$. Since by Remark \ref{rmk:relmodepsgam}, $\dim \QUhatphieR(\g)\leq  \ell^{\dim \g}$, the
claim follows.
\end{proof}

Adapting the proof of \cite[Theorem III.7.10]{K-K}, we have the following. 
\begin{theorem}\label{thm:oephiisodualnfl}
The Hopf algebras $\overline{\Oephi}$ and $\qephi^{*}$ are isomorphic.
\end{theorem}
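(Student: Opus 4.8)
The plan is to establish a pairing between $\overline{\Oephi}$ and $\qephi$ and show it is perfect, so that $\overline{\Oephi} \simeq \qephi^*$ by finite-dimensionality. First I would recall that, by Theorem \ref{thm:Z0} and Proposition \ref{prop:femodpro}, both Hopf algebras have dimension $\ell^{\dim \g}$; by Lemma \ref{lem:reslusyflkiso} we may replace $\qephi$ with $\QUhatphieR(\g)$, the Hopf subalgebra of $\Gphige$ generated by $E_i, F_i, K_{\alpha_i}$. The starting point is the non-degenerate Hopf pairing $\bar\rho : \OphieeQ \otimes_{\QQ(\eps)} \Gphige \to \QQ(\eps)$ from Lemma \ref{lem:pairind}, which after base change to $\CC$ gives a Hopf pairing $\bar\rho : \Oephi \otimes_{\CC} \Gphig_\CC \to \CC$ (with $\Gphig_\CC := \Gphige \otimes_{\QQ(\eps)} \CC$). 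Restricting the second variable to $\QUhatphieR(\g)_\CC \simeq \qephi$, one gets a Hopf pairing $\Oephi \otimes \qephi \to \CC$.

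Next I would show that the central Hopf subalgebra $\OO(G) = F_0 \subseteq \Oephi$ of Lemma \ref{lem:subalgcen} lies in the radical of this restricted pairing on the left, i.e.\ $\bar\rho(F_0, \qephi) = \eps$-trivial, equivalently $\bar\rho(\OO(G)^+, \qephi) = 0$. This is exactly the dual statement to the fact, from Lemma \ref{prop:homofropro}, that the kernel of the quantum Frobenius map $\Fr : \Gphige \to U(\g)_{\QQ(\eps)}$ is the ideal generated by $K_{\alpha_i}-1, E_i, F_i$ --- that ideal is precisely what generates $\qephi$ as a quotient, and $F_0 = {}^t\Fr(\OO(G))$ pairs with $\Gphige$ through $\Fr$. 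Concretely, for $c \in \OO(G)^+ = F_0 \cap \Ker\eps$ and $u \in \QUhatphieR(\g)$, writing $c = {}^t\Fr(c')$ one has $\bar\rho(c, u) = \langle c', \Fr(u)\rangle$; but $\Fr$ kills the augmentation ideal of $\QUhatphieR(\g)$ together with $K_{\alpha_i}-1$, hence $\Fr(u) \in \CC 1$ and $\langle c', \Fr(u)\rangle = \eps(u)\eps(c') = \eps(u)\eps(c) = 0$. Therefore the pairing descends to a Hopf pairing $\overline{\Oephi} = \Oephi/[\OO(G)^+\Oephi] \otimes \qephi \to \CC$.

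It then remains to prove this descended pairing is non-degenerate (perfect). Since $\dim \overline{\Oephi} = \dim \qephi = \ell^{\dim\g} < \infty$, it suffices to check non-degeneracy on one side, say that the induced map $\overline{\Oephi} \to \qephi^*$ is injective. I would deduce this from the non-degeneracy of $\bar\rho$ on $\Oephi \otimes \Gphig_\CC$ together with a dimension count: the radical on the $\Oephi$-side of the pairing with $\QUhatphieR(\g)$ is, by a standard argument, the subspace annihilated by $\QUhatphieR(\g)$, and one shows this radical is exactly $\OO(G)^+\Oephi$ --- the containment $\supseteq$ is the previous paragraph; for $\subseteq$, note that by faithful flatness $\Oephi$ is the $\OO(G)$-span of a transversal whose image in $\overline{\Oephi}$ is a basis, and an element pairing trivially with all of $\qephi$ must, modulo $\OO(G)^+\Oephi$, have all coordinates zero in $\overline{\Oephi} \simeq \qephi^*$-candidate. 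Equivalently, one uses the exact sequence $1 \to \OO(G) \to \Oephi \to \overline{\Oephi} \to 1$ dual to $1 \to \qephi \to \Gphig_\CC \to U(\g)_\CC \to 1$ (this is \cite[Theorem III.7.10]{K-K} adapted, exactly as announced), and matches radicals level by level. The main obstacle I anticipate is precisely this last step: verifying that passing to the quotient does not enlarge the radical, i.e.\ that the restriction of a non-degenerate pairing to a Hopf subalgebra on one side has radical on the other side equal to the ideal generated by the augmentation ideal of the annihilator --- this requires the freeness/faithful-flatness from Proposition \ref{prop:femodpro} and Lemma \ref{lem:subalgcen} and a careful bookkeeping of PBW-type bases, rather than any genuinely new idea; everything else is formal once the pairing and the Frobenius kernel description are in hand.
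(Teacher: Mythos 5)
Your approach is the same as the paper's: restrict the perfect pairing $\bar\rho$ of Lemma \ref{lem:pairind} to $\QUhatphieR(\g) \simeq \qephi$, show it descends to $\overline{\Oephi}$, and conclude by a dimension count. The paper's proof is just three sentences (assert the descended pairing is perfect, get a monomorphism, match dimensions, invoke Lemma \ref{lem:reslusyflkiso}); you correctly fill in the descent step that the paper leaves implicit, and the Frobenius computation $\bar\rho(c,u)=\langle c',\Fr(u)\rangle = \eps(u)\eps(c)$ for $c={}^t\Fr(c')\in F_0$ is exactly right and is the honest reason $\OO(G)^+$ lands in the left radical.

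The one place where your write-up is weaker than it needs to be is the final non-degeneracy step, which you flag yourself as the main obstacle. You choose to show the descended map $\overline{\Oephi}\to\qephi^*$ is \emph{injective}, and the transversal-basis argument you sketch is circular as stated (it presupposes non-degeneracy of the very pairing you're trying to control). The cleaner route, which sidesteps the careful bookkeeping entirely, is to argue \emph{surjectivity} first: if $u\in\qephi$ pairs trivially with all of $\Oephi$, then $u$ is in the right radical of the perfect pairing $\Oephi\ot\Gphige_\CC\to\CC$ and hence $u=0$; since $\qephi$ is finite-dimensional, right non-degeneracy forces $\Oephi\to\qephi^*$ to be onto. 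This map factors through $\overline{\Oephi}$ by your Frobenius computation, and $\dim\overline{\Oephi}=\ell^{\dim\g}=\dim\qephi^*$ then gives the isomorphism for free. This recovers the injectivity (i.e.\ the left radical equals $\OO(G)^+\Oephi$) as a byproduct rather than something you have to establish by hand. With that adjustment, your argument is complete and matches the paper's.
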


\begin{proof}
The pairing defined in Lemma \ref{lem:pairind} induces a perfect Hopf pairing 
$\overline{\Oephi}\otimes_{\QQ(\epsilon)}\QUhatphieR (\g)\to \QQ(\epsilon)$. 
In particular, we have a
Hopf algebra monomorphism $\overline{\Oephi}\hookrightarrow \QUhatphieR(\g)^\ast$. 
Since both algebras have the same dimension, the assertion follows by Lemma \ref{lem:reslusyflkiso}.
\end{proof}

As a consequence of the theorem above, the following sequence of Hopf algebras is exact
$$\xymatrix{1 \ar[r]^{} & \Oc(G)_{\QQ(\epsilon)} \ar[r]^{\iota} &
\OphieeQ \ar[r]^{\pi}
& \qephi^{*} \ar[r]^{} & 1.}
$$

\begin{prop}\label{prop:froblustkertwist}
 $\qephi\simeq \qe^{J}$ for a twist $J \in \QQ(\epsilon)[\Tt^{\varphi}\times \Tt^{\varphi}]$. 
\end{prop}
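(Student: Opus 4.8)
The plan is to construct the twist $J$ explicitly on the group algebra of $\Tt^{\varphi}=G(\qe)=(\ZZ/\ell\ZZ)^n$ from the antisymmetric bicharacter $p$ that governs the $2$-cocycle deformation relating $\Ophieq$ to $\Oq$, and then check that conjugating the coproduct of $\qe$ by $J^{-1}$ reproduces exactly the twisted coproduct $\Delta_\varphi$ on $\QUhatphieR(\g)\simeq\qephi$ (Lemma \ref{lem:reslusyflkiso}). First I would recall that, since $\qe$ and $\qephi$ are finite-dimensional, a twist $J$ on $\qe$ is the same as a $2$-cocycle on $\qe^\ast\simeq\overline{\Oe}$; and by Corollary \ref{cor:cocycle} together with Remark \ref{obs:cocycleproy}, the $2$-cocycle $\sigma$ descends from $\Oq$ to a $2$-cocycle $\bar\sigma$ on $\OphieeQ$, and further to a $2$-cocycle on the quotient $\overline{\Oephi}\simeq\qephi^\ast$, because $\sigma$ is supported on the degree-zero part via the counits $\eps(c_{f_i,v_i})$ and hence kills $\OO(G)^+\Oephi$. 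Dualizing this $2$-cocycle on $\qephi^\ast$ gives a twist on $\qephi$; but one must instead do the argument the other way, producing a twist on $\qe$ whose deformation is $\qephi$.

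The cleanest route: note that $\bar\sigma$ on $\overline{\Oe}\simeq\qe^\ast$ is induced by the antisymmetric bicharacter $\chi(\lambda_1,\lambda_2)=\epsilon^{-\frac12(\varphi(\lambda_1),\lambda_2)}$ on $P$, which by Remark \ref{rmk:bigrad} only depends on the images of $\lambda_i$ in $P/\ell P$; equivalently it is a bicharacter on the grading group detecting the $\Tt^{\varphi}$-grading. Under the identification $\qe^\ast\simeq\overline{\Oe}$, the group-likes $G(\qe)=\Tt^{\varphi}$ are dual to the characters giving this grading, so $\chi$ transports to a $2$-cocycle on $\QQ(\epsilon)[\Tt^{\varphi}]$, i.e. to an element $J\in\QQ(\epsilon)[\Tt^{\varphi}\times\Tt^{\varphi}]\subseteq\qe\ot\qe$. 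Concretely, writing $\Tt^{\varphi}=\langle K_{\alpha_1},\dots,K_{\alpha_n}\rangle$ and using primitive $\ell$th roots of unity, one sets $J=\frac{1}{\ell^n}\sum \chi(a,b)^{-1}\, e_a\ot e_b$ in terms of the dual idempotent basis, or more usefully $J=\prod_{i<j}$ (a Gauss-sum-type expression in the $K_{\alpha_i}$) realizing the antisymmetric form $\tfrac12 d_i x_{ij}$ on the exponents. One then verifies the cocycle/twist axioms: normalization is immediate, and the $2$-cocycle identity for $J$ is equivalent to $\chi$ being a bicharacter, which it is.

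Having $J$, the remaining step is to identify $\qe^{J}$ with $\qephi$. Both are generated by $E_i,F_i,K_{\alpha_i}$ subject to the same algebra relations (Definition \ref{algcuan}, which is unchanged by a twist since twisting alters only the coalgebra structure), so it suffices to match coproducts. In $\qe^{J}$ one has $\Delta^{J}(K_{\alpha_i})=J^{-1}\Delta(K_{\alpha_i})J=K_{\alpha_i}\ot K_{\alpha_i}$ since the $K$'s are group-like and $J$ lies in the group algebra they span (which is commutative). For $E_i$ one computes $\Delta^{J}(E_i)=J^{-1}(E_i\ot1+K_{\alpha_i}\ot E_i)J$, and because conjugation by $J\in\QQ(\epsilon)[\Tt^{\varphi}\times\Tt^{\varphi}]$ by a group-like-weighted element acts on $E_i\ot1$ and $K_{\alpha_i}\ot E_i$ by scalars determined by $\chi$ evaluated on the $\Tt^{\varphi}$-degrees of $E_i$ (namely $\alpha_i\bmod\ell$), one gets $\Delta^{J}(E_i)=E_i\ot K_{\tau_i}+K_{\alpha_i-\tau_i}\ot E_i$, matching $\Delta_\varphi$; here one uses that $K_{\tau_i}$ makes sense in $\qe$ because $\tau_i\in P$ and $K_{2\tau_i}=K_{\delta_i}$ lies in the torus after the analysis in Section \ref{sec:twistedqg} (the relevant half-lattice issue is handled by $\frac12(\varphi\lambda,\mu)\in\ZZ$ and $A+2X$ invertible mod $\ell$, odd). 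The same computation handles $F_i$, and then $\SS^{J}$ is forced. The main obstacle I expect is precisely this bookkeeping: one must check that the element $J$ built from $\chi$ genuinely conjugates the standard $(1,K_{\alpha_i})$-primitive $E_i$ to the $\varphi$-shifted $(K_{\alpha_i-\tau_i},K_{\tau_i})$-primitive, which requires that the scalar produced by $J$-conjugation on each tensor leg equals $\epsilon$ raised to the correct pairing of $\alpha_i$ with $\tau_j$'s — i.e. that the bicharacter $\chi$ restricted to $Q/\ell Q$ reproduces the exponents appearing in $\Delta_\varphi$ on divided powers in \eqref{eq:coproGphi}. Once that matching of exponents is verified (a finite check on generators using $\varphi\alpha_i=2\tau_i$, antisymmetry of $DX$, and oddness of $\ell$), the isomorphism $\qephi\simeq\qe^{J}$ follows and the proof is complete.
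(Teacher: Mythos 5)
Your plan starts with the same observation the paper uses (descent of the $2$-cocycle to $\overline{\Oephi}\simeq\qephi^\ast$, then dualize), but then you pivot to a much more computational route, and in doing so introduce a small logical muddle that is worth flagging. You write that dualizing the $2$-cocycle on $\qephi^\ast$ would give a twist on $\qephi$ and that ``one must instead do the argument the other way.'' This is not a real obstruction: the descended cocycle $\hat\sigma$ with $\hat\sigma(\pi(x),\pi(y))=\bar\sigma(x,y)$ is a cocycle \emph{on $\qe^\ast$} deforming it into $\qephi^\ast$, so its dual $J$ is already a twist on $\qe$ with $\qe^J\simeq\qephi$ by the finite-dimensional twist/cocycle duality --- no direction needs fixing. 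The isomorphism of Hopf algebras $\qe^J\simeq\qephi$ is therefore automatic, and the generator-by-generator check of $\Delta^J(E_i)=E_i\ot K_{\tau_i}+K_{\alpha_i-\tau_i}\ot E_i$ that you outline as ``the main obstacle'' is redundant: it is precisely the content already encoded in $(\qe^\ast)_{\hat\sigma}=\qephi^\ast$.

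Where the paper genuinely differs from your plan is in how it proves $J\in\QQ(\epsilon)[\Tt^\varphi\times\Tt^\varphi]$: rather than constructing $J$ explicitly as a Gauss-sum or idempotent expression and verifying the twist axioms by hand, the paper views $\hat\sigma$ as an element $J=\sum_iu_i\ot u^i\in\qe\ot\qe$ via the evaluation pairing, computes
\[
\sum_i f_1(u_i\cdot v_1)f_2(u^i\cdot v_2)=f_1(v_1)f_2(v_2)\,\epsilon^{\frac12(\varphi\lambda_1,\lambda_2)},
\]
and concludes that the $u_i, u^i$ act by scalars on every weight vector of every simple module, hence lie in $\QQ(\epsilon)[\Tt^\varphi]$. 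This is appreciably cleaner than your explicit construction, which would require you to verify the twist axioms for your candidate $J$, to show it genuinely represents the same cohomology class as $\hat\sigma$, and then to carry out the exponent bookkeeping you describe. Your approach would work, but it reproves by hand what the duality gives for free and replaces a two-line pairing argument with a nontrivial explicit computation; there is no conceptual payoff in exchange.
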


\begin{proof}
By Lemma \ref{lem:def2cocicloOe}, $\Oephi$ is a $2$-cocycle deformation of $\Oe$. Denote 
this cocycle by $\bar{\sigma}$. Then, if we denote $\mathcal{I}=\Oc(G)^+\Oephi$, it holds that $\bar{\sigma}|_{\mathcal{I}\otimes \Oephi+ \Oephi\otimes \mathcal{I}} = 0$ and 
by Remark \ref{obs:cocycleproy}, we have that $\qephi^{*}$ is a $2$-cocycle deformation of
$\qe^{*}$, where the cocycle is given by the formula $\hat{\sigma}(\pi(x),\pi(y)) = \bar{\sigma}(x,y)$ for all $x,y \in \Oe$. 
We may consider 
$\hat{\sigma}: \qe^{*}\ot \qe^{*}\to \QQ(\epsilon)$ as an element in $\qe\ot \qe$, say $J= \sum_{i} u_{i}\ot u^{i} $.
Then, 
\begin{align*}
\hat{\sigma}(\pi(c_{f_{1},v_{1}})\ot \pi(c_{f_{2},v_{2}})) &= 
\langle J,\pi(c_{f_{1},v_{1}})\ot \pi(c_{f_{2},v_{2}})\rangle = 
\sum_{i}  f_{1}(u_{i}\cdot v_{1}) f_{2}(u^{i}\cdot v_{2}) \\
& = \eps(c_{f_{1},v_{1}})\eps(c_{f_{2},v_{2}})\epsilon^{\frac{1}{2}(\varphi(\lambda_{1}), \lambda_{2})}
= f_{1}(v_{1})f_{2}(v_{2})\epsilon^{\frac{1}{2}(\varphi(\lambda_{1}), \lambda_{2})}, 
\end{align*}
for all
$\Lambda_{i}\in P_{+},\ v_{i}\in L(\Lambda_{i})_{\mu_{i}}$, $f_{i}\in L(\Lambda_{i})_{-\lambda_{i}}$,
and $i=1,2$, where $\langle \cdot, \cdot \rangle$ is the 
perfect pairing given by the evaluation. Thus, the components of $J$ must act diagonally and 
consequently, $J \in \QQ(\epsilon)[\Tt^{\varphi}\times \Tt^{\varphi}]$. 
\end{proof}

\subsection{Subalgebras of $\qephi$}
In this subsection we discuss a parametrization of the Hopf subalgebras of $\qephi$. 
Since $\qephi$ is a pointed Hopf algebra, any Hopf subalgebra is also pointed, and in this
case, it is generated by a subgroup of the group of group-like elements and a subset
of skew-primitive elements.

\begin{lema}\label{lema:subalgparadeuphi}
The Hopf subalgebras of $\qephi$ are parametrized by triples $(I_+,I_-,\Sigma^\varphi)$ where $I_\pm \subset \pm\Pi$
and $\Sigma
^\varphi$ is a subgroup of $G(\qephi)$ such that  $K_{(1\mp\varphi)(\alpha_i)} \in \Sigma^\varphi$
if $\alpha_i\in I_{\pm}$.
Denote $\widetilde{E}_i:=E_iK_{-\tau_i}$ and $\widetilde{F}_j:=K_{(\alpha_j+\tau_j)}F_j$. Then the Hopf subalgebra of
$\qephi$ corresponding to the triple $(I_+,I_-,\Sigma^\varphi)$ is the subalgebra generated by the set
$\{g, \widetilde{E}_i, \widetilde{F}_j|\  g\in \Sigma^{\varphi}, \alpha_{i}\in I_+ \text{ and } \alpha_{j}\in I_- \}$.
\end{lema}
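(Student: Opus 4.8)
Granting the observation made just before the statement --- that every Hopf subalgebra of the pointed Hopf algebra $\qephi$ is generated by its group-likes and its skew-primitive elements --- the proof reduces to (a) computing all skew-primitives of $\qephi$ and (b) deciding which combinations of a subgroup of $\Tt^\varphi$ with a set of skew-primitives close up to a Hopf subalgebra. For (a), note first that $\tau_i\in Q$, so $\widetilde E_i=E_iK_{-\tau_i}$ and $\widetilde F_j=K_{\alpha_j+\tau_j}F_j$ lie in $\qephi$, and a direct computation with $\Delta_\varphi$ using $\varphi\alpha_i=2\tau_i$ gives
\[
\Delta_\varphi(\widetilde E_i)=\widetilde E_i\otimes 1+K_{(1-\varphi)\alpha_i}\otimes\widetilde E_i,\qquad
\Delta_\varphi(\widetilde F_j)=\widetilde F_j\otimes 1+K_{(1+\varphi)\alpha_j}\otimes\widetilde F_j,
\]
so $\widetilde E_i\in P_{1,K_{(1-\varphi)\alpha_i}}(\qephi)$ and $\widetilde F_j\in P_{1,K_{(1+\varphi)\alpha_j}}(\qephi)$; this is precisely the shift relative to the untwisted case, where $E_i\in P_{1,K_{\alpha_i}}$. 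That these, together with the trivial skew-primitives $g-1$ ($g\in\Tt^\varphi$), exhaust all skew-primitives I would obtain from Proposition \ref{prop:froblustkertwist}: since $J\in\QQ(\epsilon)[\Tt^\varphi\times\Tt^\varphi]$ lies in the tensor square of the group algebra of the group-likes, $\qephi=\qe^J$ has the same underlying algebra, the same group of group-likes and --- the coradical filtration being an algebra filtration with $\QQ(\epsilon)[\Tt^\varphi]$ in its zeroth term --- the same coradical filtration as $\qe$; hence the skew-primitives of $\qephi$ and of $\qe$ span subspaces of the common underlying algebra of equal dimension, and for $\qe$ the nontrivial ones are, as used in \cite{AG}, spanned by $E_1,\dots,E_n,F_1,\dots,F_n$, so the $2n$ elements $\widetilde E_i,\widetilde F_j$ account for all of them.

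For the parametrization, let $B\subseteq\qephi$ be a Hopf subalgebra and set $\Sigma^\varphi=G(B)\leq\Tt^\varphi$. By the quoted structural fact, $B$ is generated by $\Sigma^\varphi$ together with those $\widetilde E_i$ and $\widetilde F_j$ it contains; writing $I_+\subseteq\Pi$ and $I_-\subseteq-\Pi$ for the corresponding index sets produces the triple, and these data are recovered from $B$ because the $\widetilde E_i,\widetilde F_j$ are linearly independent of each other and of the $g-1$ in a PBW basis of $\qephi$ inherited from $\Gphig$, and lie in distinct $\Tt^\varphi$-adjoint weight spaces. If $\widetilde E_i\in B$ then $K_{(1-\varphi)\alpha_i}\otimes\widetilde E_i=\Delta_\varphi(\widetilde E_i)-\widetilde E_i\otimes 1\in B\otimes B$, whence $K_{(1-\varphi)\alpha_i}\in B\cap\Tt^\varphi=\Sigma^\varphi$, and likewise $K_{(1+\varphi)\alpha_j}\in\Sigma^\varphi$ for $\alpha_j\in I_-$: this is the stated compatibility condition. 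Conversely, given a triple satisfying it, the subalgebra generated by $\Sigma^\varphi\cup\{\widetilde E_i:i\in I_+\}\cup\{\widetilde F_j:j\in I_-\}$ is stable under $\Delta_\varphi$ (every group-like appearing in the coproduct formulas above lies in $\Sigma^\varphi$) and under $\SS_\varphi$ (for $x\in P_{1,g}$ one has $\SS_\varphi(x)=-g^{-1}x$, with $g^{-1}\in\Sigma^\varphi$), hence is a Hopf subalgebra; distinct triples give distinct subalgebras by the identification just made. This yields the asserted bijection.

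The step I expect to be the main obstacle is the completeness assertion in (a): ruling out \emph{exotic} skew-primitives, such as group-like multiples of non-simple root vectors $E_\beta$ or unforeseen $\QQ(\epsilon)$-linear combinations. One can check directly from the PBW basis of $\qephi$ that such elements fail to be skew-primitive, but the cleaner route is the reduction to the known statement for $\qe$ through the twist of Proposition \ref{prop:froblustkertwist}; this also makes transparent that the twisting affects the combinatorics only through the replacement of $K_{\alpha_i}$ and $K_{\alpha_j}$ by $K_{(1-\varphi)\alpha_i}$ and $K_{(1+\varphi)\alpha_j}$ in the compatibility condition.
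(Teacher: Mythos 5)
Your proposal is correct and, unlike the paper's proof --- which simply cites \cite[Corollary 1.12]{AG} and records the two coproduct computations for $\widetilde E_i$ and $\widetilde F_j$ --- actually reconstructs the argument. The one genuinely different ingredient you introduce is Proposition \ref{prop:froblustkertwist}: since $\qephi\simeq\qe^{J}$ for a twist $J\in\QQ(\epsilon)[\Tt^{\varphi}\times\Tt^{\varphi}]$ supported on the coradical $H_0=\QQ(\epsilon)[\Tt^{\varphi}]$, conjugation by $J$ and $J^{-1}$ preserves $H_0\otimes H+H\otimes H_{n-1}$ (the coradical filtration of a pointed Hopf algebra being an algebra filtration), so the coradical filtrations of $\qephi$ and $\qe$ agree as subspaces of the common underlying algebra; in particular the space of nontrivial skew-primitives transfers, and the $\widetilde E_i,\widetilde F_j$ must exhaust the bimodule generators of $H_1/H_0$. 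This twist argument cleanly settles the ``exotic skew-primitive'' worry that the paper leaves to the citation, and it also makes transparent that the only combinatorial effect of $\varphi$ is the replacement of $K_{\alpha_i}$ by $K_{(1\mp\varphi)\alpha_i}$ in the compatibility condition. Both routes are valid; the paper's is shorter, yours is more self-contained. Two small imprecisions worth tightening: the phrase ``span subspaces \dots of equal dimension'' should be read at the level of $H_1/H_0$ and its $\Tt^{\varphi}$-bimodule generators (there are of course infinitely many skew-primitives, namely all $\Tt^{\varphi}$-shifts $g\widetilde E_i$, $g\widetilde F_j$), and the injectivity of the parametrization ultimately rests on the adjoint weights $\pm\alpha_i$ being pairwise distinct in $\widehat{\Tt^{\varphi}}$, which uses the standing coprimality assumptions on $\ell$; it would be worth saying this explicitly rather than leaving it at ``distinct weight spaces.''
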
      

\begin{proof} The proof follows from 
\cite[Corollary 1.12]{AG}, since $\qephi$ is generated by group-like and skew-primitive elements. In particular, 
$\Delta_\varphi(\widetilde{E}_i)=\widetilde{E}_i\otimes 1+K_{(1-\varphi)(\alpha_i)}\otimes \widetilde{E}_i$,
$\Delta_\varphi(\widetilde{F}_j)=\widetilde{F}_j\otimes 1+ K_{(1+\varphi)(\alpha_j)}\otimes \widetilde{F}_j$.
\end{proof}

Each pair $(I_+,I_-)$ determines a regular Lie subalgebra $\liep$ of $\g$ containing the fixed Cartan subalgebra $\lieh$.
Next we define the corresponding twisted quantum algebras.
\begin{definition}\label{def:gamaparabo}
For every pair $(I_+,I_-)$ with $I_\pm \subset \pm\Pi$, we define
$\Gamma^\varphi(\mathfrak{p})$ as
the subalgebra of $\Gamma^\varphi(\g)$ generated by the elements

\begin{align*}
K^{-1}_{\alpha_i}& &(1\leq i \leq n),\\
\dbinom{K_{\alpha_i};0}{m}&:=\prod_{s=1}^{m}
\left(\dfrac{K_{\alpha_i}q_i^{-s+1}-1}{q_i^s-1}\right)& (m \geq 1, 1 \leq i \leq n),\\
E^{(m)}_j&:=\dfrac{E^m_j}{[m]_{q_j}!}&(m \geq 1, \alpha_j\in I_+),\\
F^{(m)}_k&:=\dfrac{F^m_k}{[m]_{q_k}!}&(m \geq 1, \alpha_k\in I_-).
\end{align*}
\end{definition}         

\begin{prop}\label{prop:gepp} \cite[Proposition 2.3 (a)]{AG}
Let $\Gamma_\epsilon^\varphi(\mathfrak{p}):= 
\Gamma^\varphi(\mathfrak{p})/[\chi_{\ell}(q)\Gamma^\varphi(\mathfrak{p})]
\simeq \Gamma^\varphi(\mathfrak{p}) \otimes_R R/[\chi_\ell(q)R]$ 
denote the $\QQ(\epsilon)$-algebra 
given by the specialization. Then $\Gamma^\varphi_\epsilon(\mathfrak{p})$ is a Hopf subalgebra of $\Gphige$. \qed
\end{prop}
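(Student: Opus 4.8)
The statement to prove is Proposition \ref{prop:gepp}: for every pair $(I_+,I_-)$ with $I_\pm\subset\pm\Pi$, the specialization $\Gamma_\epsilon^\varphi(\mathfrak p):=\Gamma^\varphi(\mathfrak p)\otimes_R R/[\chi_\ell(q)R]$ is a Hopf subalgebra of $\Gphige$. The plan is to first verify that $\Gamma^\varphi(\mathfrak p)$ is itself a Hopf subalgebra of $\Gamma^\varphi(\g)$ over $R$, and then show that taking the specialization commutes with the inclusion, so that the Hopf structure descends.

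The first step is to check that $\Gamma^\varphi(\mathfrak p)$ is closed under the coproduct, counit and antipode of $\Gamma^\varphi(\g)$. Since by Definition \ref{def:gamaparabo} the algebra $\Gamma^\varphi(\mathfrak p)$ is generated by the $K_{\alpha_i}^{-1}$, the elements $\binom{K_{\alpha_i};0}{m}$ for all $i$, and the divided powers $E_j^{(m)}$ for $\alpha_j\in I_+$ and $F_k^{(m)}$ for $\alpha_k\in I_-$, it suffices to check the generators. For $E_j^{(m)}$ and $F_k^{(m)}$ this is immediate from the coproduct formulas \eqref{eq:coproGphi}: $\Delta_\varphi E_j^{(p)}=\sum_{r+s=p}q_j^{-rs}E_j^{(r)}K_{s(\alpha_j-\tau_j)}\otimes E_j^{(s)}K_{r\tau_j}$, and the right-hand side involves only $E_j^{(r)}$ with $\alpha_j\in I_+$ together with $K_\lambda$'s for $\lambda\in Q$, all of which are polynomials in the listed generators $K_{\alpha_i}^{-1}$ (the $K_{s(\alpha_j-\tau_j)}$ and $K_{r\tau_j}$ lie in the span of the $K_{\alpha_i}$ since $\tau_i\in P$ and in fact $\tau_i\in Q$ by $AY=X$, so $K_{\tau_i}$ is a monomial in the $K_{\alpha_k}^{\pm1}$). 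Similarly for $F_k^{(m)}$, and for $\binom{K_{\alpha_i};0}{t}$ the third formula in \eqref{eq:coproGphi} keeps us inside the subalgebra. The counit and antipode are handled analogously using the twisted structure maps $\eps_\varphi$, $\SS_\varphi$. This is essentially the $\varphi$-twisted version of the classical fact that $\Gamma(\mathfrak p)$ is a Hopf subalgebra of $\Gamma(\g)$, and since $\Gamma^\varphi(\mathfrak p)\simeq\Gamma(\mathfrak p)$ as an algebra (the twist only affects the coalgebra structure), one can also invoke \cite[Proposition 2.3 (a)]{AG} directly for the untwisted statement and transport it.

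The second step is to pass to the quotient. Because $\Gamma^\varphi(\mathfrak p)$ is a free $R$-submodule of $\Gamma^\varphi(\g)$ — it has an $R$-basis consisting of the PBW-type monomials in the $E_{\beta}^{(m)}$ ($\beta$ a positive root of $\mathfrak p$), $\binom{K_{\alpha_i};0}{t}$, $K_{\alpha_i}^{-1}$, $F_\beta^{(m)}$, which form part of the PBW $R$-basis of $\Gamma^\varphi(\g)$ — the inclusion $\Gamma^\varphi(\mathfrak p)\hookrightarrow\Gamma^\varphi(\g)$ remains injective after applying $-\otimes_R R/[\chi_\ell(q)R]$. Hence $\Gamma_\epsilon^\varphi(\mathfrak p)$ is a $\QQ(\epsilon)$-subalgebra of $\Gphige$, and since the comultiplication, counit and antipode of $\Gphige$ are obtained by base change from those of $\Gamma^\varphi(\g)$, which restrict to $\Gamma^\varphi(\mathfrak p)$ by Step 1, they restrict to $\Gamma_\epsilon^\varphi(\mathfrak p)$. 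Therefore $\Gamma_\epsilon^\varphi(\mathfrak p)$ is a Hopf subalgebra of $\Gphige$.

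The main obstacle, though a minor one, is bookkeeping in Step 1: one must confirm that every Cartan-type element appearing on the right-hand side of the twisted coproduct formulas \eqref{eq:coproGphi} — namely $K_{s(\alpha_j-\tau_j)}$, $K_{r\tau_j}$, $K_{s\tau_i}$, $K_{-r(\alpha_i+\tau_i)}$ — actually lies in the subalgebra generated by the $K_{\alpha_i}^{-1}$ and the $\binom{K_{\alpha_i};0}{t}$, which comes down to the fact that $\tau_i\in Q$ (equivalently $X=AY$ with $Y$ integral, so $\tau_i=\sum_j y_{ji}\alpha_j$). Once that is in place, the rest is a routine transfer of the argument of \cite[Proposition 2.3 (a)]{AG} together with the freeness observation ensuring injectivity is preserved under specialization.
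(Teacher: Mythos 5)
Your proposal is correct and takes essentially the same approach as the paper, which simply cites \cite[Proposition 2.3 (a)]{AG} and leaves the transfer of the argument implicit. You correctly identify the only $\varphi$-specific ingredient — namely that the factors $K_{r\tau_i}$, $K_{s(\alpha_i-\tau_i)}$, etc.\ appearing in the twisted coproduct formulas \eqref{eq:coproGphi} belong to the toral part of $\Gamma^\varphi(\liep)$, which indeed reduces to $\tau_i\in Q$ (this is built into the paper's hypotheses, since the coefficients $y_{ji}$ in $\tau_i=\sum_j y_{ji}\alpha_j$ are required to be integers; the relation $X=AY$ by itself does not yield integrality of $Y$, so that parenthetical could be worded more carefully). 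The freeness/PBW observation ensuring that specialization preserves the inclusion is exactly the point settled in [AG], which you rightly invoke, including the implicit compatible choice of reduced expression for $w_0$ relative to $(I_+,I_-)$.
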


Next we define a family of regular twisted Frobenius-Lusztig kernels. 
\begin{definition}\label{def:nupaFL}
For every pair $(I_+,I_-)$ with $I_\pm \subset \pm\Pi$, we define the
{\it twisted regular Frobenius-Lusztig kerner} $\qephip$ as the subalgebra of $\Gephip$ generated by 
the elements 
$
\{K_{\alpha_i}, E_j, F_k: \, 1 \leq i \leq n, \alpha_j \in I_+, \alpha_k \in I_-\}
$.
\end{definition}

In the following propositions we collect some properties.
\begin{prop}
$\qephip$ is the Hopf subalgebra of $\qephi$ given by $\Gephip \cap\qephi=\qephip$. It corresponds to 
the triple $(I_{+},I_{-},\Tt^{\varphi})$.
\end{prop}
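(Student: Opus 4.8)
The plan is to split the statement into its two halves --- the set-theoretic identity $\Gephip\cap\qephi=\qephip$, and the identification of $\qephip$ with the Hopf subalgebra of $\qephi$ attached to the triple $(I_+,I_-,\Tt^\varphi)$ --- and to reduce the first half to an algebra-theoretic fact in which the twist $\varphi$ is invisible. First I would record that, as \emph{algebras}, none of $\QUphie(\g)$, $\Gphige$, $\qephi$, $\Gephip$, $\qephip$ depends on $\varphi$: twisting by $\varphi$ only modifies the coalgebra structure, the Lusztig operators $T_i$ are the same algebra automorphisms in the twisted and untwisted settings, so $Z_0^\varphi$ is literally $Z_0$ as a subalgebra, and $\eps_\varphi$ agrees with $\eps$ on the generators $K_\alpha^\ell$, $E_i^\ell$, $F_i^\ell$ of $Z_0$, whence $(Z_0^\varphi)^+\QUphie(\g)=Z_0^+\QUphie(\g)$; likewise the defining generators of $\Gephip$ and of $\qephip$ are the very same elements of $\Gphige$ as in the case $\varphi=0$. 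Hence proving $\Gephip\cap\qephi=\qephip$ amounts to proving it for $\varphi=0$, which is contained in \cite[\S 2]{AG}.

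For completeness I would reprove that identity. The containment $\qephip\subseteq\Gephip\cap\qephi$ is immediate: $\qephip$ is a subalgebra of $\Gephip$ by Definition \ref{def:nupaFL}, generated by elements of $\QUhatphieR(\g)\simeq\qephi$ (Lemma \ref{lem:reslusyflkiso}). For the reverse I would use compatible PBW bases of $\Gphige$: choosing a reduced expression $w_0=s_{i_1}\cdots s_{i_N}$ of the longest element adapted to $I_+$ on the $E$-side --- so that the positive roots in $\ZZ I_+$ form an initial segment and the corresponding root vectors $E_\beta$ lie in the subalgebra generated by $\{E_j:\alpha_j\in I_+\}$ --- and, independently, one adapted to $I_-$ on the $F$-side, the triangular decomposition $\Gphige=\Gamma^\varphi_\epsilon(\mathfrak b_-)\,\Gamma^\varphi_\epsilon(\mathfrak h)\,\Gamma^\varphi_\epsilon(\mathfrak b_+)$ provides a $\QQ(\epsilon)$-basis of $\Gphige$ consisting of ordered monomials $\eta\,\kappa\,\xi$. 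In this basis, $\Gephip$ is the span of the monomials whose $E$-part is supported on $\Phi_+\cap\ZZ I_+$ and whose $F$-part is supported on $\Phi_+\cap\ZZ I_-$, while $\qephi$ is the span of the monomials all of whose divided-power exponents are $<\ell$ and whose toral factor lies in $\Tt^\varphi=\langle K_{\alpha_i}\rangle$; the intersection of the two spans is exactly the PBW basis of $\qephip$, so $\Gephip\cap\qephi\subseteq\qephip$. Checking that these two PBW bases are compatible --- that one may use different reduced expressions on the two sides and that the Levendorskii--Soibelman straightening keeps the Levi root vectors inside the corresponding Levi subalgebras --- is the only genuinely technical point, and it is what I would import verbatim from \cite{AG}, using that everything is untwisted as an algebra.

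Finally, for the last sentence of the statement, I would first check that $(I_+,I_-,\Tt^\varphi)$ is admissible in the sense of Lemma \ref{lema:subalgparadeuphi}: since $\tau_i\in Q$ (its expansion $\tau_i=\sum_j y_{ji}\alpha_j$ has integer coefficients), one has $(1\mp\varphi)\alpha_i=\alpha_i\mp 2\tau_i\in Q$, so $K_{(1\mp\varphi)\alpha_i}\in\langle K_{\alpha_j}\rangle=\Tt^\varphi$ whenever $\alpha_i\in I_\pm$. By Lemma \ref{lema:subalgparadeuphi} the Hopf subalgebra of $\qephi$ attached to this triple is generated by $\Tt^\varphi$ together with $\widetilde E_i=E_iK_{-\tau_i}$ ($\alpha_i\in I_+$) and $\widetilde F_j=K_{\alpha_j+\tau_j}F_j$ ($\alpha_j\in I_-$); as $\tau_i$ and $\alpha_j+\tau_j$ lie in $Q$, the factors $K_{-\tau_i},K_{\alpha_j+\tau_j}$ lie in $\Tt^\varphi$, so this subalgebra is also generated by $\{K_{\alpha_i},E_j,F_k:1\le i\le n,\ \alpha_j\in I_+,\ \alpha_k\in I_-\}$ --- which is precisely $\qephip$, since the subalgebra of $\Gphige$ generated by a given set does not depend on whether it is formed inside $\Gephip$ or inside $\qephi$. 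In particular $\qephip$ is a Hopf subalgebra of $\qephi$; alternatively this is visible directly from $\Delta_\varphi(E_j)=E_j\otimes K_{\tau_j}+K_{\alpha_j-\tau_j}\otimes E_j$ and $\Delta_\varphi(F_k)=F_k\otimes K_{-\alpha_k-\tau_k}+K_{\tau_k}\otimes F_k$, whose tensor factors all lie in $\qephip$.
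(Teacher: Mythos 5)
Your proof is correct and runs along essentially the same lines as the paper's one-line proof (which simply cites Lemma \ref{lem:reslusyflkiso} to place $\qephi$ inside $\Gphige$ as $\QUhatphieR(\g)$, and Lemma \ref{lema:subalgparadeuphi} to identify the triple). What you add is useful detail: the clean observation that the twisting $\varphi$ only deforms the coalgebra structure, so $Z_0^\varphi=Z_0$, the Lusztig automorphisms are unchanged, and all the relevant subalgebras of $\Gphige$ are literally the same $\QQ(\epsilon)$-subalgebras as in the untwisted case, which reduces the set-theoretic identity $\Gephip\cap\qephi=\qephip$ to \cite[Proposition 2.6]{AG}; the PBW bookkeeping with reduced expressions adapted to $I_\pm$, which is the content hidden behind that citation; and the explicit check that $(I_+,I_-,\Tt^\varphi)$ is admissible because $\tau_i\in Q$ (so $K_{(1\mp\varphi)\alpha_i}\in\Tt^\varphi$), after which the generating sets coincide. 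All of this is consistent with the lemmas the paper invokes; you have simply unpacked the argument rather than replaced it.
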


\pf
Follows from  Lemmata \ref{lem:reslusyflkiso} 
and  \ref{lema:subalgparadeuphi}.
\epf

\begin{prop}\label{prop:qephisse}
\begin{itemize}
\item[$(i)$] Let $U(\p)_{\QQ(\epsilon)}:=\Fr (\Gephip)$ and denote $\Fr_{res} = \Fr|_{\Gephip}$. 
Then the following diagram is commutative and all rows are exact sequences of Hopf algebras
\begin{equation}\label{eq:diapara}
\xymatrix{1 \ar[r]^{} & \qephi\ar[r] &
\Gphige\ar[r]^{\Fr}
& U(\g)_{\QQ(\epsilon)} \ar[r] & 1\\
1 \ar[r]^{} & \qephip\ar[r]\ar@{^{(}->}[u] &
\Gephip \ar[r]^{\Fr_{res}}\ar@{^{(}->}[u]
& U(\p)_{\QQ(\epsilon)} \ar[r] \ar@{^{(}->}[u]& 1.}
\end{equation}
\item[$(ii)$]There is a surjective algebra map $\theta:  \Gephip \to \qephip$ such that $\theta|_{\qephip}= \id$.
\end{itemize}
\end{prop}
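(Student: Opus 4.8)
The plan is to establish the commutative diagram of $(i)$ first and then deduce the retraction $(ii)$ from it, following the strategy of \cite[\S 2]{AG} for the untwisted case. The top row is the exact sequence of the quantum Frobenius map: by Lemma \ref{prop:homofropro} the kernel of $\Fr\colon\Gphige\to U(\g)_{\QQ(\epsilon)}$ is the two-sided ideal generated by $K_{\alpha_i}-1,E_i,F_i$ ($1\le i\le n$), which by Lemma \ref{lem:reslusyflkiso} is precisely $\qephi^{+}\Gphige$; moreover $\Gphige$ is free as a left $\qephi$-module, a basis being given by the monomials in the higher divided powers $E_\beta^{(\ell b_\beta)},F_\beta^{(\ell a_\beta)},\binom{K_{\alpha_i};0}{\ell t_i}$, hence faithfully flat over $\qephi$, so \cite[Proposition 3.4.3]{Mo} gives the exactness of $1\to\qephi\to\Gphige\xrightarrow{\Fr}U(\g)_{\QQ(\epsilon)}\to1$ (this is part of the structure theory of \cite{cv2}, extending \cite{dl,K-K}). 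The strategy for the bottom row is to run the very same argument with $\p$ in place of $\g$.

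First I would note that $\Fr_{res}=\Fr|_{\Gephip}$ is a Hopf algebra map, since $\Gephip$ is a Hopf subalgebra of $\Gphige$ (Proposition \ref{prop:gepp}), and that it is surjective onto $U(\p)_{\QQ(\epsilon)}=\Fr(\Gephip)$ by definition of the latter. The crucial point is then the identity $\Ker\Fr_{res}=\qephip^{+}\Gephip$. The inclusion ``$\supseteq$'' is immediate, because $\Fr$ annihilates $E_j$ ($\alpha_j\in I_+$), $F_k$ ($\alpha_k\in I_-$) and $K_{\alpha_i}-1$, and these generate $\qephip^{+}$ as an ideal of $\qephip$. For ``$\subseteq$'' I would work with the PBW basis of $\Gephip$ supplied by \cite[Proposition 2.3]{AG} (ultimately by \cite{dl,cv2}): choosing the reduced expression of $w_0$ compatibly with $I_\pm$, this basis is a \emph{subset} of the PBW basis of $\Gphige$, consisting of the ordered monomials in the divided powers $F_\beta^{(a_\beta)}$, the Cartan elements $\binom{K_{\alpha_i};0}{t_i}$, and $E_\beta^{(b_\beta)}$ with $\beta$ running over the positive roots of $\p$. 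Now $\Fr$ sends such a monomial to $0$ unless each exponent $a_\beta,b_\beta$ and each Cartan part is divisible by $\ell$, in which case it maps it to the corresponding PBW monomial of $U(\p)_{\QQ(\epsilon)}$; since these are linearly independent, $\Ker\Fr_{res}$ is spanned by the monomials having at least one ``non-divisible'' factor, and each such factor $E_\beta^{(b_\beta)}$, $F_\beta^{(a_\beta)}$ or $\binom{K_{\alpha_i};0}{t_i}$ can be rewritten, via Lusztig's divided-power relations and the identities $E_i^{\ell}=F_i^{\ell}=0$, $K_{\alpha_i}^{\ell}=1$ of Remark \ref{rmk:relmodepsgam}, so as to display a factor in $\qephip^{+}$. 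Since $\Gephip$ is likewise free, hence faithfully flat, over $\qephip$ (same PBW basis), \cite[Proposition 3.4.3]{Mo} then yields exactness of the bottom row. The two squares commute tautologically --- the vertical arrows are inclusions and $\Fr_{res}$ is the restriction of $\Fr$ --- and $U(\p)_{\QQ(\epsilon)}\hookrightarrow U(\g)_{\QQ(\epsilon)}$ is injective, being by construction the image subalgebra $\Fr(\Gephip)$.

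For $(ii)$ I would define $\theta$ as the canonical projection $\Gephip\twoheadrightarrow\Gephip/\mathfrak J$, where $\mathfrak J$ is the two-sided ideal generated by the divided powers $E_j^{(m)}$ ($\alpha_j\in I_+$), $F_k^{(m)}$ ($\alpha_k\in I_-$) and $\binom{K_{\alpha_i};0}{m}$ ($1\le i\le n$) with $m\ge\ell$; equivalently, $\theta$ fixes $K_{\alpha_i}^{\pm1}$ together with the divided powers with $m<\ell$, and kills those with $m\ge\ell$. One checks that $\mathfrak J$ is compatible with all the defining relations of $\Gephip$ --- the routine but lengthy step, which rests on Lusztig's ``Lucas'' congruences for the Gaussian binomials $\binom{m}{k}_{q_i}$ at $q=\epsilon$ and on Remark \ref{rmk:relmodepsgam} --- so $\theta$ is a well-defined algebra epimorphism onto the subalgebra generated by the images of $K_{\alpha_i}$, $E_j^{(1)}=E_j$ and $F_k^{(1)}=F_k$; a PBW-basis count identifies this subalgebra with $\qephip$ and shows that $\theta$ collapses nothing further, whence $\theta|_{\qephip}=\id$ (recall $\ell\ge 3$, so the generators of $\qephip$ have $m=1<\ell$). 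This is the twisted analogue of \cite[Proposition 2.7]{AG}.

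I expect the main obstacle to be the identity $\Ker\Fr_{res}=\qephip^{+}\Gephip$ used in $(i)$: it is not a formal statement, as it relies on the fact that the PBW basis of the \emph{twisted} divided power algebra $\Gphige$ restricts correctly to $\Gephip$ and that $\Fr$ is ``diagonal'' with respect to it --- facts which are available only through the analysis of \cite{cv1,cv2}, the twisted counterpart of \cite{dl} and \cite[Ch.~III]{K-K}. Everything else --- the commutativity of the squares, the injectivity of the vertical maps, the faithful flatness, and part $(ii)$ --- is a formal adaptation of the untwisted argument.
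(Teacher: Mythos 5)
Your proposal gets the structure right, but it takes a genuinely different route from the paper in two respects, and there are a couple of points worth flagging.

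\textbf{Comparison of strategies for $(i)$.} You establish the second row's exactness from scratch: you re-derive $\Ker\Fr_{res}=\qephip^{+}\Gephip$ directly from the PBW basis of $\Gephip$, and then invoke faithful flatness. The paper instead establishes only the first row's exactness and then obtains both exactness conditions for the second row by \emph{intersecting} the first row with $\Gephip$: namely $\Gephip^{\co\Fr_{res}}=\Gphige^{\co\Fr}\cap\Gephip=\qephi\cap\Gephip=\qephip$ and $\Ker\Fr_{res}=\Ker\Fr\cap\Gephip=\qephi^{+}\Gphige\cap\Gephip=\qephip^{+}\Gephip$. The intersection argument is cleaner and avoids re-running the PBW computation, though the last equality $\qephi^{+}\Gphige\cap\Gephip=\qephip^{+}\Gephip$ secretly rests on the same PBW bookkeeping you perform explicitly. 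Your route is therefore more self-contained but less economical.

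\textbf{A citation problem.} You invoke \cite[Proposition~3.4.3]{Mo} to deduce exactness from faithful flatness, but that proposition is stated for a \emph{central} Hopf subalgebra, and $\qephi$ (resp.\ $\qephip$) is normal in $\Gphige$ (resp.\ $\Gephip$) but not central. The paper accordingly does \emph{not} use \cite{Mo} here: it verifies the coinvariants condition $\Gphige^{\co\Fr}=\qephi$ by hand via \cite[Lemma~3.4.2]{A1}, adapting the computation with the twisted coproduct formula \eqref{eq:coproGphi}. The exactness-from-faithful-flatness principle you want does hold for faithfully flat normal Hopf subalgebras, but the correct reference is Takeuchi/Schneider, not Montgomery's Proposition~3.4.3; as written your proof has a missing (or mis-attributed) ingredient in verifying that $\Gphige^{\co\Fr}=\qephi$ and $\Gephip^{\co\Fr_{res}}=\qephip$.

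\textbf{Part $(ii)$.} The paper simply cites \cite[Lemma~1.10 \& Proposition~2.6]{AG}. Your explicit construction---passing to the quotient by the two-sided ideal $\mathfrak J$ generated by divided powers of degree $\ge\ell$---is in the spirit of what those results do, but the two nontrivial assertions (that $\mathfrak J$ is compatible with the defining relations, and that $\qephip\cap\mathfrak J=0$, so that the composite $\qephip\hookrightarrow\Gephip\twoheadrightarrow\Gephip/\mathfrak J$ is an isomorphism) are precisely what requires Lusztig's structure theory of the divided-power algebra; calling this ``routine but lengthy'' glosses over the point that the $\ell$-th divided powers are not central, so the two-sided ideal generated by them is not obviously ``small''. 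That is, the verification you defer is exactly the content of the cited results of \cite{AG}, not a formality.

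In summary, the proposal is essentially sound in outline and reaches the same conclusions, but it replaces the paper's slick intersection argument in $(i)$ with a heavier direct PBW computation, mis-cites the flatness-implies-exactness result (central vs.\ normal), and in $(ii)$ under-states the difficulty of the step it leaves unverified.
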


\pf $(i)$ It follows from \cite{cv2,dl} and Proposition \ref{prop:homofropro} that $\Ker \Fr=\qephi^+\Gphige$.
The proof that $\Gphige^{\co \Fr}=\qephi$ follows from \cite[Lemma 3.4.2]{A1} but using the formula 
\eqref{eq:coproGphi} instead of the formula (1.1.3) in \cite{A1}. 
So the first row is exact.
To prove that the second row is exact, note that 
$\qephip=\qephi\cap\Gephip=\Gphige^{\co \Fr}\cap\Gephip=\Gephip^{\co \Fr_{res}}$ and 
$\Ker \Fr_{res}=\Ker \Fr\cap\Gephip=\qephi^{+}\Gphig\cap\Gephip=\qephip^+\Gephip$. 

$(ii)$ Follows from \cite[Lemma 1.10 \& Proposition 2.6]{AG}. 
\epf

\begin{obs}\label{rmk:liep}
Let $\p$ be the set of primitive elements in $U(\p)_{\QQ(\epsilon)}$. Then, $\p$ is a regular  
Lie subalgebra of $\lieg$, and $\qephip$ is the Frobenius-Lusztig kernel associated to it.
\end{obs}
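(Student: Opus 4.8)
The plan is to make the Hopf subalgebra $U(\p)_{\QQ(\epsilon)}=\Fr(\Gephip)$ completely explicit and then read off its primitive elements. First I would feed the generators of $\Gephip$ listed in Definition \ref{def:gamaparabo} into the description of the quantum Frobenius map in Proposition \ref{prop:homofropro}: one gets $\Fr(K_{\alpha_i})=1$, $\Fr\binom{K_{\alpha_i};0}{m}=\binom{h_i}{m/\ell}$ for $\ell\mid m$ (and $0$ otherwise), $\Fr(E_j^{(m)})=e_j^{(m/\ell)}$ for $\alpha_j\in I_+$, and $\Fr(F_k^{(m)})=f_k^{(m/\ell)}$ for $\alpha_k\in I_-$ (again $0$ unless $\ell\mid m$). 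Since $\QQ(\epsilon)$ has characteristic zero, the divided powers $\binom{h_i}{m}$, $e_j^{(m)}$ and $f_k^{(m)}$ already lie in the subalgebra generated by $h_i$, $e_j$ and $f_k$; hence $U(\p)_{\QQ(\epsilon)}$ is exactly the $\QQ(\epsilon)$-subalgebra of $U(\g)_{\QQ(\epsilon)}$ generated by $\{h_i:1\le i\le n\}\cup\{e_j:\alpha_j\in I_+\}\cup\{f_k:\alpha_k\in I_-\}$.

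Next I would identify $\p$. As $\Fr$ is a Hopf algebra map, $U(\p)_{\QQ(\epsilon)}$ is a connected cocommutative Hopf subalgebra of $U(\g)_{\QQ(\epsilon)}$, so in characteristic zero it is the enveloping algebra of its Lie algebra $\p$ of primitive elements, and by the previous paragraph $\p$ is the Lie subalgebra generated by the above $h_i$, $e_j$, $f_k$. Now iterated brackets of the occurring positive Chevalley generators (resp.\ of the negative ones) are root vectors or zero, while a bracket of a positive with a negative one vanishes unless the two correspond to the same simple root, in which case it lies in $\lieh$; hence $\p=\lieh\oplus\bigoplus_{\alpha\in\Psi}\g_\alpha$, where $\Psi$ consists of the positive roots supported on $I_+$ together with the negative roots supported on $I_-$. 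Being normalized by $\lieh$, the Lie algebra $\p$ is (after the harmless extension of scalars identifying $U(\g)_{\QQ(\epsilon)}$ with the enveloping algebra of the split $\QQ(\epsilon)$-form of $\g$) a regular Lie subalgebra of $\g$ containing $\lieh$ — precisely the subalgebra attached to the pair $(I_+,I_-)$ in the line preceding Definition \ref{def:gamaparabo}. This is the first assertion.

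For the second assertion I would use Proposition \ref{prop:qephisse}$(i)$: the bottom row of \eqref{eq:diapara} exhibits $\qephip=\Gephip^{\co \Fr_{res}}$ fitting into the exact sequence of Hopf algebras $1\to\qephip\to\Gephip\to U(\p)_{\QQ(\epsilon)}\to 1$. Moreover the regular subalgebra $\p$ recovers the pair $(I_+,I_-)$, since $I_+$ (resp.\ $I_-$) is the set of indecomposable elements among the positive (resp.\ negative) roots $\alpha$ with $\g_\alpha\subseteq\p$; therefore $\Gamma^\varphi(\p)$, its specialization $\Gephip$, and hence $\qephip$ depend only on $\p$ (together with $\epsilon$ and $\varphi$). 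Since by Definition \ref{def:nupaFL} the algebra $\qephip$ is generated by the $K_{\alpha_i}$ and by the quantum Chevalley generators $E_j,F_k$ indexed precisely by the simple roots of $\p$, this is what one means by saying that $\qephip$ is the (twisted) Frobenius--Lusztig kernel associated to $\p$.

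The argument is the twisted counterpart of the corresponding statement in the untwisted case treated in \cite{AG}, and the only points requiring care are routine bookkeeping: the passage through the specialization $\Gamma^\varphi(\p)\to\Gephip$, the innocuous base change from $\QQ(\epsilon)$ to $\CC$ needed to view $\p$ inside $\lieg$, and the clean identification $\Fr(\Gephip)=U(\p)_{\QQ(\epsilon)}$ together with the computation of its primitives. No idea beyond the structure theory of regular subalgebras of $\g$ and the exact sequences already established in Section \ref{sec:mflk} is needed.
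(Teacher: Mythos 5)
The paper gives no proof of this remark — it is stated as an observation that is supposed to be clear from the preceding constructions, so there is nothing in the source to compare against. Your write-up is a correct and sensible fleshing-out of exactly the chain of facts the authors are implicitly invoking: the explicit description of the quantum Frobenius map on the generators of $\Gephip$, the Milnor--Moore/connected-cocommutative-in-characteristic-zero identification of $U(\p)_{\QQ(\epsilon)}$ with the enveloping algebra of its primitives, the computation that the Lie algebra generated by $\lieh$ and the Chevalley generators indexed by $I_\pm$ is the regular subalgebra $\lieh\oplus\liep_+\oplus\liep_-$ attached to $(I_+,I_-)$, and the exact sequence from Proposition \ref{prop:qephisse}$(i)$ identifying $\qephip$ with the kernel attached to $\p$. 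The only soft spot is the clause ``a bracket of a positive with a negative one vanishes unless the two correspond to the same simple root''; as a statement about the \emph{generating} elements $e_j,f_k$ this is fine, but for closure of $\lieh\oplus\liep_+\oplus\liep_-$ under brackets one also has to see that $[\g_\alpha,\g_{-\beta}]$ lands in the span for $\alpha$ supported in $I_+$ and $\beta$ supported in $I_-$ with $\alpha\neq\beta$; this follows from the sign constraints on the coefficients of $\alpha-\beta$ as a root, but you should say it. With that small addition the argument is complete and matches what the paper intends.
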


\begin{prop}\label{prop:uptup}
 $\qephip$ is a twist deformation of $\mathbf{u}_\epsilon(\p)$.
\end{prop}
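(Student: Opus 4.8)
The plan is to \emph{restrict} the twist already furnished by Proposition \ref{prop:froblustkertwist}. The point to keep in mind is that $\qephi$ and $\qe$ have the \emph{same} underlying algebra: $\varphi$ affects only the comultiplication of $\QEMphi$, and the $B_\W$-invariant subalgebra $Z_0^\varphi\subset\QUphie(\g)$ is, as a subalgebra, independent of $\varphi$ (the Lusztig operators are $\varphi$-free), so $\qephi$ and $\qe$ coincide as algebras, say on a common space $U$. Under this identification $\qephip$ and $\mathbf{u}_\epsilon(\p)$ are, in turn, the \emph{same} subalgebra of $U$ --- both are generated by $\{K_{\alpha_i},E_j,F_k:1\le i\le n,\ \alpha_j\in I_+,\ \alpha_k\in I_-\}$, by Definition \ref{def:nupaFL} and the $\varphi=0$ convention --- and they differ only through the comultiplication they inherit from $\qephi$, resp.\ $\qe$.

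First I would pin down that the isomorphism $\qephi\cong\qe^{J}$ of Proposition \ref{prop:froblustkertwist}, with $J\in\QQ(\epsilon)[\Tt^\varphi\times\Tt^\varphi]$, is the identity on $U$: unravelling its proof, $\qephi^{*}=\overline{\Oephi}$ equals $(\overline{\Oe})_{\widehat\sigma}=(\qe^{*})_{\widehat\sigma}$ as a coalgebra with deformed product (the same quotient of $\Oe$, since centrality of $\OO(G)$ forces $\OO(G)^{+}\Oephi=\OO(G)^{+}\Oe$ as sets), and dualizing yields the equality $\Delta_\varphi=J^{-1}\Delta_0(-)J$ on $U$ with $J=\widehat\sigma^{*}$. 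Then, since $\Tt^\varphi=G(\qephi)=\langle K_{\alpha_i}\rangle$ is contained in $\mathbf{u}_\epsilon(\p)$ and $\QQ(\epsilon)[\Tt^\varphi\times\Tt^\varphi]$ is a finite-dimensional subalgebra of $U\otimes U$, one has $J^{\pm1}\in\QQ(\epsilon)[\Tt^\varphi\times\Tt^\varphi]\subseteq\mathbf{u}_\epsilon(\p)\otimes\mathbf{u}_\epsilon(\p)$; the twist identities for $J$ hold in $\qe^{\otimes 3}$ and hence restrict to $\mathbf{u}_\epsilon(\p)^{\otimes 3}$ (as $\mathbf{u}_\epsilon(\p)\hookrightarrow\qe$ is a Hopf subalgebra), so $J$ is a twist for $\mathbf{u}_\epsilon(\p)$. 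Finally, $\mathbf{u}_\epsilon(\p)^{J}$ has underlying algebra $\mathbf{u}_\epsilon(\p)=\qephip$ and comultiplication $J^{-1}\Delta_0(-)J=\Delta_\varphi$ restricted to it --- which is exactly the comultiplication of $\qephip$ --- and likewise for the antipode; hence $\qephip=\mathbf{u}_\epsilon(\p)^{J}$, as claimed.

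The step that needs the most care is isolating that the isomorphism of Proposition \ref{prop:froblustkertwist} is the literal identity on $U$, i.e.\ that $\Delta_\varphi$ is obtained from $\Delta_0$ by conjugating with a torus element. If one prefers not to open that proof, there is an alternative: keep an abstract isomorphism $\Psi\colon\qephi\xrightarrow{\ \sim\ }\qe^{J}$; since conjugating by $J\in\QQ(\epsilon)[\Tt\times\Tt]$ leaves the lattice of Hopf subalgebras containing $\Tt$ unchanged (same underlying subalgebras, conjugated coproduct), $\Psi(\qephip)=B^{J}$ for a unique Hopf subalgebra $B\supseteq\Tt$ of $\qe$, and one then invokes Lemma \ref{lema:subalgparadeuphi} and its untwisted analogue to identify $B$ with $\mathbf{u}_\epsilon(\p)$, the needed input being that $\Psi$ fixes the $K_{\alpha_i}$ and alters $E_j,F_k$ only by group-likes, hence preserves the datum $(I_+,I_-)$. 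Either route gives $\qephip\cong\mathbf{u}_\epsilon(\p)^{J}$, so $\qephip$ is a twist deformation of $\mathbf{u}_\epsilon(\p)$.
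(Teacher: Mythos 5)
Your proof is correct and follows essentially the same route as the paper: use Proposition \ref{prop:froblustkertwist} to get $\qephi\simeq\qe^{J}$ with $J\in\QQ(\epsilon)[\Tt^{\varphi}\times\Tt^{\varphi}]$, observe that $\Tt^{\varphi}\subseteq\qep$ so $J\in\qep\ot\qep$, and conclude that $\qep^{J}\simeq\qephip$. The extra work you do in the first paragraph --- checking that $\qephi$ and $\qe$ have the same underlying algebra so that $\qephip$ and $\qep$ are literally the same subalgebra, and that the isomorphism of Proposition \ref{prop:froblustkertwist} can be taken to be the identity on the underlying space, so $\Delta_{\varphi}=J^{-1}\Delta_{0}(\cdot)J$ --- is a legitimate and useful supplement: the paper's one-line assertion that $\qep^{J}$ ``is the subalgebra of $\qe^{J}$ that is isomorphic to the Hopf subalgebra of $\qephi$ which corresponds to the triple $(I_{+},I_{-},\Tt^{\varphi})$'' is precisely what you are verifying, and your check that $\OO(G)^{+}\Oephi=\OO(G)^{+}\Oe$ (which ultimately rests on the cocycle being trivial on $F_{0}$, since there the weights are divisible by $\ell$ and $\ell$ is odd) is the right way to do it. Your second, abstract alternative is also sound and closer in spirit to the paper's terse formulation.
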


\begin{proof}
We know that $\qephi \simeq \qe^{J}$ for a twist $J \in \QQ(\epsilon)[\Tt^{\varphi}\times \Tt^{\varphi}]$. Thus,
$J\in \qep\ot \qep$ and  $\qep^{\textit{J}} $ is the subalgebra of $\qe^{J}$
that is isomorphic to the Hopf subalgebra of $\qephi$ which corresponds to the
triple $(I_{+},I_{-},\Tt^{\varphi})$. Hence, $\qep^{J} \simeq \qephip$.
\end{proof}

\subsection{Quotients of $\qephi^{*}$}\label{subsec:Npara}
Denote the $\CC$-form of the twisted
Frobenius-Lusztig kernel just by $\qephi$.
Let $H$ be a Hopf algebra quotient of $\qephi^{*}$. Then, $H^{*}$ is a Hopf subalgebra of
$\qephi$ and whence, by Lemma \ref{lema:subalgparadeuphi}, it is determined by a triple $(I_+,I_-,\Sigma^\varphi)$.
Let $\qephip$ be the regular Frobenius-Lusztig kernel associated to the pair $(I_{+},I_{-})$. Then
$H^{*}\hookrightarrow \qephip \hookrightarrow \qephi$ as Hopf algebras, and consequently we have
a sequence of Hopf algebra epimorphisms 
$$\xymatrix{\qephi^{*} \ar@{->>}[r]^{} & \qephip^{*} \ar@{->>}[r]^{\nu} &  H.}$$

Let $I = I_+\cup I_-$, $I'=I_+\cap I_-$ and $I^{c}= (I_+\cup I_-)^c= I_{+}^c\cap I_{-}^c$.
We define the abelian subgroups $\Tt^\varphi_{I}$ and $\Tt^\varphi_{I'}$ of $\Sigma^{\varphi}$
as follows:
\begin{align*}
\Tt^\varphi_{I} & =  \langle \overline{K}_i:= K_{(1-\varphi)(\alpha_i)}, \widetilde{K}_j:=K_{(1+\varphi)(\alpha_j)}:\
\text{ if }\alpha_i\in I_+, \alpha_j\in I_- \rangle,\\
\Tt^\varphi_{I'} & = \langle K_{\alpha_{i}}:\ \text{ if }\alpha_{i}\in I_+\cap I_-\rangle .
\end{align*}

Note that if $\alpha_{i}\in I_+\cap I_-$, then $K_{\alpha_{i}} \in \Tt^{\varphi}_{I}$. Hence, 
 $\Tt^\varphi_{I'}\subseteq \Tt^\varphi_{I}\subseteq \Sigma^\varphi \subseteq \Tt^{\varphi}$.
Denote $\Tt^\varphi_{I^c}= \Tt^\varphi/\Tt^\varphi_{I}$ and 
$\Omega^\varphi=\Sigma^\varphi / \Tt^\varphi_{I}$; so $\Omega^\varphi \subseteq \Tt^\varphi_{I^c}$.
\begin{definition}\label{def:D}
For all $i\in\{1,\ldots, n\}$ such that $\alpha_i \in (I_+ \cap I_-)^c $,
we define the algebra homomorphism $D_i:\ \qephip\to \CC$ by
\begin{align*}
D_i(E_j)=0=D_i(F_k),\qquad D_i(K_{\alpha_{t}})=\epsilon^{\delta_{it}}\quad
\text{for all }\alpha_j\in I_+,\ \alpha_k\in I_-,\ t\in \{1\ldots n\}.
\end{align*}
\end{definition}

\begin{obs}\label{rmk:notathenDi}
$(a)$ For $1\leq i \leq n$, let $\hat{D}_i \in  \widehat{\Tt^{\varphi}}$ given
by $\hat{D}_i(K_{\alpha_{t}})=\epsilon^{\delta_{it}}$ for all $1\leq t\leq n$. 
Then $\langle \hat{D}_i:\ 1\leq i \leq n\rangle = \widehat{\Tt^{\varphi}}$ and we may 
identify $ (\ZZ/\ell\ZZ)^n\simeq  \widehat{\Tt^{\varphi}}$ by 
$z\mapsto \hat{D}^{z}= \hat{D}_{1}^{z_1}\cdots \hat{D}_{n}^{z_n}$. In particular, 
one has that $\hat{D}_{i} = {D_{i}}|_{\Tt^{\varphi}}$ for all $i\in (I_+ \cap I_-)^c$.

$(b)$ Assume $(I_+\cap I_-)^c = \{\alpha_{i_{1}},\ldots, \alpha_{i_{m}}\}$. For all 
$z\in (\ZZ/\ell\ZZ)^m$, denote
$$D^{z}= D_{i_{1}}^{z_{1}}\cdots D_{i_{m}}^{z_{m}} \in G(\qephip^{*}).$$
If $f\in G(\qephip^{\ast})$ then 
$f=D^{z}$ for some $z\in (\ZZ/\ell\ZZ)^m$. In particular, we may identify 
$$ G(\qephip^{*}) \simeq \widehat{\Tt^{\varphi} / \Tt^{\varphi}_{I'}}  \simeq (\ZZ/\ell\ZZ)^m.$$

$(c)$ Since $\Tt^\varphi_{I'}\subseteq \Tt^\varphi_{I}$, there is a 
group monomorphism $\widehat{\Tt^{\varphi}_{I^{c}}}\hookrightarrow 
\widehat{\Tt^{\varphi} / \Tt^{\varphi}_{I'}}=G(\qephip^{*})$ given for any 
$f\in \widehat{\Tt^{\varphi}_{I^{c}}}$ by the composition
$
\xymatrix{\Tt^{\varphi} / \Tt^{\varphi}_{I'} \ar@{->>}[r] & \Tt^{\varphi}_{I^{c}} \ar[r]^{f}& \CC}.$

$(d)$ 
The inclusions
$\Tt_{I}^{\varphi}\overset{\iota}\hookrightarrow\Sigma^\varphi\overset{j}\hookrightarrow\Tt^{\varphi}$ 
induce the surjective maps 
$\widehat{\Tt^{\varphi}}\overset{^{t}j}\twoheadrightarrow\widehat{\Sigma^\varphi}$ with  
$\Ker\ ^{t}j= \{ f \in \widehat{\Tt^{\varphi}} :\ f(\Sigma^\varphi)=1 \}$ and 
$\widehat{\Tt_{I^c}^{\varphi}}\overset{^{t}j_{s}}\twoheadrightarrow\widehat{\Omega^\varphi}$ 
with $N^\varphi = \Ker\ ^{t}j_{s} = \{f\in\widehat{ \Tt_{I^c}^{\varphi}}:\ f(\Omega^\varphi)=1\}$. 
In particular, we have
\begin{equation}\label{eq:orderSigmaOmega}
|\Sigma^\varphi|=|\Tt_{I}^{\varphi}||\Omega^\varphi| = |\Tt_{I}^{\varphi}| \frac{|{\Tt_{I^c}^{\varphi}}|}{|N^\varphi|}=
|\Tt_{I}^{\varphi}| \frac{|{\Tt^{\varphi}}|}{|\Tt_{I}^{\varphi}||N^\varphi|} = 
\frac{\ell^n}{|N^\varphi|}.
\end{equation}
Moreover, one has that 
$\Ker\ ^{t}(j\iota)\simeq \widehat{\Tt_{I^c}^{\varphi}}$, 
since there is a group monomorphism 
$\Ker\ ^{t}(j\iota)\to \widehat{\Tt_{I^c}^{\varphi}}$ and $| \Ker\ ^{t}(j\iota)|=|\Tt_{I^c}^{\varphi}|$. 
Hence, in what follows we identify the elements of $\widehat{\Tt_{I^c}^{\varphi}}$ and  $\Ker\ ^{t}(j\iota)$.
On the other hand, if we denote $\hat{D}^z=\hat{D}_1^{z_1}\cdots \hat{D}_n^{z_n}$  
for all $z=(z_{1},\ldots,z_{n}) \in (\ZZ / \ell\ZZ)^n$, then
$$\Ker\ ^{t}(j\iota)= \{ \hat{D}^z\vert\ \hat{D}^z\left(\overline{K}_i\right)=1=\hat{D}^z(\widetilde{K}_j),\text{ for }
i\in I_+,\ j\in I_-,\ z\in (\ZZ / \ell\ZZ)^n
\}\simeq  \widehat{\Tt_{I^c}^{\varphi}}.$$ 
Therefore, if $\hat{D}^z \in  \Ker\ ^{t}(j\iota)$, then 
\begin{align}\label{eq:dedkerij1}
1 &= \hat{D}^z\left(\overline{K}_i\right) =  \hat{D}^z (K_{(1-\varphi)(\alpha_i)}) = 
\hat{D}^z \left(K_{\alpha_i}\prod_{j=1}^{n}K_{\alpha_{j}}^{-2y_{ji}}\right)=
\epsilon^{z_{i}}\prod_{j=1}^{n}\epsilon^{-2y_{ji}z_{j}},\\ \label{eq:dedkerij2}
1 &= \hat{D}^z(\widetilde{K}_{j}) =  \hat{D}^z (K_{(1+\varphi)(\alpha_j)})=
\hat{D}^z \left(K_{\alpha_j}\prod_{k=1}^{n}K_{\alpha_{j}}^{2y_{kj}}\right)=
\epsilon^{z_{j}}\prod_{k=1}^{n}\epsilon^{2y_{kj}z_{j}},
\end{align}
for all $i \in I_{+}$ and $j\in I_{-}$. Thus, to find the generators of $\Ker\ ^{t}(j\iota)$ it suffices to 
solve a linear system over $\ZZ/\ell\ZZ$. Indeed, if
$I_+=\{\alpha_{i_1},\ldots, \alpha_{i_s}\}$ and $I_-=\{ \alpha_{j_1},\ldots, \alpha_{j_r}\}$,  by \eqref{eq:dedkerij1} 
and \eqref{eq:dedkerij2} we have a
system of linear equations over  $\ZZ/\ell\ZZ$ whose matrix $S^{\varphi}_{\ell}$ is given by  
\begin{equation*}
\left(\begin{array}{cccccccccccc}
 2y_{i_1 1}&\ldots &2y_{i_1 i_s}&\ldots &2y_{i_1 j_1}& \ldots &2y_{i_1  j_l}&\ldots  &2y_{i_1 i_1}-1&\ldots & 2y_{i_1 n}  \\ 
\vdots & \ddots &  \vdots & \ddots & \vdots  & \ddots &  \vdots & \ddots & \vdots  & \ddots & \vdots \\ 
 2y_{i_s 1} & \ldots  & 2y_{i_s i_s}+1 & \ldots & 2y_{i_s j_1} & \ldots  & 2y_{i_s j_l} & \ldots & 2y_{i_s i_1} & \ldots & 2y_{i_s n} \\ 
\vdots & \ddots &  \vdots & \ddots & \vdots  & \ddots &  \vdots & \ddots & \vdots  & \ddots & \vdots \\ 
 2y_{j_1 1} & \ldots  & 2y_{j_1 i_s}  & \ldots & 2y_{j_1 j_1}-1  & \ldots & 2y_{j_1 j_l} & \ldots  & 2y_{j_1 i_1} & \ldots & 2y_{j_1 n} \\ 
\vdots & \ddots &  \vdots & \ddots & \vdots  & \ddots &  \vdots & \ddots & \vdots  & \ddots & \vdots \\ 
 2y_{j_l 1} & \ldots  & 2y_{j_l i_s}  & \ldots & 2y_{j_l j_1} & \ldots & 2y_{j_l j_l}+1 & \ldots  & 2y_{j_l i_1} & \ldots & 2y_{j_l n}
 \end{array}\right)
\end{equation*}
In particular, $|\Ker\ ^{t}(j\iota)|=| \widehat{\Tt_{I^c}^{\varphi}}|=\ell^{n-\rk S^{\varphi}_{\ell}}$.
Analogously, it is possible to characterize in the same way the kernel $N^\varphi$. In this case we 
have to consider  the system of linear equations  determined by the conditions  $\hat{D}^{z}(\Omega^\varphi)=1$ 
for all  $ \hat{D}^z \in\widehat{\Tt_{I^c}^{\varphi}}$. 
\end{obs}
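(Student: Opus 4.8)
The plan is to establish the four assertions in turn, the common engine being that $\qephip$ is a finite-dimensional \emph{pointed} Hopf algebra (a Hopf subalgebra of the pointed $\qephi$) and that, since $\ord\epsilon=\ell$ is odd (and coprime to $3$ in type $G_{2}$), torsion obstructions in the group of group-likes disappear. By Lemma~\ref{lem:reslusyflkiso} and the discussion following Definition~\ref{def:flkernels}, $G(\qephi)=\Tt^{\varphi}\simeq(\ZZ/\ell\ZZ)^{n}$, freely generated by $K_{\alpha_{1}},\dots,K_{\alpha_{n}}$, while by Definition~\ref{def:nupaFL} $\qephip$ is generated as an algebra by these together with the nilpotent elements $E_{j}$ ($\alpha_{j}\in I_{+}$) and $F_{k}$ ($\alpha_{k}\in I_{-}$), with $E_{j}^{\ell}=F_{k}^{\ell}=0$ by Remark~\ref{rmk:relmodepsgam}. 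Hence every $f\in G(\qephip^{*})=\Alg(\qephip,\CC)$ kills all $E_{j},F_{k}$ and is determined by its restriction, a character, to $\Tt^{\varphi}$; assertion (a) is then formal, as the $\hat D_{i}$ are by definition the basis of $\widehat{\Tt^{\varphi}}$ dual to $K_{\alpha_{1}},\dots,K_{\alpha_{n}}$, so $z\mapsto\hat D^{z}$ is the claimed isomorphism $(\ZZ/\ell\ZZ)^{n}\xrightarrow{\ \sim\ }\widehat{\Tt^{\varphi}}$ and $\hat D_{i}=D_{i}|_{\Tt^{\varphi}}$ is immediate from comparing Definition~\ref{def:D} with the defining formula for $\hat D_{i}$.

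For (b) I would show this character is forced to be trivial on $\Tt^{\varphi}_{I'}$ and, conversely, that every such character extends. If $\alpha_{i}\in I_{+}\cap I_{-}$ then $E_{i},F_{i}\in\qephip$, and applying $f$ to $E_{i}F_{i}-F_{i}E_{i}=\tfrac{K_{i}-K_{i}^{-1}}{q_{i}-q_{i}^{-1}}$ gives $0=f(K_{i})-f(K_{i})^{-1}$, the denominator being nonzero because $\ell$ is odd (and $3\nmid\ell$ in type $G_{2}$); hence $f(K_{i})^{2}=1$, and since $f(K_{i})$ has order dividing the odd number $\ell$ we get $f(K_{i})=1$. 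Thus $f$ factors through $\Tt^{\varphi}/\Tt^{\varphi}_{I'}$. Conversely, a character $\chi$ of $\Tt^{\varphi}/\Tt^{\varphi}_{I'}$ extended by $0$ on all $E_{j},F_{k}$ respects every defining relation of $\qephip$: the Serre relations lie in the subalgebra generated by the $E$'s (resp.\ the $F$'s) and go to $0$; the relations $K_{\lambda}E_{j}K_{-\lambda}=q^{(\lambda,\alpha_{j})}E_{j}$ become $0=0$; and $E_{i}F_{i}-F_{i}E_{i}=\tfrac{K_{i}-K_{i}^{-1}}{q_{i}-q_{i}^{-1}}$ for $\alpha_{i}\in I_{+}\cap I_{-}$ becomes $0=0$ since $\chi(K_{i})=1$. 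Therefore $G(\qephip^{*})\simeq\widehat{\Tt^{\varphi}/\Tt^{\varphi}_{I'}}$; and since $\Tt^{\varphi}_{I'}$ is the direct summand of $\Tt^{\varphi}$ spanned by the $K_{\alpha_{i}}$ with $\alpha_{i}\in I_{+}\cap I_{-}$, writing $(I_{+}\cap I_{-})^{c}=\{\alpha_{i_{1}},\dots,\alpha_{i_{m}}\}$ the quotient is free of rank $m$ with induced basis $D_{i_{1}},\dots,D_{i_{m}}$ (each $D_{i}$, $\alpha_{i}\notin I_{+}\cap I_{-}$, descending to the quotient since it is trivial on every $K_{\alpha_{t}}$ with $\alpha_{t}\in I_{+}\cap I_{-}$), so every $f\in G(\qephip^{*})$ equals $D^{z}$ for a unique $z\in(\ZZ/\ell\ZZ)^{m}$, which is the content of (b).

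Parts (c) and (d) are then bookkeeping with Pontryagin duality of finite abelian groups: $\widehat{(-)}$ is contravariant and exact, sending a subgroup inclusion $H\hookrightarrow G$ to the surjection $\widehat G\twoheadrightarrow\widehat H$ with kernel the annihilator $H^{\perp}=\{f\in\widehat G:\ f|_{H}=1\}\simeq\widehat{G/H}$. Applied to $\Tt^{\varphi}_{I'}\subseteq\Tt^{\varphi}_{I}$ this gives $\Tt^{\varphi}/\Tt^{\varphi}_{I'}\twoheadrightarrow\Tt^{\varphi}_{I^{c}}$, whose dual injection, read through the identification of (b), is the map of (c); applied to $\Tt^{\varphi}_{I}\overset{\iota}{\hookrightarrow}\Sigma^{\varphi}\overset{j}{\hookrightarrow}\Tt^{\varphi}$ it gives the surjections ${}^{t}j$ and ${}^{t}j_{s}$ with the stated kernels and $\Ker {}^{t}(j\iota)=(\Tt^{\varphi}_{I})^{\perp}\simeq\widehat{\Tt^{\varphi}_{I^{c}}}$, while \eqref{eq:orderSigmaOmega} follows from $|\Sigma^{\varphi}|=|\Tt^{\varphi}_{I}|\,|\Omega^{\varphi}|$, $|\Omega^{\varphi}|=|\widehat{\Omega^{\varphi}}|=|\widehat{\Tt^{\varphi}_{I^{c}}}|/|N^{\varphi}|=|\Tt^{\varphi}_{I^{c}}|/|N^{\varphi}|$ and $|\Tt^{\varphi}_{I^{c}}|=\ell^{n}/|\Tt^{\varphi}_{I}|$. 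Finally, to describe $\Ker {}^{t}(j\iota)$ explicitly I would expand the generators of $\Tt^{\varphi}_{I}$ in the basis $K_{\alpha_{1}},\dots,K_{\alpha_{n}}$ using $\varphi\alpha_{i}=2\tau_{i}$ and $\tau_{i}=\sum_{t}y_{ti}\alpha_{t}$, namely $\overline{K}_{i}=K_{\alpha_{i}}\prod_{t}K_{\alpha_{t}}^{-2y_{ti}}$ and $\widetilde{K}_{j}=K_{\alpha_{j}}\prod_{t}K_{\alpha_{t}}^{2y_{tj}}$, and then evaluate $\hat D^{z}$ via $\hat D^{z}(K_{\alpha_{t}})=\epsilon^{z_{t}}$: the conditions $\hat D^{z}(\overline{K}_{i})=\hat D^{z}(\widetilde{K}_{j})=1$ for $\alpha_{i}\in I_{+}$, $\alpha_{j}\in I_{-}$ become a homogeneous linear system over $\ZZ/\ell\ZZ$ with matrix $S^{\varphi}_{\ell}$, so $|\Ker {}^{t}(j\iota)|=|\widehat{\Tt^{\varphi}_{I^{c}}}|=\ell^{n-\rk S^{\varphi}_{\ell}}$, and replacing these conditions by $\hat D^{z}(\Omega^{\varphi})=1$ describes $N^{\varphi}$ in exactly the same way.

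The step I expect to be the main obstacle is (b): one must genuinely use pointedness to realise $G(\qephip^{*})$ as $\Alg(\qephip,\CC)$, force each such character to annihilate $\Tt^{\varphi}_{I'}$ — here the oddness of $\ell$ is indispensable, as otherwise one obtains only $f(K_{i})=\pm1$ — and then verify the converse that an arbitrary character of $\Tt^{\varphi}/\Tt^{\varphi}_{I'}$ is compatible with every defining relation of $\qephip$. Once this identification is secured, (a), (c) and (d) reduce to the elementary duality theory of finite abelian $\ell$-groups together with a single rank computation over $\ZZ/\ell\ZZ$.
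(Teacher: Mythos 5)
Your proposal is correct and follows the same route that the paper leaves implicit (the statement is presented as a remark, with the key computation for (d) sketched but (b) left unproved). Your argument for (b) supplies exactly the missing ingredients: algebra maps $\qephip\to\CC$ must kill the nilpotents $E_j,F_k$, and for $\alpha_i\in I_+\cap I_-$ the quantum commutator relation $E_iF_i-F_iE_i=(K_i-K_i^{-1})/(q_i-q_i^{-1})$ forces $f(K_{\alpha_i})^2=1$, hence $f(K_{\alpha_i})=1$ because $\ell$ is odd (and $\ell\nmid d_i$ under the hypotheses, so the denominator is a nonzero scalar). Conversely the $D_i$ of Definition~\ref{def:D} realise every character of $\Tt^\varphi/\Tt^\varphi_{I'}$, giving the bijection. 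Parts (a), (c), (d) then reduce, as you say, to Pontryagin duality bookkeeping for finite abelian groups plus the expansion $\overline{K}_i=K_{\alpha_i}\prod_t K_{\alpha_t}^{-2y_{ti}}$, $\widetilde{K}_j=K_{\alpha_j}\prod_t K_{\alpha_t}^{2y_{tj}}$ and the ensuing linear system mod $\ell$, which matches \eqref{eq:dedkerij1}--\eqref{eq:dedkerij2} exactly.

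One small inaccuracy: you write that pointedness is needed ``to realise $G(\qephip^{*})$ as $\Alg(\qephip,\CC)$''. That identification holds for any finite-dimensional Hopf algebra and is just unwinding the definition of a group-like functional; pointedness of $\qephip$ enters instead to know that $\qephip$ is generated by group-likes and skew-primitives (Lemma~\ref{lema:subalgparadeuphi}), so that a character is determined by its values on the generators $K_{\alpha_i}$, $E_j$, $F_k$. Also, when you verify the converse direction of (b) by ``checking the defining relations'', strictly speaking one should either invoke a presentation of $\qephip$ (which is legitimate since the algebra structure of $\qephip$ agrees with that of $\qep$, as the twist only deforms the coproduct -- cf.\ Proposition~\ref{prop:uptup}) or simply observe that the $D^z$ are already well-defined algebra maps by Definition~\ref{def:D}; the latter is cleaner and is what the paper tacitly relies on. Neither point affects the validity of the argument.
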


\begin{exa}\label{exa:1}Assume $\lieg$ is of type $C_3$ with associated Cartan matrix $A = 
\left(\begin{smallmatrix}
   2 & -1 & 0\\
   -1 & 2 & -1\\
   0 & -2 & 2
\end{smallmatrix}\right)$. Then the multiparametric matrix $Y$ is given by
$$Y=\begin{pmatrix}
    a+b/2 & -a+c/2 & -b/2-c/2\\
   2a+b & -a+c & -b/2-c\\
   2a+3b/2 & -a+3c/2 & -b/2-c
\end{pmatrix},$$
where $a\in \ZZ,$ and $b,c\in 2\ZZ$.
Set $a=1$, $b=2$, $c=0$ and $\ell=11$. Then,   
$\varphi(\alpha_1)=4\alpha_1
 +8\alpha_2+10\alpha_3$,  
 $\varphi(\alpha_2)=-2\alpha_1
 -2\alpha_2-2\alpha_3$ and 
 $\varphi(\alpha_3)=-2\alpha_1
 -2\alpha_2-2\alpha_3$.
 
 $(a)$ If we choose $I_+=\{\alpha_2\}$ and
 $I_-=\{\alpha_1\}$, then $S^{\varphi}_{11}= 
 \left(\begin{smallmatrix}
    5 & 8 & 10\\
   2 & 3 & 2
\end{smallmatrix}\right) \sim_{11} 
\left(\begin{smallmatrix}
    1 & 0 & 8\\
   0 & 1 & 10
\end{smallmatrix}\right)$ and 
$\widehat{\Tt^{\varphi}_{I^{c}}}=\langle \hat{D}_1^3\hat{D}_2\hat{D}_3 \rangle\simeq \ZZ/11\ZZ$. 
 If we take $\Sigma^\varphi=\langle K_{(1-\varphi)(\alpha_2)}, K_{(1+\varphi)(\alpha_1)},  K_{\tau_3}, K_{\tau_2}\rangle$, 
then we have that $\Sigma^\varphi = \Tt^{\varphi} \simeq (\ZZ/11\ZZ)^{3}$ and $N^\varphi$ is trivial.

\smallbreak
$(b)$ If we choose $I_{+}=\{\alpha_2\}$, $I_-=\emptyset$ and 
$\Sigma^\varphi=\langle K_{(1+\varphi)(\alpha_1)}, K_{(1-\varphi)(\alpha_2)}  \rangle$, 
then we have that $\widehat{\Tt^\varphi_{I^c}}= \langle \hat{D}^{(1,0,10)}, \hat{D}^{(0,1,4)}\rangle\simeq (\ZZ/11\ZZ)^{2}$, 
$ \Omega^\varphi \simeq \langle K_{(1+\varphi)(\alpha_1)}\rangle$ and 
$N^\varphi=\langle \hat{D}^{(3,1,1)} \rangle$.
\end{exa}

The following proposition states that the elements in $ \widehat{\Tt_{I^c}^{\varphi}}$ are central in
$\qephip^\ast$.

\begin{prop}\label{prop:dzcen}
The subgroup $ \widehat{\Tt_{I^c}^{\varphi}}$ of $G(\qephip^\ast)$ consists of central group-like elements.
\end{prop}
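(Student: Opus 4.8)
The plan is to apply the standard criterion that a group-like element of a dual Hopf algebra is central precisely when its conjugation endomorphism is the identity. First I would make explicit the identification of an abstract character $\hat D^z\in\widehat{\Tt^{\varphi}_{I^{c}}}$ with the corresponding group-like element of $\qephip^{\ast}$, namely the algebra homomorphism $g=\hat D^z\colon\qephip\to\CC$ which on the generators of Definition \ref{def:nupaFL} is given by $g(E_j)=g(F_k)=0$ and $g(K_{\alpha_t})=\hat D^z(K_{\alpha_t})$ (cf. Remark \ref{rmk:notathenDi}). For arbitrary $f\in\qephip^{\ast}$ and $x\in\qephip$ one has $(g\ast f\ast g^{-1})(x)=f(\ad_g(x))$, where $g^{-1}=g\circ\SS_\varphi$ and $\ad_g(x)=g(x_{(1)})\,x_{(2)}\,g(\SS_\varphi(x_{(3)}))$; hence $g$ is central in $\qephip^{\ast}$ if and only if $\ad_g=\id_{\qephip}$. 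Since $g$ is an algebra map, $\SS_\varphi$ is an algebra anti-homomorphism and the values of $g$ are scalars, a one-line computation shows that $\ad_g$ is an algebra endomorphism of $\qephip$, so it suffices to check that $\ad_g$ fixes the generators $K_{\alpha_t}$ $(1\le t\le n)$, $E_j$ $(\alpha_j\in I_+)$ and $F_k$ $(\alpha_k\in I_-)$.

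The next step is to run these three evaluations using the coproduct and antipode formulas of Subsection \ref{subsec:tmqea}. On a group-like one gets $\ad_g(K_{\alpha_t})=g(K_{\alpha_t})\,g(K_{-\alpha_t})\,K_{\alpha_t}=K_{\alpha_t}$. For $E_j$, from $\Delta_\varphi(E_j)=E_j\otimes K_{\tau_j}+K_{\alpha_j-\tau_j}\otimes E_j$ and $\SS_\varphi(E_j)=-K_{-\alpha_j}E_j$, the two terms of $\Delta_\varphi^{(2)}(E_j)$ with an $E_j$ in an outer tensor slot contribute $0$ (since $g(E_j)=0$), while the remaining term yields $\ad_g(E_j)=g(K_{\alpha_j-\tau_j})\,g(K_{-\tau_j})\,E_j=g(K_{\alpha_j-2\tau_j})E_j=g(K_{(1-\varphi)(\alpha_j)})E_j=\hat D^z(\overline K_j)\,E_j$, using $2\tau_j=\varphi(\alpha_j)$. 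Symmetrically, $\ad_g(F_k)=g(K_{(1+\varphi)(\alpha_k)})F_k=\hat D^z(\widetilde K_k)\,F_k$.

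Finally I would invoke the defining property of $\widehat{\Tt^{\varphi}_{I^{c}}}$ recalled in Remark \ref{rmk:notathenDi}$(d)$: the elements $\hat D^z\in\widehat{\Tt^{\varphi}_{I^{c}}}$ are exactly the characters of $\Tt^{\varphi}$ with $\hat D^z(\overline K_i)=1=\hat D^z(\widetilde K_j)$ for all $\alpha_i\in I_+$, $\alpha_j\in I_-$ (this is precisely \eqref{eq:dedkerij1}--\eqref{eq:dedkerij2}). Together with the previous paragraph this gives $\ad_g(E_j)=E_j$ for $\alpha_j\in I_+$ and $\ad_g(F_k)=F_k$ for $\alpha_k\in I_-$, so $\ad_g$ is the identity on a generating set of $\qephip$, hence $\ad_g=\id_{\qephip}$ and $\hat D^z$ is central.

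I expect the only delicate points to be organizational: legitimizing the identification of $\hat D^z$ with a genuine algebra map on $\qephip$ (so that evaluating it on $E_j$ and $F_k$ makes sense) and keeping the signs and weights straight in $\Delta_\varphi^{(2)}$ and $\SS_\varphi$; the multiplicativity of $\ad_g$ is immediate once one observes that all values of $g$ are scalars.
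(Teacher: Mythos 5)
Your proof is correct and rests on the same key fact the paper uses: that $\hat D^z(\overline K_i)=1=\hat D^z(\widetilde K_j)$ for $\alpha_i\in I_+$, $\alpha_j\in I_-$ (the characterization of $\widehat{\Tt^{\varphi}_{I^c}}$ as $\Ker\,{}^t(j\iota)$ in Remark~\ref{rmk:notathenDi}$(d)$) is precisely what forces centrality, once it is verified on a generating set. The only difference is organizational: the paper checks the convolution identity $D^z\ast f=f\ast D^z$ directly by evaluating on the PBW basis of $\qephip$ and inducting on monomials, while you repackage the same verification as $\ad_g=\id_{\qephip}$ and check it on the algebra generators $K_{\alpha_t}$, $E_j$ ($\alpha_j\in I_+$), $F_k$ ($\alpha_k\in I_-$), which is equivalent and sidesteps the PBW induction since $\ad_g$ is manifestly an algebra endomorphism.
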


\pf
Let $z\in (\ZZ/\ell\ZZ)^{m}$ and $D^z\in G(\qephip^\ast)$ such that $D^{z} \in \widehat{\Tt_{I^c}^{\varphi}}$. Then 
$D^z(\overline{K}_i)=1=D^z(\widetilde{K}_j)$ 
for all $i\in I_+$ and $j \in I_-$. We show that $D^z$ is central in $\qephip^\ast$.

By \cite[Theorem 6.7]{L2} and \cite[Lemma 2.14]{AG}, $\qephip$ has a basis 
$$
\left\{\prod_{\beta\geq0}F_\beta^{n_\beta}\prod_{i=1}^{n}K_{\alpha_i}^{t_i}
\prod_{\alpha\geq0}E_\alpha^{m_\alpha}: 0 \leq n_\beta, t_i, m_\alpha \leq \ell,\text{ } 
1 \leq i \leq n,\text{ } \beta \in Q_{I_-},\text{ } \alpha \in Q_{I_+} \right\}.
$$
The hypotesis on $D^{z}$ ensures that $D^{z}f(E_{i}) = fD^{z}(E_{i})$ and 
$D^{z}f(F_{j}) = fD^{z}(F_{j})$ for all $f\in \qephip^\ast$, $z\in (\ZZ/\ell\ZZ)^{m}$, $i\in I_+$ and $j \in I_-$.
Moreover, since the elements $K_{\alpha_{t}}\in \qephip$ are group-like for all $1\leq t\leq n$, 
$D^{z}f(K_{\alpha_{t}}) = fD^{z}(K_{\alpha_{t}})$. As $D^{z}$ is a group-like element in $\qephip^\ast$,
we have that $D^{z}f(MN) = fD^{z}(MN)$ for $M,N\in \{K_{\alpha_{t}}, E_{i}, F_{j}: i\in I_{+}, j\in I_{-}\}$, since
$D^{z}f(MN) = (D^{z}f)_{(1)}(M)(D^z f)_{(2)}(N) = D^{z}f_{(1)}(M)D^z f_{(2)}(N) 
= f_{(1)}D^{z}(M)f_{(2)}D^{z}(N) = fD^{z}(MN)$.
Analogously,  using an inductive argument
one may prove that $D^{z}$ and $f$ commute 
when evaluated on every element of the basis.
\epf

The following proposition gives a characterization of all quotients of $\qephi^{*}$. 

\begin{prop}\label{prop:propdih} Let $H$ be a Hopf algebra quotient of $\qephi^*$ such that $H^{*}$ is
determined by the triple $(I_{+}, I_{-},\Sigma^{\varphi})$ and $\qephip$ the twisted regular Frobenius-Lusztig kernel
associated to $(I_{+},I_{-})$. Then
$H=\qephip^\ast/\langle D^z-1 :\ D^{z} \in N^\varphi \rangle$.
\end{prop}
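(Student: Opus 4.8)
Since $H^{*}$ is, by Lemma \ref{lema:subalgparadeuphi}, the Hopf subalgebra of $\qephip$ attached to the triple $(I_{+},I_{-},\Sigma^{\varphi})$, the epimorphism $\nu\colon\qephip^{*}\twoheadrightarrow H$ is, after the canonical identification $H\cong(H^{*})^{*}$, the transpose of the inclusion $H^{*}\hookrightarrow\qephip$; hence $\ker\nu=(H^{*})^{\perp}=\{f\in\qephip^{*}:f|_{H^{*}}=0\}$, which is a (Hopf) ideal because $H^{*}$ is a Hopf subalgebra, and $H\cong\qephip^{*}/(H^{*})^{\perp}$. So the plan is to identify $(H^{*})^{\perp}$ with $\langle D^{z}-1:\ D^{z}\in N^{\varphi}\rangle$. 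I would first establish the inclusion $\langle D^{z}-1:\ D^{z}\in N^{\varphi}\rangle\subseteq(H^{*})^{\perp}$ and then promote it to an equality by comparing dimensions.

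For the inclusion, fix $D^{z}\in N^{\varphi}$; since $D^{z}$ and $\eps$ are both algebra maps $\qephip\to\CC$, it suffices to check that they agree on the generators $g\in\Sigma^{\varphi}$, $\widetilde{E}_{i}$ ($\alpha_{i}\in I_{+}$) and $\widetilde{F}_{j}$ ($\alpha_{j}\in I_{-}$) of $H^{*}$ given by Lemma \ref{lema:subalgparadeuphi}. Because $\widetilde{E}_{i}=E_{i}K_{-\tau_{i}}$, $\widetilde{F}_{j}=K_{\alpha_{j}+\tau_{j}}F_{j}$ and each $D_{t}$ vanishes on $E_{i}$ and $F_{j}$, each $D_{t}$ kills $\widetilde{E}_{i}$ and $\widetilde{F}_{j}$; then an induction based on $\Delta_{\varphi}(\widetilde{E}_{i})=\widetilde{E}_{i}\ot 1+\overline{K}_{i}\ot\widetilde{E}_{i}$ and the analogous formula for $\widetilde{F}_{j}$ shows that so does any convolution product of the $D_{t}$, in particular $D^{z}$; and $\eps$ vanishes on these skew-primitives too. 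On a group-like $g\in\Sigma^{\varphi}\subseteq\Tt^{\varphi}$, Remark \ref{rmk:notathenDi} shows that $D^{z}|_{\Tt^{\varphi}}$ is trivial on $\Tt^{\varphi}_{I}$, hence factors through $\Tt^{\varphi}_{I^{c}}=\Tt^{\varphi}/\Tt^{\varphi}_{I}$, and the defining property $D^{z}\in N^{\varphi}$ says precisely that the induced character is trivial on $\Omega^{\varphi}=\Sigma^{\varphi}/\Tt^{\varphi}_{I}$; thus $D^{z}(g)=1=\eps(g)$. Consequently $D^{z}-1$ vanishes on $H^{*}$, i.e. $D^{z}-1\in(H^{*})^{\perp}$, and since $(H^{*})^{\perp}$ is an ideal the desired inclusion follows.

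It then remains to show the induced surjection $\bar{\nu}\colon\qephip^{*}/\langle D^{z}-1:\ D^{z}\in N^{\varphi}\rangle\twoheadrightarrow H$ is an isomorphism, which I would do by a dimension count. By Proposition \ref{prop:dzcen} the subgroup $N^{\varphi}\subseteq\widehat{\Tt^{\varphi}_{I^{c}}}$ consists of central group-likes, so $\CC[N^{\varphi}]$ is a central Hopf subalgebra of $\qephip^{*}$, the ideal $\langle D^{z}-1:\ D^{z}\in N^{\varphi}\rangle$ coincides with $\CC[N^{\varphi}]^{+}\qephip^{*}$, and by the Nichols--Zoeller theorem \cite{Mo} the algebra $\qephip^{*}$ is free over $\CC[N^{\varphi}]$; hence the source of $\bar{\nu}$ has dimension $\dim\qephip^{*}/|N^{\varphi}|$. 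On the other side, $\dim H=\dim H^{*}$, and the PBW-type basis of $\qephip$ recalled in the proof of Proposition \ref{prop:dzcen} together with Lemma \ref{lema:subalgparadeuphi} shows that $H^{*}$ has the analogous basis with $\Sigma^{\varphi}$ replacing $\Tt^{\varphi}$, so $\dim H^{*}=\dim\qephip\cdot|\Sigma^{\varphi}|\,\ell^{-n}$; by \eqref{eq:orderSigmaOmega} this equals $\dim\qephip^{*}/|N^{\varphi}|$. Thus $\bar{\nu}$ is a surjection between finite-dimensional vector spaces of equal dimension, hence an isomorphism, giving $H=\qephip^{*}/\langle D^{z}-1:\ D^{z}\in N^{\varphi}\rangle$. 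I expect the delicate points to be the reduction on $\Sigma^{\varphi}$ through the annihilator description of $N^{\varphi}$ in Remark \ref{rmk:notathenDi} and the explicit identification of $\dim H^{*}$; everything else is formal.
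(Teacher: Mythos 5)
Your proof is correct and follows essentially the same route as the paper's: show $\langle D^{z}-1:\ D^{z}\in N^{\varphi}\rangle\subseteq\Ker\nu$ by checking $D^{z}$ and $\eps$ agree on the generators of $H^{*}$, then conclude by comparing dimensions via \eqref{eq:orderSigmaOmega}. The only cosmetic difference is that you make explicit two points the paper leaves implicit — the identification $\Ker\nu=(H^{*})^{\perp}$ and the appeal to Nichols--Zoeller for the freeness of $\qephip^{*}$ over $\CC[N^{\varphi}]$ — and your induction on the coproduct to kill $\widetilde{E}_{i},\widetilde{F}_{j}$ could be shortened by just noting that each $D^{z}$ is group-like, hence an algebra map, so it already vanishes on $E_{i}K_{-\tau_{i}}$ and $K_{\alpha_{j}+\tau_{j}}F_{j}$.
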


\begin{proof}
If $(I_+\cap I_-)^c = \{\alpha_{i_{1}},\ldots, \alpha_{i_{m}}\}$ and we write 
$D^{z} = D_{i_{1}}^{z_{1}}\cdots D_{i_{m}}^{z_{m}}$, then  Remark \ref{rmk:notathenDi} $(b)$, $G(\qephip^*)=
\{ D^{z}\vert\ z\in (\ZZ/\ell\ZZ)^{m}\}$.
By Proposition \ref{prop:dzcen}, we know that the elements of $\widehat{\Tt^{\varphi}_{I^{c}}}$ are central in $\qephip^*$. 
Since $N^{\varphi} \subseteq  \widehat{\Tt^{\varphi}_{I^{c}}}$, the two-sided ideal $\II$ of $\qephip^*$
generated by the elements $ \{D^z-1 :\ D^{z} \in N^\varphi\} $ is a Hopf ideal 
and whence $\qephip^*/\II$ is a Hopf algebra. 

On the other hand, we know that $H^{*}$ is determined by the triple $(I_{+}, I_{-},\Sigma^{\varphi})$, 
and consequently, $H^{*}$ is included in $\qephip$. If we denote by $\nu: \qephip^{*} \to H$ the epimorphism 
induced by this inclusion, we have that $\Ker \nu = \{f \in \qephip^{*} :\ f (h) = 0\text{ for all }h \in H^{*} \}$.
Since by Remark \ref{rmk:notathenDi} $(c)$, $D^{z}(g) = 1$ for all $g\in \Sigma^{\varphi}$ and 
$D^{z} \in N^{\varphi}$, we have that $D^{z}-1 \in \Ker \nu$ and whence 
there is a Hopf algebra epimorphism $\qephip^\ast/\II \twoheadrightarrow H$. 
But by \eqref{eq:orderSigmaOmega} we have that 
$$\dim H = |\Sigma^{\varphi}|\ell^{|I_{+}|+|I_{-}|}= \frac{\ell^{n}}{|N^{\varphi}|}\ell^{|I_{+}|+|I_{-}|} = 
\dim \qephip^{*}/\II,$$
which implies that the epimorphism is indeed an isomorphism.
\end{proof}

\begin{exa}\label{exa:2}
 Let $\varphi$ be the twisting map defined in Example \ref{exa:1} over $\g=\mathfrak{sp}_{6}$. 
 If we take $I_+=\{\alpha_2\}$,
 $I_-=\{\alpha_1\}$ and $\Sigma^\varphi=\langle K_{(1-\varphi)(\alpha_2)}, K_{(1+\varphi)(\alpha_1)},  K_{\tau_3}, K_{\tau_2}\rangle$, 
then $\Sigma^\varphi = \Tt^{\varphi} \simeq (\ZZ/11\ZZ)^{3}$ and $N^\varphi$ is trivial. On the other hand, if we set
$\varphi=0$, then $\Sigma= \langle K_{\alpha_{1}},K_{\alpha_{2}}\rangle$ and $N$ is not trivial. This implies that 
the quotient $\qephip^\ast/\langle D^z-1 :\ D^{z} \in N^\varphi \rangle$ cannot be a $2$-cocycle deformation
of $\qep^\ast/\langle D^z-1 :\ D^{z} \in N \rangle$, since they have different dimension.

\end{exa}

\section{Quantum subgroups}\label{sec:qsubgroups}
In this section we determine all quantum subgroups of the twisted quantum group $G_{\eps}^{\varphi}$.
We first construct a family of quantum subgroups using the root datum associated to $\lieg= \Lie(G)$
and an algebraic subgroup $\Gamma$ of $G$. Then we prove that any quantum subgroup of $G_{\eps}^{\varphi}$
is isomorphic to one constructed in this way. We end the section with a parametrization of the isomorphism classes.

From now on, we work with the complex form of all quantum algebras introduced above. 

\subsection{Twisted quantum regular subgroups}
Let $I_{\pm} \subseteq {\pm}\Pi$. Let $\Gephip$ be the Hopf algebra associated to the pair $(I_{+}, I_{-}) $ as in 
Definition \ref{def:gamaparabo}, and $\liep$ the regular Lie subalgebra of $\lieg$ given by Remark \ref{rmk:liep}.
In this subsection we construct the twisted quantum function algebras related to the pair $(I_{+}, I_{-}) $.

Denote by $\Res: \Gphige^\circ\to\Gephip^\circ$ the Hopf algebra map induced 
by the inclusion $\Gephip\hookrightarrow\Gphige$. Using Lemma \ref{lem:pairind}, we know that 
$\Oephi\subseteq\Gphige^\circ$. 

\begin{definition} We define the twisted quantum function algebra associated to the 
regular Lie subalgebra $\p$ of $\lieg$ as the Hopf algebra given by
$$\OephiP:=\text{Res}(\Oephi).$$
\end{definition}
If $\varphi=0$, we have that $\OO^{0}_{\epsilon}(P) = \OeP$, see \cite[\S 2.3.1]{AG}.
Since $\Oc(G) $ is a central Hopf subalgebra of $\Oephi $, 
$\Res (\Oc(G) )$ is a central Hopf subalgebra of 
$\OephiP $. Thus, there exists $P$ an algebraic subgroup of $G$ such that 
$\Res (\Oc(G) ) = \Oc(P) $. Since $\Oc(P) $ is a 
central Hopf subalgebra of $\OephiP $, the quotient 
$$ \overline{\OephiP}  := \OephiP  / 
[\Oc(P) ^{+}\OephiP ],$$ 
is a Hopf algebra, which is in fact isomorphic to $\qephip^{*}$. 

\begin{prop}\label{prop:paraL}
\begin{enumerate}
\item[$(i)$] $P$ is a connected algebraic group and $\Lie (P)=\p$.
\item[$(ii)$] The following sequence of Hopf algebras is exact 
$$
1 \longrightarrow \OO(P)  \longrightarrow \OephiP  \longrightarrow 
\overline{\OephiP}  \longrightarrow 1.
$$
\item[$(iii)$] There exists a Hopf algebra epimorphism $\overline{\Res}: \qephi^{*} \to \overline{\OephiP} $
making the following diagram commutative
\begin{equation}\label{eq:diaP}
\xymatrix{1 \ar[r]^{} & \OO(G) \ar[r]^{\iota}\ar[d]^{\res} &
\Oephi  \ar[r]^{\pi}\ar[d]^{\Res}
& \qephi^\ast \ar[r] \ar[d]^{\overline{\Res}}& 1\\
1 \ar[r]^{} & \OO(P) \ar[r]^{\iota_P} &
\OephiP  \ar[r]^{\pi_P}
& \overline{\OephiP}   \ar[r] & 1.}
\end{equation}
\item[$(iv)$] $\OephiP  $ and $\overline{\OephiP} $ 
are $2$-cocycle deformations of $\OeP $ and 
$\overline{\OeP} $, respectively.
\item[$(v)$] $\overline{\OephiP} \simeq\qephip^\ast$ as Hopf algebras. 
\end{enumerate}
\end{prop}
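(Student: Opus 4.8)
The plan is to transport the results of Andruskiewitsch and the first author \cite{AG}, valid for $\varphi=0$, to the twisted case, using systematically that $\Oephi$ is a $2$-cocycle deformation of $\Oe$ (Lemma \ref{lem:def2cocicloOe}) and that, at the level of the underlying coalgebras, $\Res$ is essentially independent of $\varphi$. Concretely I would prove the five parts in the order $(iv),(i),(ii),(iii),(v)$, since $(iv)$ drives everything else.

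First I would settle $(iv)$. Let $\bar\sigma$ be the $2$-cocycle on $\Oe$ of Lemma \ref{lem:def2cocicloOe}, the specialization of the cocycle $\sigma$ of Corollary \ref{cor:cocycle}. By \cite[\S 2.3]{AG} the untwisted restriction map $\Res_0\colon\Oe\twoheadrightarrow\OeP$ is a surjective Hopf algebra map; since $\Gphige=\Gamma_\epsilon(\g)$ as algebras (cf.\ the proof of Lemma \ref{lem:pairRG}) and $\Res$ is dual to the inclusion $\Gamma^\varphi_\epsilon(\p)=\Gamma_\epsilon(\p)\hookrightarrow\Gamma_\epsilon(\g)$, the map $\Res\colon\Oephi\to\OephiP$ agrees with $\Res_0$ as a map of coalgebras. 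Now $\bar\sigma(x,y)$ depends only on $\eps$ and the $P$-bigrading of $x$ and $y$ (Remark \ref{rmk:bigrad}, Corollary \ref{cor:cocycle}), and $\Res_0$ is a morphism of $P$-bigraded algebras; hence $\bar\sigma$ vanishes on $\Ker\Res_0\ot\Oe+\Oe\ot\Ker\Res_0$, so by Remark \ref{obs:cocycleproy} it induces a $2$-cocycle $\hat\sigma$ on $\OeP$ with $\OephiP\simeq(\OeP)_{\hat\sigma}$; equivalently, by Lemma \ref{lem:deforprp} the product on $\OephiP$ is obtained from that of $\OeP$ by rescaling matrix coefficients exactly as $\bar\sigma$ prescribes. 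Moreover, writing $F_0=\OO(G)$, the weights occurring in $F_0$ are multiples of $\ell$, so $\tfrac12(\varphi\lambda,\mu)\in\ZZ$ gives $\bar\sigma|_{F_0\ot\Oe}=\eps\ot\eps=\bar\sigma|_{\Oe\ot F_0}$; this is the vanishing on $\OO(G)^+\Oephi\ot\Oephi+\Oephi\ot\OO(G)^+\Oephi$ already used in Proposition \ref{prop:froblustkertwist}. Pushing it along $\Res$ and applying Remark \ref{obs:cocycleproy} once more, $\hat\sigma$ descends to $\overline{\OeP}=\OeP/[\OO(P)^+\OeP]$ and $\overline{\OephiP}\simeq(\overline{\OeP})_{\hat\sigma}$. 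I expect this to be the main technical point: one must be careful to check that $\bar\sigma$ truly passes through $\Res$ and that the resulting deformed algebra is the one given by the definition of $\OephiP$.

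With $(iv)$ in hand, $(i)$ and $(ii)$ follow quickly. Since $\bar\sigma|_{F_0\ot F_0}=\eps\ot\eps$, the cocycle deformation does not change the algebra structure of $F_0$, so $\Oc(P)=\Res(\OO(G))$ is, as a Hopf algebra, exactly the one from the case $\varphi=0$; hence $P$ is the same algebraic subgroup of $G$ as in \cite{AG}, and $(i)$ — $P$ connected with $\Lie P=\p$ — is \cite[\S 2.3]{AG}. For $(ii)$, because $\hat\sigma|_{\Oc(P)\ot\OephiP}=\eps\ot\eps$ the left $\Oc(P)$-module structure of $\OephiP=(\OeP)_{\hat\sigma}$ coincides with that of $\OeP$, which by \cite{AG} (cf.\ \cite[Prop.~3.5]{cv2}, \cite{dl}, \cite{BGS}) is free, in particular faithfully flat, over $\OO(P)$; since $\OO(P)$ is central (hence normal) in $\OephiP$, \cite[Prop.~3.4.3]{Mo} yields the exact sequence $1\to\OO(P)\to\OephiP\to\overline{\OephiP}\to1$. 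Part $(iii)$ is then a diagram chase: $\res:=\Res|_{\OO(G)}\colon\OO(G)\twoheadrightarrow\OO(P)$ and $\Res\colon\Oephi\twoheadrightarrow\OephiP$ are surjective Hopf algebra maps compatible with $\iota,\iota_P$, so $\Res$ induces a surjective Hopf algebra map $\overline{\Res}\colon\qephi^*=\Oephi/[\OO(G)^+\Oephi]\to\OephiP/[\OO(P)^+\OephiP]=\overline{\OephiP}$ completing \eqref{eq:diaP}.

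Finally, for $(v)$ I would combine $(iv)$ with Propositions \ref{prop:froblustkertwist} and \ref{prop:uptup}. By \cite[\S 2.3]{AG} one has $\overline{\OeP}\simeq\qep^*$, and under this identification the cocycle $\hat\sigma$ of $(iv)$ is the restriction of the cocycle on $\overline{\Oe}\simeq\qe^*$ which, by the proof of Proposition \ref{prop:froblustkertwist}, is evaluation against the twist $J\in\CC[\Tt^\varphi\times\Tt^\varphi]\subset\qe\ot\qe$; since $\Tt^\varphi\subset\qep$, this $J$ lies in $\qep\ot\qep$ and $\hat\sigma$ corresponds to $J^*$. By Proposition \ref{prop:uptup}, $\qephip\simeq\qep^{J}$, hence $\qephip^*\simeq(\qep^*)_{J^*}\simeq(\overline{\OeP})_{\hat\sigma}=\overline{\OephiP}$, the desired isomorphism. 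Alternatively, $(v)$ can be obtained directly by restricting the perfect Hopf pairing $\overline{\Oephi}\ot\QUhatphieR(\g)\to\CC$ underlying Theorem \ref{thm:oephiisodualnfl} to a perfect pairing $\overline{\OephiP}\ot\qephip\to\CC$ and comparing dimensions, as in \cite[Thm.~III.7.10]{K-K}.
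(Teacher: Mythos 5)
Your proof is essentially correct and for parts $(i)$--$(iv)$ it coincides in spirit with the paper's argument: the paper dispatches $(i)$--$(iii)$ by citing \cite[Propositions 2.7 \& 2.8]{AG} (your elaboration of why the $\Oc(P)$-module structure is unaffected by the cocycle is a correct and welcome unpacking of that citation), and for $(iv)$ the paper likewise verifies that $\bar\sigma$ vanishes on $\Ker\Res\ot\Oe+\Oe\ot\Ker\Res$ --- though their argument is a shade more direct than your bigrading observation: elements of $\Ker\Res$ are matrix coefficients that vanish on $\Gamma_\epsilon(\liep)$, hence have zero counit, and $\bar\sigma$ is by construction $(\eps\ot\eps)\cdot\chi$, so it annihilates them immediately.

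Where you genuinely diverge is in $(v)$. The paper dualizes the commutative diagram \eqref{eq:diapara} to produce a surjective Hopf map $\overline{\OephiP}\twoheadrightarrow\qephip^*$ directly (using that $\OO(G)\simeq U(\lieg)^\circ$ because $\lieg$ is simple), and then invokes the dimension equalities $\dim\overline{\OephiP}=\dim\overline{\OeP}=\dim\qep=\dim\qephip$ from $(iv)$, \cite[Prop.~2.8 (c)]{AG} and Proposition \ref{prop:uptup} to conclude. Your route instead transports the untwisted isomorphism $\overline{\OeP}\simeq\qep^*$ through the cocycle deformation, which requires identifying $\hat\sigma$ on $\overline{\OeP}$ with the restriction of the twist cocycle $J^*$ on $\qep^*$. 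That identification is true --- the paper records it afterwards in Remark \ref{rmk:2cocycleOephiP} precisely to make diagram-chases with the cocycles coherent --- but you are using it as an input rather than deducing it, so a fully self-contained write-up along your lines would need to verify the compatibility of the cocycles along $\pi_P$, which in turn amounts to a small diagram chase comparable in length to the paper's dualization argument. Your alternative approach via restricting the perfect pairing of Theorem \ref{thm:oephiisodualnfl} is also sound and closer in spirit to the proof of that theorem; it avoids the cocycle bookkeeping entirely at the cost of re-establishing perfectness of the restricted pairing, which again reduces to the same dimension count from $(iv)$.
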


\pf $(i)$, $(ii)$, $(iii)$
follow \textit{mutatis mutandis} from \cite[Propositions 2.7 \& 2.8]{AG}.

$(iv)$ By Lemma \ref{lem:def2cocicloOe}, we know that $\Oephi $ 
is a $2$-cocycle
deformation of $\Oe $, say by the cocycle $\bar{\sigma}$. 
Since the kernel $\II$ of the Hopf algebra map $\Res: \Oe  \to \OeP $ 
is spanned by matrix coefficients that 
vanish when restricted to $\Gamma_\epsilon(\p)$, using the definition of $\bar{\sigma}$
we see that $\bar{\sigma}|_{\II\ot \Oe + \Oe \ot \II}= 0$. Thus by Remark \ref{obs:cocycleproy},
$\Res$ induces a $2$-cocycle $\hat{\sigma}$ on $\Oe /\II$ and 
we have that $\OephiP = \Res((\Oe )_{\bar{\sigma}}) = (\Oe /\II)_{\hat{\sigma}}=
(\OeP )_{\hat{\sigma}}$. The same argument applies for $\overline{\OephiP} $
and $\overline{\OeP} $, since 
$\OO(P) $ is a central Hopf subalgebra of $ \OeP $ and 
the cocycle $\hat{\sigma}$ is trivial on it.

$(v)$ Dualizing the diagram \eqref{eq:diapara} we get 
$$
\xymatrix{1 \ar[r]^{} & U(\g)^{\circ}  \ar@{^{(}->}[r]^{^{t}\Fr} 
\ar@{->>}[d] &
\Gphige^{\circ}\ar[r]^{\alpha} \ar@{->>}[d]^{\Res}
& \qephi^{*} \ar[r] \ar@{->>}[d] & 1\\
1 \ar[r]^{} & U(\p)^{\circ}  \ar@{^{(}->}[r]^{^{t}\Fr_{res}}  &
\Gephip^{\circ} \ar[r]^{\beta}
& \qephip^{*}  \ar[r] & 1.}
$$
Since $\lieg$ is simple, we have that $\OO(G)  \simeq U(\g)^{\circ} $.
Thus, as $\OO(P)  = \Res (\OO(G) )$ and 
$\OephiP  = \Res (\Oephi )$, we have that 
$^{t}\Fr_{res}(\OO(P) ) \subseteq U(\p)^{\circ} $ and consequently
$\OO(P) ^{+} \subseteq \Ker \beta$. Moreover, since $\alpha(\Oephi ) 
= \pi (\Oephi )=\qephi^{*}$, we have that 
$\qephip^{*} = \beta\Res(\Oephi ) = \beta(\OephiP )$. Hence, 
there exists a surjective Hopf algebra map $\gamma:\overline{\OephiP}   \to \qephip^{*}$.
But by $(iv)$,  \cite[Proposition 2.8 (c)]{AG} and Proposition \ref{prop:uptup}, we have that 
$\dim \overline{\OephiP} =\dim \overline{\OeP}  = \dim \qep= \dim \qephip$ and 
the epimorphism is in fact an isomorphism.
\epf

\begin{obs}\label{rmk:2cocycleOephiP} 
By the proposition above, we know that $\OephiP$ fits into the central exact sequence
of Hopf algebras $
\xymatrix{\OO(P)\ar@{^{(}->}[r]^{\iota_{P}} & \OephiP  \ar@{->>}[r]^{\pi_{P}} & \qephip^{*}} $ and 
that $\OephiP$ is a $2$-cocycle deformation of $\OeP$, where the $2$-cocycle $\hat{\sigma}$
is given by the formula $\hat{\sigma}(\Res(x),\Res(y)) = \bar{\sigma}(x,y)$ for all 
$x,y\in \Oe$. On the other hand, by Propositions \ref{prop:froblustkertwist} and \ref{prop:uptup} we know that 
$\qephip^{*} = (\qep^{*})_{\tau}$
for the $2$-cocycle $\tau$ given by $\tau(\overline{\Res}(\pi(x)),\overline{\Res}(\pi(y)) = 
\bar{\sigma}(x,y)$. Since the diagram \eqref{eq:diaP} for $\varphi = 0$ is commutative, 
the pullback of the cocycle $\tau$ coincides with the cocycle $\hat{\sigma}$.
\end{obs}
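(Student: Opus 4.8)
The plan is to obtain each assertion of the statement by transporting the $2$-cocycle $\bar\sigma$ of Lemma \ref{lem:def2cocicloOe} through the quotient maps of diagram \eqref{eq:diaP}, using the functoriality of cocycle twisting recorded in Remark \ref{obs:cocycleproy}. Nothing genuinely new has to be computed; the only real point is to check that the descriptions of the deformation cocycle coming from the top and the bottom rows of \eqref{eq:diaP} agree.

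First I would collect what is already in hand. By Proposition \ref{prop:paraL}(ii) and (v) the algebra $\OephiP$ sits in the central exact sequence $\Oc(P)\hookrightarrow\OephiP\twoheadrightarrow\qephip^{*}$, and by Proposition \ref{prop:paraL}(iv) one has $\OephiP=(\OeP)_{\hat\sigma}$, where $\hat\sigma$ is the cocycle obtained from $\bar\sigma$ via Remark \ref{obs:cocycleproy} applied to $\Res\colon\Oe\to\OeP$, i.e.\ $\hat\sigma(\Res(x),\Res(y))=\bar\sigma(x,y)$ for all $x,y\in\Oe$. The same proof records that $\hat\sigma$ is trivial on the central Hopf subalgebra $\Oc(P)$; hence a second application of Remark \ref{obs:cocycleproy}, now with $B=\Oc(P)$ and $I=\Oc(P)^{+}\OeP$, descends $\hat\sigma$ to a cocycle $\tau$ on $\overline{\OeP}=\OeP/\Oc(P)^{+}\OeP$ with $\tau(\pi_{P}(a),\pi_{P}(b))=\hat\sigma(a,b)$, together with $\overline{\OephiP}=(\overline{\OeP})_{\tau}$. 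Since $\overline{\OeP}\simeq\qep^{*}$ and $\overline{\OephiP}\simeq\qephip^{*}$ by Proposition \ref{prop:paraL}(v), Propositions \ref{prop:froblustkertwist} and \ref{prop:uptup}, this is precisely the asserted identity $\qephip^{*}=(\qep^{*})_{\tau}$.

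It then remains to rewrite the formula for $\tau$ and to phrase the compatibility, for which I would use the commutativity of \eqref{eq:diaP} at $\varphi=0$, namely $\pi_{P}\circ\Res=\overline{\Res}\circ\pi$. Chaining the characterisations above, for all $x,y\in\Oe$,
$$\tau\bigl(\overline{\Res}(\pi(x)),\overline{\Res}(\pi(y))\bigr)=\tau\bigl(\pi_{P}(\Res(x)),\pi_{P}(\Res(y))\bigr)=\hat\sigma\bigl(\Res(x),\Res(y)\bigr)=\bar\sigma(x,y),$$
which is the formula for $\tau$ quoted in the statement, and it determines $\tau$ because $\pi$ is surjective and $\tau$ is multiplicative. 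Reading the middle equality as $\tau(\pi_{P}(a),\pi_{P}(b))=\hat\sigma(a,b)$ on $\OeP$ is exactly the assertion that the pullback of $\tau$ along $\pi_{P}$ — equivalently, along $\overline{\Res}$ composed with $\pi$ — coincides with $\hat\sigma$. The one subtlety worth isolating, and hence the main obstacle (a mild one), is verifying that the cocycle governing $\OephiP$ over $\OeP$ and the cocycle governing $\qephip^{*}$ over $\qep^{*}$ are genuinely a pushforward and a further descent of the same $\bar\sigma$; this is guaranteed by the commutativity of \eqref{eq:diaP} in both the twisted and the untwisted cases, i.e.\ Proposition \ref{prop:paraL}(iii), together with the triviality of $\hat\sigma$ on $\Oc(P)$ already established in the proof of Proposition \ref{prop:paraL}(iv), so once these are invoked there is nothing further to do.
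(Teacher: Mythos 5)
Your proof is correct and uses the same ingredients as the paper's remark: Proposition \ref{prop:paraL}(iv)--(v), Propositions \ref{prop:froblustkertwist} and \ref{prop:uptup}, Remark \ref{obs:cocycleproy}, and the commutativity of \eqref{eq:diaP}. The only reorganization is that you build $\tau$ as the descent of $\hat\sigma$ along $\pi_P$ (so the pullback identity $\tau\circ(\pi_P\otimes\pi_P)=\hat\sigma$ is immediate by construction) and then use the commutativity of \eqref{eq:diaP} at $\varphi=0$ to verify the stated formula $\tau\bigl(\overline{\Res}(\pi(x)),\overline{\Res}(\pi(y))\bigr)=\bar\sigma(x,y)$, whereas the remark obtains $\tau$ from the twist on the Frobenius--Lusztig side and then uses the same commutativity to identify its pullback with $\hat\sigma$; this is the same diagram chase read in the opposite order.
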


\subsection{Quantum subgroups from classical subgroups}
In this subsection we construct a Hopf algebra quotient of $\Oephi$ associated to the pair $(I_{+},I_{-})$ 
and an algebraic
subgroup of $G$ included in $P$. 
This is based in the  \textit{pushout construction}, 
which is a general method for constructing Hopf algebras from central
exact sequences. 

The following proposition follows from the arguments in \cite[\S 2.2]{AG}. If 
$\gamma:\Gamma\to G$ is a homomorphism of algebraic groups, then 
$^{t}\gamma:\OO(G) \to \OO(\Gamma)$ denotes the corresponding algebra map
between the coordinate algebras.

\begin{prop}\label{prop:pushout}
Let $\Gamma$ be an algebraic group and $\gamma:\Gamma\to G$ an
injective homomorphism of algebraic groups such that $\gamma(\Gamma)\subseteq P$. 
Let $\JJ$ denote the two-sided ideal of $\OephiP$ generated by $\iota(\Ker\ ^{t}\gamma)$.
Then
$ A^\varphi_{\epsilon,\mathfrak{p},\gamma} = \OephiP/\JJ $ is a Hopf algebra and there exist 
a Hopf algebra monomorphism 
$j:\OO(\Ga) \hookrightarrow   A^\varphi_{\epsilon,\mathfrak{p},\gamma}$, and Hopf algebra epimorphism 
$\bar{\pi}: A^\varphi_{\epsilon,\mathfrak{p},\gamma} \twoheadrightarrow \qephip^{*} $ such that 
$A^\varphi_{\epsilon,\mathfrak{p},\gamma}$ fits into the exact sequence of Hopf algebras 
$$\xymatrix{1 \ar[r]^{} & \OO(\Gamma) \ar[r]^{j} & A^\varphi_{\epsilon,\mathfrak{p},\gamma}\ar[r]^{\overline{\pi}} & 
\qephip^{*}\ar[r]^{} & 1.}
$$
If in addition $|\Ga|$ is finite, then $\dim A^\varphi_{\epsilon,\mathfrak{p},\gamma} = 
|\Ga| \dim \qephip$.
Moreover, the following diagram is commutative
\begin{equation}\label{diag:pushout}
\xymatrix{1 \ar[r]^{} & \Oc(G) \ar[r]^{\iota} \ar[d]_{\res} &
\Oephi \ar[r]^{\pi}
\ar[d]^{\Res} & \qephi^{*} \ar[r]^{}\ar[d]^{\overline{\Res}} & 1\\
1 \ar[r]^{} &   \Oc(P)  \ar[r]^{\iota_P}\ar[d]_{^t\gamma} &
\Oc_\epsilon^\varphi(P) \ar[r]^{\pi_P}\ar[d]^{\psi} & \qephip^{*} \ar[r]^{}\ar[d]^{\id} & 1\\
1 \ar[r]^{} & \OO(\Gamma) \ar[r]^{j} & A^\varphi_{\epsilon,\mathfrak{p},\gamma}\ar[r]^{\overline{\pi}} & 
\qephip^{*}\ar[r]^{} & 1.} 
\end{equation}
\qed
\end{prop}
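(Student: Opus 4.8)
The plan is to realize $A^{\varphi}_{\epsilon,\mathfrak{p},\gamma}$ as the \emph{pushout} of the central exact sequence $1\to\OO(P)\xrightarrow{\iota_{P}}\OephiP\xrightarrow{\pi_{P}}\qephip^{*}\to 1$ of Proposition \ref{prop:paraL} along the Hopf algebra surjection ${}^{t}\gamma$, and then to invoke the standard pushout machinery for central extensions exactly as in \cite[\S 2.2]{AG}. First I would record that, since $\gamma$ is an injective homomorphism of complex algebraic groups with $\gamma(\Gamma)\subseteq P$, it factors as a closed immersion $\Gamma\hookrightarrow P$ followed by $P\hookrightarrow G$; hence ${}^{t}\gamma\colon\OO(P)\to\OO(\Gamma)$ is a surjective Hopf algebra map, so $\Ker\ {}^{t}\gamma$ is a Hopf ideal of $\OO(P)$ and $\OO(\Gamma)\simeq\OO(P)/\Ker\ {}^{t}\gamma$. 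Because $\OO(P)=\Res(\OO(G))$ is a central Hopf subalgebra of $\OephiP$ by Proposition \ref{prop:paraL} and $\Ker\ {}^{t}\gamma$ is a coideal contained in $\OO(P)^{+}$, the two-sided ideal $\JJ=\OephiP\,\iota_{P}(\Ker\ {}^{t}\gamma)=\iota_{P}(\Ker\ {}^{t}\gamma)\,\OephiP$ is a Hopf ideal of $\OephiP$, so $A^{\varphi}_{\epsilon,\mathfrak{p},\gamma}=\OephiP/\JJ$ is a Hopf algebra; moreover, by faithful flatness of $\OephiP$ over $\OO(P)$ (part of the exactness in Proposition \ref{prop:paraL}$(ii)$, ultimately via Proposition \ref{prop:femodpro}) one gets $A^{\varphi}_{\epsilon,\mathfrak{p},\gamma}\simeq\OephiP\otimes_{\OO(P)}\OO(\Gamma)$.

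Next I would build the two structural maps. Writing $\psi\colon\OephiP\twoheadrightarrow A^{\varphi}_{\epsilon,\mathfrak{p},\gamma}$ for the canonical projection, the composite $\psi\iota_{P}$ kills $\Ker\ {}^{t}\gamma$, hence factors as $\psi\iota_{P}=j\circ{}^{t}\gamma$ for a Hopf algebra map $j\colon\OO(\Gamma)\to A^{\varphi}_{\epsilon,\mathfrak{p},\gamma}$, and base change of the faithfully flat (indeed pure) inclusion $\OO(P)\hookrightarrow\OephiP$ along ${}^{t}\gamma$ shows that $j$ is injective. Since $\JJ\subseteq\OO(P)^{+}\OephiP=\Ker\pi_{P}$, the map $\pi_{P}$ descends to a Hopf algebra epimorphism $\overline{\pi}\colon A^{\varphi}_{\epsilon,\mathfrak{p},\gamma}\twoheadrightarrow\qephip^{*}$ with $\overline{\pi}\psi=\pi_{P}$. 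Exactness of $1\to\OO(\Gamma)\xrightarrow{j}A^{\varphi}_{\epsilon,\mathfrak{p},\gamma}\xrightarrow{\overline{\pi}}\qephip^{*}\to 1$ then follows from \cite[Proposition 3.4.3]{Mo}, once one observes that $A^{\varphi}_{\epsilon,\mathfrak{p},\gamma}$ is faithfully flat over $\OO(\Gamma)$ (a base change of a faithfully flat extension) and that, by a base change computation, $A^{\varphi}_{\epsilon,\mathfrak{p},\gamma}/j(\OO(\Gamma)^{+})A^{\varphi}_{\epsilon,\mathfrak{p},\gamma}\simeq\OephiP/\OO(P)^{+}\OephiP\simeq\qephip^{*}$. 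When $|\Gamma|$ is finite, $\OO(\Gamma)$ is finite-dimensional and $A^{\varphi}_{\epsilon,\mathfrak{p},\gamma}\simeq\OephiP\otimes_{\OO(P)}\OO(\Gamma)$ is free over $\OO(\Gamma)$ of rank $\dim\qephip$ — since $\OephiP$ is free of that rank over $\OO(P)$ by \cite{AG}, part $(iv)$ and Proposition \ref{prop:uptup}, the cocycle $\hat{\sigma}$ being trivial on $\OO(P)$ — whence $\dim A^{\varphi}_{\epsilon,\mathfrak{p},\gamma}=|\Gamma|\dim\qephip$.

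Finally I would check the commutativity of \eqref{diag:pushout}: its upper rectangle is precisely the commutative diagram \eqref{eq:diaP} of Proposition \ref{prop:paraL}$(iii)$, and the lower rectangle commutes by the very construction of $j$ and $\overline{\pi}$, namely $j\circ{}^{t}\gamma=\psi\circ\iota_{P}$ and $\overline{\pi}\circ\psi=\pi_{P}=\id\circ\,\pi_{P}$. I expect the only genuine obstacle to be the faithful flatness input: one must be sure that the central extension of Proposition \ref{prop:paraL}$(ii)$ is preserved under the pushout, i.e. that $\OephiP$ is a faithfully flat — in fact free — $\OO(P)$-module, since everything else reduces to formal manipulations with central Hopf subalgebras identical to the untwisted treatment in \cite[\S 2.2]{AG}.
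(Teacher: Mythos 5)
Your proposal is correct and takes essentially the same approach as the paper: the paper gives no proof of its own for this proposition beyond citing \cite[\S 2.2]{AG}, and your argument is exactly the standard central pushout construction from that reference, spelled out in the twisted setting (with the only extra input being that $\OephiP$ is still free over $\OO(P)$ because the $2$-cocycle is trivial on $\OO(P)$, which you correctly note).
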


\begin{prop}\label{prop:pushoutcocycle}
$A^\varphi_{\epsilon,\mathfrak{p},\gamma}$ is a $2$-cocycle deformation of ${A_{\epsilon,\mathfrak{p},\gamma}}$. 
\end{prop}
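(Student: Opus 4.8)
The plan is to exhibit $A^\varphi_{\epsilon,\p,\gamma}$ as obtained from $A_{\epsilon,\p,\gamma}$ by a $2$-cocycle that is the pullback of the cocycle $\hat\sigma$ on $\OephiP$ already identified in Proposition~\ref{prop:paraL}~$(iv)$ and Remark~\ref{rmk:2cocycleOephiP}. First I would recall from Remark~\ref{rmk:2cocycleOephiP} that $\OephiP = (\OeP)_{\hat\sigma}$, where $\hat\sigma(\Res(x),\Res(y)) = \bar\sigma(x,y)$ for all $x,y\in\Oe$, and that $\hat\sigma$ is trivial on the central Hopf subalgebra $\OO(P)$. The untwisted pushout $A_{\epsilon,\p,\gamma} = \OeP/\JJ_0$, where $\JJ_0$ is the two-sided ideal generated by $\iota_P(\Ker\ {}^t\gamma)$, and the twisted pushout $A^\varphi_{\epsilon,\p,\gamma} = \OephiP/\JJ$ with $\JJ$ generated by the same set of elements, now viewed inside $\OephiP$. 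Since $\OephiP = \OeP$ as coalgebras and the deformation only changes the product, the underlying vector space and coalgebra structures of $\JJ$ and $\JJ_0$ coincide once we check that $\JJ$, as a subspace, equals $\JJ_0$.

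The key point I would establish is that $\hat\sigma$ vanishes on $\JJ_0\ot\OeP + \OeP\ot\JJ_0$, so that Remark~\ref{obs:cocycleproy} applies and produces a well-defined $2$-cocycle on the quotient $\OeP/\JJ_0 = A_{\epsilon,\p,\gamma}$. The ideal $\JJ_0$ is generated by elements of $\iota_P(\OO(P))$, which is a central Hopf subalgebra of $\OeP$ on which $\hat\sigma$ restricts trivially (equivalently, $\hat\sigma(\iota_P(x),-)=\eps(x)\eps(-)$ and symmetrically). More precisely, since $\iota_P(\Ker\ {}^t\gamma)\subseteq \OO(P)^+$ lies in the augmentation ideal of a central Hopf subalgebra, and $\JJ_0 = (\iota_P(\Ker\ {}^t\gamma))\,\OeP$, the $2$-cocycle identity together with triviality of $\hat\sigma$ on $\OO(P)$ forces $\hat\sigma|_{\JJ_0\ot\OeP+\OeP\ot\JJ_0}=0$; this is exactly the situation of the second sentence of Remark~\ref{obs:cocycleproy} with $B=\OO(P)$ replaced by the argument adapted to the Hopf ideal generated by a central element. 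Then the induced cocycle $\hat\sigma'$ on $A_{\epsilon,\p,\gamma}$ satisfies $(A_{\epsilon,\p,\gamma})_{\hat\sigma'} = (\OeP)_{\hat\sigma}/\JJ_0 = \OephiP/\JJ = A^\varphi_{\epsilon,\p,\gamma}$, where the middle equality uses that $\JJ$ and $\JJ_0$ agree as subspaces (the generators are the same and the deforming cocycle does not change the span of a two-sided ideal generated by central elements, since multiplication by a central element is unaffected up to scalars by the cocycle twist).

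I expect the main obstacle to be the careful verification that $\JJ = \JJ_0$ as subspaces of the common underlying space of $\OeP$ and $\OephiP$: a priori the ideal generated by a set $S$ in the deformed product $\cdot_{\hat\sigma}$ could differ from the one generated in the original product. The resolution is that the generators lie in the central Hopf subalgebra $\OO(P)$, on which $\hat\sigma$ is trivial, so for $b\in\iota_P(\Ker\ {}^t\gamma)$ and $a\in\OeP$ one computes $a\cdot_{\hat\sigma} b = \hat\sigma(a_{(1)},b_{(1)})a_{(2)}b_{(2)}\hat\sigma^{-1}(a_{(3)},b_{(3)})$; using $\com(b)\in\OO(P)\ot\OO(P)$, centrality, and $\hat\sigma(-,\iota_P(x))=\eps\ot\eps$, this collapses to a scalar multiple of the ordinary product $a b$ lying in $\OO(P)$, and symmetrically on the other side, so the two ideals coincide. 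Once this is in hand, the statement follows immediately by invoking Remark~\ref{obs:cocycleproy} and the compatibility of cocycle deformation with Hopf quotients. I would close by noting that this also identifies, via the commutative diagram~\eqref{diag:pushout}, the cocycle $\hat\sigma'$ with the pullback along $\psi\circ\iota_P$ of the original $\bar\sigma$, consistently with Remark~\ref{rmk:2cocycleOephiP}.
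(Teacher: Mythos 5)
Your argument is correct and follows essentially the paper's route: both invoke Remark~\ref{obs:cocycleproy} with the cocycle $\hat\sigma$ from Proposition~\ref{prop:paraL}~$(iv)$, check that it vanishes on $\JJ\ot\OephiP+\OephiP\ot\JJ$, and conclude that the deformation descends through the pushout. You are somewhat more explicit than the paper about the identification of the two-sided ideal $\JJ$ in the deformed product with $\JJ_0$ in the undeformed one (a point the paper takes for granted); the one small slip is the parenthetical that $a\cdot_{\hat\sigma}b$ is a ``scalar multiple of $ab$ lying in $\OO(P)$'' --- in fact $a\cdot_{\hat\sigma}b=ab$ exactly when $b\in\OO(P)$, because $\hat\sigma$ restricts to $\eps\ot\eps$ there, and this product lies in the full ideal rather than in $\OO(P)$ itself, but this does not affect the conclusion.
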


\pf By Proposition \ref{prop:paraL} $(iv)$, we know that $\OephiP$ is a $2$-cocycle deformation 
of $\OeP$, say by the cocycle $\hat{\sigma}$, see Remark \ref{rmk:2cocycleOephiP} above. Then,
by Remark \ref{obs:cocycleproy} it is enough to check that $\hat{\sigma}|_{\OephiP\ot\JJ + \JJ\ot \OephiP} = 0$. 
Since $\JJ= \OephiP \iota_{P}(\Ker\ ^{t}\gamma)$ and $\Ker\ ^{t}\gamma$ is 
generated by matrix coefficients $c_{f,v}$ in $\OO(P)$, of degree $(\ell \lambda, \ell \mu) $
for some $\lambda, \mu \in P$, we have that 
$\hat{\sigma}|_{\iota_{p}(\Ker\ ^{t}\gamma)\ot \iota_{p}(\Ker\ ^{t}\gamma)} = \eps\ot\eps = 0$ and whence
$\hat{\sigma}|_{\OephiP\ot\JJ + \JJ\ot \OephiP} = 0$. 
Thus, we may define a $2$-cocycle 
$\tilde{\sigma}:A_{\epsilon,\mathfrak{p},\gamma}\ot A_{\epsilon,\mathfrak{p},\gamma} \to \CC$
by $\tilde{\sigma}(\psi(x),\psi(y)) = \hat{\sigma}(x,y)$ for all $x,y \in \OeP$ and $(A_{\epsilon,\mathfrak{p},\gamma})_{\tilde{\sigma}} = 
A^\varphi_{\epsilon,\mathfrak{p},\gamma}$.
Note that $\tilde{\sigma}$ coincides with the pullback through $\bar{\pi}$ of the $2$-cocycle $\tau$ on $\qep^{*}$.
\epf

By Proposition \ref{prop:qephisse} $(ii)$, we know that there exists an injective
coalgebra map $^{t}\theta: \qephip^\ast \to \Gamma_\epsilon^\varphi(\p)^\circ$, and since $\qephip^{*} \simeq \overline{\OephiP}$ by Proposition \ref{prop:paraL},
we have that $\Img\ ^{t}\theta\subseteq \OephiP$. Thus, the image of the central subgroup $\widehat{\Tt_{I^c}^{\varphi}}$ of $G(\qephip^{*})$ is a 
subgroup of $G(\OephiP)$. Denote $d^{z} = \ ^{t}\theta(D^{z})$ for $z\in (\ZZ/\ell\ZZ)^{m}$, $D^{z}\in \widehat{\Tt_{I^c}^{\varphi}}$.

\begin{lema}\label{lem:centralgrelements}
There exists a subgroup $\mathcal{A}= \{\partial^{z}=\psi(^{t}\theta(D^{z})):\ D^{z} \in  \widehat{\Tt_{I^c}^{\varphi}}\}$ of 
$G(A^\varphi_{\epsilon,\mathfrak{p},\gamma})$ isomorphic to $\widehat{\Tt_{I^c}^{\varphi}}$ consisting of central elements. 
In particular, $|\mathcal{A}|=\ell^{n-\rk S^{\varphi}_{\ell}}$.
\end{lema}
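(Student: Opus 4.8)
The plan is to realize $\mathcal{A}$ as the image of a group homomorphism $\widehat{\Tt_{I^c}^{\varphi}}\to G(A^\varphi_{\epsilon,\mathfrak{p},\gamma})$, $D^{z}\mapsto\partial^{z}$, and then to check that this homomorphism is injective with central image; the cardinality of $\mathcal{A}$ will drop out of Remark \ref{rmk:notathenDi}$(d)$. First I would record that $d^{z}={}^{t}\theta(D^{z})$ is group-like: since $\theta\colon\Gephip\to\qephip$ is an algebra map (Proposition \ref{prop:qephisse}$(ii)$) and each $D^{z}\colon\qephip\to\CC$ is an algebra map, the composite $d^{z}=D^{z}\circ\theta$ is an algebra map $\Gephip\to\CC$, i.e.\ a group-like of $\Gephip^{\circ}$; by the paragraph preceding the statement it lies in $\OephiP$, and $\pi_{P}(d^{z})=D^{z}$ because ${}^{t}\theta$ splits the canonical map $\OephiP\to\overline{\OephiP}\simeq\qephip^{*}$. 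Hence $\partial^{z}=\psi(d^{z})$ is a group-like of $A^\varphi_{\epsilon,\mathfrak{p},\gamma}$ and, by the commutativity of \eqref{diag:pushout}, $\bar{\pi}(\partial^{z})=\pi_{P}(d^{z})=D^{z}$. Moreover $z\mapsto\partial^{z}$ is a homomorphism: the functionals $D^{z}$, the section $\theta$ and hence the $d^{z}$ depend only on the algebra structures of $\qephip$ and $\Gephip$, which are the $\varphi$-independent algebras $\qep$ and $\Gamma_{\epsilon}(\mathfrak{p})$, so the identity $d^{z}d^{w}=d^{z+w}$ may be checked on the algebra generators $K_{\alpha_{i}}^{-1}$, $E_{j}^{(m)}$, $F_{k}^{(m)}$, $\dbinom{K_{\alpha_{i}};0}{m}$ of $\Gephip$ (Definition \ref{def:gamaparabo}) exactly as in \cite{AG}; applying $\psi$ yields $\partial^{z}\partial^{w}=\partial^{z+w}$.

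For injectivity and the cardinality, observe that the composite $\widehat{\Tt_{I^c}^{\varphi}}\xrightarrow{D^{z}\mapsto\partial^{z}}G(A^\varphi_{\epsilon,\mathfrak{p},\gamma})\xrightarrow{\bar{\pi}}G(\qephip^{*})$ is the monomorphism $\widehat{\Tt_{I^c}^{\varphi}}\hookrightarrow G(\qephip^{*})=\widehat{\Tt^{\varphi}/\Tt_{I'}^{\varphi}}$ of Remark \ref{rmk:notathenDi}$(c)$. Hence $D^{z}\mapsto\partial^{z}$ is injective, $\mathcal{A}$ is a subgroup of $G(A^\varphi_{\epsilon,\mathfrak{p},\gamma})$ isomorphic to $\widehat{\Tt_{I^c}^{\varphi}}$, and $|\mathcal{A}|=|\widehat{\Tt_{I^c}^{\varphi}}|=\ell^{\,n-\rk S_{\ell}^{\varphi}}$ by Remark \ref{rmk:notathenDi}$(d)$.

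It remains to show that every $\partial^{z}$ with $D^{z}\in\widehat{\Tt_{I^c}^{\varphi}}$ is central in $A^\varphi_{\epsilon,\mathfrak{p},\gamma}$, and this is the step I expect to be the main obstacle. Following \cite[\S\,2.2]{AG}, $A^\varphi_{\epsilon,\mathfrak{p},\gamma}$ is generated as an algebra by $j(\OO(\Gamma))$ together with lifts of a generating set of $\qephip^{*}$; now $j(\OO(\Gamma))$ is central by the bottom row of \eqref{diag:pushout}, and $\partial^{z}$ commutes with the group-like lifts, which may be chosen among the $\partial^{w}$ (extend the construction $D^{w}\mapsto\partial^{w}$ to all $w\in(\ZZ/\ell\ZZ)^{n}$: it is still a homomorphism with $\bar{\pi}(\partial^{w})=D^{w}$, so these cover $G(\qephip^{*})$) and hence lie in one abelian subgroup. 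Thus it suffices that $\partial^{z}$ commute with lifts $\hat\xi_{j}^{+}$, $\hat\xi_{k}^{-}$ of the near-primitive generators of $\qephip^{*}$ dual to $\widetilde{E}_{j}=E_{j}K_{-\tau_{j}}$ ($\alpha_{j}\in I_{+}$) and $\widetilde{F}_{k}=K_{\alpha_{k}+\tau_{k}}F_{k}$ ($\alpha_{k}\in I_{-}$). Since $\widetilde{E}_{j}$ is $(1,K_{(1-\varphi)(\alpha_{j})})$-primitive and $\widetilde{F}_{k}$ is $(1,K_{(1+\varphi)(\alpha_{k})})$-primitive in $\qephip$ (Lemma \ref{lema:subalgparadeuphi}), conjugation by $D^{z}$ in $\qephip^{*}$ multiplies the corresponding near-primitives by $D^{z}(K_{(1-\varphi)(\alpha_{j})})$, resp.\ $D^{z}(K_{(1+\varphi)(\alpha_{k})})$, and the hypothesis $D^{z}\in\widehat{\Tt_{I^c}^{\varphi}}$ says precisely that these scalars equal $1$ (this is \eqref{eq:dedkerij1}--\eqref{eq:dedkerij2}), which is exactly why $D^{z}$ is central in $\qephip^{*}$ by Proposition \ref{prop:dzcen}.

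The crux is to promote the relations $\partial^{z}\hat\xi_{j}^{+}=\hat\xi_{j}^{+}\partial^{z}$ and $\partial^{z}\hat\xi_{k}^{-}=\hat\xi_{k}^{-}\partial^{z}$ from $\qephip^{*}$ up to $A^\varphi_{\epsilon,\mathfrak{p},\gamma}$. For this I would use that $A^\varphi_{\epsilon,\mathfrak{p},\gamma}=(A_{\epsilon,\mathfrak{p},\gamma})_{\tilde\sigma}$ (Proposition \ref{prop:pushoutcocycle}), where $\tilde\sigma$ is the pullback along $\bar{\pi}$ of the $2$-cocycle of Corollary \ref{cor:cocycle}; in particular $\tilde\sigma$ vanishes whenever one of its arguments lies in the augmentation ideal, and on a pair of group-likes it is a power of $\epsilon$ determined by their degrees. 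Computing the $\tilde\sigma$-deformed products $\partial^{z}\cdot_{\tilde\sigma}\hat\xi_{j}^{\pm}$ and $\hat\xi_{j}^{\pm}\cdot_{\tilde\sigma}\partial^{z}$ from the skew-primitive coproducts, together with the relations holding already in $A_{\epsilon,\mathfrak{p},\gamma}$ (where, by the computations of \cite{AG}, $\partial^{z}\hat\xi_{j}^{+}=\epsilon^{z_{j}}\hat\xi_{j}^{+}\partial^{z}$, without $D^{z}$ being central there), one finds that the deformation contributes exactly the scalar that turns $\epsilon^{z_{j}}$ into $D^{z}(K_{(1-\varphi)(\alpha_{j})})$; hence the two deformed products coincide if and only if $D^{z}(K_{(1-\varphi)(\alpha_{j})})=1$, i.e.\ by \eqref{eq:dedkerij1}--\eqref{eq:dedkerij2}, and symmetrically for $\hat\xi_{k}^{-}$. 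Conceptually, passing from the untwisted setting of \cite{AG}, where centrality of $\partial^{z}$ forces $D^{z}\in\widehat{\Tt_{I^c}}$, to the present one replaces this condition by $D^{z}\in\widehat{\Tt_{I^c}^{\varphi}}$, the two commutation computations differing only by those cocycle scalars; carrying out this bookkeeping, using the PBW-type description of $A^\varphi_{\epsilon,\mathfrak{p},\gamma}$ furnished by Proposition \ref{prop:pushout}, is where the real work of the proof lies.
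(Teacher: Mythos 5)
Your argument for the isomorphism $\mathcal{A}\simeq \widehat{\Tt_{I^c}^{\varphi}}$ and the cardinality is essentially the paper's: factor through $\bar\pi$, use the monomorphism of Remark \ref{rmk:notathenDi}$(c)$, and quote \ref{rmk:notathenDi}$(d)$. That part is fine.

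The centrality argument, however, diverges from the paper and is left substantially unfinished. The paper's route is to prove centrality of $d^{z}={}^{t}\theta(D^{z})$ directly in $\OephiP$: using the explicit coproduct \eqref{eq:coproGphi} on the generators of $\Gephip$ and the pairing, one evaluates $d^{z}f$ and $fd^{z}$ on $E_{i}^{(m)}$, $F_{j}^{(m)}$, $K_{\alpha_i}^{-1}$, $\binom{K_{\alpha_i};0}{m}$, and the hypothesis $D^{z}\in\widehat{\Tt_{I^c}^{\varphi}}$ enters precisely through $d^{z}(\overline{K}_{i})=d^{z}(\widetilde{K}_{j})=1$, which makes the two asymmetric $\varphi$-terms $d^z(K_{m(\alpha_i-\tau_i)})$ and $d^z(K_{m\tau_i})$ coincide; centrality of $\partial^{z}=\psi(d^{z})$ then falls out from surjectivity of $\psi$ with no further work in $A^\varphi_{\epsilon,\mathfrak{p},\gamma}$ needed. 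You instead try to argue centrality inside $A^\varphi_{\epsilon,\mathfrak{p},\gamma}$ by comparison with $A_{\epsilon,\mathfrak{p},\gamma}$ through the cocycle $\tilde\sigma$ of Proposition \ref{prop:pushoutcocycle}. This is an attractive idea, but as written it rests on several unverified claims: (i) that $A^\varphi_{\epsilon,\mathfrak{p},\gamma}$ is generated by $j(\OO(\Gamma))$, the $\partial^{w}$, and certain lifts $\hat\xi_{j}^{\pm}$ of skew-primitives of $\qephip^{*}$; (ii) that the untwisted commutation relation is $\partial^{z}\hat\xi_{j}^{+}=\epsilon^{z_{j}}\hat\xi_{j}^{+}\partial^{z}$; (iii) that the $\tilde\sigma$-deformation replaces $\epsilon^{z_{j}}=D^{z}(K_{\alpha_j})$ exactly by $D^{z}(K_{(1-\varphi)\alpha_{j}})$. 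None of (i)--(iii) is established in the paper or proved by you, and you explicitly defer the bookkeeping. Until that is done the proof has a genuine gap; in particular (ii) is asserted as a known fact from \cite{AG} but is not a statement that appears there, so it would itself require the kind of generator-by-generator computation that the paper performs at the cleaner level of $\OephiP$ via the pairing. I would recommend switching to the paper's route: it is shorter, works upstream in $\OephiP$ where you have a concrete dual description, and the centrality of $\partial^z$ comes for free via $\psi$.
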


\pf
Using the same argument as in the proof of Proposition \ref{prop:dzcen},
one sees that the elements $d^{z}$ are central in $\OephiP$. Indeed, if $f\in \OephiP$, then 
$d^{z}f(M) = fd^{z}(M)$ for every generator $M$ of $\Gephip$ from Definition \ref{def:gamaparabo}. For example, let $i\in I_{+}$ and 
$m\geq 0$, then by \eqref{eq:coproGphi} we have 
\begin{align*}
d^{z}f\left(E_{i}^{(m)}\right) &= \sum\limits_{r+s=m}q_i^{-rs}d^{z}\left(E_i^{(r)}K_{s(\alpha_i-\tau_{i})}\right) f\left(E_i^{(s)}K_{r\tau_i}\right)\\
&= \sum\limits_{r+s=m}q_i^{-rs}d^{z}\left(E_i^{(r)}\right)d^{z}(K_{s(\alpha_i-\tau_{i})}) f\left(E_i^{(s)}K_{r\tau_i}\right)=  d^{z}(K_{m(\alpha_i-\tau_{i})}) f\left(E_i^{(m)}\right),\text{ and }\\
f d^{z}\left(E_{i}^{(m)}\right) &= \sum\limits_{r+s=m}q_i^{-rs}f\left(E_i^{(r)}K_{s(\alpha_i-\tau_{i})}\right) d^{z}\left(E_i^{(s)}K_{r\tau_i}\right)\\
&= \sum\limits_{r+s=m}q_i^{-rs}f\left(E_i^{(r)}K_{s(\alpha_i-\tau_{i})}\right) d^{z}\left(E_i^{(s)}\right)d^{z}(K_{r\tau_i})=  f\left(E_i^{(m)}\right)d^{z}(K_{m\tau_i}).
\end{align*}
Since $d^{z}\left(\overline{K_{i}}\right) = d^{z}(K_{\alpha_{i}-2\tau_{i}})= D^{z}(\theta(K_{\alpha_{i}-2\tau_{i}}))= 1$,
we have that $d^{z}(K_{m(\alpha_i-\tau_{i})}) = d^{z}(K_{m\tau_i})$ for all $m\geq 0$, and then $d^{z}f\left(E_{i}^{(m)}\right) = fd^{z}\left(E_{i}^{(m)}\right)$.
Analogously, using that $ 1= d^{z}\left(\widetilde{K_{j}}\right) $ for all $j\in I_{-}$, we have that  $d^{z}f\left(F_{j}^{(m)}\right) = fd^{z}\left(F_{j}^{(m)}\right)$
 for all $m\geq 0$. The equality on the generators 
 $K^{-1}_{\alpha_i}$ and $
\dbinom{K_{\alpha_i};0}{m}$ follows easily since the coproduct is cocommutative on them.
Applying an inductive argument on monomials on the generators we have that $d^{z}$ is central in $\OephiP$.
Since $\psi: \OephiP \to A^{\varphi}_{\epsilon, \liep,\gamma}$ is surjective, the group-like elements $\partial^{z}$
are also central in $A^{\varphi}_{\epsilon, \liep,\gamma}$.

Now we show that $\mathcal{A}\simeq \widehat{\Tt_{I^c}^{\varphi}}$ as groups. By construction, we have that 
$\psi\circ\ ^{t}\theta: \widehat{\Tt_{I^c}^{\varphi}} \to \mathcal{A}$ is a group epimorphism. As 
the diagram
$$
\xymatrix{\Oc_\epsilon^\varphi(P) \ar[d]_{\psi}\ar[r]^{\pi_P}  & \qephip^\ast \\
 A^\varphi_{\epsilon,\mathfrak{p},\sigma}  \ar[ru]^{\overline{\pi}} & }
$$
 is commutative
 by \eqref{diag:pushout}, we have that  
$\overline{\pi}(\mathcal{A}) =\overline{\pi}(\psi (^{t}\theta(\widehat{\Tt_{I^c}^{\varphi}})) = \pi_P (^{t}\theta(\widehat{\Tt_{I^c}^{\varphi}}) = 
\widehat{\Tt_{I^c}^{\varphi}}
$, which implies that $\psi\circ\ ^{t}\theta$ is indeed an isomorphism.
\epf

\subsection{Quantum subgroups from subalgebras of the twisted Frobenius-Lusztig kernels}
In this subsection we construct Hopf algebras from a Hopf subalgebra of $\qephi$ and 
an algebraic subgroup of $G$.

Let $L$ be a Hopf subalgebra of $\qephi$. By Lemma \ref{lema:subalgparadeuphi}, it is determined by a triple 
$(I_{+}, I_{-},\Sigma^{\varphi})$ with $\Sigma
^\varphi$ a subgroup of $G(\qephi)$ and $I_\pm \subset \pm\Pi$ are such that  $K_{(1\mp\varphi)(\alpha_i)} \in \Sigma^\varphi$
if $\alpha_i\in I_{\pm}$. If $H= L^{*}$, then by Proposition \ref{prop:propdih}, 
$H = \qephip^{*}/ \langle D^z-1 :\  D^{z} \in N^\varphi \rangle$, 
where $N^\varphi$ is determined by $\Sigma^{\varphi}$ as 
in Remark \ref{rmk:notathenDi} $(d)$. Let $P$ be the regular subgroup of $G$
determined by the pair $( I_{+}, I_{-})$ with $\liep = \Lie(P)$ and $\OephiP$, $\qephip$
the corresponding twisted quantum algebras.

\begin{prop}\label{prop:ultimopaso}
Let $\Gamma$ be an algebraic group and $\gamma:\Gamma \to G$ an injective morphism
of algebraic groups such that $\gamma (\Ga) \subseteq P $. For every group homomorphism
$\delta: N^{\varphi}\to \widehat{\Ga}$, the two-sided ideal $J_{\delta}$ of $A^{\varphi}_{\epsilon, \liep,\gamma}$
generated by the elemens $\delta(D^{z}) - \partial^{z}$ for $\partial^{z}\in \mathcal{A}$ and $D^{z}$
in $N^{\varphi}$, is a Hopf ideal and the Hopf algebra $A_{\epsilon, \liep,\gamma} / J_{\delta}$
fits into the central exact sequence 
$$1 \longrightarrow \OO(\Gamma) \overset{\hat{\iota}}{\longrightarrow} 
A^{\varphi}_{\epsilon, \p,\gamma} / J_{\delta}  \overset{\hat{\pi}}{\longrightarrow}
 H \longrightarrow  1.$$
If in addition $|\Ga|$ is finite, then $\dim A^\varphi_{\epsilon,\mathfrak{p},\gamma} / J_{\delta} = 
|\Ga| \dim H$.
Moreover, the following diagram is commutative 
\begin{equation}\label{diag:ultimopaso}
\xymatrix{1 \ar[r]^{} & \Oc(G) \ar[r]^{\iota} \ar[d]_{\res} &
\Oephi \ar[r]^{\pi}
\ar[d]^{\Res} & \qephi^{*} \ar[r]^{}\ar[d]^{\overline{\Res}} & 1\\
1 \ar[r]^{} &   \Oc(P)  \ar[r]^{\iota_P}\ar[d]_{^{t}\gamma} &
\Oc_\epsilon^\varphi(P) \ar[r]^{\pi_P}\ar[d]^{\psi} & \qephip^{*} \ar[r]^ {}\ar[d]^{\id} & 1\\
1 \ar[r]^{} & \OO(\Gamma) \ar[r]^{j} \ar[d]_{\id}& 
A^\varphi_{\epsilon,\mathfrak{p},\gamma}\ar[r]^{\overline{\pi}} \ar[d]^{}& \qephip^{*}\ar[r]^{}\ar[d]^{\nu} & 1\\
1 \ar[r] & \OO(\Gamma) \ar[r]^{\hat{\iota}} &
A^{\varphi}_{\epsilon, \liep,\gamma} / J_{\delta}\ar[r]^{\hat{\pi}}
& H \ar[r] & 1.}
\end{equation}
\end{prop}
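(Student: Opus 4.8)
The plan is to descend the maps $j$ and $\nu\circ\overline{\pi}$ through the canonical projection $q\colon A^{\varphi}_{\epsilon,\p,\gamma}\twoheadrightarrow A^{\varphi}_{\epsilon,\p,\gamma}/J_{\delta}=:B$ and then to establish exactness of the bottom row by the faithful-flatness argument of the untwisted case in \cite{AG}, the new inputs being Lemma \ref{lem:centralgrelements} and the commutativity of \eqref{diag:pushout}. First I would check that $J_{\delta}$ is a Hopf ideal. Identifying $\delta(D^{z})\in\widehat{\Gamma}\subseteq\OO(\Gamma)$ with the group-like element $j(\delta(D^{z}))$ of $A^{\varphi}_{\epsilon,\p,\gamma}$, each generator $\delta(D^{z})-\partial^{z}$ is a difference of two group-likes, and the identities $\com(g-h)=(g-h)\ot g+h\ot(g-h)$, $\eps(g-h)=0$, $\SS(g-h)=-g^{-1}(g-h)h^{-1}$ show that the two-sided ideal generated by such differences is automatically a Hopf ideal; since moreover $\partial^{z}$ is central in $A^{\varphi}_{\epsilon,\p,\gamma}$ by Lemma \ref{lem:centralgrelements} and $j(\OO(\Gamma))$ is central, being the image under $\psi$ of the central Hopf subalgebra $\iota_{P}(\OO(P))$ of $\OephiP$, one gets $J_{\delta}=A^{\varphi}_{\epsilon,\p,\gamma}\{\delta(D^{z})-\partial^{z}\colon D^{z}\in N^{\varphi}\}$ and $q(j(\OO(\Gamma)))$ is a central Hopf subalgebra of $B$.

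Next I would produce $\hat{\pi}$ by composing $\overline{\pi}\colon A^{\varphi}_{\epsilon,\p,\gamma}\to\qephip^{*}$ with $\nu\colon\qephip^{*}\to H$. From the commutativity of \eqref{diag:pushout} one has $\overline{\pi}\circ\psi=\pi_{P}$, so $\overline{\pi}(\partial^{z})=\pi_{P}({}^{t}\theta(D^{z}))=D^{z}$, as in the proof of Lemma \ref{lem:centralgrelements}, whereas $\overline{\pi}(j(\delta(D^{z})))=\eps(\delta(D^{z}))=1$ because $\overline{\pi}\circ j$ is trivial and $\delta(D^{z})$ is group-like; since $\nu(D^{z})=1$ for $D^{z}\in N^{\varphi}$ by Proposition \ref{prop:propdih}, it follows that $\nu\overline{\pi}(\delta(D^{z})-\partial^{z})=1-\nu(D^{z})=0$, hence $J_{\delta}\subseteq\Ker(\nu\overline{\pi})$ and $\nu\overline{\pi}$ factors as $\hat{\pi}\circ q$ for a Hopf algebra epimorphism $\hat{\pi}\colon B\to H$. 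Setting $\hat{\iota}:=q\circ j$, the composite $\hat{\pi}\circ\hat{\iota}$ is trivial, so $\hat{\iota}(\OO(\Gamma))$ is a central Hopf subalgebra of $B$ contained in $B^{\co\hat{\pi}}$. To pin down $\Ker\hat{\pi}$, I would observe that $\Ker(\nu\overline{\pi})=\overline{\pi}^{-1}(\Ker\nu)$ equals $j(\OO(\Gamma)^{+})A^{\varphi}_{\epsilon,\p,\gamma}$ plus the two-sided ideal generated by the $\partial^{z}-1$ with $D^{z}\in N^{\varphi}$, while modulo $J_{\delta}$ one has $\partial^{z}-1\equiv\delta(D^{z})-1\in\hat{\iota}(\OO(\Gamma)^{+})$; therefore $\Ker\hat{\pi}=q(\Ker(\nu\overline{\pi}))=\hat{\iota}(\OO(\Gamma)^{+})B$. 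The commutativity of \eqref{diag:ultimopaso} is then immediate: its top three rows are \eqref{diag:pushout}, and the two bottom squares commute by the definitions of $\hat{\pi}$ and $\hat{\iota}$.

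The main obstacle is the exactness of the bottom row, namely the injectivity of $\hat{\iota}$, the faithful flatness of $B$ over $\hat{\iota}(\OO(\Gamma))$ and the identity $\OO(\Gamma)=B^{\co\hat{\pi}}$; granting these, the equality $\dim B=|\Gamma|\dim H$ for finite $\Gamma$ follows from multiplicativity of dimensions in an extension. I would handle this exactly as in the untwisted treatment of \cite{AG}: by Proposition \ref{prop:pushout}, $A^{\varphi}_{\epsilon,\p,\gamma}$ is a central extension of $\qephip^{*}$ by the Noetherian commutative Hopf algebra $\OO(\Gamma)$, hence faithfully flat over it; since $J_{\delta}$ is generated by central elements lying over $\Ker\nu$, one checks, using an $\OO(\Gamma)$-basis of $A^{\varphi}_{\epsilon,\p,\gamma}$ adapted to the central group-likes $\partial^{z}$, that $J_{\delta}\cap j(\OO(\Gamma))=0$ and that $B$ remains faithfully flat over $\hat{\iota}(\OO(\Gamma))$, whence $\OO(\Gamma)=B^{\co\hat{\pi}}$ follows from $\Ker\hat{\pi}=\hat{\iota}(\OO(\Gamma)^{+})B$ by \cite[Proposition 3.4.3]{Mo}. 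The only genuinely new points relative to the untwisted setting are that the group controlling $\Ker\nu$ is $N^{\varphi}$ rather than $N$ and that the required central group-likes $\partial^{z}$ are provided by Lemma \ref{lem:centralgrelements}; once these are in place the argument is formally the same.
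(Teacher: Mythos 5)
Your proposal is correct and follows essentially the same strategy as the paper's proof: both establish that $J_{\delta}$ is a Hopf ideal via Lemma \ref{lem:centralgrelements} (centrality of the $\partial^{z}$) and the elementary observation that differences of group-likes generate Hopf ideals, both compute $\overline{\pi}(J_{\delta})=\langle D^{z}-1:D^{z}\in N^{\varphi}\rangle$ using $\overline{\pi}(\partial^{z})=D^{z}$ and Proposition \ref{prop:propdih} so that the right-hand quotient is $H$, and both defer the verification that $J_{\delta}\cap\OO(\Gamma)=0$ and the faithful-flatness/coinvariant identification to the untwisted argument in \cite[Theorem 2.17]{AG}. Your write-up is somewhat more explicit about factoring $\hat{\pi}=\nu\circ\overline{\pi}$ and identifying $\Ker\hat{\pi}$, but the decomposition and the key external inputs are the same.
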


\pf Follows by the proof \cite[Theorem 2.17]{AG}. We reproduce the first part here to give an idea.
By Lemma \ref{lem:centralgrelements}, we know that the group-like elements $\partial^{z} \in \mathcal{A}$  are central in 
$A^{\varphi}_{\epsilon, \liep,\gamma}$. Since $\delta(D^{z}) \in \OO(\Ga)$ for all $D^{z}\in N^{\varphi}$, the ideal $J_{\delta}$ in 
$A^{\varphi}_{\epsilon, \liep,\gamma}$ generated by the elements $\delta(D^{z}) - \partial^{z}$
is a Hopf ideal, and whence $A^{\varphi}_{\epsilon, \liep,\gamma}/J_{\delta}$ is a Hopf algebra. 
If we write $\JJ_{\delta} = J_{\delta}\cap \OO(\Ga)$,
then $A^{\varphi}_{\epsilon, \liep,\gamma}/J_{\delta}$ fits into the central exact sequence
$$1 \longrightarrow \OO(\Gamma)/\JJ_{\delta} \longrightarrow
A^{\varphi}_{\epsilon, \p,\gamma} / J_{\delta}\longrightarrow \qephip^{*}/\overline{\pi}(J_{\delta}) \longrightarrow 1.$$
Since $\overline{\pi}(\delta(D^{z}))=1$ and 
$\overline{\pi}(\partial^{z})= D^{z}$ by the proof of Lemma \ref{lem:centralgrelements},
it follows that 
$\overline{\pi}(J_{\delta}) = \langle D^z-1 :\ D^{z} \in N^\varphi \rangle$. Thus,  
by Proposition \ref{prop:propdih} $(ii)$, we have that  $\qephip^{*}/\overline{\pi}(J_{\delta}) = H $.
The proof that $\JJ_{\delta} =0$ and that $A^{\varphi}_{\epsilon, \liep,\gamma}/J_{\delta}$ fits into the 
commutative diagram follow the same arguments used in \textit{loc.\ cit}.
\epf

\subsection{Parametrization of quantum subgroups} 
In this subsection we parametrize the Hopf algebra quotients of $\Oephi$ by a $6$-tuple called 
\textit{twisted subgroup datum}. We show first that 
there is a $1$-$1$ correspondance between 
Hopf algebra quotients of $\Oephi$ and twisted subgroup data, and then we classify these quotients  
up to isomorphism.

\begin{definition}\label{def:twistedsdata} A \textit{twisted subgroup datum} 
is a collection $\D^\varphi:= (I_+,I_-,N^\varphi,\Gamma,\gamma,\delta)$ where
\begin{itemize}
 \item[$\triangleright$]  $I_\pm\subset \pm\prod$. Let $\Psi_{\pm} = \{\alpha \in \Phi: \Supp \alpha \in I_{\pm}\}$, 
 $\liep = \sum_{\alpha \in \Psi_{\pm}} \g_{\alpha}$ and $\liep = \liep_{+ } \oplus \lieh \oplus \liep_{-}$. 
 Let $P$ be the connected Lie subgroup of $G$ with
 $\Lie (P) = \liep$.
\item[$\triangleright$]  $N^\varphi$ a subgroup of $\widehat{\Tt^{\varphi}_{I^{c}}}$, see Remark \ref{rmk:notathenDi} $(d)$. 
\item[$\triangleright$] $\Gamma$ an algebraic group.
\item[$\triangleright$]   $\gamma:\Gamma\to 
P$ is a injective algebraic group homomorphism.
\item[$\triangleright$] $\delta:N^\varphi\to \widehat{\Gamma}$ is a group homomorphism.
\end{itemize}
If $\Ga$ is finite, we call $\D^{\varphi}$ a \textit{finite twisted subgroup datum}.
\end{definition} 

Summarizing the previous results we obtain the first main result of the paper.
\begin{theorem}\label{thm:mainteo1}
Let $\D^\varphi = (I_+,I_-,N^\varphi,\Gamma,\gamma,\delta)$ 
be a twisted subgroup datum. 
Then there exists a Hopf algebra $A_{\D^{\varphi}}= A_{\epsilon, \liep,\gamma} / J_{\delta}$  
of $\Oephi$ that fits into the central exact sequence 
$$
\xymatrix{1 \ar[r] & \OO(\Gamma) \ar[r]^{\hat{\iota}} &
A_{\D^{\varphi}}\ar[r]^{\hat{\pi}}
& H \ar[r] & 1.}
$$
In particular, if $|\Ga|$ is finite, then 
 $\dim A_{\D^{\varphi}} = |\Ga| \dim H$.
\end{theorem}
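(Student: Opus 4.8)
The plan is to obtain $A_{\D^{\varphi}}$ by assembling, in the order dictated by the datum $\D^{\varphi}=(I_+,I_-,N^\varphi,\Gamma,\gamma,\delta)$, the Hopf algebras already constructed in the preceding subsections. First I would feed the pair $(I_+,I_-)$ into the machinery of Definition \ref{def:gamaparabo}, Remark \ref{rmk:liep} and Proposition \ref{prop:paraL}: this produces the regular Lie subalgebra $\liep$, the connected subgroup $P\subseteq G$ with $\Lie(P)=\liep$, the twisted quantum function algebra $\OephiP=\Res(\Oephi)$ and the twisted regular Frobenius--Lusztig kernel $\qephip$, together with the central exact sequence
$$1 \longrightarrow \OO(P) \longrightarrow \OephiP \longrightarrow \qephip^{*} \longrightarrow 1.$$
Since $\Res:\Oephi\twoheadrightarrow\OephiP$ is surjective by the very definition of $\OephiP$, every Hopf algebra quotient of $\OephiP$ is automatically a quotient of $\Oephi$; this is what will make $A_{\D^{\varphi}}$ a quotient ``of $\Oephi$''.

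Next I would run the pushout construction twice. Using the injective morphism $\gamma:\Gamma\to P$ with $\gamma(\Gamma)\subseteq P$, Proposition \ref{prop:pushout} gives the Hopf algebra $A^{\varphi}_{\epsilon,\liep,\gamma}=\OephiP/\JJ$, sitting in the central exact sequence $1\to\OO(\Gamma)\to A^{\varphi}_{\epsilon,\liep,\gamma}\to\qephip^{*}\to1$ and in the commutative diagram \eqref{diag:pushout}. Then, feeding in the group homomorphism $\delta:N^\varphi\to\widehat{\Gamma}$, Proposition \ref{prop:ultimopaso} produces $A_{\D^{\varphi}}:=A^{\varphi}_{\epsilon,\liep,\gamma}/J_\delta$, a Hopf algebra fitting into the central exact sequence
$$1 \longrightarrow \OO(\Gamma) \overset{\hat{\iota}}{\longrightarrow} A_{\D^{\varphi}} \overset{\hat{\pi}}{\longrightarrow} H \longrightarrow 1,$$
where $H=\qephip^{*}/\langle D^z-1:\ D^z\in N^\varphi\rangle$ is exactly the Hopf algebra quotient of $\qephi^{*}$ attached to $N^\varphi$ in Proposition \ref{prop:propdih} (recall that $N^\varphi\subseteq\widehat{\Tt^{\varphi}_{I^{c}}}$ encodes the subgroup $\Sigma^\varphi$ via Remark \ref{rmk:notathenDi}). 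Composing the three surjections $\Oephi\xrightarrow{\Res}\OephiP\xrightarrow{\psi}A^{\varphi}_{\epsilon,\liep,\gamma}\twoheadrightarrow A_{\D^{\varphi}}$ exhibits $A_{\D^{\varphi}}$ as a Hopf algebra quotient of $\Oephi$, and this composite is precisely the left column of the diagram \eqref{diag:ultimopaso}; the stated exact sequence for $A_{\D^{\varphi}}$ is its bottom row.

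For the dimension formula, when $|\Gamma|<\infty$ the Hopf algebra $\OO(\Gamma)$ is finite dimensional of dimension $|\Gamma|$, and since the sequence $1\to\OO(\Gamma)\to A_{\D^{\varphi}}\to H\to1$ is a central extension, $A_{\D^{\varphi}}$ is faithfully flat — in fact free of rank $\dim H$ — over $\OO(\Gamma)$, whence $\dim A_{\D^{\varphi}}=|\Gamma|\dim H$; this is also recorded directly in Proposition \ref{prop:ultimopaso}. I expect that the only point that genuinely needs care — and it is settled inside Proposition \ref{prop:ultimopaso} by adapting \cite[Theorem 2.17]{AG} — is the verification that $J_\delta\cap\OO(\Gamma)=0$, i.e. that $\OO(\Gamma)$ survives intact as a central Hopf subalgebra of $A_{\D^{\varphi}}$ after the second quotient; once this is known, the exactness of the sequence and the identification of the cokernel with $H$ are formal, and what remains is the bookkeeping along the diagram \eqref{diag:ultimopaso}.
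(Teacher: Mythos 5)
Your proposal is correct and follows essentially the same route as the paper: feed $(I_+,I_-)$ into Proposition \ref{prop:paraL} to get $\liep$, $P$, $\OephiP$ and $\qephip$, run the pushout construction of Proposition \ref{prop:pushout} with $\gamma$, then quotient by $J_\delta$ via Proposition \ref{prop:ultimopaso}, stacking the three central exact sequences into the commutative diagram \eqref{diag:ultimopaso}. The only trivial slip is that the composite $\Oephi \to \OephiP \to A^{\varphi}_{\epsilon,\liep,\gamma} \to A_{\D^{\varphi}}$ is the \emph{middle} column of \eqref{diag:ultimopaso}, not the left one; otherwise everything, including your observation that the crux is $J_\delta\cap\OO(\Gamma)=0$ and the dimension count from freeness over $\OO(\Gamma)$, matches the paper's argument.
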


\begin{proof} 
By Lemma \ref{lema:subalgparadeuphi} and Remark \ref{rmk:notathenDi} $(d)$, the triple $(I_+,I_-,N^\varphi)$
determines a quotient $H$ of $\qephi^{*}$. Besides, by Proposition \ref{prop:paraL}, the pair $(I_+,I_-)$ determines
a regular subgroup $P$ of $G$, a regular Lie subalgebra $\liep$ of $\g$ and the quantum algebras $\OephiP$ 
and $\qephip$, which makes the upper part of the diagram \eqref{diag:ultimopaso} commutative. 
Then by Proposition \ref{prop:pushout}, the morphism 
$\gamma:\Gamma \to P \subset G$ give rise to the Hopf algebra 
$A_{\epsilon,\p,\gamma}^\varphi$ through the pushout construction. 
Finally, by Proposition \ref{prop:ultimopaso} the 
group homomorphism $\delta: N^{\varphi} \to \widehat{\Ga}$ defines the Hopf 
ideal $J_{\delta}$ of $A_{\epsilon,\p,\gamma}^\varphi$ and the Hopf 
algebra $A_{D^{\varphi}} = 
A_{\epsilon,\p,\sigma}^\varphi / J_{\delta}$ fits into the commutative diagram \eqref{diag:ultimopaso}.
\end{proof}

The next theorem establishes the converse of Theorem \ref{thm:mainteo1}. We give its 
proof in several lemmata.

\begin{theorem}\label{thm:reciteo}
Let $\kappa :\Oephi\to A$ be a surjective Hopf algebra morphism, then there exists a twisted
subgroup datum $\D^\varphi$ such that $A\simeq A_{\D^{\varphi}}$ as Hopf algebras. \qed
\end{theorem}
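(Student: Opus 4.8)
The plan is to reconstruct a twisted subgroup datum $\D^{\varphi}$ directly from $\kappa$, following the strategy of \cite{AG} for $\varphi=0$ and exploiting throughout that $\Oephi$ is a $2$-cocycle deformation of $\Oe$ (Lemma \ref{lem:def2cocicloOe}); I would organize the argument as a sequence of lemmata. First, since $\OO(G)$ is a central Hopf subalgebra of $\Oephi$, the image $B:=\kappa(\OO(G))$ is a central Hopf subalgebra of $A$; being a quotient of a commutative Hopf algebra, $B\simeq\OO(\Gamma)$ for a closed subgroup $\Gamma\subseteq G$, and $\kappa|_{\OO(G)}$ is the comorphism of an injective homomorphism of algebraic groups $\gamma_{0}:\Gamma\to G$. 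Second, $\overline{A}:=A/[B^{+}A]$ is a quotient of $\Oephi/[\OO(G)^{+}\Oephi]=\qephi^{*}$, so by Lemma \ref{lema:subalgparadeuphi} and Proposition \ref{prop:propdih} it is determined by a triple $(I_{+},I_{-},\Sigma^{\varphi})$; one then sets $N^{\varphi}\subseteq\widehat{\Tt^{\varphi}_{I^{c}}}$ to be the subgroup attached to $\Sigma^{\varphi}$ as in Remark \ref{rmk:notathenDi}$(d)$, so that $\overline{A}=\qephip^{*}/\langle D^{z}-1:\ D^{z}\in N^{\varphi}\rangle$, where $(I_{+},I_{-})$ determines a regular subgroup $P\subseteq G$, a Lie subalgebra $\liep$, and the quantum algebras $\OephiP$, $\qephip$.

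The first genuine step is to show $\gamma_{0}(\Gamma)\subseteq P$ and that $\kappa$ factors through $\Res:\Oephi\to\OephiP$. As in \cite{AG}, using that $\Ker\Res$ is spanned by matrix coefficients vanishing on $\Gamma^{\varphi}_{\epsilon}(\liep)$, one checks that the Hopf ideal $\Ker\kappa$ contains $\Ker\Res$: the induced surjection $\qephi^{*}\twoheadrightarrow\overline{A}$ factors through $\qephip^{*}$ by Proposition \ref{prop:propdih}, and chasing this through the exact rows of \eqref{eq:diaP} forces $\kappa$ to annihilate $\Ker\Res$; restricting to the central copies of $\OO(G)$ and $\OO(P)$ then gives $\gamma_{0}(\Gamma)\subseteq P$. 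Write $\gamma:\Gamma\to P$ for the resulting injective morphism and $\overline{\kappa}:\OephiP\to A$ for the induced surjection. Since $\overline{\kappa}$ maps $\iota_{P}(\OO(P))$ onto $B=\OO(\Gamma)$ and agrees on it with $^{t}\gamma$, it kills $\iota_{P}(\Ker\ ^{t}\gamma)$, so by Proposition \ref{prop:pushout} it descends to a surjection $q:A^{\varphi}_{\epsilon,\liep,\gamma}\twoheadrightarrow A$.

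It remains to extract $\delta$ and to identify $A$ with $A_{\D^{\varphi}}$. For $D^{z}\in N^{\varphi}$, the central group-like element $\partial^{z}\in\mathcal{A}\subseteq G(A^{\varphi}_{\epsilon,\liep,\gamma})$ of Lemma \ref{lem:centralgrelements} satisfies $\overline{\pi}(\partial^{z})=D^{z}$, which becomes trivial in $\overline{A}$; hence $q(\partial^{z})\in B=\OO(\Gamma)$, and being group-like and central it lies in $\widehat{\Gamma}$. The assignment $D^{z}\mapsto q(\partial^{z})$ is readily seen to be a group homomorphism $\delta:N^{\varphi}\to\widehat{\Gamma}$, and with this choice the elements $\delta(D^{z})-\partial^{z}$ lie in $\Ker q$, so $q$ factors through $\overline{q}:A_{\D^{\varphi}}=A^{\varphi}_{\epsilon,\liep,\gamma}/J_{\delta}\twoheadrightarrow A$. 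Finally $\overline{q}$ is an isomorphism: it restricts to the identity on the central subalgebra $\OO(\Gamma)$ and induces the identity on $H=\qephip^{*}/\langle D^{z}-1\rangle$, and since $A_{\D^{\varphi}}$ and $A$ are faithfully flat over $\OO(\Gamma)$ and fit into compatible central extensions with common quotient $H$ (Theorem \ref{thm:mainteo1} together with the exact rows above), $\overline{q}$ must be bijective; when $|\Gamma|$ is finite one may instead simply compare dimensions, $\dim A=|\Gamma|\dim\overline{A}=|\Gamma|\dim H=\dim A_{\D^{\varphi}}$.

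The step I expect to be the main obstacle is the factorization $\gamma_{0}(\Gamma)\subseteq P$ and, in the infinite case, the closing identification $\overline{q}\simeq\id$: the former demands a careful diagram chase through the exact sequences relating $\Oephi$, $\OephiP$ and the twisted Frobenius--Lusztig kernels, and although the $2$-cocycle picture reduces most computations to the untwisted case of \cite{AG}, the central group-likes $\partial^{z}$ and the subgroup $N^{\varphi}$ are genuinely $\varphi$-dependent and the bookkeeping must be redone; the latter requires the structure theory of central (faithfully flat) extensions of Hopf algebras rather than a dimension count.
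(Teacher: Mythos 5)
Your overall plan reproduces the paper's strategy lemma by lemma: produce $\Gamma$ and $\gamma$ from $\kappa(\OO(G))$, read off $(I_{+},I_{-},\Sigma^{\varphi})$ and $N^{\varphi}$ from $\overline{A}$ via Lemma \ref{lema:subalgparadeuphi} and Proposition \ref{prop:propdih}, factor $\kappa$ through $\Res$, descend to the pushout $A^{\varphi}_{\epsilon,\liep,\gamma}$, and extract $\delta$ from the images of the central group-likes $\partial^{z}$. The construction of $\delta$ (using that a group-like mapping to $1$ in $\overline{A}$ is a coinvariant, hence lies in $\OO(\Gamma)$) and the closing faithfully-flat/dimension argument are correct and match what the paper outsources to \cite[Lemmata 3.2 \& 3.3]{AG}.

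The genuine gap is in the step establishing $\Ker\Res\subseteq\Ker\kappa$. You assert that, once the map $\qephi^{*}\twoheadrightarrow\overline{A}$ is known to factor through $\qephip^{*}$, a diagram chase through the exact rows of \eqref{eq:diaP} ``forces $\kappa$ to annihilate $\Ker\Res$.'' It does not. Starting from $x\in\Ker\Res$, the chase only shows that $\pi(x)$ dies in $\overline{A}$, hence that $\kappa(x)\in\Ker\hat{\pi}=\OO(\Gamma)^{+}A$; nothing about the right column or the exactness of the rows forces $\kappa(x)=0$. The inclusion $\Ker\Res\subseteq\Ker\kappa$ is precisely the content of the paper's middle lemma, and its proof is not a formal diagram argument: it embeds $\Oephi$ into $\AA^{\varphi}_{\epsilon}$ via $\mu^{\varphi}_{\epsilon}$, identifies $\mu^{\varphi}_{\epsilon}(\Ker\Res)$ as the two-sided ideal generated by the $1\ot e^{\varphi}_{k}$ and $f^{\varphi}_{j}\ot 1$ with $\alpha_{k}\notin I_{-}$, $\alpha_{j}\notin I_{+}$ (using Remark \ref{rmk:reci} and the elements $\psi_{\pm\omega_{i}}^{\pm\alpha_{i}}$), and then shows that ideal lies inside $\mu^{\varphi}_{\epsilon}(\Ker\kappa)$, following \cite[Lemma 3.1]{AG}. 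You flag this step as the ``main obstacle,'' but characterizing it as a ``careful diagram chase'' misrepresents what is needed; without the explicit realization and computation, the factorization $\gamma_{0}(\Gamma)\subseteq P$ and the induced surjection $\OephiP\to A$ are not established, and the rest of the construction cannot get off the ground.
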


\begin{lema}
There exists an algebraic group $\Ga$ and an injective homomorphism of algebraic
groups $\gamma: \Ga \to G$ such that $\OO(\Ga)$ is a Hopf subalgebra of $A$ and 
$A$ fits into the central exact sequence of Hopf algebras 
$\xymatrix{
1 \ar[r] & \OO(\Gamma) \ar[r]^{\hat{\iota}} &
A\ar[r]^{\hat{\pi}}
& H \ar[r] & 1}$,
where $H = A / \OO(\Ga)^{+}A$. Moreover, 
the following diagram is commutative
\begin{equation}\label{diag:diareci1}
\xymatrix{1 \ar[r]^{} & \Oc(G) \ar[r]^{\iota} \ar[d]_{^t\gamma} &
\Oephi \ar[r]^{\pi}
\ar[d]^{\kappa} & \qephi^{*} \ar[r]^{}\ar[d]^{} & 1\\
1 \ar[r] & \OO(\Gamma) \ar[r]^{\hat{\iota}} &
A\ar[r]^{\hat{\pi}}
& H \ar[r] & 1.}
\end{equation}
\end{lema}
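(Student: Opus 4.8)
The plan is to exploit the fact that $\Oephi$ contains the central Hopf subalgebra $\Oc(G)$ (Lemma \ref{lem:subalgcen}) and that $\kappa$ is surjective, transporting this central structure through $\kappa$. First I would set $B := \kappa(\iota(\Oc(G))) \subseteq A$, the image of the central Hopf subalgebra. Since $\Oc(G)$ is central in $\Oephi$ and $\kappa$ is an algebra map, $B$ is a central Hopf subalgebra of $A$. Because $\Oc(G) \simeq \Oc(G)$ is the coordinate algebra of the affine algebraic group $G$, it is commutative, finitely generated, reduced, and a domain; hence any Hopf algebra quotient of it is again of the form $\Oc(\Gamma)$ for a closed subgroup $\Gamma \le G$. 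More precisely, the restriction $\kappa\iota : \Oc(G) \to B$ is a surjective Hopf algebra map of commutative Hopf algebras, so it equals ${}^t\gamma$ for a unique closed immersion $\gamma : \Gamma \hookrightarrow G$, and $B \simeq \Oc(\Gamma)$. This already gives the left square of \eqref{diag:diareci1} commuting with the vertical arrow $^t\gamma = \kappa\iota|_{\Oc(G)}$, by construction.

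Next I would pass to the quotients by the augmentation ideals of the central subalgebras. Set $H := A/\OO(\Ga)^{+}A$ with canonical projection $\hat\pi : A \to H$. The key points are: (i) $\Oc(G)$ is faithfully flat in $\Oephi$ (this is used already in the excerpt, via Proposition \ref{prop:femodpro} and \cite[Proposition 3.4.3]{Mo}, to produce the exact sequence $1 \to \Oc(G) \to \Oephi \to \overline{\Oephi} \to 1$); and (ii) faithful flatness descends to the quotient, so $\OO(\Ga)$ is faithfully flat in $A$. For (ii), since $A$ is a quotient of $\Oephi$ which is faithfully flat over $\Oc(G)$, one can argue that $A$ is faithfully flat over $\OO(\Gamma) = \kappa(\Oc(G))$ — this is the standard fact that a surjection of Hopf algebras carrying a faithfully flat central subalgebra to a central subalgebra remains faithfully flat over the image; it can be extracted from \cite[Proposition 3.4.3]{Mo} together with the normality of the Hopf subalgebra $\OO(\Gamma)$ (centrality implies normality). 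Having faithful flatness, \cite[Proposition 3.4.3]{Mo} again yields that
$$
\xymatrix{1 \ar[r] & \OO(\Gamma) \ar[r]^{\hat{\iota}} & A \ar[r]^{\hat{\pi}} & H \ar[r] & 1}
$$
is a central exact sequence of Hopf algebras, with $\hat\iota$ the inclusion and $H = A/\OO(\Ga)^{+}A$.

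Finally I would produce the induced map on the right, $\bar\kappa : \qephi^{*} \to H$, making the whole square \eqref{diag:diareci1} commute. Recall $\qephi^{*} \simeq \overline{\Oephi} = \Oephi/[\OO(G)^{+}\Oephi]$ by Theorem \ref{thm:oephiisodualnfl} and the exact sequence displayed just before Section \ref{sec:mflk}. Since $\kappa(\Oc(G)^{+}) \subseteq \OO(\Gamma)^{+} = B^{+}$, the map $\kappa$ descends to a Hopf algebra map $\bar\kappa : \overline{\Oephi} \to A/B^{+}A = H$; identifying $\overline{\Oephi}$ with $\qephi^{*}$ gives the desired arrow, and commutativity of both squares is immediate from the definitions of $\hat\pi$ and $\pi$ as the respective canonical projections. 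Surjectivity of $\bar\kappa$ follows from surjectivity of $\kappa$ and $\hat\pi$.

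The main obstacle is step (ii): establishing that $\OO(\Gamma)$ is faithfully flat inside $A$, since faithful flatness does not pass to quotients in complete generality. The cleanest route is to observe that $A$ is obtained from $\Oephi$ (which is faithfully flat, indeed free, over $\Oc(G)$ of rank $\ell^{\dim\lieg}$ by Proposition \ref{prop:femodpro}) by a Hopf-algebra quotient; combined with the fact that $\OO(\Gamma) = \Oc(G)/K$ for a Hopf ideal $K$ and that everything in sight is central (hence normal), one invokes the general theory of faithfully flat central extensions — precisely the framework of \cite[Proposition 3.4.3]{Mo} — to conclude. This is exactly the technique used for the untwisted case, so I would follow \cite[\S 2]{AG} \emph{mutatis mutandis}, the only change being the replacement of $\Oe$ by its $2$-cocycle deformation $\Oephi$, which does not affect the coalgebra structure or the centrality of $\Oc(G)$.
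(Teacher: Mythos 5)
Your proposal is correct and takes essentially the same approach as the paper: define $K = \kappa(\iota(\OO(G)))$, identify it as $\OO(\Gamma)$ for a closed subgroup $\gamma:\Gamma\hookrightarrow G$ via commutativity and centrality, set $H = A/K^+A$, and deduce exactness and commutativity of the diagram. The paper's own proof is even terser — it asserts exactness without dwelling on the faithful-flatness point you flag — so your extra discussion of why $\OO(\Gamma)$ is faithfully flat in $A$ only makes the argument more explicit, not different in kind.
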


\pf
Let $K=\kappa(\iota(\OO(G))) $. Since $\OO(G)$ is central in $\Oephi$, $K$ is central
in $A$ and there exists an algebraic group $\Ga$ and an algebraic group 
homomorphism $\gamma:\Ga \to G$ such that $K=\OO(\Ga)$ and 
$^{t}\gamma: \OO(G) \to \OO(\Ga)$ is the Hopf algebra epimorphism $\kappa\circ \iota|_{\OO(G)}$.
Moreover, if we set $H = A / K^{+}A$, then the sequence 
$\xymatrix{
1 \ar[r] & \OO(\Gamma) \ar[r]^{\hat{\iota}} &
A\ar[r]^{\hat{\pi}}
& H \ar[r] & 1}$ is exact and the diagram \eqref{diag:diareci1} is commutative.
\epf

By the lemma above, $H^{*}$ is a Hopf subalgebra of $\qephi$. Thus, by Lemma \ref{lema:subalgparadeuphi}
it is determined by a triple $( I_{+}, I_{-}, \Sigma^{\varphi})$. Let $P$ be the subgroup of $G$
determined by the pair $(I_+,I_-)$, $\liep = \Lie(P)$ and $\OephiP$,
$\qephip$ the quantum algebras given by
Proposition \ref{prop:paraL}. In particular, we have that 
$H^{*}\subseteq \qephip \subseteq \qephi$ and by Proposition \ref{prop:propdih},
$H \simeq \qephip^{*}/ \langle D^z-1 :\ D^{z} \in N^\varphi \rangle$, where
$N^{\varphi}$ is determined by $\Sigma^{\varphi}$ as in Remark \ref{rmk:notathenDi} $(d)$.
Denote by 
$\nu: \qephip \to H$ the corresponding epimorphism.

The next lemma follows from \cite[Lemma 3.1]{AG}, but adapted to the twisted case. 
\begin{lema} 
 The diagram \eqref{diag:diareci1} factorizes through the central exact sequence
$$\xymatrix{1 \ar[r]^{} &   \Oc(P)  \ar[r]^{\iota_P} &
\Oc_\epsilon^\varphi(P) \ar[r]^{\pi_P} & \qephip^{*} \ar[r]^ {} & 1.}$$
\end{lema}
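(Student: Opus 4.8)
The plan is to show that the surjection $\kappa:\Oephi\to A$ restricts and factors through the intermediate Hopf algebra $\OephiP$, so that the two-row diagram \eqref{diag:diareci1} becomes the three-row diagram whose middle row is the central exact sequence $\xymatrix{1 \ar[r] & \Oc(P) \ar[r]^{\iota_P} & \Oephi(P) \ar[r]^{\pi_P} & \qephip^{*} \ar[r] & 1}$ of Proposition \ref{prop:paraL}. Recall that $\OephiP=\Res(\Oephi)$ where $\Res:\Gphige^{\circ}\to\Gephip^{\circ}$ is dual to the inclusion $\Gephip\hookrightarrow\Gphige$; the key point is that $\kappa$ is constant on the fibers of $\Res$, i.e. $\Ker\Res\subseteq\Ker\kappa$.

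First I would identify $\Ker(\Res|_{\Oephi})$. Since $H^{*}$ is the Hopf subalgebra of $\qephi$ determined by the triple $(I_+,I_-,\Sigma^{\varphi})$ and $H^{*}\subseteq\qephip$, the map $\pi:\Oephi\to\qephi^{*}$ composed with $\kappa$'s induced quotient $\qephi^{*}\to H$ factors through $\qephip^{*}$; dually this says the matrix coefficients that vanish on $\Gephip$ lie in $\Ker\kappa$. More precisely, as in \cite[Lemma 3.1]{AG}, one observes that $\Ker\kappa$ contains the ideal $\II$ of $\Oephi$ spanned by matrix coefficients $c_{f,v}$ with $v\in L(\Lambda)_{\mu}$, $f\in L(\Lambda)^{*}_{-\lambda}$ that vanish on the subalgebra $\Gephip$ — this is exactly $\Ker(\Res|_{\Oephi})$ by definition of $\OephiP$. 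To see $\II\subseteq\Ker\kappa$: on the central subalgebra $\OO(G)$ the map $\kappa$ is $^{t}\gamma$, and modulo $\OO(G)^{+}$ the quotient $\Oephi\to\qephi^{*}\to H$ kills precisely those $\bar c_{f,v}$ supported outside $\qephip$, by Proposition \ref{prop:propdih} and Lemma \ref{lema:subalgparadeuphi}; lifting back through the faithfully flat extension $\OO(G)\hookrightarrow\Oephi$ and using that $\II$ is generated over $\OO(G)$ by such coefficients (here one uses the $P$-bigrading and the description of $F_0=\OO(G)_{\QQ(\epsilon)}$ from Lemma \ref{lem:subalgcen}) gives $\II\subseteq\Ker\kappa$.

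Granting $\Ker(\Res|_{\Oephi})\subseteq\Ker\kappa$, the universal property of the quotient $\Res:\Oephi\twoheadrightarrow\OephiP$ produces a Hopf algebra map $\kappa':\OephiP\to A$ with $\kappa=\kappa'\circ\Res$, and this is surjective since $\kappa$ is. Restricting to the central Hopf subalgebras, $\kappa'\circ\iota_{P}=\ ^{t}\gamma'$ for the induced map $\OO(P)=\Res(\OO(G))\to\OO(\Ga)$; since $\gamma(\Ga)\subseteq G$ factors through $P$ by construction of $P$ from $(I_+,I_-)$ and $H^{*}\subseteq\qephip$, this $^{t}\gamma'$ is exactly $^{t}(\gamma:\Ga\to P)$. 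Passing to quotients by the augmentation ideals of these central subalgebras, $\kappa'$ induces $\qephip^{*}\to H$, which is the map $\nu$ already fixed. Assembling, the outer diagram \eqref{diag:diareci1} now sits on top of the three rows
\begin{equation*}
\xymatrix{1 \ar[r] & \Oc(G) \ar[r]^{\iota}\ar[d]^{\res} & \Oephi \ar[r]^{\pi}\ar[d]^{\Res} & \qephi^{*}\ar[r]\ar[d] & 1\\
1 \ar[r] & \Oc(P) \ar[r]^{\iota_P}\ar[d]^{^{t}\gamma} & \Oephi(P)\ar[r]^{\pi_P}\ar[d]^{\kappa'} & \qephip^{*}\ar[r]\ar[d]^{\nu} & 1\\
1 \ar[r] & \OO(\Gamma)\ar[r]^{\hat\iota} & A\ar[r]^{\hat\pi} & H\ar[r] & 1,}
\end{equation*}
with all squares commutative and the middle row exact by Proposition \ref{prop:paraL} $(ii)$.

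The main obstacle is the inclusion $\Ker(\Res|_{\Oephi})\subseteq\Ker\kappa$, that is, verifying that $\kappa$ really does kill every matrix coefficient vanishing on $\Gephip$. This is where the twisted case needs care beyond \cite[Lemma 3.1]{AG}: one must check that the $2$-cocycle deformation relating $\Oephi$ to $\Oe$ (Lemma \ref{lem:def2cocicloOe}) does not disturb this kernel — but since $\bar\sigma$ is trivial on $\OO(G)$ and only rescales matrix coefficients by scalars depending on the $P$-bidegree (Corollary \ref{cor:cocycle}), the span $\II$ is $\bar\sigma$-stable and the argument of \emph{loc.\ cit.} transfers \emph{mutatis mutandis}. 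The remaining diagram chase — exactness of the factored rows and commutativity of the new squares — is routine given the exact sequences already established and the faithful flatness of $\OO(G)\hookrightarrow\Oephi$ from Proposition \ref{prop:femodpro}.
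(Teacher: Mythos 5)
Your overall strategy matches the paper's: both reduce the lemma to the inclusion $\Ker(\Res|_{\Oephi})\subseteq\Ker\kappa$ and then invoke the universal property of the quotient $\Res:\Oephi\twoheadrightarrow\OephiP$. However, the crux is exactly that inclusion, and here your argument has a genuine gap. You claim that $\Ker\Res$ is ``generated over $\OO(G)$'' by matrix coefficients $c_{f,v}$ vanishing on $\Gephip$ and that this follows from the $P$-bigrading, the description of $F_0$ in Lemma \ref{lem:subalgcen}, and faithful flatness—but none of that is actually demonstrated, and it is not what the paper does. The key technical device the paper uses, and which you omit entirely, is the embedding $\mu^{\varphi}_{\epsilon}:\Oephi\hookrightarrow\QUhatphie(\mathfrak{b}_-)^{\cop}\otimes\QUhatphie(\mathfrak{b}_+)^{\cop}$ from Lemma \ref{lem:mu}. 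Working inside the image $\AA^{\varphi}_{\epsilon}$, the paper identifies $\mu^{\varphi}_{\epsilon}(\Ker\Res)$ concretely as the two-sided ideal $\II$ generated by $1\otimes e^{\varphi}_{k}$ and $f^{\varphi}_{j}\otimes 1$ for $\alpha_k\notin I_-$, $\alpha_j\notin I_+$, via the explicit formulas $\mu^{\varphi}_{\epsilon}(\psi^{-\alpha_i}_{-\omega_i}\psi_{\omega_i})=\epsilon^{-(\tau_i,\omega_i)}f^{\varphi}_{\alpha_i}\otimes 1$ and similarly for the $e^{\varphi}_{\alpha}$; only once the kernel is pinned down in these terms can the argument from \cite[Lemma~3.1]{AG} be transferred.

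Your assertion that ``modulo $\OO(G)^+$ the quotient kills precisely those $\bar c_{f,v}$ supported outside $\qephip$'' is also underspecified—it is not clear what ``supported outside $\qephip$'' means for a general matrix coefficient, nor that such a description characterizes $\Ker\Res$. The remark about the $2$-cocycle $\bar\sigma$ being trivial on $\OO(G)$ is correct but does not by itself show that the untwisted argument goes through; the paper takes care to verify the twisted version of the relevant identities (Remark \ref{rmk:reci}, formulas \eqref{eq:pairphipi} and \eqref{eq:reci3teo}) precisely because the twist does affect $\mu^{\varphi}_{\epsilon}$ away from $\OO(G)$. Without the embedding into $\AA^{\varphi}_{\epsilon}$ and the explicit description of the generators of $\Ker\Res$, the pivotal inclusion $\Ker\Res\subseteq\Ker\kappa$ remains unproved.
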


\pf We want to show that $A$ fits into the commutative diagram
\begin{equation}
\xymatrix{1 \ar[r]^{} & \Oc(G) \ar[r]^{\iota} \ar[d]_{\res} &
\Oephi \ar[r]^{\pi}
\ar[d]^{\Res} & \qephi^{*} \ar[r]^{}\ar[d]^{\overline{\Res}} & 1\\
1 \ar[r]^{} &   \Oc(P)  \ar[r]^{\iota_P}\ar[d]_{^{t}\zeta} &
\Oc_\epsilon^\varphi(P) \ar[r]^{\pi_P}\ar[d]^{\psi} & \qephip^{*} \ar[r]^ {}\ar[d]^{\nu} & 1\\
1 \ar[r] & \OO(\Gamma) \ar[r]^{\hat{\iota}} &
A\ar[r]^{\hat{\pi}}
& H \ar[r] & 1.}
\end{equation}
To prove it, it suffices to show that $\Ker \Res \subseteq \Ker \kappa$. In order to do so, we realize
$\Oephi$ as a subalgebra of $\AA^{\varphi}_{\epsilon}= \AA^{\prime\prime}_{\varphi}\ot_{R}\QQ(\epsilon)$, 
see \cite[\S 3.6]{cv2}, Lemma  \ref{lem:mu}.

Let $\mu^{\varphi}_{\epsilon}: \Oephi \to \QUhatphie(\mathfrak{b}_-)^{\cop}\ot \QUhatphie(\mathfrak{b}_+)^{\cop}$ be 
the complexification of the injective algebra map $\mu^{\prime\prime}_{\varphi}$ given by \eqref{eq:defmu}. Then by Lemma \ref{lem:mu},
$\mu^{\varphi}_{\epsilon}(\Oephi) \subseteq \AA^{\varphi}_{\epsilon}$, which is the algebra generated by 
$f_\alpha^\varphi\otimes 1$,  $1\otimes e_\alpha^\varphi$ and 
$K_{-(1+\varphi)\lambda} \otimes K_{(1-\varphi)\lambda}$ for 
$\lambda \in P$ and $\alpha \in \Phi_+$. 

The proof follows by showing $\mu_\epsilon^\varphi(\Ker \Res)\subseteq\mu_\epsilon^\varphi(\Ker \kappa)$.
First note that 
$\mu_\epsilon^\varphi(\text{Ker }\text{Res})$ is the two-sided ideal $\II$ generated by 
$\{1\otimes e_k^\varphi, f_i^\varphi\otimes1|\ \alpha_k \notin I_-\text{, }\alpha_j\notin I_+\}$.
Indeed, by \cite[Proposition 2.7]{cv2}, Remark \ref{rmk:reci} $(a)$ and \eqref{eq:reci3teo} we have 
\begin{align*}
\mu_\epsilon^\varphi(\psi_{-\omega_i}^{-\alpha_i}\psi_{\omega_i}) & =
\left(\left(\epsilon^{-(\tau_i,\omega_i)}f^\varphi_{\alpha_i}\right)K_{-(1+\varphi)(\omega_i)}\otimes K_{(1-\varphi)(\omega_i)}\right)
\left(K_{(1+\varphi)(\omega_i)}\otimes K_{-(1-\varphi)(\omega_i)}\right)
\\
& = \epsilon^{-(\tau_i,\omega_i)}f^\varphi_{\alpha_i}\otimes 1.
\end{align*}
Analogously, we have $\mu_\epsilon^\varphi(\psi_{\omega_i}\psi_{-\omega_i}^{\alpha_k})=\epsilon^{-(\tau_i,\omega_i)}1\otimes e_{\alpha}^{\varphi}$. 
Since by definition  $\psi_{-\omega_i}^{\alpha_k}$, $\psi_{-\omega_i}^{-\alpha_k}\in \Ker \Res$ 
when $\alpha_k\notin I_-$, $\alpha_j\notin I_+$, we obtain that $1\otimes e_k^\varphi$, $f_i^\varphi\otimes1 \in \mu_{\epsilon}^{\varphi}(\Ker \Res)$
for $\alpha_k \notin I_-$ and $\alpha_j\notin I_+ $.
Conversely, assume $f\in \Ker \Res$. Then $f_{|\Gamma_\epsilon^\varphi(\p)}=0$ and 
$\langle \mu_\epsilon^\varphi(f),FM\otimes NE\rangle=f(FMNE)=0$ for all elements $FMNE$ in a basis of $\Gamma_\epsilon^\varphi(\p)$.
Using the perfect pairing \eqref{eq:pairphipi} on $\epsilon$, it follows that $\mu^{\varphi}_{\epsilon}(f) \subseteq \II$.

The proof that $\II \subseteq \mu^{\varphi}_{\epsilon}(\Ker \kappa)$ is analogous to the proof of 
\cite[Lemma 3.1]{AG}. 
\epf 

Note that the map $^{t}\zeta:\OO(P)\to \OO(\Ga)$ is given by the restriction $\psi|_{\OO(P)}$. 
Hence, $^{t}\zeta \res  =\ ^{t}\gamma$ and $\Img \gamma \subseteq P$.

We end the proof of Theorem \ref{thm:reciteo} with the following lemma.
Its proof is analogous to the case $\varphi=0$ and will be given without any detail.

\begin{lema}\cite[Lemmata 3.2 \& 3.3]{AG}
There exists a group homomorphism $\delta: N^{\varphi} \to \widehat{\Ga}$ such that 
the two-sided ideal  $J_{\delta}$ of $A^{\varphi}_{\epsilon, \liep,\gamma}$
generated by the elemens $\delta(D^{z}) - \partial^{z}$ for $D^{z}$
in $N^{\varphi}$ is a Hopf ideal,  
$A\simeq A_{\D^\varphi} = A^{\varphi}_{\epsilon, \liep,\gamma}/J_{\delta}$ as Hopf algebras and $A$
fits into the commutative diagram 
$$
\xymatrix{1 \ar[r]^{} & \Oc(G) \ar[r]^{\iota} \ar[d]_{\res} &
\Oephi \ar[r]^{\pi}
\ar[d]^{\Res} & \qephi^{*} \ar[r]^{}\ar[d]^{\overline{\Res}} & 1\\
1 \ar[r]^{} &   \Oc(P)  \ar[r]^{\iota_P}\ar[d]_{^t\gamma} &
\Oc_\epsilon^\varphi(P) \ar[r]^{\pi_P}\ar[d]^{\psi} & \qephip^{*} \ar[r]^{}\ar[d]^{\id} & 1\\
1 \ar[r]^{} & \OO(\Gamma) \ar[r]^{j} \ar[d]_{\id}& 
A^\varphi_{\epsilon,\mathfrak{p},\gamma}\ar[r]^{\overline{\pi}} \ar[d]^{}& \qephip^{*}\ar[r]^{}\ar[d]^{\nu} & 1\\
1 \ar[r] & \OO(\Gamma) \ar[r]^{\hat{\iota}} &
A\ar[r]^{\hat{\pi}}
& H \ar[r] & 1.}
$$ 
\end{lema}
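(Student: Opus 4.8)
The plan is to leverage the factorization established in the previous lemma to produce a surjection from the pushout Hopf algebra $A^{\varphi}_{\epsilon,\liep,\gamma}$ onto $A$, and then to exhibit its kernel in the form $J_\delta$, following \cite[Lemmata 3.2 \& 3.3]{AG} with the twisted data replacing the untwisted one. First I would check that the Hopf algebra map $\psi:\OephiP\to A$ appearing in the diagram of the previous lemma annihilates the ideal $\JJ=\OephiP\iota_P(\Ker\ ^t\gamma)$ defining $A^{\varphi}_{\epsilon,\liep,\gamma}$ in Proposition \ref{prop:pushout}. Indeed $\psi|_{\OO(P)}={}^t\zeta$ and ${}^t\zeta\,\res={}^t\gamma$ with $\res$ surjective, so $\iota_P(\Ker\ ^t\gamma)=\iota_P(\Ker\ ^t\zeta)\subseteq\Ker\psi$; since $\psi$ is a surjective algebra map (it is, because $\kappa=\psi\,\Res$ is surjective), $\psi(\JJ)=0$. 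Hence $\psi$ descends to a surjective Hopf algebra map $\Theta:A^{\varphi}_{\epsilon,\liep,\gamma}\twoheadrightarrow A$ satisfying $\Theta\circ j=\hat\iota$ and $\hat\pi\circ\Theta=\nu\circ\overline{\pi}$.

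Next I would pin down $\delta$. For $D^z\in N^\varphi$ the central group-like element $\partial^z\in\mathcal{A}\subseteq G(A^{\varphi}_{\epsilon,\liep,\gamma})$ of Lemma \ref{lem:centralgrelements} satisfies $\overline{\pi}(\partial^z)=D^z$, so $\hat\pi(\Theta(\partial^z))=\nu(D^z)=1$ in $H=\qephip^{*}/\langle D^z-1:\ D^z\in N^\varphi\rangle$ by Proposition \ref{prop:propdih}; thus $\Theta(\partial^z)$ is a group-like of $A$ lying in the coinvariants $A^{\co\hat\pi}=\hat\iota(\OO(\Gamma))$. Setting $\delta(D^z):=\hat\iota^{-1}(\Theta(\partial^z))\in G(\OO(\Gamma))=\widehat{\Gamma}$ defines a group homomorphism $\delta:N^\varphi\to\widehat{\Gamma}$, since $z\mapsto D^z$, $z\mapsto\partial^z$ and $\Theta$, $\hat\iota^{-1}$ are all multiplicative. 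By construction $\Theta(j(\delta(D^z)))=\hat\iota(\delta(D^z))=\Theta(\partial^z)$, whence $\delta(D^z)-\partial^z\in\Ker\Theta$ and therefore $J_\delta\subseteq\Ker\Theta$; that $J_\delta$ is a Hopf ideal, so that $A_{\D^\varphi}=A^{\varphi}_{\epsilon,\liep,\gamma}/J_\delta$ is a Hopf algebra, is exactly Proposition \ref{prop:ultimopaso}.

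Finally I would show that the induced surjection $\overline{\Theta}:A_{\D^\varphi}\twoheadrightarrow A$ is an isomorphism. Both $A_{\D^\varphi}$ and $A$ sit in central exact sequences $1\to\OO(\Gamma)\to\,\cdot\,\to H\to 1$ (Theorem \ref{thm:mainteo1}, respectively the first lemma of this section), and $\overline{\Theta}$ restricts to the identity on $\OO(\Gamma)$ (because $\Theta\circ j=\hat\iota$) and induces the identity on $H$ (because $\hat\pi\circ\Theta=\nu\circ\overline{\pi}$). One must also check $J_\delta\cap\OO(\Gamma)=0$, exactly as in \emph{loc.\ cit.}; granting this, faithful flatness of these central extensions together with the five-lemma type argument of \cite[Lemmata 3.2 \& 3.3]{AG} forces $\overline{\Theta}$ to be bijective. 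When $\Gamma$ is finite one may instead simply compare dimensions, $\dim A_{\D^\varphi}=|\Gamma|\dim H=\dim A$. Stacking the commutative squares of the previous lemmata on top of the quotient $A^{\varphi}_{\epsilon,\liep,\gamma}\twoheadrightarrow A$ then yields the asserted four-row commutative diagram, and $\D^\varphi=(I_+,I_-,N^\varphi,\Gamma,\gamma,\delta)$ is the required twisted subgroup datum.

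The steps splitting $\psi$ through the pushout and defining $\delta$ are essentially formal given the setup in the previous lemmata; I expect the main obstacle to be the last step, namely proving bijectivity of $\overline{\Theta}$ in the generality where $\Gamma$ is an arbitrary (possibly infinite) algebraic group, so that the dimension count is unavailable and one must invoke faithful flatness/cleftness of the central exact sequences (as in \cite{Mo}) and the exact-sequence bookkeeping of \cite{AG}, together with the verification $J_\delta\cap\OO(\Gamma)=0$.
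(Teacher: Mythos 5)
Your proposal follows the same strategy as the paper's (admittedly terse, sketch-level) proof: use the universal property of the pushout to factor the map $\psi$ from the previous lemma through $A^{\varphi}_{\epsilon,\liep,\gamma}$, read off $\delta$ from the image of the central group-like elements $\partial^{z}$ landing in the coinvariants $\hat\iota(\OO(\Gamma))$, and then conclude that the induced surjection onto $A$ has kernel exactly $J_\delta$ via the central-exact-sequence bookkeeping (faithful flatness / dimension count) borrowed from \cite[Lemmata 3.2 \& 3.3]{AG}. The one small slip is writing $\iota_P(\Ker\,{}^t\gamma)=\iota_P(\Ker\,{}^t\zeta)$, which does not typecheck since $\Ker\,{}^t\gamma\subseteq\OO(G)$ while $\iota_P$ is defined on $\OO(P)$; one should instead use $\res(\Ker\,{}^t\gamma)=\Ker\,{}^t\zeta$ (from ${}^t\zeta\circ\res={}^t\gamma$ and surjectivity of $\res$) and then argue $\psi\circ\iota_P=\hat\iota\circ{}^t\zeta$ kills $\iota_P(\Ker\,{}^t\zeta)$; this is a notational fix, not a gap.
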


\pf (\textit{Sketch}) Using that $A^\varphi_{\epsilon,\mathfrak{p},\gamma}$ is given by a pushout, one first shows that $A$ fits into 
the commutative diagram above. Then, using the commutativity of the 
diagram, one proves that there exists a group homomorphism $\delta: N^{\varphi} \to \widehat{\Ga}$ and 
a Hopf ideal $J_{\delta}$ such that $A \simeq A^{\varphi}_{\epsilon, \liep,\gamma}/J_{\delta}$.
\epf

\subsubsection{Isomorphism classes of quantum subgroups}
In this subsection we parametrize the Hopf algebra quotients of $\Oephi$ up to isomorphism.
To do so, we first define a partial order on the isomorphism classes of quotients of $\Oephi$ and on the
set of twisted subgroup data.

\smallbreak
Let $\q (\Oephi)$ be the category
whose objects are surjective Hopf algebra maps $\kappa: \Oephi \to A$. If $\kappa:
\Oephi \to A$ and $\kappa': \Oephi \to A'$ are such maps, then an arrow
$\xymatrix{\kappa\ar[0,1]^{\alpha}& \kappa'}$ in $\q (\Oephi)$ is a Hopf algebra
map $\alpha: A\to A'$ such that $\alpha \kappa = \kappa'$. In this language, a
\emph{quotient} of $\Oephi$ is just an isomorphism class of objects in
$\q(\Oephi)$; let $[\kappa]$ denote the class of the map $\kappa$. There is a
partial order in the set of quotients of $\Oephi$, given by $[\kappa]\leq
[\kappa']$ iff there exists an arrow $\xymatrix{\kappa\ar[0,1]^{\alpha}& \kappa'}$
in $\q(\Oephi)$. Note that $[\kappa]\leq [\kappa']$ and $[\kappa']\leq [\kappa]$
implies $[\kappa] = [\kappa']$.
Our goal is to describe the partial order in $\q(\Oephi)$.

Let $I_{\pm}, I_{\pm}^{\prime}\subseteq \pm\Pi$. If $I_{+}^{\prime} \subseteq I_{+}$ and $I_{-}^{\prime} \subseteq I_{-}$, 
then $I^{\prime} \subseteq I$ and
$\Tt^\varphi_{I'}\subseteq\Tt^\varphi_{I}$. Thus, there exists an epimorphism 
$\Tt^\varphi_{I'^c}\twoheadrightarrow \Tt^\varphi_{I^c}$ 
which induces a monomorphism $\eta: \widehat{\Tt^\varphi_{I^c}}\hookrightarrow \widehat{\Tt^\varphi_{I'^c}}$.

\begin{definition}
Let $\D^{\varphi}=(I_+,I_-,N^{\varphi},\Gamma,\gamma,\delta)$ and $\D^{\varphi\prime}=(I'_+,I'_-,N^{\varphi\prime},\Gamma',\gamma',\delta')$ be twisted 
subgroup data with respect to $\Oephi$. We say that $\D^{\varphi}\leq\D^{\varphi\prime}$ if and only if:
\begin{itemize}
\item[$\triangleright$] $I'_+\subseteq I_+$, $I'_-\subseteq I_-$.
\item[$\triangleright$] $\eta(N^{\varphi}) \subseteq  N^{\varphi\prime}$.
\item[$\triangleright$] there exists an algebraic group homomorphism $\tau: \Ga^{\prime} \to \Ga$ such that $\gamma\tau=\gamma'$.
\item[$\triangleright$] $\delta'\eta=\tau^t\delta$.
\end{itemize}
\end{definition}
Moreover, we say that $\D^{\varphi}\sim\D^{\varphi\prime}$ if and only if 
$\D^{\varphi}\leq\D^{\varphi\prime}$ and $\D^{\varphi}\leq\D^{\varphi\prime}$. In particular, this implies that 
 $I'_+= I_+$, $I'_-= I_-$, $N^{\varphi} = N^{\varphi\prime}$, $\tau$ is an isomorphism
 and $\delta'=\tau^t\delta$.

Our last theorem yields the parametrization of the quotients of $\Oephi$ up to isomorphism. The proof 
is analogous to the case $\varphi=0$ since it relies on the commutativity of the diagram \eqref{diag:pushout} and 
general constructions of the sucesive quotients. For these reasons, it will be omitted. See \cite[Theorem 2.20]{AG}.

\begin{theorem}\label{thm:classification}
Let $\D^{\varphi}$ and $\D^{\varphi\prime}$ be twisted subgroup data and 
$\kappa: \Oephi \to A_{\D^{\varphi}}$, $\kappa': \Oephi \to A_{\D^{\varphi'}}$ the corresponding quotients.
Then $[\kappa]\leq[\kappa']$ 
if only if $\D^{\varphi}\leq\D^{\varphi\prime}$. \qed
\end{theorem}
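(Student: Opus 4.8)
The plan is to reduce Theorem~\ref{thm:classification} entirely to the already-established machinery: the commutative diagram \eqref{diag:ultimopaso}, the explicit description of $A_{\D^\varphi}$ as $A^\varphi_{\epsilon,\liep,\gamma}/J_\delta$, and the classification in the untwisted case \cite[Theorem 2.20]{AG}. First I would prove the implication $\D^\varphi \leq \D^{\varphi\prime} \Rightarrow [\kappa]\leq[\kappa']$ by constructing the arrow $\alpha : A_{\D^\varphi}\to A_{\D^{\varphi\prime}}$ explicitly. The four conditions defining $\D^\varphi\leq\D^{\varphi\prime}$ are tailored precisely so that each successive quotient in the tower $\Oephi \twoheadrightarrow \Oc_\epsilon^\varphi(P) \twoheadrightarrow A^\varphi_{\epsilon,\liep,\gamma}\twoheadrightarrow A_{\D^\varphi}$ maps compatibly to the corresponding quotient in the tower for $\D^{\varphi\prime}$: the inclusions $I'_\pm\subseteq I_\pm$ give a Hopf algebra surjection $\Oc_\epsilon^\varphi(P)\twoheadrightarrow \Oc_\epsilon^\varphi(P')$ (via the restriction maps $\Res$, using Proposition~\ref{prop:paraL}), the homomorphism $\tau:\Gamma'\to\Gamma$ with $\gamma\tau=\gamma'$ induces $\OO(\Gamma)\to\OO(\Gamma')$ and hence a map on the pushouts $A^\varphi_{\epsilon,\liep,\gamma}\to A^\varphi_{\epsilon,\liep',\gamma'}$ by the universal property of the pushout construction in Proposition~\ref{prop:pushout}, and finally the conditions $\eta(N^\varphi)\subseteq N^{\varphi\prime}$ and $\delta'\eta = \tau^t\delta$ guarantee that this map descends modulo the Hopf ideals $J_\delta$ and $J_{\delta'}$. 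One checks $\alpha\kappa = \kappa'$ by the commutativity of \eqref{diag:ultimopaso} at each stage.

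Next I would prove the converse: if $[\kappa]\leq[\kappa']$, so that there is an arrow $\alpha:A_{\D^\varphi}\to A_{\D^{\varphi\prime}}$ with $\alpha\kappa=\kappa'$, then $\D^\varphi\leq\D^{\varphi\prime}$. Here the strategy is to recover each component of the datum from intrinsic features of the Hopf algebra $A_{\D^\varphi}$ and the map $\kappa$, and to see how $\alpha$ acts on these. The pair $(I_+,I_-)$ is recovered from the Hopf subalgebra $H^* \subseteq \qephi$ determined by the dual quotient, via Lemma~\ref{lema:subalgparadeuphi}; since $\alpha$ induces a map on the $H$'s compatible with $\overline{\Res}$, and quotients of $\qephi^*$ correspond to Hopf subalgebras of $\qephi$ (which behave monotonically), one gets $I'_\pm\subseteq I_\pm$ and the subgroup inclusion $\Sigma^{\varphi\prime}\subseteq$ appropriate subgroup, whence $\eta(N^\varphi)\subseteq N^{\varphi\prime}$ by the description in Remark~\ref{rmk:notathenDi}~$(d)$. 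The group $\Gamma$ is recovered as $\operatorname{Spec}$ of the central Hopf subalgebra $\kappa(\iota(\OO(G)))$ of $A_{\D^\varphi}$ (as in the first lemma of the proof of Theorem~\ref{thm:reciteo}); $\alpha$ restricts to a map on these central subalgebras giving $\tau:\Gamma'\to\Gamma$, and $\gamma\tau=\gamma'$ follows because both are read off from $\kappa\iota$ resp.\ $\kappa'\iota$. Finally $\delta$ is recovered from the relations $\delta(D^z)=\partial^z$ defining $J_\delta$, and compatibility of $\alpha$ with these relations yields $\delta'\eta = \tau^t\delta$.

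Since the excerpt explicitly says the proof ``is analogous to the case $\varphi=0$ since it relies on the commutativity of the diagram \eqref{diag:pushout} and general constructions of the successive quotients,'' the cleanest writeup simply verifies that every ingredient used in \cite[Theorem 2.20]{AG} has a twisted counterpart proved earlier in the paper: Proposition~\ref{prop:paraL} replaces the untwisted $\Oc_\epsilon(P)$ results, Proposition~\ref{prop:pushout} gives the twisted pushout, Proposition~\ref{prop:ultimopaso} gives the twisted $J_\delta$-quotient, Proposition~\ref{prop:propdih} and Remark~\ref{rmk:notathenDi} describe the quotients of $\qephi^*$, and Lemma~\ref{lem:centralgrelements} provides the central group-likes $\partial^z$. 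The main obstacle, and the only place where twisting genuinely intervenes rather than being formally parallel, is bookkeeping the subgroup $N^\varphi\subseteq\widehat{\Tt^\varphi_{I^c}}$ and the monomorphism $\eta:\widehat{\Tt^\varphi_{I^c}}\hookrightarrow\widehat{\Tt^\varphi_{I'^c}}$ correctly: one must check that the linear-algebra description of $N^\varphi$ via the matrix $S^\varphi_\ell$ (Remark~\ref{rmk:notathenDi}~$(d)$) is compatible with $\eta$ under $I'_\pm\subseteq I_\pm$, so that the condition $\eta(N^\varphi)\subseteq N^{\varphi\prime}$ is the exact obstruction to the existence of the arrow on the $H$-level. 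Once that compatibility is in place, the rest of the argument transports verbatim, and the theorem follows.
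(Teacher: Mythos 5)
Your proposal is correct and takes essentially the same approach the paper intends: the authors explicitly omit the proof and defer to the untwisted argument of \cite[Theorem 2.20]{AG}, and what you have written is precisely the reconstruction of that argument with the twisted analogues (Propositions \ref{prop:paraL}, \ref{prop:pushout}, \ref{prop:ultimopaso}, Remark \ref{rmk:notathenDi}) substituted in, including the correct identification of the one genuinely twist-sensitive point, namely the compatibility of the monomorphism $\eta:\widehat{\Tt^\varphi_{I^c}}\hookrightarrow\widehat{\Tt^\varphi_{I'^c}}$ with the subgroups $N^\varphi$ and $N^{\varphi\prime}$.
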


\subsubsection{Properties of the quotients}\label{subsec:propquot}
We end the paper with a list of some properties of the quotients. Apart from item $(v)$, the proof is analogous to \cite[Proposition 3.8]{AG2}.
\begin{prop}
Let $D^{\varphi} = (I_{+}, I_{-}, N^{\varphi}, \Ga,\gamma, \delta)$ be a twisted subgroup datum. 
\begin{enumerate}
 \item[$(i)$] If $A_{D^{\varphi}}$ is pointed, then $I_{+} \cap I_{-} =\emptyset $ and 
 $\Ga$ is a subgroup of the group of upper triangular matrices of some
size. In particular, if $\Ga$ is finite, then it is abelian.
 \item[$(ii)$] $A_{D^{\varphi}}$ is semisimple if and only if $I_{+} \cup I_{-} =\emptyset$ and $\Ga$ is finite.
 \item[$(iii)$] If $\dim A_{D^{\varphi}} <\infty$ and $A_{D^{\varphi}}^{*}$ is pointed, then $\gamma(\Ga )$ is included 
 in the fixed torus of $G$.
 \item[$(iv)$] If $A_{D^{\varphi}}$ is co-Frobenius then $\Ga$ is reductive.
 \item[$(v)$] If $\varphi$ and $(I_{+}, I_{-},\Sigma^{\varphi})$ are such that $\Sigma^{\varphi}=\Tt^{\varphi}$ but $\Sigma\neq \Tt$, then 
 $ A_{\D^{\varphi}}$ is not a $2$-cocycle deformation of $A_{\D}$.
\end{enumerate}
\end{prop}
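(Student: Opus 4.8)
The plan is to argue by a dimension count, exactly in the spirit of Example~\ref{exa:2}. Suppose, for the sake of contradiction, that $A_{\D^\varphi}$ is a $2$-cocycle deformation of $A_{\D}$, where $\D = (I_+,I_-,N,\Ga,\gamma,\delta)$ is the untwisted subgroup datum obtained from $\D^\varphi$ by setting $\varphi=0$ and keeping the same $I_\pm$, $\Ga$, $\gamma$ (and the induced $\delta$). A $2$-cocycle deformation preserves the underlying coalgebra, hence preserves dimension; so it suffices to show $\dim A_{\D^\varphi} \neq \dim A_{\D}$ under the stated hypothesis. I would not even need finiteness of $\Ga$ in general: the relative dimension over $\OO(\Ga)$ is controlled by $\dim H$, and it is $\dim H$ that will differ.

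First I would recall, from Theorem~\ref{thm:mainteo1} together with Propositions~\ref{prop:pushout} and \ref{prop:ultimopaso}, that $A_{\D^\varphi}$ fits into the central exact sequence $1 \to \OO(\Ga) \to A_{\D^\varphi} \to H \to 1$ with $H = \qephip^{*}/\langle D^z - 1 : D^z \in N^\varphi\rangle$, and likewise $A_{\D}$ sits in $1 \to \OO(\Ga) \to A_{\D} \to H_0 \to 1$ with $H_0 = \qep^{*}/\langle D^z - 1 : D^z \in N\rangle$. By faithful flatness of these central extensions (cf. \cite[Prop.~3.4.3]{Mo}), comparing dimensions of $A_{\D^\varphi}$ and $A_{\D}$ reduces to comparing $\dim H$ and $\dim H_0$. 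Next I would compute these using Proposition~\ref{prop:propdih} and \eqref{eq:orderSigmaOmega}: one gets
$$
\dim H = |\Sigma^\varphi|\,\ell^{|I_+|+|I_-|} = \frac{\ell^n}{|N^\varphi|}\,\ell^{|I_+|+|I_-|},
\qquad
\dim H_0 = |\Sigma|\,\ell^{|I_+|+|I_-|} = \frac{\ell^n}{|N|}\,\ell^{|I_+|+|I_-|}.
$$
Now the hypothesis $\Sigma^\varphi = \Tt^\varphi$ forces $N^\varphi$ to be trivial (by \eqref{eq:orderSigmaOmega}, $|\Sigma^\varphi| = \ell^n/|N^\varphi|$ and $|\Tt^\varphi| = \ell^n$), whereas $\Sigma \neq \Tt$ forces $N$ to be nontrivial, so $|N| > 1 = |N^\varphi|$. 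Therefore $\dim H > \dim H_0$, hence $\dim A_{\D^\varphi} > \dim A_{\D}$, and the two Hopf algebras cannot be related by a $2$-cocycle deformation.

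The only subtle point — and the one I would state carefully rather than gloss over — is the exact sense in which $\D$ is ``the'' untwisted datum corresponding to $\D^\varphi$: a $2$-cocycle deformation of $A_{\D^\varphi}$, if it is again of the form $A_{\D'}$ for an untwisted datum $\D'$, must share the same underlying coalgebra and in particular the same quotient Hopf algebra $H$ of the group-like part, forcing $I'_\pm = I_\pm$ and the same $\Ga$, $\gamma$; this is already implicit in Remark~\ref{rmk:2cocycleOephiP} and Proposition~\ref{prop:pushoutcocycle}, where the cocycle is pulled back along $\bar\pi$ and is trivial on $\OO(\Ga)$. Granting this, the computation above is forced, and the dimension mismatch is the whole content of the argument. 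I expect this bookkeeping — pinning down that the putative untwisted partner has the same $(I_+,I_-)$ and $\Ga$ — to be the main (mild) obstacle; the dimension count itself is immediate from \eqref{eq:orderSigmaOmega}.
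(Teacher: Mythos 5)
Your computation of $\dim H$ and $\dim H_0$ via \eqref{eq:orderSigmaOmega} and Proposition~\ref{prop:propdih} is correct, and your identification of the crux ($N^\varphi=1$ while $N\neq 1$) matches the paper exactly. However, your logical bridge from this to the conclusion has a gap that the paper's proof (tersely) avoids. You conclude by writing ``$\dim H > \dim H_0$, hence $\dim A_{\D^\varphi} > \dim A_{\D}$, and the two Hopf algebras cannot be related by a $2$-cocycle deformation.'' That implication only makes sense when $\Ga$ is finite, so that $\dim A_{\D^\varphi}=|\Ga|\dim H$ and $\dim A_{\D}=|\Ga|\dim H_0$ are finite numbers. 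Nothing in the statement of (v) restricts to finite $\Ga$. You do flag the issue (``the relative dimension over $\OO(\Ga)$ is controlled by $\dim H$''), but you never explain why a mismatch of ranks over $\OO(\Ga)$ obstructs a $2$-cocycle deformation: a priori the cocycle $\sigma$ on $A_{\D}$ could destroy the position of $\OO(\Ga)$ inside the deformed algebra, so the rank over $\OO(\Ga)$ is not obviously a cocycle-deformation invariant.

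The paper handles this uniformly by what it calls a ``chasing diagram argument'': if $A_{\D^\varphi}$ were a $2$-cocycle deformation of $A_{\D}$, one shows the cocycle must be trivial on the central $\OO(\Ga)$ and hence descends to the finite-dimensional quotient, forcing $H^\varphi$ to be a $2$-cocycle deformation of $H_0$ --- and these are always finite-dimensional (of dimension $|\Sigma^\varphi|\ell^{|I_+|+|I_-|}$ and $|\Sigma|\ell^{|I_+|+|I_-|}$ respectively), so the dimension count applies to them regardless of $\Ga$. Your proposal would be complete if you replaced the final sentence with precisely this reduction to $H$ versus $H_0$; as written, you have only proved the case $|\Ga|<\infty$, and the step that would establish the general case is asserted rather than argued.
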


\pf
We prove only $(v)$. If $\varphi$ and $(I_{+}, I_{-},\Sigma^{\varphi})$ are 
such that $\Sigma^{\varphi}=\Tt^{\varphi}$ but $\Sigma\neq \Tt$, then 
$N^{\varphi}=1$ and $N\neq 1$. Then, the quotient  
$H^{\varphi}= \qephip^\ast/\langle D^z-1 :\ D^{z} \in N^\varphi \rangle$ cannot be a $2$-cocycle deformation
of $H=\qep^\ast/\langle D^z-1 :\ D^{z} \in N \rangle$ since they have different dimension. 
If $ A_{D^{\varphi}}$ were a $2$-cocycle deformation of $A_{D}$, then 
by a chasing diagram argument we would have that $H^{\varphi}$ is a $2$-cocycle 
deformation of $H$, a contradiction, see Example \ref{exa:2}. 
\epf

\end{document}